\documentclass{amsart}

\usepackage{hyperref}
\usepackage{graphicx} 
\usepackage[utf8]{inputenc}
\usepackage[T1]{fontenc}
\usepackage{amsmath}
\usepackage{comment}
\usepackage{amsfonts}
\usepackage{amssymb}
\usepackage{amsthm}
\usepackage{mathtools}      
\usepackage{mathabx}        
\usepackage{mathabx}
\usepackage{tikz}
\usepackage{tikz-3dplot}
\usepackage{tikz-cd}
\usetikzlibrary{patterns}
\usetikzlibrary{patterns.meta}
\usepackage{etoolbox}
\usepackage{pgfplots}
\pgfplotsset{compat=1.15}
\usepackage{mathrsfs}
\usetikzlibrary{arrows}
\usetikzlibrary{arrows.meta}

\tikzset{
  background line/.style={gray!65,thin},
  figure line/.style={blue,semithick},
  figure arrow/.style={
    figure line,
    -{Straight Barb[length=2.5mm,width=2mm]}
  }
}

\usepackage{multicol}

\usepackage[colorinlistoftodos]{todonotes}
\usepackage{marginnote}

\hypersetup{bookmarksdepth  =  3} 

\theoremstyle{plain}
\newtheorem{thm}{Theorem}[section]
\newtheorem{theorem}[thm]{Theorem}
\newtheorem{lem}[thm]{Lemma}
\newtheorem{lemma}[thm]{Lemma}
\newtheorem{prop}[thm]{Proposition}
\newtheorem{proposition}[thm]{Proposition}

\newtheorem{corollary}[thm]{Corollary}

\theoremstyle{definition}
\newtheorem{defn}[thm]{Definition}
\newtheorem{defnprop}[thm]{Definition/Proposition}
\newtheorem{definition}[thm]{Definition}

\newtheorem{example}[thm]{Example}

\theoremstyle{remark}
\newtheorem{rem}[thm]{Remark}
\newtheorem{remark}[thm]{Remark}

\theoremstyle{plain}

\newtheorem{mainthm}{Theorem}

\newtheorem{maincor}[mainthm]{Corollary}

\AtBeginEnvironment{rem}{
  \pushQED{\qed}
  
}
\AtEndEnvironment{rem}{\popQED}

\AtBeginEnvironment{remark}{
  \pushQED{\qed}
  
}
\AtEndEnvironment{remark}{\popQED}

\AtBeginEnvironment{rems}{
  \pushQED{\qed}
  
}
\AtEndEnvironment{rems}{\popQED}

\newcommand{\R}{\mathbb{R}}
\newcommand{\N}{\mathbb{N}}
\newcommand{\C}{\mathbb{C}}
\DeclareMathOperator{\Leb}{Leb}
\newcommand{\X}{\mathbb{X}}

\newcommand{\HH}{\mathcal{H}}
\newcommand{\Id}{\mathrm{Id}}
\newcommand{\1}{\mathmybb{1}}

\newcommand{\partialvis}{\partial_{\mathrm{vis}}}
\newcommand{\Diag}{\operatorname{Diag}}

\DeclareMathOperator{\Tr}{Tr}
\DeclareMathOperator{\Ker}{Ker}
\DeclareMathOperator{\PSL}{PSL}
\DeclareMathOperator{\SL}{SL}
\DeclareMathOperator{\Lie}{Lie}
\DeclareMathOperator{\GL}{GL}

\DeclareMathOperator{\Gr}{Gr}

\DeclareMathOperator{\SO}{SO}
\DeclareMathOperator{\Sp}{Sp}

\DeclareMathOperator{\Lag}{Lag}
\DeclareMathOperator{\rad}{rad}

\newcommand{\Aff}{\Lin_0}

\DeclareMathOperator{\Lin}{Span}
\DeclareMathOperator{\ver}{vert}
\DeclareMathOperator{\stable}{stable}
\DeclareMathOperator{\fixed}{fixed}
\DeclareMathOperator{\flipped}{flipped}
\DeclareMathOperator{\Span}{Span}
\DeclareMathOperator{\Cone}{Cone}
\DeclareMathOperator{\Bis}{Bis}

\newcommand{\abs}[1]{\left\lvert {#1} \right\rvert}
\newcommand{\Abs}[1]{\left\lVert {#1} \right\rVert}

\DeclareMathAlphabet{\mathmybb}{U}{bbold}{m}{n}

\newcommand{\ie}{i.e.\ }

\title[IPVT for Finsler metrics on symmetric spaces]{Ideal Poisson--Voronoi tessellations for Finsler metrics on symmetric spaces}

\author{Colin Davalo}
\address{Dipartimento di Matematica ``Giuseppe Peano'',
  Università degli Studi di Torino,
  Via Carlo Alberto 10,
  10123 Torino, Italy}
\email{colinmarcthierry.davalo@unito.it}

\author{J. Maxwell Riestenberg}
\address{Max Planck Institute for Mathematics in the Sciences,
  Inselstraße 22,
  04103 Leipzig, Germany}
\email{max.riestenberg@mis.mpg.de}

\author{Anna Wienhard}
\address{Max Planck Institute for Mathematics in the Sciences,
  Inselstraße 22,
  04103 Leipzig, Germany}
\email{anna.wienhard@mis.mpg.de}

\begin{document}

\begin{abstract}
Ideal Poisson--Voronoi tessellations are random tessellations which arise as limits of Voronoi diagrams for Poisson point processes of low intensity. 
For the Riemannian metric on a symmetric space of higher rank, Fr{\k{a}}czyk-Mellick-Wilkens \cite{FMW23poissonvoronoi} proved that the corresponding ideal Poisson Voronoi tessellation is almost surely completely connected, with an unbounded side between any two cells, marking a very different geometric behavior than the ideal Poisson--Voronoi tessellation of hyperbolic space investigated in \cite{DAchilleCurienEnriquezLyonsUnel2026ipvthyperbolic}. 
In this paper we introduce and study ideal Poisson--Voronoi tessellations for Finsler metrics on higher rank symmetric spaces. 
We show that ideal Poisson--Voronoi tessellations with respect to polyhedral Finsler metrics can exhibit either behavior seen in these previous works: generically they resemble the hyperbolic case, where almost surely all cells have bounded sides.

We further introduce the ideal Selberg--Poisson--Voronoi tessellation of the space of positive definite symmetric matrices, which has particularly nice properties. 

\end{abstract}

\maketitle
\tableofcontents

\section{Introduction}

Given a metric space $(X,d)$, a natural way to define a tiling  of $X$, given a discrete collection of points $\Pi\subset X$, is the \emph{Voronoi} tessellation. 
For each point $p\in \Pi$ its Voronoi cell $\mathcal{C}_p$ consists of all points in $X$ whose distance to $p$ is less than or equal than to any other point in the collection of points $\Pi$: 
\begin{equation}\label{eqn: voronoi tile}
    \mathcal{C}_p=\lbrace x\in X \mid \forall q\in \Pi, \,d(x,p)\leq d(x,q)\rbrace.
\end{equation}
Of course, the Voronoi tessellation depends not only on the collection of points, but also heavily on the distance function on $X$.

A \emph{Poisson--Voronoi} tessellation is the natural random tiling one obtains when the collection of points $\Pi$ is a random set of points following a \emph{Poisson} law with a given intensity measure. 
On a Riemannian manifold $M$, a natural measure is given by the Riemannian volume measure $\Leb$, and a Poisson--Voronoi tessellation with respect to this measure is often called the Poisson--Voronoi tessellation of $M$. The scaling of the volume measure affects the intensity of the corresponding Poisson process. 

In \cite{DAchilleCurienEnriquezLyonsUnel2026ipvthyperbolic}, D'Achille--Curien-Enriquez--Lyons--Ünel considered Poisson--Voronoi tessellations of hyperbolic space $\mathbb{H}^n$ of dimension $n$ and introduced the \emph{ideal Poisson--Voronoi tessellation}.
This is the random tiling of $\mathbb{H}^n$ that arises as a limit of the Poisson--Voronoi tessellations of $\mathbb{H}^n$ when the intensity goes to zero, see Figure \ref{fig:TessellationsH2}. 
It is quite remarkable that in hyperbolic space - in contrast to the Euclidean situation - there is a well defined limit, which gives a random tiling of $\mathbb{H}^n$ that is a universal object. 
The authors of \cite{DAchilleCurienEnriquezLyonsUnel2026ipvthyperbolic} further proved that the ideal Poisson--Voronoi tessellation of $\mathbb{H}^n$ has nice geometric  properties: 
\begin{enumerate}
    \item Almost surely, each tile is unbounded with an infinite number of bounded faces. In particular, almost surely, the intersection of two cells of the tessellation is bounded. 
    \item Almost surely, each cell has a unique limit point in the boundary of $\mathbb{H}^n$. 
\end{enumerate}

\begin{figure}[htbp!]
    \centering
    \includegraphics[scale=1]{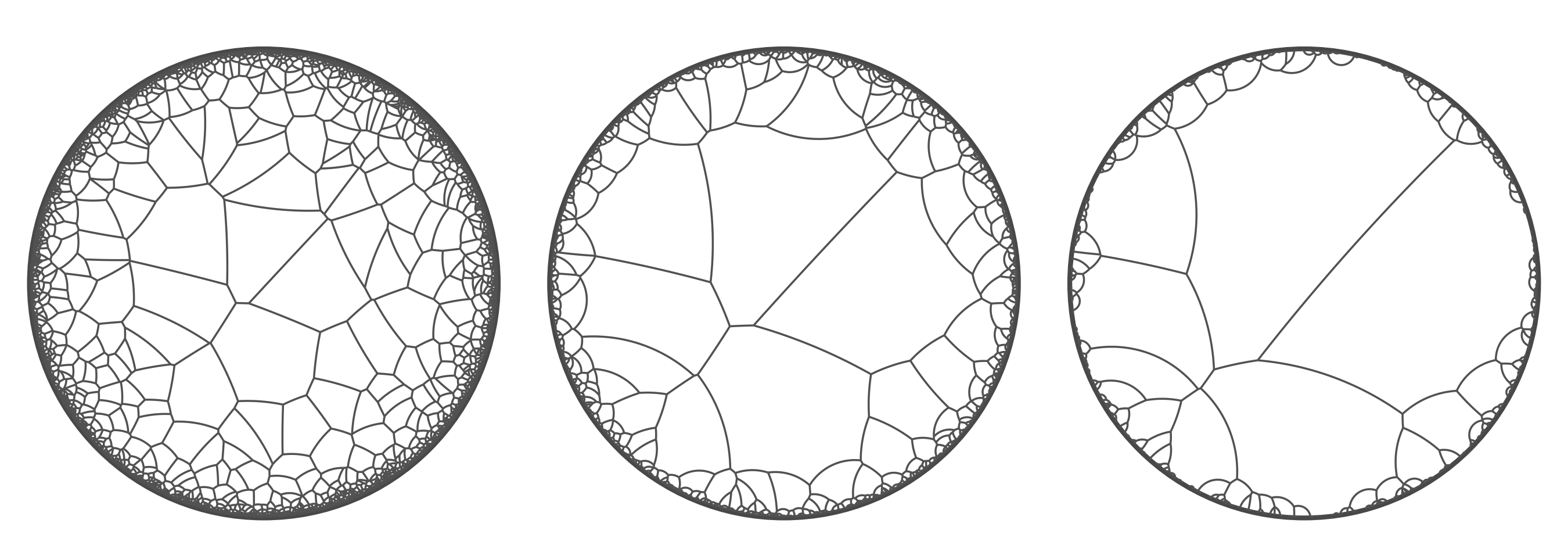}
    \caption{Two realisations of Poisson--Voronoi tessellations with intensity $3\Leb$, $0.5\Leb$ and a realisation of an ideal Poisson--Voronoi tessellation. These three realisations are coupled to emphasize the convergence, in the way described in \cite{DAchilleCurienEnriquezLyonsUnel2026ipvthyperbolic}.}
    \label{fig:TessellationsH2}
\end{figure}

This random tessellation provides information on the geometry of $\mathbb{H}^n$. 
Budzinski--Curien--Petri used this idea to show the existence of a gap between the Cheeger constant of the hyperbolic plane, which is equal to $1$, and that of closed hyperbolic surfaces of large genus \cite{BudzinskiCurienPetri2025cheeger}, due to the fact that the average length of the sides of the tessellation in a given set is equal to $\frac{2}{\pi}$ per unit of area.
Earlier, percolation with respect to the Poisson--Voronoi tessellation on the hyperbolic plane was introduced and studied by Benjamini--Schramm \cite{BenjaminiSchramm2001percolationsinH2}.
Low intensity limits were studied recently in \cite{Bhupatiraju2019LowIntensity}. 

\medskip

In a recent breakthrough Fr{\k{a}}czyk--Mellick--Wilkens \cite{FMW23poissonvoronoi} introduced the Poisson--Voronoi tessellation of higher rank symmetric spaces and products of trees. 
They proved that the properties of the ideal Poisson--Voronoi tessellation with respect to the Riemannian metric on a higher rank symmetric space are quite different from those of the real hyperbolic case. 
In particular, in the higher rank situation, almost surely the intersection of two cells is unbounded. 
This is a remarkable property which is difficult to picture. 
Fr{\k{a}}czyk--Mellick--Wilkens used this to prove that higher rank lattices have fixed price \cite{FMW23poissonvoronoi}, settling a long-standing conjecture in measurable group theory \cite{Gaboriau_cost}. 

\medskip

In \cite{FMW23poissonvoronoi} the authors also introduced a different perspective on the ideal Poisson--Voronoi tessellation of a symmetric space. 
They do not construct the tessellation as a limit of Poisson--Voronoi tessellations of low intensity, but as a tessellation associated to a Poisson point process on the visual  boundary of the symmetric space, or more precisely on an $\mathbb{R}$-bundle over it.

The intuition of this limit comes from examining the definition of the Voronoi cell in (\ref{eqn: voronoi tile}). 
In the low-intensity limit, the points appearing in $\Pi$ tend to leave every compact set. 
However, differences of distance functions $d(\cdot,p) - d(\cdot,q)$ can still limit to well-defined functions on $X$ as $p,q$ diverge. 
Such limit functions are called \emph{horofunctions}.

To make this picture more precise, observe that the tiles (\ref{eqn: voronoi tile}) are unchanged when each distance function $d(\cdot,p)$ is shifted by the same constant $t \in \R$. 
A function of the form $d(\cdot,p)-t$ is called \emph{distance-like}. 
The closure of the space of distance-like functions is the space of \emph{horofunctions}, denoted $\mathcal{H}(X)$. 
It is an $\R$-bundle over the \emph{horoboundary} $\partial X$, defined to be horofunctions modulo constants. 
When $X$ is proper, $\overline{X} = X \cup \partial X$ is the \emph{horofunction compactification} of $X$, the closure of the image of the embedding of $X$, via distance functions, into its space of continuous functions  modulo constants.

If $\Pi$ is a random set of points following a Poisson law with a given intensity measure $\mu$ on $\mathcal{H}(X)$, one gets an ideal Poisson--Voronoi tessellations of $X$ with respect to the measure $\mu$. 
For each $t \in \R$, there is an embedding 
    \[ \iota_t(x) = d(\cdot,x) -t , \quad \iota_t \colon X \to C(X) .\]
Suitably renormalized limits of the form $(\iota_{t})_\ast (\lambda \mu)$ (for $t$ large and $\lambda$ small) converge to a measure on $\mathcal{H}(X)$. 
In \cite{FMW23poissonvoronoi} the authors consider the Riemannian distance function $d$ on a symmetric space $\X = G/K$ of higher rank and the Riemannian volume measure $\mu$.
Here the space of horofunctions is more traditionally called the space of \emph{Busemann functions} and forms an $\R$-bundle over the visual boundary $\partial_{vis}\X$. 
In higher rank, the visual boundary and the space of Busemann functions have infinitely many $G$-orbits. 
A key point in \cite{FMW23poissonvoronoi} is computing the precise $G$-orbit where this low intensity accumulates.  

\medskip

An irreducible Riemannian symmetric space has a unique (up to scaling) invariant Riemannian distance function, but in higher rank it carries a wealth of invariant Finsler metrics. 
Of particular interest are polyhedral Finsler metrics, which are determined completely by a finite sided polyhedron in a maximal flat of the symmetric space $\X = G/K$.
In this article, we generalize the construction of ideal Poisson--Voronoi tessellations from \cite{FMW23poissonvoronoi} for polyhedral Finsler metrics on Riemannian symmetric spaces.
The geometric features of these random tessellations depend heavily on the precise Finsler metric. 
In particular, on every symmetric space $\X$ of higher rank, there are polyhedral Finsler metrics whose ideal Poisson--Voronoi tessellation exhibits the unbounded intersection phenomenon of the ideal Riemannian Poisson--Voronoi tessellation, and there are polyhedral Finsler metrics for which the cells of the ideal Poisson--Voronoi tessellations have geometric properties similar to the cells of the ideal Poisson--Voronoi tessellations of hyperbolic space $\mathbb{H}^n$. 
For the symmetric space of positive definite symmetric matrices we further define the \emph{ideal Selberg--Poisson--Voronoi tessellation} and describe its remarkable geometric properties. 

\medskip

We now describe the results of the article in more detail, describing first the space of horofunctions, and then the construction of measures on the space of horofunction with respect to which the ideal Poisson--Voronoi tessellations are constructed. We discuss the geometric properties of the ideal Poisson--Voronoi tessellations, as well as the special properties of the ideal Selberg--Poisson--Voronoi tessellation of the space of positive definite symmetric matrices.

\subsection{Polyhedral Finsler norms and their spaces of horofunctions}
An invariant Finsler metric on a symmetric space $\X = G/K$ is uniquely determined by a Weyl group invariant convex set in a Cartan subspace $\mathfrak{a}$ of $\X$ which contains the origin in its interior. 
We call an invariant Finsler metric \emph{polyhedral} if this convex set is a finite sided polyhedron $B^N$ and denote the corresponding distance function on $\X$ by $d_N$. 
As described above we can use $d_N$ to embed $\X$ into the space of continuous functions  $C(\X)$, defining for $t\in \R$ the map $\iota_t^N \colon \X \rightarrow C(\X)$ by $y \mapsto d_N( \cdot, y) - t$.
We emphasize that the Finsler metrics we consider are not necessarily symmetric, so the order of the arguments in $d_N$ matters.
The complement of $\bigcup_{t\in \R}\iota_t^N(X)$ in the closure 
of the space of Lipschitz functions on $X$ of the form $d_N(\cdot, y)-t$ is the space of horofunctions $\mathcal{H}^N(\X)$. The space of horofunctions  is not compact. To obtain a compact space, one usually considers the quotient space of $C(\X)/\mathbb{R}$ modulo the constant functions. The closure of $\iota_0^N(\X)$ in this quotient is the horofunction compactification $\overline{\X}^N$ of $X$, and the complement of $\iota_0^N(\X)$ is its horofunction boundary $\partial_N \X$. 
The space of horofunctions $\mathcal{H}^N(\X)$ is an $\R$-bundle over $\partial_N \X$. 

\medskip

The horofunction compactification of metric spaces with respect to Finsler norms have been studied in \cite{Walsh2007normed,Schilling2013Horofunction, JS2024polyhedral}. For symmetric spaces, it has been studied in \cite{FriedlandFreitas2004pMetrics,FriedlandFreitas2004Siegel} and then in more generality in \cite{ Schilling2013Horofunction, KapovichLeeb2018finsler,haettel2018horofunctioncompactificationssymmetricspaces}.
Building on this previous work, and inspired by the description of generalized Satake compactifications by Guichard-Kassel-Wienhard \cite{GuichardKasselWienhard}, our first result is a description of the space of horofunctions and its $G$-orbits, which we just summarize here. 
For a precise Lie theoretic description we refer to Proposition \ref{prop:Description space Horofunctions} and Proposition \ref{prop:Space horofunctions}.

\begin{mainthm}
    The space of horofunctions $\mathcal{H}^N(\X)$ is a disjoint union of finitely many $G$-orbits $\mathcal{O}_F$, which are indexed by the faces of the polyhedron $B^N \subset \mathfrak{a}$, considered up to the action of the Weyl group:
    $$\mathcal{H}^N(\X)=\bigcup_{[F]\in \text{Faces}(B^N)}\mathcal{O}_F. $$
    Each orbit fibers over a flag variety associated to $G$ with fiber being a (smaller) symmetric space of rank equal to the codimension of $F$. 
\end{mainthm}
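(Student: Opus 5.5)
The strategy is to reduce the horofunction limits to the flat, using the Cartan decomposition and the Iwasawa (horospherical) coordinates. First, $G$-invariance of $d_N$ gives $d_N(x,y)=\|\mu(x^{-1}y)\|_N$, where $\mu\colon G\to \overline{\mathfrak a}^+$ is the Cartan projection and $\|\cdot\|_N$ is the Weyl-invariant polyhedral (possibly asymmetric) norm with unit ball $B^N$. The dual ball $(B^N)^\ast$ is a polytope, so
\[
\|v\|_N=\max_{\lambda\in \mathrm{Vert}((B^N)^\ast)}\lambda(v),
\]
a finite maximum of linear functionals.

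Take a sequence $y_n=k_n\exp(a_n)\cdot o$ with $a_n\in\overline{\mathfrak a}^+$ and $t_n\in\R$ such that $d_N(\cdot,y_n)-t_n$ converges. After passing to subsequences, we may assume three things:
\begin{itemize}
\item $k_n\to k$;
\item the direction $a_n/|a_n|$ converges;
\item the vector $a_n$ decomposes as $a_n=b_n+c_n$, where $c_n$ stays bounded modulo a subspace and $b_n$ diverges "inside" the normal cone of a face $F$ of $B^N$.
\end{itemize}
Here the face $F$ is singled out as the face whose supporting functionals $\lambda$ asymptotically realise the maximum $\lambda(a_n-\cdot)$.

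The basic computation is then the following. For a fixed $x=g\cdot o$, the Cartan projection $\mu(x^{-1}y_n)$ differs from $a_n$ plus the Iwasawa projection $H(\cdot)$ of $x$ relative to the parabolic $P$ determined by the singular directions of $\lim a_n/|a_n|$. The difference tends to zero in the regular directions, by the standard estimate for $\mu(g\exp(a_n))-a_n$. Plugging this into the finite max-formula, the functionals not attaining the maximum for $a_n$ drop out. What remains is the limit formula
\[
h(x)=\max_{\lambda\in \mathrm{Vert}(F^\ast)}\bigl(\lambda(\text{component of }x\text{ in a Levi factor symmetric space})+\text{linear term in the horospherical coordinate}\bigr)-\text{const}.
\]
Here $F^\ast$ is the dual face. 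This expresses $h$ as an induced polyhedral distance function on the boundary symmetric space $\X_F=L_F/K_F$ of a Levi subgroup $L_F$, plus a Busemann-type term.

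The next step is to check that this family of limit functions is exactly a $G$-orbit $\mathcal O_F$. The group $G$ acts transitively on pairs $(P_F, z)$, with $P_F$ in the flag variety $G/P_F$ and $z\in\X_F$. Here $P_F$ is the parabolic whose Levi $L_F$ has Cartan subspace spanned by the directions orthogonal to the normal cone of $F$, of dimension $\operatorname{codim}F$ as required. Together with the $\R$-shift, this identifies $\mathcal O_F\to G/P_F$ with fibre $\X_F$ times $\R$. Conversely, every such function arises as a limit, by choosing $a_n$ to go to infinity in the relative interior of the normal cone of $F$ while a bounded part moves in $\X_F$. The Weyl group identifies faces $F$ and $wF$, which give the same orbit.

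The main obstacle is the distinctness and disjointness of the orbits. We must show that limits attached to non-Weyl-conjugate faces (or to different points of $G/P_F\times\X_F$) are different functions. To do this I would read off $F$ intrinsically from $h$: restrict $h$ to flats through a point and determine the set of directions along which $h$ decreases with slope $-1$ (the "gradient faces" of $h$). This set recovers $F$ up to $W$. I would also need to handle the uniformity of the Cartan projection estimate when $a_n$ is not regular, which requires the parabolic to be chosen from the face rather than from the Weyl chamber. I expect this to be the step requiring the most care. Finiteness is immediate, since $B^N$ has finitely many faces.
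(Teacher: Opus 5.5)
Your route differs from the paper's, and the difference is exactly where the proposal has a gap. The paper does not derive the classification from Cartan-projection asymptotics. It takes the reduction to the flat from Haettel--Schilling--Walsh--Wienhard (Theorem~\ref{thm:Comparaison Horofunction Flat and symmetric space HSWW}) and the flat classification from Ji--Schilling (Theorem~\ref{thm: Ji-Schilling}), so $\mathcal H^N(\X)=\bigcup_{[F]}\mathcal O_F$ is immediate. It then gets the fibration by computing the stabilizer $R_F$ of $h_F$. With $\Theta=\Delta\setminus\Theta^{\stable}_F$ one has $U_\Theta,K_\Theta,G^{\flipped}_F\subset R_F\subset P_\Theta$, the intersection $R_F\cap G^{\fixed}_F$ is maximal compact in $G^{\fixed}_F$, and $\Lie(R_F)\cap\mathfrak a=\Lin_0(F)$; hence $P_\Theta/R_F\cong\X_F$.

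Re-deriving the reduction by hand is legitimate, but your central estimate does not suffice, because $\lim a_n/|a_n|$ does not determine the parabolic. For $N_1$ on $\mathbb H^2\times\mathbb H^2$, take $a_n=(n-\sqrt n,\sqrt n)$ and $t_n=n$. The limit is $b_{\xi_1}(x_1)+b_{\xi_2}(x_2)\in\mathcal O_{F_\rho}$, which depends on a point of $\partial\mathbb H^2\times\partial\mathbb H^2$, although $\lim a_n/|a_n|=(1,0)$ is singular. An estimate relative to $P_{\{\alpha_1\}}$ controls only the first coordinate of $\mu(x^{-1}y_n)$, whereas the functional $(1,1)\in F_\rho^\vee$ also sees the second.

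Your remark that the parabolic must be chosen from the face is the right instinct, but it is the whole content of the step. Two things are needed.
\begin{enumerate}
\item[(a)] $\alpha(a_n)\to\infty$ for every $\alpha\in\Delta\setminus\Theta^{\stable}_F$. This follows in the flat: if $\alpha(a_n)$ stayed bounded, the flat limits along $a_n$ and $s_\alpha a_n$ would be translates of $h_F$ and $h_{s_\alpha F}$ at bounded distance. Lemma~\ref{lem:equivalence of translated horofunctions} then forces $s_\alpha F=F$.
\item[(b)] Write $x^{-1}k=\kappa\,\ell\,u$ along $G=KP_\Theta$, with $k=\lim k_n$. Conjugation by $e^{a_n}$ contracts $u$, so $\mu(x^{-1}y_n)=\mu(\ell e^{a_n})+o(1)$ is a Cartan projection in the Levi $L_\Theta$. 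It equals $a_n$ plus an Iwasawa projection only when $\Theta=\Delta$.
\end{enumerate}

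Inside the Levi you must then split $\Theta^{\stable}_F=\Theta^{\fixed}_F\sqcup\Theta^{\flipped}_F$, which your proposal never does.
\begin{itemize}
\item Fixed roots vanish on $\Lin(F)$, so $\alpha(a_n)$ converges for them, and the $G^{\fixed}_F$-factor contributes a genuine distance-like term. For $N_1$, the vertex $(1,0)$ and $a_n=(n,c)$, the limit is $b_\xi(x_1)+d(o',x_2)$, which is not linear in any Iwasawa coordinate.
\item Flipped coroots lie in $\Lin_0(F)$, on which every $\beta\in F^\vee$ vanishes, and $F$ is stable under the flipped reflections. So $G^{\flipped}_F$ and $e^{\Lin_0(F)}$ act trivially on the limit.
\end{itemize}

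This split is what identifies the fiber. It is not the symmetric space of a Levi subgroup of $P_F$: every Levi containing $A$ has all of $\mathfrak a$ as Cartan subspace. It is the quotient of that symmetric space by $G^{\flipped}_F$ and $e^{\Lin_0(F)}$, namely the symmetric space of $G^{\fixed}_F\times e^V$. Its rank is $\dim\mathfrak a-\dim F$, \emph{including} the $\R$-shifts. Your count, $\X_F$ of rank $\operatorname{codim}F$ times $\R$, overshoots by one. For $N_\infty$ on $\mathbb H^2\times\mathbb H^2$, the fiber of $\mathcal O_{F_\rho}$ over $\partial\mathbb H^2\times\partial\mathbb H^2$ is $\R^2$.

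Relatedly, the sequences landing in $\mathcal O_F$ are those for which $a_n-t_nf_0$ converges modulo $\Lin_0(F)$ for some $f_0\in F$. So $a_n$ diverges along the cone over $F$, not along the normal cone of $F$. The normal cone is a cone in $\mathfrak a^*$; transported to $\mathfrak a$, for the vertex of the $N_\infty$-square it is the whole quadrant, yet $a_n=(2n,n)$ gives an edge horofunction.

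By contrast, the disjointness you single out as the main obstacle is comparatively easy. Any two points at infinity lie in a common flat, and standard horofunctions of distinct faces have unbounded difference (Lemma~\ref{lem:equivalence of translated horofunctions}). This is how the paper shows that a horofunction determines its flag, and the same argument separates non-conjugate faces.
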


We work this out for 3 examples of polyhedral Finsler metrics on $\mathbb{H}^2\times \mathbb{H}^2$ in Section \ref{subsec:H2H2}, as well as for a particular metric $d_{\omega_1}$ (related to the spectral radius) on the symmetric space $\X=\SL(n,\R)/\SO(n,\R)$ associated to $G=\SL(n,\R)$ in Section \ref{subsec:Horofunctions for SLn}.

\medskip

\subsection{Measures on the space of horofunctions}

In order to define ideal Poisson--Voronoi tessellations we need a measure on the space of horofunctions $\mathcal{H}^n(\X)$. 
The measures which are of importance to us are measures which arise as a limit of a family of push-forward measures of the Riemannian volume measure by the embedding $\iota^N: \X \rightarrow C(\X)$. 
More precisely, as in \cite{FMW23poissonvoronoi}, we consider the shifted embeddings :
$$\iota_t^N: \X \rightarrow C(\X), \quad y \mapsto \iota_t^N(y) := d_N(\cdot,y) - t,$$
and the family of measures $\mu_t:= \frac{1}{m_t}{\iota_t}_{*}(\Leb)$, where $\Leb$ is the Riemannian volume measure on $\X$. The re-normalization factor takes into account the Riemannian volume of growing balls of the Finsler metric $d_N$ on $\X$. 

For the Riemannian metric, \cite{FMW23poissonvoronoi} prove that this family of measures has a unique limit measure $\mu$, which agrees with the unique $G$-invariant measure on the $G$-orbit of $\mathcal{H}^N(\X)$ corresponding to the weighted barycenter of a Weyl chamber. 

For polyhedral Finsler norms 
we prove the following (Theorem \ref{thm: Convergence measure}):

\begin{mainthm}
\label{thm_intro:measure}
The family of measures $\frac{1}{m_t}(\iota_t)_*\Leb$ converges to a measure $\mu_N$ which is the unique non-zero measure (up to scalar) that is $G$-invariant and supported on the orbit $\mathcal{O}_{F_\rho}\subset \mathcal{H}^N(\X)$.
 \end{mainthm}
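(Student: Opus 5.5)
We will prove the convergence by working in Cartan ($KAK$) coordinates. We reduce the problem to a Laplace-type asymptotic on the closed positive Weyl chamber $\mathfrak{a}^+$, and then identify the limit through its $G$-invariance.

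\textbf{Step 1: reduction to the chamber.} For $y=ke^{H}o$ with $H\in\mathfrak{a}^+$, the Riemannian volume is $dk\, J(H)\,dH$ with $J(H)=\prod_{\alpha>0}\sinh(\alpha(H))^{m_\alpha}$. This density is asymptotic to $c\,e^{2\rho(H)}$ when $H$ stays deep inside the chamber, and is bounded above by $e^{2\rho(H)}$ everywhere. The relevant ball for $\iota_t$ is $\{H: N(H)\le t\}$, where $N$ is the (possibly asymmetric) polyhedral norm, taken with the appropriate orientation. Since $N$ is piecewise linear, $\max_{N(H)\le t}2\rho(H)=t\cdot\max_{B^N}2\rho$. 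This maximum is attained exactly on a face $F_\rho$ of $B^N$, namely the face exposed by the linear functional $\rho$; this is the definition of $F_\rho$. The normalisation $m_t$ is the mass $\int_{N(H)\le t}J(H)\,dH$. A standard polyhedral Laplace computation gives $m_t\sim c\,t^{\dim F_\rho}e^{t\max 2\rho}$, up to the polynomial factor coming from the dimension of the maximizing face.

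\textbf{Step 2: tightness and concentration.} We show that $\frac1{m_t}(\iota_t)_*\Leb$ gives asymptotically full mass to points $y$ with $t-N(H)$ bounded and with $H/t$ close to $\mathrm{cone}(F_\rho)\cap$ the relative interior of $F_\rho$, far from the walls and from the boundary of the face. This follows from the exponential decay of $e^{2\rho}$ in the directions transverse to $F_\rho$.

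For such $y$ we test against compactly supported continuous functions on $C(\X)$ in the compact-open topology. We then use the explicit description of horofunctions (Proposition \ref{prop:Description space Horofunctions} and Proposition \ref{prop:Space horofunctions}). When $H$ diverges in the interior of $\mathrm{cone}(F_\rho)$ and $N(H)-t$ stays bounded, $d_N(\cdot,ke^Ho)-t$ converges uniformly on compacts to a horofunction in $\mathcal{O}_{F_\rho}$. That horofunction is determined by three data: the flag-variety point $kP_{F_\rho}$, the component of $H$ transverse to the face (which parametrises the fibre symmetric space, of rank equal to the codimension of $F_\rho$), and the additive constant. Uniformity of this convergence, with error terms controlled by the distance of $H/t$ to the boundary of the face, is the main technical obstacle. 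We expect to handle it using the fact that a polyhedral norm is locally linear near the relative interior of a face, so that $d_N(x,ke^Ho)$ is eventually given by a single linear functional composed with an Iwasawa-type projection, exactly as in the Riemannian argument of \cite{FMW23poissonvoronoi}.

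\textbf{Step 3: existence and identification of the limit.} Step 2 shows that the family is tight on $\mathcal{H}^N(\X)$ and that every subsequential limit is supported on $\mathcal{O}_{F_\rho}$. To check that a limit is non-zero and locally finite, we compute the mass of the set of horofunctions with bounded value at the basepoint $o$; it is a constant times the limit ratio of the Step 1 integrals, hence finite and positive.

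Invariance follows because $g_*\iota_t=\iota_t\circ g$ up to precomposition and $\Leb$ is $G$-invariant, so each $\mu_t$ is $G$-invariant and any weak limit is too. The orbit $\mathcal{O}_{F_\rho}=G/L$ has a stabilizer $L$ that is unimodular up to the character given by the $\R$-shift. We show that a $G$-invariant Radon measure on it is unique up to scalar by an explicit fibration argument: such a measure is $K$-invariant on the flag variety, invariant on the fibre symmetric space, and has density $e^{c s}$ in the $\R$-direction forced by the invariance. Uniqueness of the limit then gives convergence of the whole family rather than just subsequences.
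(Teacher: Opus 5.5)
\textbf{Overall verdict.} The global architecture of your argument is sound. The asymptotics of $m_t$ give $m_{t+R}/m_t\to e^{2\rho_{\max}R}$, hence subsequential vague limits exist; these limits are $G$-invariant; and it remains to prove support on $\mathcal{O}_{F_\rho}$ and use uniqueness. However, the mechanism you propose for Step 2, which you correctly single out as the crux, only works in the Riemannian-like case. It fails for typical polyhedral norms, in two ways.

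\textbf{First gap: mass near the walls.} The mass does \emph{not} concentrate far from the walls as soon as $F_\rho$ lies in a wall, i.e.\ $\Theta^{\fixed}_{F_\rho}\neq\emptyset$. This is the usual situation, e.g.\ $F_\rho$ a vertex on a fundamental coweight ray, which is the generic case for $N_\omega$. Write $H=t(u+v_F)+v$ with $u\in\Lin_0(F_\rho)$ and $v\in\Lin_0(F_\rho)^\perp$. The very exponential decay of $e^{2\rho}$ that you invoke keeps $v$ bounded, so $\alpha(H)=\alpha(v)$ stays bounded for $\alpha\in\Theta^{\fixed}_{F_\rho}$. There the density is $e^{2\rho(v)}\prod_{\alpha\in\Span(\Theta^{\fixed}_{F_\rho})}(1-e^{-2\alpha(v)})^{m_\alpha}$, which is not asymptotic to a constant times $e^{2\rho}$. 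This only changes the constant in $m_t$, but it changes the picture of the limit.

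\textbf{Second gap: the norm is not locally linear.} Near the relative interior of $F_\rho$ the norm equals $\max_{\beta\in\ver(F_\rho^\vee)}\beta$. This is linear only if $F_\rho^\vee$ is a point, i.e.\ $F_\rho$ is a facet. Otherwise $d_N(\cdot,ke^Ho)-t$ does not become a single linear functional composed with an Iwasawa projection. Instead it converges, on the flat, to the translate $x\mapsto h_{F_\rho}(x-v)=\max_{\beta}\bigl(\beta(v)-\beta(x)\bigr)$, a maximum of Busemann functions, in which the bounded transverse component $v$ is precisely the fibre coordinate.

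\textbf{A concrete example.} On $\mathbb{H}^2\times\mathbb{H}^2$ take $N(x,y)=a|x|+b|y|$ with $a>b>0$. Then $F_\rho=\{(0,1/b)\}\subset\ker\alpha_1$, and the limit horofunctions are $a\,d(o_1,\cdot)+b\,b_\beta(\cdot)$ with $b_\beta$ a Busemann function. The distance $x$ from $o_1$ to the basepoint has limiting density proportional to $\sinh(x)e^{-(a/b)x}$, which charges a neighbourhood of the wall $x=0$. The argument of \cite{FMW23poissonvoronoi} relies on $\rho$ being a regular direction and on the limit being a single Busemann function, so it does not transfer.

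\textbf{What is needed instead.} You need the bookkeeping the paper does. Keeping the wall factor above, prove by dominated convergence that on each cylinder $C^o_{r,R}$ the mass concentrates on:
\begin{itemize}
\item $u+v_F$ in compact subsets of $F_\rho$;
\item $v$ in compact subsets of $\Lin_0(F_\rho)^\perp\cap H_{F_\rho}$, where $H_{F_\rho}=\{\alpha\ge 0 \text{ for } \alpha\in\Theta^{\fixed}_{F_\rho}\}$.
\end{itemize}
Then show that, modulo $K$, $\iota_t(e^{t(u+v_F)+v}o)$ converges uniformly on such sets to the class of $h_{F_\rho}(\cdot-v)$ in $(\mathfrak{a}/\Lin_0(F_\rho))/W^{\fixed}_{F_\rho}\cong\mathcal{O}_{F_\rho}/K$.

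\textbf{How your route then compares with the paper.} With Step 2 repaired in this way, your route is a legitimate alternative. The paper computes the limit of the $K$-projected measures explicitly, via $[v,s]\mapsto[v-sv_F]$, a restriction to the set $S$, and a mass comparison showing that no mass is lost. You would instead only need tightness inside $\mathcal{O}_{F_\rho}$, plus the standard uniqueness of an invariant Radon measure on $G/R_{F_\rho}$; your fibration sketch and unimodularity remark are unnecessary. You should, however, state explicitly that the limiting cylinder masses $e^{2\rho_{\max}R}-e^{2\rho_{\max}r}$ pin down the scalar, the cylinder boundaries being null for the invariant measure. Uniqueness up to scalar alone does not give convergence of the whole family.
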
 
Here $F_\rho$ is the face of the polyhedron $B^N \subset \mathfrak{a}$ on which the \emph{restricted Weyl vector} 
\begin{equation}\label{eqn: weyl vector}
    \rho \coloneqq \frac{1}{2}\sum_{\alpha \in \Sigma^+} \alpha \dim \mathfrak{g}_\alpha,
\end{equation}
the half sum of all positive roots $\Sigma^+$ in $\mathfrak{a}^*$, is maximized. 
The restricted Weyl vector is the same weighted barycenter mentioned above.
This is a consequence of the fact that the directions in which the points can be seen in a Poisson point process of low intensity are close to the direction of maximal volume growth.

Note that for invariant Finsler metrics, using the Lie theoretic description of the $G$-orbits in the space of horofunctions, one can classify which $G$-orbits $\mathcal{O}_F \subset \mathcal{H}^N(X)$ admit a $G$-invariant measure (see Proposition \ref{prop: Existence measure}), and there are measures which do not arise as limits of the construction in Theorem~\ref{thm_intro:measure}.

\subsection{The proper-on-bisector property}\label{subsec:properonbisector}

A key issue in understanding the geometric properties of the ideal Poisson--Voronoi tessellations of polyhedral Finsler metrics is to understand the bisectors of two generic horofunctions $h_1, h_2$ in an orbit $\mathcal{O}_F\subset \mathcal{H}^N(X)$:
$$ \Bis(h_1,h_2) \coloneqq \left\{ x \in X\, |\,h_1(x) = h_2(x) \right\}. $$ 
The faces of the cells of the ideal Poisson--Voronoi tessellations are subsets of these bisectors, and so the shape of the bisectors and the behavior of the horofunctions on $\Bis(h_1,h_2)$ governs their geometric properties. 

One difficulty in higher rank is that bisectors might not have nice properties. For example, the bisectors for horofunctions restricted to the Cartan subspace $\mathfrak{a}$ can have non-empty interior. However, for the symmetric space we prove the following (Theorem \ref{thm:Empty Interior}):
\begin{mainthm}
For any polyhedral Finsler norm and any two horofunctions $h_1, h_2\in \mathcal{O}_F\subset \mathcal{H}^N(X)$ in generic position, 
the bisector $\Bis(h_1,h_2) \subset X$ has empty interior. 
\end{mainthm}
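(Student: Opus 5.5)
The plan is to use the Lie theoretic description of horofunctions from Proposition \ref{prop:Description space Horofunctions} and Proposition \ref{prop:Space horofunctions}, together with real analyticity on open Weyl chambers. A horofunction $h\in\mathcal{O}_F$ is determined by a point of a flag variety $G/P_F$ and a point of the fiber symmetric space. It can be written as
\[
h(g\cdot o)=\max_{\lambda\in \mathrm{Ext}(F)} \bigl\langle \lambda, \beta_{\xi}(g\cdot o)\bigr\rangle + (\text{a term depending only on the fiber projection}),
\]
that is, a maximum of finitely many functions. Each function in the maximum is a linear form in the appropriate vector-valued Busemann cocycle $\beta$ (an Iwasawa projection) composed with a Finsler distance on the smaller fiber symmetric space. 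The key structural fact I will extract is the following. On the open dense set where the relevant Cartan/Iwasawa projections lie in open Weyl chambers of the fiber, and where the maximum is attained by a single linear form, $h$ is real analytic. Moreover $\X$ is covered, up to a closed nowhere dense set, by finitely many open pieces on which $h$ coincides with a real analytic function $h^{(i)}$ defined on all of $\X$, since Iwasawa projections are analytic on the whole of $G$.

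Suppose now that $\Bis(h_1,h_2)$ contains an open set $U$. Shrinking $U$, we may assume both $h_1$ and $h_2$ are given there by single analytic expressions $h_1^{(i)}$ and $h_2^{(j)}$. By analyticity, $h_1^{(i)}-h_2^{(j)}$ then vanishes identically on $\X$. Both expressions are built from Iwasawa cocycles relative to flags $\xi_1,\xi_2\in G/P_F$. The next step is to show that such an identity forces $\xi_1=\xi_2$, or more precisely forces $(\xi_1,\xi_2)$ to lie in a proper algebraic subvariety of $(G/P_F)^2$.

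For this I will let $x$ go to infinity along geodesic rays in a flat through a point that is in general position with respect to $\xi_1$. Along such a ray, $h_1^{(i)}$ grows at a linear rate determined by the Weyl chamber position of the ray relative to $\xi_1$ (a Bruhat cell computation), and likewise for $h_2^{(j)}$ relative to $\xi_2$. If $\xi_1$ and $\xi_2$ are transverse, which is the generic position and holds for an open dense set of pairs, we can pick a direction $X$ for which the rate for $\xi_1$ is $\lambda(X)$ while the rate for $\xi_2$ is $\lambda'(w_0X)$. For a well-chosen $X$, the inequality $\lambda(X)\neq\lambda'(w_0X)$ follows because $B^N$ contains $0$ in its interior, so a linear form maximized on $F$ cannot equal a form maximized on $w_0F$ along all directions. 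This contradicts the identical vanishing. A subtlety is that the fiber symmetric space may have rank at least one. In that case the relevant rays must be taken within the parabolic's Levi directions as well, and the same asymptotic argument is applied there with the fiber components of $h_1$ and $h_2$.

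The main obstacle I expect is making the decomposition into finitely many analytic pieces precise. The difficulty is that the maximum over extreme points of $F$ and the fiber Finsler distance are only piecewise analytic, and the analytic continuations must be globally defined on $\X$ so that identity on $U$ propagates. I would handle this by writing each piece as $\langle\lambda,\cdot\rangle$ composed with a $\mathfrak{a}$-valued Iwasawa cocycle $G\times G/P_{\min}\to\mathfrak{a}$. This cocycle is analytic on all of $G$ once we lift the fiber point to a full flag, which gives global analytic extensions.
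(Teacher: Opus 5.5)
You have a genuine gap, and it is exactly where the theorem is hard.

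\textbf{The analytic pieces.} Your structural claim is that, off a nowhere dense set, $h$ agrees locally with finitely many real-analytic functions defined on all of $\X$, each of the form $\langle\lambda,\beta_\xi\rangle$ for a fixed lifted full flag $\xi$. This fails as soon as $F$ lies in a wall of $\overline{\mathfrak{a}^+}$, i.e.\ $\Theta_F^{\fixed}\neq\emptyset$. That is the typical situation, e.g.\ $F_\rho$ a vertex on a coweight ray for $N_\omega$.
\begin{itemize}
\item In that case the sets $E_i^F=K_\Theta\cdot[b_{\omega_i}]$ of Proposition~\ref{prop:Explicit formula for Horofunctions} are positive dimensional, and the Busemann point realizing the maximum moves with $x$. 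An expression attached to one fixed flag therefore agrees with $h$ only where that flag is the maximizer, which is a set with empty interior.
\item The true local pieces are $x\mapsto\max_{p\in E_i^F}\Abs{\omega_i}b_{p,o}(x)$. They involve the Cartan projection of the fiber and are not analytic on all of $\X$. This already happens for $b_{\alpha,o}(p_1)+d(o',p_2)\in\mathcal{O}_{F_1}$ for $N_1$ on $\mathbb{H}^2\times\mathbb{H}^2$.
\item ``The vertex attaining the maximum'' is not even a locally constant label off a flat, since $W_{\Theta_F^{\fixed}}$ permutes the vertices of $F^\vee$.
\end{itemize}

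\textbf{The comparison step.} It has the matching flaw: a vertex of $F^\vee$ can also be maximized on $w_0F$. Take $\SL(3,\R)$ with $N_{\omega_1}$ and the vertex $F_2=\{\Diag(1,1,-2)\}$. Then $\overline{F_2^\vee}=[\lambda_1^*,\lambda_2^*]$ and $\overline{w_0F_2^\vee}=[\lambda_3^*,\lambda_2^*]$. Your argument from ``$0$ is interior to $B^N$'' only works when $w_0F=-F$.

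In this example, for suitable $h_1,h_2$, both restrict to $-\lambda_2^*$ on an open cone of the common flat. The fixed-flag pieces you would attach to $\lambda_2^*$ are the same Busemann function up to a constant, so your asymptotic comparison has nothing to work with. What actually separates $h_1$ from $h_2$ there is a second-order effect: their maximizing Busemann points move in two different circles $E^1,E^2\subset\partialvis\X$ that meet in a single point.

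\textbf{The paper's route.} This is the missing idea. Instead of comparing growth rates of analytic pieces, the paper compares the sets of maximizing points.
\begin{itemize}
\item If $h_1=h_2$ near $x_0$, a first-order argument along the ray on which one maximizing Busemann function increases at unit speed gives $M(h_1^{\ell_0},x)=M(h_2^{\ell_0},x)$ for $x$ near $x_0$.
\item By $K_\Theta$-homogeneity, the union of these sets over a neighbourhood is open both in $\bigcup_iE_i^1$ (orbits of $P_\Theta$) and in $\bigcup_jE_j^2$ (orbits of $w_0P_\Theta w_0^{-1}$).
\item By analyticity, a $P_\Theta$-orbit and a $w_0P_\Theta w_0^{-1}$-orbit would then coincide, which the Bruhat-order lemma excludes.
\end{itemize}

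\textbf{Repairing your route.} If you want to keep analytic continuation, you need two things.
\begin{itemize}
\item[(a)] Use the true pieces above. They are analytic off a closed set of codimension at least $2$ (the singular fiber Cartan projections), hence on a connected open dense set, so the identity theorem applies.
\item[(b)] Show that their restrictions to a common flat differ. Up to additive constants these are $\max_{w\in W_F}(-w\omega_i)$ and $\max_{w\in W_F}(-w_0w\omega_j)$, with $W_F=W_{\Theta_F^{\stable}}$, so you need $W_F\omega_i\neq w_0W_F\omega_j$. This follows from a dominance argument: the $W_F$-average of such an orbit would be a nonzero dominant form fixed by $w_0$, which cannot exist. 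It does not follow from $0$ being interior to $B^N$.
\end{itemize}
As written, your scheme only covers $\Theta_F^{\fixed}=\emptyset$, where each $E_i^F$ is a point and $h$ is a finite maximum of weighted Busemann functions. Even there, the non-coincidence of vertices needs the dominance argument.
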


The different behaviors of the ideal Poisson--Voronoi tessellations of different polyhedral Finsler metrics $d_N$ stem from different behavior of the horofunctions $h_1, h_2 \in \mathcal{O}_F$ on these bisectors, namely whether they are proper and bounded from below or not. 
If the horofunctions are proper and bounded from below on the bisector for any generic pair, we say that $\mathcal{O}_F$ has the \emph{proper-on-bisector property}. 

We give a characterization of which orbits 
$\mathcal{O}_F$ in the space of horofunctions have the proper-on-bisector property, depending purely on the shape of the dual face $F^\vee\subset \mathfrak{a}^*$ associated to $F$.
\begin{mainthm}\label{thm_intro:bounded_bisector}
    An orbit $\mathcal{O}_F$ has the proper-on-bisector property if and only if :
    \begin{equation}
    \Cone(F^\vee \cup w_0\cdot F^\vee) =\mathfrak{a}^* ,\end{equation} 
    Here $\Cone(S)$ denotes the smallest convex cone in $\mathfrak{a}^\ast$ containing the set $S$, and $w_0$ is the longest word of the Weyl group.

     The special orbit $\mathcal{O}_{F_\rho}$ has the proper-on-bisector property if and only if :
     $$w_0\cdot \Lin_0(F_\rho)\cap \Lin_0(F_\rho)=\lbrace 0\rbrace.$$ 
Here $\Lin_0(F_\rho)$ is the linear part of the smallest affine subspace containing $F_\rho$. 
In particular if $F_\rho$ is a vertex, then $\mathcal{O}_{F_\rho}$ has the proper-on-bisector property.
\end{mainthm}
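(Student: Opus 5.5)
Throughout I use the Lie-theoretic description of $\mathcal{O}_F$ from Proposition~\ref{prop:Description space Horofunctions}. A horofunction in $\mathcal{O}_F$ is based at a flag $P$ of the type determined by $F$. In horospherical coordinates $x = n\,a\,k$ for the parabolic $P$ (in a suitable form, possibly after Iwasawa decomposition), it reads
\[
h(x)=\max_{\phi\in F^\vee}\phi\bigl(\text{projection of }x\text{ to }\mathfrak a\bigr) + h_{\text{fiber}}(x) - t .
\]
Here $F^\vee\subset\mathfrak a^*$ is the dual face: the set of extreme functionals of the dual ball $(B^N)^\ast$ attaining the maximum on $F$. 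The fiber term is a horofunction of the smaller symmetric space of rank $\operatorname{codim} F$.

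Two horofunctions in generic position are based at transverse flags. Using the $G$-action, I reduce to $P$ and its opposite $w_0P$. These share a Levi subgroup, and so a maximal flat $\mathfrak a$ through the basepoint. On that flat, $h_1$ restricts to $a\mapsto \max_{\phi\in F^\vee}\phi(a)-t_1$, and $h_2$ restricts to $a\mapsto\max_{\psi\in w_0F^\vee}\psi(a)-t_2$.

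The core reduction is the following. Since $h_1$ and $h_2$ are $1$-Lipschitz and invariant under the respective unipotent radicals, I would show, via a retraction or Cartan-projection estimate (the $KAK$ or $NAK$ comparison inequality $h_i(x)\ge h_i(\pi(x))-C$ for a suitable projection $\pi$ onto the common Levi), that properness and lower boundedness of $h_i$ on $\Bis(h_1,h_2)$ reduce to the same question for the flat restrictions.

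On the flat, set $f_1(a)=\max_{F^\vee}\phi(a)$ and $f_2(a)=\max_{w_0F^\vee}\psi(a)$. The claim is then equivalent to the following: on $\{f_1-f_2=c\}$ the function $f_1$ is proper and bounded below, for every $c$. This holds if and only if $\max(f_1,f_2)$ grows linearly in every direction. Indeed, on the bisector $f_1=\tfrac12(f_1+f_2)+c/2$, and $f_1+f_2$ is positively homogeneous. Properness of $f_1+f_2$ means $f_1(v)+f_2(v)>0$ for all $v\ne0$, which in turn means $\max(f_1(v),f_2(v))>0$ on a bisector-like ray, i.e.\ $\max_{F^\vee\cup w_0F^\vee}\phi(v)>0$ for every $v\ne0$.

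By Farkas/Hahn--Banach, the positivity $\max_{\phi\in S}\phi(v)>0$ for all $v\neq0$ holds if and only if no nonzero $v$ satisfies $\phi(v)\le0$ for all $\phi\in S$, i.e.\ if and only if $\Cone(S)=\mathfrak a^*$. This gives the first criterion.

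For the converse direction, when $\Cone$ is not everything, take $v$ with $\phi(v)\le 0$ on both sets. Moving along $v$ then stays in a region where $h_1$ and $h_2$ are unbounded below or non-proper on the bisector, and genericity lets me fit an actual bisector point.

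For $F_\rho$, I use that $\rho$ lies in the interior of $\Cone(F_\rho^\vee)$ relative to its span, since $F_\rho^\vee$ is the dual face containing a multiple of $\rho$ in its relative interior. Moreover $w_0\rho=-\rho$. Hence $\Cone(F^\vee_\rho\cup w_0F^\vee_\rho)$ contains the line through $\rho$ and is the sum of two cones. That sum is all of $\mathfrak a^*$ exactly when its polar, namely $\{v:\phi(v)\le0 \text{ on both}\}$, is zero.

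A vector $v$ in this polar satisfies $\rho(v)=0$, and so $\phi(v)\le 0$ on $F^\vee_\rho$ with average zero. This forces $\phi(v)=0$ on all of $F^\vee_\rho$, so $v\in(\Span F_\rho^\vee)^\perp$, and likewise for the $w_0$ image. Dually, $(\Span F^\vee_\rho)^\perp$ is identified with $\Lin_0(F_\rho)$, the directions along $F_\rho$. This yields the condition $w_0\Lin_0(F_\rho)\cap\Lin_0(F_\rho)=\{0\}$. A vertex has $\Lin_0=\{0\}$, which gives the final assertion.

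The main obstacle is the comparison step: controlling the unipotent and fiber contributions so that behavior off the common flat is dominated by the flat restriction. For this I would use the explicit formula for horofunctions as maxima of $\phi\circ$(Iwasawa projection) from Proposition~\ref{prop:Space horofunctions}, together with the standard inequality between Iwasawa and Cartan projections.
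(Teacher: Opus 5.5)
\paragraph{The comparison step is not needed.}
The step you single out as the main obstacle plays no role in this theorem. In the body of the paper the proper-on-bisector property is \emph{defined} on the model flat: $h_F$ is proper and bounded below on $\{v\in\mathfrak{a} : h_F(v)=h_F(w_0 v)\}$. The proof of this theorem therefore takes place entirely in $\mathfrak{a}$.

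The link with bisectors in $\X$ is a separate result, Proposition~\ref{prop:MainProp of proper horofunctions}. It is not proved by projecting onto a flat, but via Riemannian convexity of the $h_i$, a limiting ray of the segments $[o,x_n]$, and the Bruhat-order inequality of Lemma~\ref{lem:Bruhat order}. Your sketch would not suffice for that result anyway:
\begin{itemize}
\item An inequality $h_i(x)\ge h_i(\pi(x))-C$ gives properness on $\Bis(h_1,h_2)$ only if $\pi$ is itself proper there, and that is exactly the missing transverse control. It already fails for nearest-point projection in $\mathbb{H}^2$: the bisector of two opposite Busemann functions is a geodesic orthogonal to their axis, so it projects to a single point.
\item Two flags of type $\Theta$ in generic position share a maximal flat, but not a Levi subgroup, when $\iota(\Theta)\neq\Theta$.
\end{itemize}

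\paragraph{The genuine gap: the converse direction on the flat.}
On the flat your skeleton is the paper's: reduce to the non-existence of $v\neq0$ with $h_F(v)\le0$ and $h_F(w_0v)\le0$, then apply duality. The gap is in the direction $\Cone(F^\vee\cup w_0F^\vee)\neq\mathfrak{a}^*\Rightarrow$ failure of the property. I use the paper's normalization $h_F=\max_{\beta\in F^\vee}(-\beta)$; yours differs by $v\mapsto-v$.

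By homogeneity, the property just says $h_F>0$ on $\{h_F=h_F\circ w_0\}\setminus\{0\}$. All the content is therefore in your ``i.e.'', the passage to $\max(h_F,h_F\circ w_0)>0$ for \emph{every} $v\neq0$. Taken literally on all of $\mathfrak{a}$, your intermediate criterion ``$f_1+f_2>0$ for all $v\neq 0$'' is even wrong. In rank one $f_1+f_2\equiv0$, while the bisector is $\{0\}$ and the property holds.

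So, starting from a nonzero $v$ with $h_F(v)\le0$ and $h_F(w_0v)\le0$, you must produce a nonzero bisector point where $h_F\le0$. The sentence ``genericity lets me fit an actual bisector point'' does not do this; on the flat the two functions are simply $h_F$ and $h_F\circ w_0$. The paper supplies two ways.
\begin{itemize}
\item Symmetrization. The set $V$ of such $v$ is the dual cone of $\Cone(F^\vee\cup w_0F^\vee)$, and it is $w_0$-invariant. Hence $w=v+w_0v\in V$ is $w_0$-fixed, so it lies on the bisector and satisfies $h_F(w)\le0$. If instead $v+w_0v=0$ for all $v\in V$, then $V=-V$ is a nonzero linear subspace on which $h_F=h_F\circ w_0\equiv0$.
\item Intermediate values (Proposition~\ref{prop: proper_on-bisector Plus}). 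For $f\in F\cap\overline{\mathfrak{a}^+}$ one has $h_F(f)=-1<h_F(w_0f)$. Apply the intermediate value theorem to $h_F-h_F\circ w_0$ along $[f,v]\cup[v,w_0f]$. Convexity of $h_F$ on the first segment and of $h_F\circ w_0$ on the second gives the required point.
\end{itemize}

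\paragraph{The remaining steps.}
Once this is supplied, your Farkas step and your $F_\rho$ argument are correct. The $F_\rho$ argument is a slightly cleaner dual version of the paper's. You identify the cone of such $v$ directly with $\Lin_0(F_\rho)\cap w_0\Lin_0(F_\rho)$, using $\rho(v)=0$ and the fact that $\rho/\rho_{\max}$ lies in the relative interior of $F_\rho^\vee$. The paper instead shows that $0$ is interior to the convex hull of $F_\rho^\vee\cup w_0F_\rho^\vee$ once the spans fill $\mathfrak{a}^*$.
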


See Proposition \ref{prop: proper bisectors} and Proposition \ref{prop:Caracterisation proper bisectors Frho}.
When the opposition involution is trivial, the norm $N$ is necessarily symmetric and the statement simplifies considerably, see Corollary \ref{cor: proper bisectors when opposition involution is trivial}.

\begin{maincor}
    When the opposition involution is trivial, the orbit $\mathcal{O}_{F_\rho}$ has the proper-on-bisector property if and only if $F_\rho$ is a vertex. 
\end{maincor}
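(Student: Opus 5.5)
The plan is to deduce the corollary directly from the second part of Theorem~\ref{thm_intro:bounded_bisector}. That theorem says $\mathcal{O}_{F_\rho}$ has the proper-on-bisector property if and only if $w_0\cdot \Lin_0(F_\rho)\cap \Lin_0(F_\rho)=\lbrace 0\rbrace$. So it suffices to understand how $w_0$ acts on $\Lin_0(F_\rho)$ when the opposition involution is trivial.

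The first step is to recall what triviality of the opposition involution means. The opposition involution is $\iota=-w_0$, viewed as an automorphism of $\mathfrak{a}$ preserving the positive Weyl chamber. It is trivial exactly when $w_0=-\Id$ on $\mathfrak{a}$, equivalently $-\Id$ belongs to the Weyl group. In that case, for every linear subspace $V\subset\mathfrak{a}$ we have $w_0\cdot V=-V=V$. (As noted in the text, this also forces $B^N$ to be symmetric, since $B^N$ is Weyl invariant; we do not need this fact.)

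The second step applies this to $V=\Lin_0(F_\rho)$. We get
\[
w_0\cdot \Lin_0(F_\rho)\cap \Lin_0(F_\rho)=\Lin_0(F_\rho),
\]
so the criterion of Theorem~\ref{thm_intro:bounded_bisector} holds if and only if $\Lin_0(F_\rho)=\lbrace 0\rbrace$.

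The last step identifies this condition. $\Lin_0(F_\rho)$ is the direction space of the affine hull of $F_\rho$, and its dimension equals $\dim F_\rho$. Hence $\Lin_0(F_\rho)=\lbrace 0\rbrace$ exactly when $F_\rho$ is a single point, that is, a vertex of $B^N$. This gives the corollary.

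There is no real obstacle. The only point needing care is the convention for $w_0$ acting on $\mathfrak{a}$ versus $\mathfrak{a}^*$, and whether $\Lin_0(F_\rho)$ lives in $\mathfrak{a}$. This does not matter: $w_0=-\Id$ holds on both spaces simultaneously, so $w_0$ preserves every linear subspace of either space.
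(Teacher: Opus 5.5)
Your proposal is correct and is the argument the paper intends. The paper states this corollary without a separate proof, as an immediate consequence of the criterion $w_0\cdot \Lin_0(F_\rho)\cap \Lin_0(F_\rho)=\{0\}$ from Proposition~\ref{prop:Caracterisation proper bisectors Frho}. With $w_0=-\Id$, that intersection is all of $\Lin_0(F_\rho)$, which vanishes exactly when $F_\rho$ is a vertex, just as you argue.
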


For simple Lie algebras the opposition involution is non-trivial precisely when the restricted root system has type $A_n (n \ge 2)$, $D_{2n+1} (n \ge 1)$, or $E_6$.  
When no simple factor of these types appears, the opposition involution is trivial.

\subsection{Ideal Poisson--Voronoi tessellations}
Now we have all the ingredients to construct the ideal Poisson--Voronoi tessellations for polyhedral Finsler metrics. 
Given a polyhedral Finsler metric $d_N$ we have the space of horofunctions $\mathcal{H}^N(\X)$, as well as the $G$-invariant measure $\mu_N$ on the orbit $\mathcal{O}_{F_\rho} \subset \mathcal{H}^N(\X)$. Thus we can consider a random set of points $\Pi$ following a Poisson law with respect to $\mu_N$ and consider the corresponding ideal Poisson--Voronoi tessellation, whose cells are given by 
$$
C_h = \{ x\in \X \, |\, \forall{h'\in \Pi},\, h(x) \leq h'(x)\}.$$

We prove the following (see Theorem \ref{thm:ProperBissectro dichotomy}) :
\begin{mainthm}
\label{thm_intro:PorperBissectrors consequences}
    Let $(C_p^{\Pi})_{p\in \Pi}$ be the ideal Poisson--Voronoi tessellation associated with a polyhedral Finsler metric on a symmetric space $\X$. We have the following dichotomy: 
    \begin{enumerate}
        \item 
    If $\mathcal{O}_{F_\rho}$ has the proper-on-bisector property, then almost surely the intersection of two cells is bounded. 

    \item If $\mathcal{O}_{F_\rho}$ does not have the proper-on-bisector property, then almost surely the intersection of two cells is unbounded. 
    \end{enumerate}
\end{mainthm}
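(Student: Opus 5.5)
We prove the dichotomy one case at a time. In both cases we use the Mecke formula for the Poisson process $\Pi$ with intensity $\mu_N$ on $\mathcal{O}_{F_\rho}$, which reduces statements about pairs of cells to statements about a fixed pair $h_1,h_2$ chosen from $\mu_N\otimes\mu_N$, with the process $\Pi\cup\{h_1,h_2\}$. Since $\mu_N$ is $G$-invariant and the orbit fibers over a flag variety with symmetric space fibers (Proposition \ref{prop:Space horofunctions}), $\mu_N\otimes\mu_N$-almost every pair is in generic position. So we may use Theorem \ref{thm:Empty Interior} and the definition of the proper-on-bisector property for such pairs.

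\textbf{Case (1).} Let $h_1,h_2$ be generic, so $h_1$ is proper and bounded below on $\Bis(h_1,h_2)$. The intersection $C_{h_1}\cap C_{h_2}$ lies in $\Bis(h_1,h_2)$, and on it $h_1=h_2\le h$ for every $h\in\Pi$. Take a point $x$ of the bisector with $h_1(x)\ge R$. The cell condition requires that no point of $\Pi$ lies in the set $U_x=\{h\in \mathcal{O}_{F_\rho}: h(x)<h_1(x)\}$. By invariance, $\mu_N(U_x)=\mu_N(\{h: h(o)<h_1(x)\})$ grows (exponentially) in $h_1(x)$, where $o$ is a basepoint. We must rule out an unbounded sequence of such $x$ surviving. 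We cover the part of $\Bis(h_1,h_2)$ where $h_1\in[R,R+1]$ by a controlled number of unit balls; this set is compact by properness. We then show that $\Pi$ hits each $U_x$ with probability close to one, uniformly over a unit ball, using Lipschitz bounds on horofunctions. A Borel--Cantelli argument over $R\in\N$ then gives boundedness almost surely. The main obstacle is the counting: the volume of $\{x\in\Bis: h_1(x)\le R\}$ must grow more slowly than $\exp(\mu_N$-mass$)$. I would try to get this from the explicit form of horofunctions on a flat, $h(x)=\max_{\lambda\in F^\vee}\lambda(\cdot)$ plus fiber terms, which gives polynomial or at most exponential volume growth against a doubly exponential void probability.

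\textbf{Case (2).} Here the proper-on-bisector property fails for generic pairs. By Proposition \ref{prop:Caracterisation proper bisectors Frho}, $w_0\Lin_0(F_\rho)\cap\Lin_0(F_\rho)\neq\{0\}$. This gives an unbounded ray $x_s$ in $\Bis(h_1,h_2)$ along which $h_1(x_s)=h_2(x_s)$ stays bounded above, or tends to $-\infty$. We take the ray in a flat joining the two generic flags, in a direction of the common subspace. Along $x_s$ the void region $U_{x_s}$ has $\mu_N$-mass bounded uniformly in $s$. We then show that with positive probability no point of $\Pi$ lies in $\bigcup_s U_{x_s}$. For this we need the union to have finite $\mu_N$-measure, or we take a limiting horofunction and observe that $\{h:\sup_s(h_1(x_s)-h(x_s))>0\}$ has finite measure. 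The failure of properness then gives an unbounded intersection with positive probability.

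To upgrade to almost surely, we use ergodicity of $G$ (or of a lattice or a flat direction) acting on the Poisson process. Alternatively, we let the pair vary: among infinitely many candidate rays, or by rescaling the ray's offset, independent chances occur, and a 0–1 law gives probability one. The delicate point is the exact finiteness of the measure of the union of void regions. I would check it in the model case $\SL(3,\R)$ with $d_{\omega_1}$, then reduce the general case to the Levi factor associated with $\Lin_0(F_\rho)\cap w_0\Lin_0(F_\rho)$.
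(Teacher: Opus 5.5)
Your Mecke reduction to a fixed generic pair $h_1,h_2$ matches the paper, and so does the Borel--Cantelli skeleton of Case (1). However, both cases have a real gap, and the probabilistic strategy of Case (2) provably fails.

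\textbf{Case (2).} Along the ray you build in a direction $v\in\Lin_0(F_\rho)\cap w_0\Lin_0(F_\rho)$, $h_1$ is constant, say equal to $c$. The alternative $h_1\to-\infty$ never occurs for $F_\rho$: since $\lambda\rho\in F_\rho^\vee$ and $w_0\rho=-\rho$, on the flat bisector one has $h_{F_\rho}(v)=h_{F_\rho}(w_0v)\ge\lambda|\rho(v)|\ge 0$. So each $U_{x_s}$ is a $G$-translate of $\{h: h(o)<c\}$, of mass $e^{2\rho_{\max}c}$. By decay of matrix coefficients (Lemma \ref{lem:Matrix Coefficient Vanishing}), $\mu_N(U_{x_s}\cap U_{x_{s'}})\to 0$ as $|s-s'|\to\infty$. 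Hence for a sufficiently sparse sequence $s_1<s_2<\cdots$ one gets $\mu_N(\bigcup_{k\le K}U_{x_{s_k}})\ge Ke^{2\rho_{\max}c}-1$. So $\bigcup_s U_{x_s}$ has infinite mass. Your set $\{h:\sup_s(h_1(x_s)-h(x_s))>0\}$ is the same set. Therefore $\Pi$ meets it almost surely: the ray, or even a tail of it, lies in $\mathcal{C}_{h_1}$ with probability zero, and there is nothing for a $0$--$1$ law to upgrade.

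The missing idea is to use mixing the other way and aim only for infinitely many points. Take a diverging sequence $x_i\in\Bis(h_1,h_2)$ with $h_1(x_i)<m$. Let $N_i$ be the event $\Pi\cap C^{x_i}_{-\infty,m}=\emptyset$, which forces $x_i\in\mathcal{C}_{h_1}$; then $\mathbb{P}(N_i)\ge\exp(-e^{2\rho_{\max}m})$. After passing to a subsequence, Lemma \ref{lem:Matrix Coefficient Vanishing} gives $\mathbb{P}(N_i\cap N_j)\le(1+2^{-i-j})\mathbb{P}(N_i)\mathbb{P}(N_j)$. The Kochen--Stone lemma then gives infinitely many $N_i$ almost surely; this is Lemma \ref{lem:3 conditions}(ii).

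Your ray also presupposes that $h_1,h_2$ restrict to translates of $h_{F_\rho}$ and $h_{F_\rho}\circ w_0$ on a common flat. In general one only gets $h_1=h_{F_\rho}$ and $h_2$ within bounded sup-distance of $h_{F_\rho}\circ w_0$, so $\Bis(h_1,h_2)$ need not stay near the flat bisector. The paper instead finds diverging bisector points with $h_1$ bounded by an intermediate value argument on $[rf,rv]\cup[rv,rw_0f]$.

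\textbf{Case (1).} The counting step you flag is the actual content, not a technicality. Properness of $h_1$ on $\Bis(h_1,h_2)$ gives no rate. Without a rate you cannot bound the number of unit balls covering $\Bis(h_1,h_2)\cap\{h_1\le R\}$ against a void probability of order $\exp(-c\,e^{2\rho_{\max}R})$. Moreover, in the body of the paper the proper-on-bisector property is defined only on the model flat, so even properness on the bisector in $\X$ must be proved.

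The paper's key input is Proposition \ref{prop:MainProp of proper horofunctions}: $h_1(x)\ge A\,d^N(o,x)-B$ on $\Bis(h_1,h_2)$. With it, Lemma \ref{lem:3 conditions}(i) is immediate. A $1$-separated, $2$-dense net of the bisector has at most $e^{Dn}$ points in the ball of radius $n$, just by exponential volume growth of $\X$. Each net point at distance about $n$ has void probability at most $\exp(-e^{A'n-B'})$.

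The linear bound cannot be read off the flat formula for horofunctions, because $x$, $h_1$, $h_2$ need not lie in a common flat. The paper argues by contradiction:
\begin{itemize}
\item Suppose $x_n\in\Bis(h_1,h_2)$ with $h_1(x_n)/d^N(o,x_n)\to\ell\le 0$.
\item Riemannian convexity of $h_1,h_2$, as maxima of Busemann functions (Proposition \ref{prop:Explicit formula for Horofunctions}), gives a limit ray $\gamma$ along which both are bounded above.
\item One has $h_i(\gamma(t))=t\,h_{F_\rho}(w_iu)+O(1)$, where $w_i$ is the relative position of a chamber through the flag of $h_i$ and a chamber containing $\gamma(\infty)$.
\item Proposition \ref{prop:Tits angle} gives $w_0w_1\le w_2$, and Lemma \ref{lem:Bruhat order} then gives $h_{F_\rho}(w_0w_1u)\le h_{F_\rho}(w_2u)\le 0$.
\item This contradicts Proposition \ref{prop: proper_on-bisector Plus}.
\end{itemize}
That Tits-angle and Bruhat-order argument is what your proposal is missing.
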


The characterization of the proper-on-bisector property in Theorem~\ref{thm_intro:bounded_bisector} allows one to easily determine if a polyhedral Finsler metric $d^N$ has the proper-on-bisector property, by analyzing the polyhedron $B^N$. 

\medskip

In $\mathbb{H}^2\times \mathbb{H}^2$ we describe the implications of Theorem \ref{thm_intro:PorperBissectrors consequences} for examples of polyhedral norms, see Section \ref{subsec:ExamplesV2}:
\begin{itemize}
    \item[-] For the distance $d_\infty((x_1,x_2),(y_1,y_2))=\max(d(x_1,y_1),d(x_2,y_2))$, the orbit $\mathcal{O}_{F_\rho}$ has the proper-on-bisector property.
    \item[-] For the distance $d_{a,b},(y_1,y_2))=ad(x_1,y_1)+bd(x_2,y_2))$ for all $a\neq b>0$, the orbit $\mathcal{O}_{F_\rho}$ has the proper-on-bisector property. Only in the case $a=b>0$ does $\mathcal{O}_{F_\rho}$ not have the proper-on-bisector property, which is the case of the $\ell^1$ norm studied by D'Achille \cite{DAchille2024ipvtbeyondhyperbolic}. 
\end{itemize}

In general, for every symmetric space $\X$ of higher rank both behaviors can occur.
\begin{itemize}
    \item[-] Let $\rho \in \mathfrak{a}^\ast$ be the half sum of positive roots as in (\ref{eqn: weyl vector}) and consider the metric $d_\rho$ associated to the norm $N(x)=\max_{w\in W}\rho(w\cdot x)$ on $\mathfrak{a}$, where $W$ is the Weyl group. 
    For this metric the orbit $\mathcal{O}_{F_\rho}$ never has the proper-on-bisector property. 
    The corresponding ideal
    Poisson--Voronoi tessellation for this metric coincides with the ideal Poisson--Voronoi tessellation for the Riemannian metric. 
    See Remark \ref{rem:drho}.
    \item[-] For ``generic" polyhedral metrics, $F_\rho$ is a vertex and $\mathcal{O}_{F_\rho}$ has the proper-on-bisector property. 
    This is the case for the Finsler metric associated to $N(x)=\max_{w\in W}\omega(w\cdot x)$ for almost all linear forms $\omega\in \mathfrak{a}^*$, see Proposition \ref{prop:ProperbissectorLinear forms}.

\end{itemize}

Further corollaries of Theorem \ref{thm_intro:PorperBissectrors consequences} are discussed in Section \ref{subsec:Corollaries}.

\medskip

For a symmetric space of rank one, the Riemannian metric is (up to scaling) the unique invariant Finsler metric. 
By Theorem~\ref{thm_intro:bounded_bisector} it has the proper-on-bisector property. 

Thus we obtain the following corollary.

\begin{maincor}
Let $\X$ be a symmetric space of rank one, and     $(C_p^{\Pi})_{p\in \Pi}$ the ideal Poisson--Voronoi tessellation. 
Then almost surely the side between any two cells is bounded.
\end{maincor}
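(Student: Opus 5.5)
The plan is to derive this corollary directly from Theorem~\ref{thm_intro:PorperBissectrors consequences}(1). For that, it suffices to show that in rank one the special orbit $\mathcal{O}_{F_\rho}$ has the proper-on-bisector property, and we will check this with the criterion of Theorem~\ref{thm_intro:bounded_bisector}.

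First we describe the rank one situation. Here $\mathfrak{a}\cong\R$ and the Weyl group is $W=\{\pm 1\}$. The polyhedron $B^N$ is a $W$-invariant convex set containing $0$ in its interior, so it is a symmetric segment $[-c,c]$. Hence every invariant Finsler norm is a multiple of the absolute value, and $d_N$ is a rescaling of the Riemannian metric. This rescaling changes neither the tessellation nor the measure class; the measure changes only by a constant, which amounts to a change of intensity.

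Next we locate $F_\rho$. The faces of $B^N$ are the two vertices $\pm c$ and the whole segment. The Weyl vector $\rho$ is a nonzero positive multiple of the positive root, so it is maximized on $B^N$ at exactly one point, the vertex on the positive side. Thus $F_\rho$ is a vertex. The last clause of Theorem~\ref{thm_intro:bounded_bisector} then says that $\mathcal{O}_{F_\rho}$ has the proper-on-bisector property.

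As a sanity check, we can verify the criterion directly. Since $\Lin_0(F_\rho)=\{0\}$, the condition $w_0\cdot\Lin_0(F_\rho)\cap\Lin_0(F_\rho)=\{0\}$ holds trivially. Alternatively, $F^\vee$ is a ray in $\mathfrak{a}^*\cong\R$, and $w_0=-1$ sends it to the opposite ray, so the cone they generate is all of $\mathfrak{a}^*$.

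Finally, we apply Theorem~\ref{thm_intro:PorperBissectrors consequences}(1). It gives that almost surely the intersection of any two cells is bounded, that is, the side between any two cells is bounded.

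There is no real obstacle here. The only point needing care is that the ideal tessellation in rank one is the one built from $\mu_N$ on $\mathcal{O}_{F_\rho}$ as in Theorem~\ref{thm_intro:measure}, so that the dichotomy theorem applies verbatim.
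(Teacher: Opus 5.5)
Your proposal is correct and follows the paper's route. In rank one $B^N$ is a symmetric segment and $\rho$ is maximized at a single vertex, so $\mathcal{O}_{F_\rho}$ has the proper-on-bisector property by Theorem~\ref{thm_intro:bounded_bisector} (Proposition~\ref{prop:Caracterisation proper bisectors Frho}), and Theorem~\ref{thm_intro:PorperBissectrors consequences}(1) then gives bounded sides; the only slip, in your sanity check, is that $F_\rho^\vee$ is the single point $\{1/c\}\subset\mathfrak{a}^*$, not a ray (the cone it generates is the ray), which does not affect the conclusion $\Cone(F_\rho^\vee\cup w_0 F_\rho^\vee)=\mathfrak{a}^*$.
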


These results show that, in some sense, generic polyhedral Finsler metrics on higher rank symmetric spaces behave more like rank one symmetric spaces. A similar phenomenon occurred in \cite{davaloriestenberg2026dirichletsanosov}, where the first two authors associated to certain polyhedral Finsler metrics $d_\omega$ a class of discrete subgroups which behave like convex cocompact isometry groups of rank one symmetric spaces, in that their Dirichlet domains are finite-sided in the same strong sense. 

\subsection{The ideal Selberg--Poisson--Voronoi tessellation}
A key property of real hyperbolic space $\mathbb{H}^n$ is that it admits a \emph{projective model} where Riemannian geodesics are projective segments and, more generally, complete totally geodesic $k$-submanifolds are intersections of $\mathbb{H}^n$ with projective $k$-planes. 
This special property fails for other symmetric spaces of non-compact type, but nevertheless some do admit projective models. 
In these models the projective geometry may be considered even though it might be incompatible with the Riemannian metric. 

\medskip

Such a method goes back to Selberg, who considered finite-sided ``Dirichlet'' domains for uniform lattices in $\SL(d,\R)$ associated to the invariant
    \[ \mathfrak{s}(x,y) \coloneqq \frac12 \log \frac1{d} \Tr(x^{-1}y) \]
on the space of positive definite symmetric matrices of determinant one. 
Bisectors of $\mathfrak{s}$ are projective hyperplanes. 
With this method, Selberg established that uniform lattices of $\SL(d,\R)$ are locally rigid when $d \ge 3$ \cite{Selberg1960discontinuous}, an early precursor to Margulis' superrigidity theorems. 
Kapovich recently proposed a more systematic investigation of Dirichlet--Selberg domains for discrete subgroups of $\SL(d,\R)$ \cite{Kapovich2023geometricalgorithms}, see \cite{Du2024selbergbisectors,Du2025selbergcompleteness,davaloriestenberg2026dirichletsanosov} for recent developments. 
The Selberg invariant is at bounded distance from the Finsler metric $d_{\iota\omega_1}$. 

\medskip

The space of horofunctions for the Selberg invariant decomposes as $n-1$ orbits $\mathcal{O}_{F_i}$, which can be identified with the space of positive $n\times n$ symmetric matrices of corank $i= 1,2,\cdots,n-1$. 
Each of these spaces admit an $\SL(n,\R)$-invariant measure. The orbit $\mathcal{O}_{F_\rho}$ where the ideal Poisson--Voronoi tessellation lives is $\mathcal{O}_{F_1}$. 

We prove the following theorem for the ideal Selberg--Poisson--Voronoi tessellation, highlighting the similarity with the ideal Poisson--Voronoi tessellation of $\mathbb{H}^n$ (Theorem \ref{thm:Selberg Main}).

\begin{mainthm}
   The ideal Selberg--Poisson--Voronoi tessellation satisfies the following properties: 
   \begin{enumerate}
     \item Almost surely, the side between any two cells is bounded. 
     
     \item Almost surely, each cell $c$ is convex, has locally finitely many sides and its closure meets $\partial \mathcal{X}$ in a single point $P_c$. 
     
     \item Moreover, for every projective line $L$  passing through $P_c$ and some  $X\in \mathcal{X}$, the intersection of $L$ with the cell $c$ contains a non-trivial segment $[X',P_c)$.
   \end{enumerate}
\end{mainthm}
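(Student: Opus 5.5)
We work in the projective model: $\mathcal{X}$ is the set of positive definite symmetric matrices of determinant one, seen inside the projective space of symmetric matrices, and horofunctions of $\mathfrak{s}$ are encoded by positive semidefinite matrices. Concretely, a horofunction in $\mathcal{O}_{F_1}$ corresponds, up to the $\R$-shift, to a rank-one matrix $vv^T$ acting by $h_{v}(x)=\frac12\log \Tr(x^{-1}vv^T)=\frac12\log(v^Tx^{-1}v)$ (up to an additive constant). The Poisson process $\Pi$ for $\mu_N$ on $\mathcal{O}_{F_1}$ is then a Poisson process of vectors $v\in\R^n$, with intensity invariant under $\SL(n,\R)$ and homogeneous in the scaling $v\mapsto\lambda v$; this is essentially Lebesgue measure on $\R^n$. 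The cell of $v$ is
\[ c_v=\{x\in\mathcal{X} : v^Tx^{-1}v\le w^Tx^{-1}w \ \ \forall w\in\Pi\}. \]
In the coordinate $y=x^{-1}$ (also a projective model) every constraint $\langle y, vv^T-ww^T\rangle\le 0$ is a linear half-space. Hence each cell is the intersection of $\mathcal{X}$ with a convex cone, and is convex in the projective model. Bisectors are projective hyperplanes, consistent with Selberg.

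\textbf{Step 1: bounded sides.} Since $\mathcal{O}_{F_\rho}=\mathcal{O}_{F_1}$ and $F_1$ is a vertex for $d_{\iota\omega_1}$, Theorem~\ref{thm_intro:bounded_bisector} gives the proper-on-bisector property. The Selberg invariant is at bounded distance from $d_{\iota\omega_1}$, so its horofunctions are at bounded distance from those of $d_{\iota\omega_1}$ on the same orbit. I therefore expect to transfer Theorem~\ref{thm_intro:PorperBissectrors consequences}(1) directly.

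It is cleaner to check the property directly. On the bisector $v^Tyv=w^Tyw$ with $v,w$ independent, the function $\log(v^Tyv)$ tends to $+\infty$ along any escaping sequence of $\mathcal{X}$. The reason is that $\det y=1$ forces $y$ to be large in some direction, and by a compactness argument on the projective boundary one needs both $v^Tyv$ and $w^Tyw$ to stay bounded, which forces a degenerate limit. Combining this with the usual Poisson argument (a.s.\ finitely many competitors near a compact region) yields (1).

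\textbf{Step 2: single limit point.} In the $y$ coordinate the cell $c_v$ is the cone $\{y>0 : v^Tyv\le w^Tyw\ \forall w\}$. At a boundary point $P$ of rank $r<n$, the ratio $v^TPv / w^TPw$ controls membership. I claim that almost surely the only boundary point of the closure of $c_v$ is $P_c=[v^\perp$-type matrix$]$, namely the projective class of a rank-$(n-1)$ matrix $P$ with $Pv=0$. This is where $v^TPv=0$ beats every other $w$.

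To show this, take a boundary point $P\ge 0$ in the closure. Then $v^TPv\le w^TPw$ for all $w\in\Pi$. Since $\Pi$ is a.s.\ dense in directions and contains vectors of arbitrarily small norm in every open cone (Lebesgue intensity), we get $w^TPw\to0$ along directions in the image of $P$. Hence $v^TPv=0$, i.e.\ $v\in\ker P$. Iterating this with points of $\Pi$ near $\ker P$ should pin $P$ down to the rank-one-kernel boundary point.

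The hard part is uniqueness: showing that $\ker P$ is exactly $\R v$ and that the image of $P$ is determined. I would try to use the scale-invariance of $\Pi$ together with the fact that, conditioned on $v\in\Pi$, the remaining points form a Poisson process. I would then compute the probability that a whole boundary face of dimension $>0$ lies in the closure, and show it is zero by a zero-one or ergodicity argument under the stabilizer of $v$.

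\textbf{Step 3: local finiteness and the segment property.} Local finiteness follows from Step 1, because only finitely many $w$ give active constraints on a compact set. For (3), let $L$ be the projective line through $X$ and $P_c$. Near $P_c$ the constraint $w^Tyw-v^Tyv$ evaluated at $P_c$ is positive for each $w\ne \pm v$. Uniformly over all $w$, the inequality $w^TP_cw\ge c\,|w|^2$ on the relevant set, combined with the finiteness of the set of $w$ of bounded norm, gives a neighbourhood of $P_c$ on $L$ inside $c$. This produces the segment $[X',P_c)$.

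The main obstacle is Step 2's identification of $P_c$ and the uniform control needed in Step 3, which require handling infinitely many small-norm $w$ near $v$'s direction.
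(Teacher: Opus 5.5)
\paragraph{The gap: the process is placed on the wrong orbit.}
Your proposal fails at its first step. In the paper's model, $\mathcal{O}_{F_\rho}=\mathcal{O}_{F_1}$ is identified with $\mathcal{O}_1$, the semipositive forms of \emph{corank} one (rank $n-1$, e.g.\ $Y_0=\Diag(0,1,\ldots,1)$), paired linearly with the points of $\mathcal{X}$. The rank-one forms $vv^T$ you use make up $\mathcal{O}_{n-1}\simeq\mathcal{O}_{F_{n-1}}$. That orbit corresponds to a facet of the unit ball of $N_{\iota\omega_1}$, not to the vertex $F_1$; the two orbits coincide only when $n=2$.

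Homogeneity detects this. The map $v\mapsto e^{\lambda}v$ shifts $h_v$ by $\lambda$ and multiplies Lebesgue measure on $\R^n$ by $e^{n\lambda}$. The limit measure of Theorem~\ref{thm: Convergence measure}, by contrast, is $2\rho_{\max}$-homogeneous with $2\rho_{\max}=n^2-n$. Equivalently, a volume-typical point of a large Selberg ball has $n-1$ comparable large eigenvalues and one tiny one, so its rescaled limits have rank $n-1$.

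For $n=3$ your process is the one illustrated in Figure~\ref{fig:CellSymSpace2}, whose sides are unbounded. Your key claims are false for $n\ge 3$:
\begin{enumerate}
\item Step 1. Take $v=e_1$, $w=e_2$ and $y=\Diag(a,a,a^{-2})$. Then $y$ lies on the bisector, leaves every compact set as $a\to 0$, and $v^Tyv=a\to 0$. The direction in which $\det y=1$ forces $y$ to be large can be orthogonal to both $v$ and $w$. This is consistent with the paper's criterion: $F_{n-1}^\vee$ is a single vertex, so $\Cone(F_{n-1}^\vee\cup w_0F_{n-1}^\vee)\neq\mathfrak{a}^*$ in rank at least $2$.
\item Step 2. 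The set $\{P\ge 0: Pv=0\}$ is a positive-dimensional face of $\partial\mathcal{X}$, so no zero-one argument can single out one boundary point.
\item Also in Step 2, a Poisson process with Lebesgue intensity has only finitely many points of norm $<1$. So ``arbitrarily small $w$ in every open cone'' is false as well.
\end{enumerate}
Your opening sentence of Step 1 (vertex, hence proper-on-bisector) is true, but it is about a different orbit from the one you then compute with.

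\paragraph{How the argument goes on the correct orbit.}
Once the process is on $\mathcal{O}_1$, the obstacles you flag disappear, and uniform control over infinitely many competitors is replaced by Borel--Cantelli arguments.

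For (1), first use the Mecke formula to reduce to two fixed opposite $Y_1,Y_2\in\mathcal{O}_1$. Their kernels are distinct lines, so $Y_1+Y_2$ is positive definite and $\Tr(XY_1)+\Tr(XY_2)\ge\epsilon\Tr(XO)$. Hence on the bisector $\mathfrak{s}(X,Y_1)\ge\mathfrak{s}(X,O)+\frac12\log(\epsilon/2)$ (Lemma~\ref{lem:Proper Horofunctions Selberg}). Lemma~\ref{lem:3 conditionsbis}(i), which uses $\mu_1(\mathfrak{C}^X_{-\infty,R})=C_1e^{(n^2-n)R}$ from Lemma~\ref{lem: Selberg Control volume cones}, then shows the side is bounded. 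Citing Theorem~\ref{thm_intro:PorperBissectrors consequences} through the bounded-distance comparison would not suffice by itself, because Selberg cells are not $d_{\iota\omega_1}$-cells; the probabilistic argument must be rerun with $\mathfrak{s}$.

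For (2), $\ker Y_0$ is a line, so $\{X\ge 0:\Tr(XY_0)=0\}$ is a single ray, and uniqueness of $P_c$ comes for free. The paper shows that boundary points of the closure lie on this ray by applying Lemma~\ref{lem:3 conditionsbis}(i) to the complement of $E_\epsilon=\{\Tr(XY_0)\le\epsilon\Tr(X)\}$, where $\mathfrak{s}(\cdot,Y_0)$ grows linearly. It shows that $P_c$ lies in the closure by Lemma~\ref{lem:3 conditionsbis}(iii) applied along $X_k=e^{-k/n}\Diag(e^k,1,\ldots,1)$.

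For (3), apply Lemma~\ref{lem:3 conditionsbis}(iii) to the projective cone $\bigcup_t g_tK$ over a compact $K$ with tip $P_c$. This cone has subexponential volume growth and $\mathfrak{s}(\cdot,Y_0)$ decreases linearly on it. So almost surely only a bounded part of it lies outside the convex cell, which yields the segment $[X',P_c)$. A countable cover of $\mathcal{X}$ by such $K$ handles every $X$.

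Finally, local finiteness of sides comes from the almost sure finiteness of $\{Y\in\Pi_0:\mathfrak{s}(X,Y)\le R\}$, not from bounded sides.
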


As a comparison with the ideal Selberg--Poisson--Voronoi tessellation illustrated in Figure \ref{fig:CellSymSpace}, which has bounded sides between pairs of cells, we illustrate in Figure \ref{fig:CellSymSpace2} the opposite phenomenon. We draw a cell for a Poisson--Voronoi tessellation associated to the $\SL(3,\R)$-invariant measure $\mu_2$ on the other orbit of horofunctions for the Selberg invariant $\mathcal{O}_{F_2}$. On this second picture all sides of the cell are unbounded. 

\medskip

In Figure \ref{fig:CellSymSpace} and \ref{fig:CellSymSpace2} we just draw the intersection of a cell with a 3-dimensional subspace of the 5-dimensional space $\SL(3,\R)/\SO(3,\R)$. 
This subspace is a \emph{parallel set}: it is a totally geodesic subspace isometric to $\mathbb{H}^2\times \R$ that appears as a cone in the projective model for $\SL(3,\R)/\SO(3,\R)$.

\begin{figure}[htbp!]
    \centering
    \includegraphics[scale=.4]{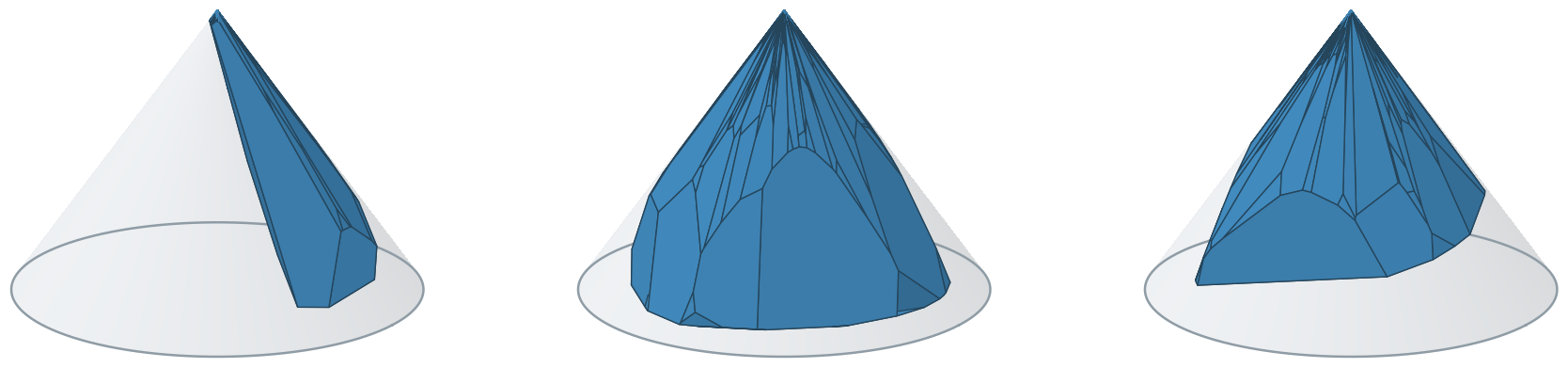}
    \caption{The intersection of a $3$-dimensional slice of $\SL(3,\R)/\SO(3,\R)$ with a cell $c$ for 3 realizations of the ideal Selberg--Poisson--Voronoi tessellation. This slice is a cone with circular base, and is chosen so that $P_c$ is the tip of the cone.}
    \label{fig:CellSymSpace}
\end{figure}

\begin{figure}[htbp!]
    \centering
    \includegraphics[scale=.4]{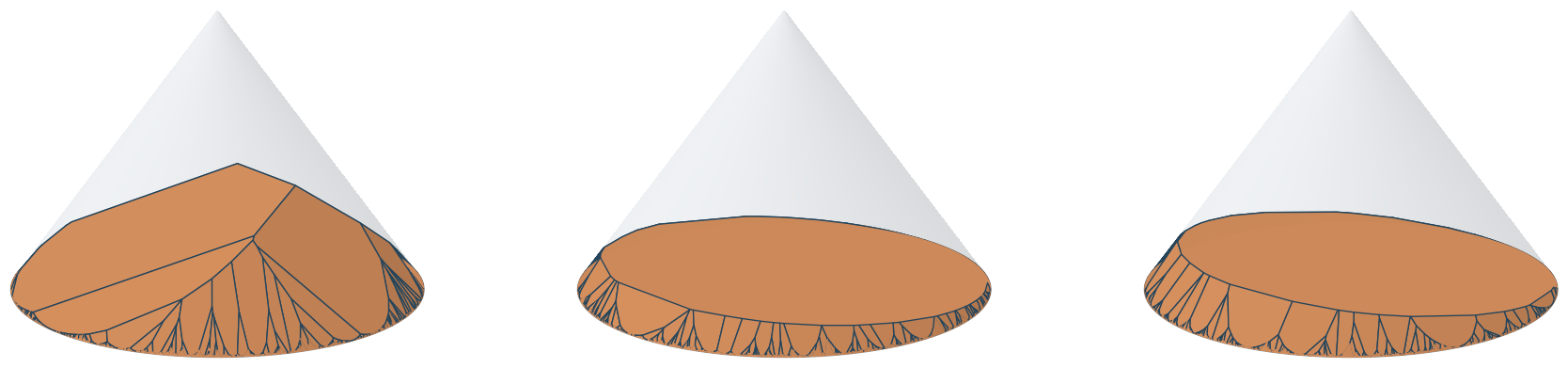}
    \caption{The intersection of a $3$-dimensional slice in $\SL(3,\R)/\SO(3,\R)$ with a cell for 3 realizations of a Poisson--Voronoi tessellation associated to the unique invariant measure $\mu_2$ supported on the orbit of horofunctions $\mathcal{O}_{F_2}$. The parallel set is chosen so that the base of the cone is contained in the closure of the cell.}
    \label{fig:CellSymSpace2}
\end{figure}

\subsection{Organization of the paper}

We begin in Section \ref{sec:prelim} by introducing the necessary background on Lie algebras and symmetric spaces,and on invariant Finsler metrics on symmetric spaces. We illustrate these notions in Section \ref{subsec:Symmetric spaces examples} for the symmetric spaces $(\mathbb{H}^2)^n$ as well as $\SL(n,\R)/\SO(n,\R)$.

In Section \ref{sec:HorofunctionsCompact} we introduce spaces of horofunctions. We then describe the decomposition into orbits of the space of horofunctions for polyhedral Finsler metrics. We illustrate these notions for the symmetric spaces $\mathbb{H}^2\times \mathbb{H}^2$ and $\SL(n,\R)/\SO(n,\R)$ in Section \ref{subsec:H2H2} and \ref{subsec:Horofunctions for SLn}.

In Section \ref{sec:bisectors} we discuss bisectors for horofunctions, in the setting of polyhedral Finsler metrics, and in particular study the proper-on-bisector property. 
We also show that bisectors in $\X$ have empty interior, which may fail for normed vector spaces.

In Section \ref{sec:Measures} we compute the limit of the renormalized pushforwards of the Lebesgue measure into the space of horofunctions.

In Section \ref{sec:IdealPoisson} we discuss Poisson point processes and prove our main results on the ideal Poisson--Voronoi tessellation. 
We illustrate the consequences in the example of  $\mathbb{H}^2\times \mathbb{H}^2$ in Section \ref{subsec:ExamplesV2}. 

Finally in Section \ref{sec:Selberg}, we recall the Selberg invariant on the symmetric space $\SL(n,\R)/\SO(n,\R)$ and prove our results on the ideal Selberg--Poisson--Voronoi tessellation.

\subsection*{Acknowledgments}
The second author thanks Amanda Wilkens for interesting discussion, and all three authors thank Miko{\l}aj Fr{\k{a}}czyk for interesting discussion. 

A.W. was supported by the European Research Council under ERC-Advanced Grant 101018839. She thanks the Hector Fellow Academy for support.  

C.D. was funded by the ERC Grant 101124349. 
Views and opinions expressed are however those of the authors only and do not
necessarily reflect those of the European Union or the European Research Council Executive Agency.
Neither the European Union nor the granting authority can be held responsible for them.

\subsection*{AI usage statement}
The illustrations in Figure \ref{fig:TessellationsH2}, Figure \ref{fig:CellSymSpace} and Figure \ref{fig:CellSymSpace2} in the introduction were created using python code that was written by AI, and reviewed by the authors. 
In the final stages, we used AI to assist in proof-reading the manuscript. 
Otherwise the mathematical arguments and text in this paper are made by the authors with no AI input.

\section{Preliminaries}
\label{sec:prelim}

In this paper we will consider symmetric spaces of non-compact type associated to semi-simple Lie groups. 
In this section we recall all the necessary background about the geometry of these spaces, and their flag manifolds. References for most result cited here can be found in \cite{Eberlein1996book}. 
In Section \ref{subsec:Symmetric spaces examples} we illustrate each of the notions on the concrete examples $G=\SL(n,\R)$ and $G=\SL(2,\R)^n$.

\subsection{Lie algebra background}

Let $G$ be a connected semi-simple Lie group with  finite center and let $\mathfrak{g}$ be its Lie algebra. 
It admits a Cartan decomposition $\mathfrak{g} = \mathfrak{k} \oplus \mathfrak{p}$ and associated Cartan involution $\theta$ acting by $-1$ on $\mathfrak{p}$ and $+1$ on $\mathfrak{k}$. 
Let $\mathfrak{a}$ be a maximal abelian subspace of $\mathfrak{p}$, which is a maximal real split toral subalgebra. 

The \emph{real rank}, or simply the rank, of $G$ is defined as the dimension of $\mathfrak{a}$. 
Such a subalgebra is unique up to the adjoint action of $G$. 
The Lie algebra $\mathfrak{g}$ admits a decomposition into \emph{root spaces} that is :

\begin{equation}\label{eq:Root Decomposition}
\mathfrak{g}=\mathfrak{a}\oplus \mathfrak{m}\oplus \bigoplus_{\alpha \in \Sigma}\mathfrak{g}_\alpha.
\end{equation}

Here $\Sigma$ is a finite set of non-zero linear forms on $\mathfrak{a}$ called \emph{roots}, and $\mathfrak{g}_\alpha$ is the simultaneous eigenspace $\lbrace x\in \mathfrak{g}\mid \forall \tau \in  \mathfrak{a},\, [\tau , x]=\alpha(\tau)x\rbrace$. 
The subalgebra $\mathfrak{m}$ is the orthogonal complement of $\mathfrak{a}$ for the Killing form of the zero eigenspace $\mathfrak{g}_0$. 

\medskip

The \emph{Weyl group} $W$ is the quotient $N_\mathfrak{a}/Z_\mathfrak{a}$ of the normalizer $N_\mathfrak{a}=\lbrace g\in G\mid\text{Ad}_g(\mathfrak{a})=\mathfrak{a} \rbrace$ by the centralizer $Z_\mathfrak{a}=\lbrace g\in G\mid {\text{Ad}_g}_{|\mathfrak{a}}=\text{Id} \rbrace$. 
This group is a finite group acting naturally on $\mathfrak{a}$ and $\mathfrak{a}^*$ and preserving the root system $\Sigma\subset \mathfrak{a}^*$.

\medskip

An element $\tau\in \mathfrak{a}$ is \emph{regular} if for all $\alpha\in \Sigma$, $\alpha(\tau)\neq 0$. 
The choice of such an element $\tau$ defines a decomposition $\Sigma=\Sigma^+\cup \Sigma^-$ where $\Sigma^+=\lbrace \alpha\mid \alpha(\tau)>0\rbrace$. 
The subset $\Sigma^+$ is called the \emph{positive roots}.

\medskip

We fix once and for all for $G$ the subalgebra $\mathfrak{a}$ and a choice of positive roots $\Sigma^+\subset \Sigma$. 
Let $\mathfrak{a}^+\subset \mathfrak{a}$ be \emph{standard Weyl chamber} defined as the set $\lbrace x\in \mathfrak{a}\mid \forall \alpha\in \Sigma^+, \alpha(x)>0\rbrace$. This standard Weyl chamber is a cone bounded by finitely many hyperplanes $\text{Ker}(\alpha)$ for $\alpha\in \Delta\subset \Sigma^+$ (we exclude from $\Delta$ the roots $\alpha$ such that $\alpha/2\in \Sigma)$. These roots are called \emph{simple roots}, and they form a basis of $\mathfrak{a}^*$. Weyl chambers are the translates of $\mathfrak{a}^+$ by the Weyl group, and a crucial property of these Weyl chambers is that their closures form a tiling of $\mathfrak{a}$, i.e. the translates of $\overline{\mathfrak{a}^+}$ have disjoint interior and cover all of $\mathfrak{a}$.

Given a non-empty set of simple roots $\Theta\subset \Delta$ we define the associated parabolic Lie subalgebra as:
$$\mathfrak{p}_\Theta=\mathfrak{a}\oplus \mathfrak{m}\bigoplus_{\alpha \in \Sigma^+ \cup -\text{span}(\Delta\setminus \Theta)}\mathfrak{g}_\alpha.$$

In this expression $\text{span}(\Delta\setminus \Theta)\subset \Sigma^+$ is the set of roots that are finite positive integral combinations of roots in $\Delta\setminus \Theta$. 
For $\tau \in \overline{\mathfrak{a}^+}$, if $\Theta = \{ \alpha \in \Delta : \alpha(\tau)>0 \}$, then $\Sigma^+ \cup -\text{span}(\Delta\setminus \Theta)$ is the set of roots nonnegative on $\tau$, and $\Sigma^+ \setminus \text{span}(\Delta \setminus \Theta)$ is the set of roots positive on $\tau$.

\medskip

\subsection{Geometry of symmetric spaces}

Each Cartan involution $\theta$ integrates to a unique Lie group automorphism of $G$, whose fixed point set is a maximal compact subgroup of $G$. 
The quotient space $\X = G/K$ is the \emph{symmetric space of non-compact type} associated to $G$. 
It carries a Riemannian metric induced by the Killing form $B$ of $\mathfrak{g}$ which is unique up to rescaling on each irreducible de Rham factor. 
With this metric $\X$ is indeed a non-positively curved Riemannian symmetric space without Euclidean factors. 

The differential of the orbit map at the identity naturally identifies the tangent space of $\X$ at $x = [K]$ with $\mathfrak{p}$. 
Under this identification, the Riemannian exponential map agrees with the restriction of the Lie theoretic exponential map to $\mathfrak{p}$. 

Let $A=\exp(\mathfrak{a})\subset G$ be a restricted Cartan subgroup, where $\mathfrak{a} \subset \mathfrak{p}$ is a maximal abelian subspace.
Then $A\cdot x$ is a \emph{maximal flat} in $\X$, i.e.\ a totally geodesic submanifold isometric to Euclidean space.

For $A^+=\exp(\overline{\mathfrak{a}^+})$ we have the Cartan decomposition:
Every $g\in G$ decomposes as $g=k_1\exp(\tau)k_2$ for some $k_1,k_2\in K$ and $\tau \in \overline{\mathfrak{a}^+}$, or in other words $G=K\overline{A^+}K$. Moreover the element $\tau$ is uniquely determined by $g$ and is called the \emph{Cartan projection} of $g$.

On the level of the Lie algebra, the Cartan decomposition allows one to define a map $\mu_o \colon \mathfrak{p}\to\overline{\mathfrak{a}^+}$ such that $\mu(v)$ is the unique element in the $\text{Ad}_K$-orbit of $v$ in $\overline{\mathfrak{a}^+}$.

\medskip

Geometrically this result means that one can find a translate of the standard maximal flat $k_1\cdot F$ through the points $o$ and $g\cdot o$, and using the action of the Weyl group one can assume $a\in \overline{\mathfrak{a}^+}$.
More generally for any $x,y\in \X$ one can find a translate $g\cdot F$ of the standard maximal flat $F$ such that $g\cdot x=o$ and $g\cdot y=\exp(a)\cdot o$ for some $a\in \overline{\mathfrak{a}^+}$.
This $a$ does not depend on any choice other than the choice of $x,y$ hence this defines the \emph{vector-valued distance} on $\X$:
$$\vec{d}:\X\times \X\to \overline{\mathfrak{a}^+}.$$
This is the unique $G$-invariant function such that $\vec{d}(o, e^a\cdot o)=a$ for all $a\in \mathfrak{a}$ in the corresponding Cartan subalgebra. This distance is not always symmetric, as in general: 
$$\vec{d}(x,y)=\iota\left(\vec{d}(y,x)\right)$$ 
Here $\iota$ is the \emph{opposition involution}. This involution is defined as  $\iota=-w_0$ where $w_0\in W$ is the \emph{longest element} of the Weyl group, i.e.\ the unique element exchanging $\mathfrak{a}^+$ and $-\mathfrak{a}^+$. 
The opposition involution is non-trivial precisely when the restricted root system has type $A_n (n\ge 2), D_{2n+1} (n\ge 1),$ or $E_6$. 

\medskip

The distance on $\X$ induced by the Riemannian metric can be written as $d(x,y)=\lVert \vec{d}(x,y)\rVert$, for some Euclidean norm on $\mathfrak{a}$.

\subsection{Flag manifolds}
\label{sec:Flag manifolds}

Associated to $G$ is also a natural finite family of compact homogeneous spaces called \emph{flag manifolds}.

When $G$ has real rank $1$ the only associated flag manifold is the visual boundary of its associated negatively curved symmetric space. 

In higher rank the visual boundary has infinitely many $G$ orbits, but only finitely many conjugacy classes of stabilizers arise. 
We emphasize that at the moment we are discussing the Riemannian metric of $G/K$; a similar discussion will apply to the horofunction compactifications relevant to the rest of the paper, where even richer behaviors of orbits and stabilizers will occur. 

\medskip

The \emph{visual boundary} $\partialvis\X$ is defined as the set of geodesic rays $\gamma:\R_{\geq 0}\to \X$ with speed one quotiented by an equivalence relation so that $[\gamma_1]=[\gamma_2]\in \partialvis \X$ if for some $C>0$ one has $\forall t>0$, $d(\gamma_1(t),\gamma_2(t))\leq C$. 
This boundary allows one to define a natural compactification $\X\cup \partialvis\X$ of $\X$ called the \emph{visual compactification}.

\medskip

Every point $p\in \partialvis\X$ is equal up to the action of $G$ to some $[\gamma_\tau]$ where $\gamma_\tau:t\mapsto \exp(t\tau)$ where $\tau\in \overline{\mathfrak{a}^+}$, $\lVert \tau\rVert=1$. 

\medskip

Let  $\tau\in \overline{\mathfrak{a}^+}$, $\lVert \tau\rVert=1$, and let $\Theta=\lbrace\alpha \in \Delta\mid \alpha(\tau)>0\rbrace$. Note here that every non-empty subset $\Theta\subset \Delta$ can be realized this way. We define $P_\Theta$ the stabilizer of $[\gamma_\tau]\in \partialvis \X$. This Lie subgroup depends only on $\Theta$, and is called a \emph{parabolic subgroup}.

We define the horospherical subgroup $U_\Theta\subset P_\Theta$ as the set of elements $u\in G$ such that $u\cdot \gamma_\tau$ and $\gamma_\tau$ are \emph{strongly asymptotic}, meaning that $d(\gamma_\tau(t),u\cdot\gamma_\tau(t))\to 0$ when $t\to \infty$.
The Lie algebra of $P_\Theta$ is the Lie subalgebra $\mathfrak{p}_\Theta$ defined previously.

\medskip

The \emph{flag manifold} $\mathcal{F}_\Theta$ associated to a non-empty subset $\Theta\subset \Delta$ is the quotient $G/P_\Theta$. 
The name comes from the fact that for $G=\SL(n,\R)$, the flag manifolds can be viewed as spaces of partial flags in $\R^n$, where the type of the flag is determined by $\Theta$. Geometrically these flag manifolds can be embedded (non-uniquely) inside the visual boundary of the symmetric space.

\begin{rem} \label{rem:Flag in boundary of a flat}
    The flag manifold $\mathcal{F}_\Theta$ can be embedded $G$-equivariantly into $\partialvis \X$ so that the base point $P_\Theta$ is mapped to $[\gamma_\tau]$. This embedding is not unique, and depends on the choice of a unit vector $\tau\in \overline{\mathfrak{a}^+}$ such that $\Theta=\lbrace\alpha \in \Delta\mid \alpha(\tau)>0\rbrace$. Given a flag $f\in \mathcal{F}_\Theta$ a a maximal flat $F\subset \X$ we say that $f$ is in the boundary of $F$ if for one of these embeddings $f$ is mapped inside $\partialvis F\subset \partialvis \X$. In fact this property does not depend on the choice of an embedding, \ie on the choice of $\tau$.
\end{rem}

Given a pair of non empty subsets $\Theta_1,\Theta_2\subset \Delta$, we call the set of \emph{relative positions} between $\mathcal{F}_{\Theta_1}$ and $\mathcal{F}_{\Theta_2}$ the quotient $P_{\Theta_1}\backslash G/P_{\Theta_2}$, or equivalently the set of pairs  $(f_1,f_2)\in \mathcal{F}_{\Theta_1}\times \mathcal{F}_{\Theta_2}$ up to the diagonal action of $G$. 

\medskip

It is a fact that any pair of flags $(f_1,f_2)\in \mathcal{F}_{\Theta_1}\times \mathcal{F}_{\Theta_2}$ lie in the boundary of a common flat $F$. This allows to show that the set of relative positions is finite and equal to $W_{\Theta_1}\backslash W/W_{\Theta_2}$ where $W$ is the Weyl group and $W_{\Theta_1}$, $W_{\Theta_2}$ its intersection with $P_{\Theta_1}$, $P_{\Theta_2}$. 

\medskip

We now recall the Bruhat order on the set of relative positions.
Given two relative positions $[w_1],[w_2]\in W_{\Theta_1}\backslash W/W_{\Theta_2}$ and their corresponding orbits $\mathcal{O}_1,\mathcal{O}_2\in \mathcal{F}_{\Theta_1} \times \mathcal{F}_{\Theta_2}$, we have $w_1\leq w_2$ if and only if $\mathcal{O}_1\subset \overline{\mathcal{O}_2}$.

\medskip

The opposition involution $\iota$ acts also as a permutation on the set of simple roots $\Delta$. 
The set of relative positions in $W_{\Theta}\backslash W/W_{\iota(\Theta)}$ admits a maximum $[w_0]$ where $w_0$ is the longest element of the Weyl group. 
A pair of flags $(f_1,f_2)\in \mathcal{F}_{\Theta} \times \mathcal{F}_{\iota(\Theta)}$ whose relative position is this special class is called \emph{transverse}. 
The space of transverse pairs of flags is open and dense in $\mathcal{F}_{\Theta} \times \mathcal{F}_{\iota(\Theta)}$.

\medskip

The Bruhat order satisfies the following more geometric property. 
Informally this proposition states that if we fix the relative position of $(f,g)$, then the relative position of $(f^\text{opp},g)$ is minimal when $(f^\text{opp},f,g)$ lie in a common flat.

\medskip

Let $w_0$ be the longest reduced word in the Weyl group, which satisfies $w_0^2=\Id$. 
A class $[w_1]\in W_{\Theta_1}\backslash W /W_{\Theta_2}$ defines a class $[w_0w_1]\in W_{\iota(\Theta_1)}\backslash W /W_{\Theta_2}$ since $W_{\iota(\Theta_1)}=w_0W_{\Theta_1}w_0$. 
This map reverses the Bruhat order (see also \cite[Lemma 3.9]{KapovichLeebPorti2018dynamicsonflagmanifolds}).

\begin{prop}
\label{prop:Tits angle}
Let $f\in \mathcal{F}_{\Theta_1}$ and $f^\text{opp}\in \mathcal{F}_{\iota(\Theta_1)}$ be a transverse pair of flags. Let $g\in \mathcal{F}_{\Theta_2}$. The relative positions $w_1$, $w_2$ corresponding to the pairs $(f,g)$ and $(f^\text{opp},g)$ satisfy:

$$ [w_0w_1]\leq [w_2]. $$

\end{prop}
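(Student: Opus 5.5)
We reduce the statement to an inequality in the Weyl group, using the Bruhat cell decomposition and the fact that any two flags lie in the boundary of a common flat.

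\textbf{Normalization.} Since $f$ and $f^{\text{opp}}$ are transverse, after acting by $G$ we may assume $f=P_{\Theta_1}$ and $f^{\text{opp}}=w_0P_{\iota(\Theta_1)}$. These are the two opposite flags in the boundary of the standard flat. Their common stabilizer is the Levi subgroup $L_{\Theta_1}=P_{\Theta_1}\cap w_0P_{\iota(\Theta_1)}w_0^{-1}$.

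\textbf{Computing the two relative positions.} Write $g=hP_{\Theta_2}$. By the Bruhat decomposition,
\[
G=\bigsqcup_{[w]} P_{\Theta_1}wP_{\Theta_2},
\]
and the relative position $[w_1]$ of $(f,g)$ is the class of the cell containing $h$. Using the Langlands decomposition $P_{\Theta_1}=U_{\Theta_1}L_{\Theta_1}$, write
\[
h=u\,\ell\, w_1\, p \qquad (u\in U_{\Theta_1},\ \ell\in L_{\Theta_1},\ p\in P_{\Theta_2}).
\]
The element $\ell$ fixes both $f$ and $f^{\text{opp}}$, so we may replace $g$ by $\ell^{-1}g$ without changing either relative position. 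We may therefore assume $h=u w_1$. The relative position of $(f^{\text{opp}},g)$ is the double coset class of $w_0^{-1}uw_1$ in $P_{\iota(\Theta_1)}\backslash G/P_{\Theta_2}$.

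\textbf{Degenerating to the flat.} Choose $a\in A$ with $\mathrm{Ad}_{a^{n}}$ contracting $U_{\Theta_1}$ to the identity. Conjugating by $a^n$ gives
\[
a^n u a^{-n}\to e .
\]
Now $a^n$ fixes $f$ and $f^{\text{opp}}$, and $a^n w_1P_{\Theta_2}=w_1P_{\Theta_2}$ since $a^n$ lies in the torus. Hence the relative position of $(f^{\text{opp}}, a^n g)$ is constant in $n$ and equals $[w_2]$. Meanwhile
\[
a^n g \to w_1P_{\Theta_2}\eqqcolon g_0 ,
\]
the flag in the boundary of the common flat. The orbit of $(f^{\text{opp}},g_0)$ therefore lies in the closure of the orbit of $(f^{\text{opp}},g)$. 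By the closure characterization of the Bruhat order, its relative position is $\le [w_2]$.

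\textbf{The flat configuration.} It remains to identify the relative position of $(f^{\text{opp}},g_0)$ as $[w_0w_1]$, up to the correct convention for the map $[w]\mapsto[w_0w]$. This position is the class of $w_0^{-1}w_1=w_0w_1$ in $W_{\iota(\Theta_1)}\backslash W/W_{\Theta_2}$, using $w_0^{-1}=w_0$. This class is well defined because $W_{\iota(\Theta_1)}=w_0W_{\Theta_1}w_0$. Combining with the previous step gives $[w_0w_1]\le[w_2]$.

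\textbf{Main obstacle.} The step needing care is the Bruhat normal form $h=u\ell w_1p$. One must verify that $P_{\Theta_1}w_1P_{\Theta_2}=U_{\Theta_1}L_{\Theta_1}w_1P_{\Theta_2}$, which follows from $P_{\Theta_1}=U_{\Theta_1}L_{\Theta_1}$, and that the contracting element $a$ can be taken in $A$ with $\alpha(\log a)<0$ for all roots $\alpha$ of $U_{\Theta_1}$. Beyond that, one only needs the orientation conventions, namely that $f^{\text{opp}}$ corresponds to $w_0$ and that the closure relation matches the stated Bruhat order.
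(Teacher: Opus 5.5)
Your argument is correct and is essentially the paper's proof: degenerate along a one-parameter subgroup of a Cartan subgroup to a configuration lying in a single flat, identify the limiting relative position as $[w_0w_1]$, and conclude from the orbit-closure description of the Bruhat order. The only difference is which flag moves. You fix the transverse pair $(f,f^{\text{opp}})$ in the standard flat and contract $g=uw_1P_{\Theta_2}$ onto the torus-fixed point $w_1P_{\Theta_2}$. The paper instead fixes $f$ and $g$ in a common flat and uses North--South dynamics to push $f^{\text{opp}}$ to the flag $f'$ opposite to $f$ in that flat.
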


\begin{proof}
Let $\tau \in \overline{\mathfrak{a}^+}$  be a unit vector such that $\lbrace \alpha \in \Delta \mid \alpha (\tau)>0\rbrace =\Theta$.
Up to acting by $G$, since $f$ and $f^\text{opp}$ are transverse we can assume that the stabilizer of $f$ in $G$ is equal to the stabilizer of the class of the ray $t \mapsto \exp(\tau t)o$ in $\partialvis \X$ and the stabilizer of $f^\text{opp}$ is equal to the stabilizer of the class of the ray $t \mapsto \exp(-\tau t)o$.

\medskip

Let $f'$ be the flag opposite to $f$ in a flat $F$ containing $f$ and $g$ in its boundary. The relative position of the pair $(f',g)$ is therefore $[w_0 w_1]$. We consider the $1$-parameter subgroup $(g_t)$ of $G$ consisting of transvections along a geodesic in the flat $F$ whose limit is $f'$. This group fixes $(f',f,g)$ but since $f$ and $f^\text{opp}$ are transverse then $g_t\cdot f^\text{opp}$ converges to $f'$. This fact is the so-called North-South dynamics of semi-simple elements in semisimple Lie groups. Hence since the pairs $g_t\cdot(f^\text{opp},g)$ converge to $(f',g)$, we have $ [w_0w_1]\leq [w_2]$.

\end{proof}

\subsection{Finsler metrics}

The Lie group $G$ acts transitively on the set of pairs of points with $x,y\in \X$ such that $\vec{d}(x,y)=\tau$ is fixed. 
However it does not act transitively on pairs of points such that $d(x,y)=1$, where we recall that $d$ is the distance associated to the Riemannian metric on $\X$. In fact there exist many more Finsler metrics that one can put on the symmetric space, inducing other $d$-invariant distances.

\medskip

Let $N:\mathfrak{a}\to \R_{\geq 0}$ be a norm, in the sense that for $\tau\in \mathfrak{a}$,  $N(\tau)=0$ if and only if $\tau=0$, $N(\lambda\tau)=\lambda N(\tau)$ for all $\lambda\in \R_{>0}$, and $N(\tau+\tau')\leq N(\tau)+N(\tau')$. 
We do not assume that this norm is \emph{symmetric} in the sense that $N(-\tau)$ is not necessarily equal to $N(\tau)$. 

\medskip

Now assume that $N$ is invariant by the action of the Weyl group. Note that if the opposition involution is trivial, then $N$ is necessarily symmetric. We define the following Finsler metric on the symmetric space. Recall that for $o\in \X$, $\mu_o:T_o\X\to \overline{\mathfrak{a}^+}$ is the Cartan projection

\begin{thm}[{\cite{Planche1995finsler}}]
\label{thm:Planche}
    Let $N$ be a Weyl group invariant norm of $\mathfrak{a}$. For every $o\in \X$ let $N_o$ be the norm on $T_o\X$ defined as:
    $$N_o(v)=N(\mu_o(v)).$$

These norms define a (possibly asymetric) Finsler metric on $\X$. The distance possibly asymetric associated distance function $d^N$ on $\X$ can be written as:
$$d^N(x,y)=N(\vec{d}(x,y)).$$
\end{thm}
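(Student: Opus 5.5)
We would prove Theorem~\ref{thm:Planche} in three steps: first that the norms $N_o$ form a well-defined $G$-invariant continuous Finsler structure, then that the induced length distance is bounded above by $N(\vec{d}(x,y))$, and finally the reverse inequality.

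\emph{Well-definedness.} For $v\in T_o\X\cong\mathfrak{p}$, $\mu_o(v)$ is the unique point of $\mathrm{Ad}_K v\cap\overline{\mathfrak{a}^+}$. Since $N$ is $W$-invariant, $N_o(v)=N(w\cdot\mathrm{Ad}_k v)$ for any $k\in K$ and $w\in W$ with $\mathrm{Ad}_k v\in\mathfrak{a}$. Hence $N_o$ is $\mathrm{Ad}_K$-invariant, and transporting by $G$ gives a $G$-invariant field of functions.

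\emph{Convexity of $N_o$.} We need $N_o$ to be a norm on $\mathfrak{p}$. Write $N(\tau)=\sup_{\lambda\in B^*}\lambda(\tau)$, where $B^*\subset\mathfrak{a}^*$ is the $W$-invariant dual convex set. Then
$$N_o(v)=\sup_{k\in K,\ \lambda\in B^*}\lambda\bigl(\pi_{\mathfrak{a}}(\mathrm{Ad}_k v)\bigr),$$
using Kostant's linear convexity theorem: the orthogonal projection to $\mathfrak{a}$ of $\mathrm{Ad}_K v$ is the convex hull of $W\cdot\mu_o(v)$, and a $W$-invariant convex function attains its maximum on this hull at a vertex. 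This writes $N_o$ as a supremum of linear forms, so it is sublinear; it vanishes only at $0$ because $N$ does. Continuity follows from continuity of $\mu_o$.

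\emph{Upper bound $d^N(x,y)\le N(\vec d(x,y))$.} By invariance, take $x=o$ and $y=e^{a}o$ with $a\in\overline{\mathfrak{a}^+}$. The geodesic $t\mapsto e^{ta}o$ has tangent vector at each time the translate of $a$, of Finsler length $N(a)$. Its length is therefore $N(a)=N(\vec d(x,y))$.

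\emph{Lower bound $d^N(x,y)\ge N(\vec d(x,y))$.} This is the main obstacle: we must show every piecewise $C^1$ path from $x$ to $y$ has Finsler length at least $N(\vec d(x,y))$. Using the dual description, it suffices to show that for each $\lambda\in B^*$ the function $f(z)=\lambda(\vec d(x,z))$ satisfies $|df_z(v)|\le N_z(v)$ wherever it is differentiable, i.e.\ that $f$ is $1$-Lipschitz for the Finsler structure. The length of a path is then at least $f(y)-f(x)$, and taking the supremum over $\lambda$ gives the claim.

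For the derivative we would use the Iwasawa/horospherical picture. The first variation of $\vec d(x,\cdot)$ at $z$ in direction $v$ is the $\mathfrak{a}$-projection of $v$ after conjugating so that a flat through $x$ and $z$ is standard. By Kostant again, this projection lies in the convex hull of $W\mu_z(v)$, so its $\lambda$-value is at most $N(\mu_z(v))$. Equivalently, one can argue with the triangle inequality $\vec d(x,z')-\vec d(x,z)\in\mathrm{conv}\bigl(W\vec d(z,z')\bigr)$ from Kapovich--Leeb--Millson; applied to nearby $z'$, this gives $f(z')-f(z)\le N(\vec d(z,z'))$. Summing along fine subdivisions of the path and passing to the limit yields the lower bound.

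So the key input in both the convexity and the lower-bound steps is Kostant's convexity theorem, or the KLM vector-valued triangle inequality. The rest is invariance bookkeeping, including keeping track of the order of arguments when $N$ is asymmetric.
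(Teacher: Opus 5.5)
The paper gives no proof of this statement; it is quoted from Planche, so your outline has to stand on its own. The architecture is sound. Kostant's linear convexity theorem exhibits $N_o$ as the supremum of the linear forms $v\mapsto\lambda(\pi_{\mathfrak a}(\mathrm{Ad}_k v))$, with $k\in K$ and $\lambda\in B^*$, which gives sublinearity. The flat geodesic $t\mapsto e^{ta}o$ gives $d^N\le N\circ\vec d$. For the reverse inequality you calibrate with $f_\lambda=\lambda(\vec d(x,\cdot))$: at a point $z$ where $\vec d(x,z)$ is regular, the differential of $\vec d(x,\cdot)$ is $\pi_{\mathfrak a}$ after translating $z$ to the base point, and Kostant then gives $df_z(v)\le N_z(v)$. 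These steps are correct as written.

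Two points in the lower bound need fixing.

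\emph{A minor slip.} For asymmetric $N$ the two-sided bound $|df_z(v)|\le N_z(v)$ is false. Take $\SL(3,\R)$, $N=N_{\omega_1}$, $\lambda=\omega_1$, $x=o$, $z=e^ao$ with $a=\Diag(3,-1,-2)$, and $v$ the translate of $-a$. Then $df_z(v)=-3$, while $N_z(v)=N(\iota a)=2$. Only the one-sided bound is needed, and it is what your argument proves.

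\emph{The real gap is ``wherever it is differentiable''.} A piecewise $C^1$ path can spend a set of times of positive measure in the singular set $\Sigma_x=\{z:\vec d(x,z)\in\partial\mathfrak a^+\}$; a singular geodesic issued from $x$ lies entirely in it. There the first-variation formula is unavailable, and you have no bound on $(f\circ c)'$. One repair: $\Sigma_x\setminus\{x\}$ is a finite union of submanifolds of codimension at least $2$. So a $C^1$-small perturbation of $c$ with fixed endpoints meets $\Sigma_x$ only at its endpoints, and the Finsler length varies continuously under such perturbations.

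A cleaner, derivative-free repair is also available. For $u\in\overline{\mathfrak a^+}$, $N(u)$ is already the supremum of $\lambda(u)$ over \emph{dominant} $\lambda\in B^*$. For dominant $\lambda$ one has the exact inequality $\lambda(\vec d(x,z'))\le\lambda(\vec d(x,z))+\lambda(\vec d(z,z'))$. This follows from $\chi_\rho(\vec d(o,go))=\log\lVert\rho(g)\rVert$ for representations with a $K$-invariant inner product making $\rho(\mathfrak p)$ symmetric, together with submultiplicativity of operator norms, since such highest restricted weights $\chi_\rho$ generate the dominant cone. Hence $N\circ\vec d$ satisfies the triangle inequality, and subdividing any path gives $d^N\ge N\circ\vec d$.

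Your alternative appeal to $\vec d(x,z')-\vec d(x,z)\in\mathrm{conv}(W\vec d(z,z'))$ also closes the argument. However, that Lidskii-type inequality is much stronger than what is needed: it is the $1$-Lipschitz property of $\vec d(x,\cdot)$ for every $W$-invariant norm at once. With it, the theorem becomes essentially a corollary of the citation.
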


Since all norms on $\mathfrak{a}$ are equivalent, these metric on $\X$ are equivalent. However these metrics can have very different behavior: for instance the associated horofunction compactifications can be very different.

\medskip

In the present paper we will focus on \emph{polyhedral Finsler metrics} on $\X$ which are metrics coming from polyhedral norm $N$ on $\mathfrak{a}$ invariant by the Weyl group action.
Let us fix a Weyl group invariant norm $N$ on $\mathfrak{a}$ that is \emph{polyhedral}, i.e. whose unit ball is a polyhedron. 

A simple class of such norms are the norms $|\cdot |_\omega$  associated with some dominant linear form $\omega\in \mathfrak{a}^*$ defined as $|\tau|_\omega=\max_{w\in W} (w\cdot\omega)(\tau)$ for $\tau\in \mathfrak{a}$. This always defines a norm on $\mathfrak{a}$ if $G$ is simple, but not necessarily if $G$ is only semi-simple. 
In general a polyhedral norm $N$ can be written, for some finite family of linear forms $(\omega_i)_{i\in I}$, as: $$N(\tau)=\max_{i\in I} |\tau|_{\omega_i}.$$ 

By convention the associated possibly asymmetric distance will be $d(x,y)=N(y-x)$. This convention matters because these distances are possibly assymetric. This distance defines a possibly asymmetric distance on $\X$:
$$ d^N(x,y)=N(\vec{d}(x,y)). $$

\begin{rem}
Given a Weyl group invariant polyhedral norm $N$ on, $\mathfrak{a}$, we denote by $\mathcal{D}^N(\X)$, $\mathcal{H}^N(\X)$, $\overline{\X}^N$, $\partial^N\X$ repectively the spaces of distance functions, of horofunctions, the horofunction compactification of $\X$ and the horoboundary of $\X$ for the Finsler metric $d^N$. 
\end{rem}

\subsection{The invariant measure of the symmetric space}
\label{sec:Invariant measures sym space}
We first recall how the volume measure $\lambda$ on the symmetric space disintegrates along $K$-orbits. 
We consider a multiple of the volume associated to the Riemannian metric in order to get rid of constants depending only on the group $G$.
The restricted Weyl vector $\rho$ emerges from this formula as the direction of largest volume growth.

Let $o \in \X$ be the point fixed by $K \subset G$, and let $M$ be the stabilizer in $K$ of an element of $\mathfrak{a}^+$. 

\begin{proposition}\label{prop: volume density along K-orbits}
    The volume measure $\Leb$ on $\X$, up to a multiplicative constant, satisfies for every integrable function $f$ on $\X$:
    $$\int_\X f(x)\mathrm{d}\Leb =\int_{\mathfrak{a}^+} \left(\int_{K/M}f(kM \cdot e^\tau o) \mathrm{d}\text{Haar}(kM)\right) \mathrm{d}\mu_\mathfrak{a}(\tau).$$
    Here $\text{Haar}$ is the Haar probability measure on $K/M$, which is naturally identified with the orbit $K\cdot e^\tau o$ for all regular $\tau\in \mathfrak{a}^+$ (hence for almost every $\tau \in \mathfrak{a}^+$), and $\mu_\mathfrak{a}$ is the measure $\prod_{\alpha\in \Sigma^+} \left(2\sinh(\alpha (\tau))\right)^{m_\alpha}\mathrm{d}\Leb_\mathfrak{a}(\tau)$ with $\Leb_\mathfrak{a}$ the volume measure on the Euclidean space $\mathfrak{a}$ and $m_\alpha$ is the dimension of the root space $\mathfrak{g}_\alpha$ associated with $\alpha\in \Sigma$.
\end{proposition}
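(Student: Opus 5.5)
This is the classical polar (Cartan) decomposition of the invariant measure (Helgason, Ch. I, Thm. 5.8). The plan is to compute the Jacobian of the map
$$\Phi\colon K/M\times \mathfrak{a}^+\to \X,\qquad (kM,\tau)\mapsto k e^\tau\cdot o.$$
By the Cartan decomposition $G=K\overline{A^+}K$, $\Phi$ is surjective onto the complement of the image of $\partial\mathfrak{a}^+$, which is a null set. Injectivity follows because the stabilizer in $K$ of a regular $e^\tau o$ is exactly $M$. So $\Phi$ is a diffeomorphism onto an open set of full measure. By $K$-invariance of $\Leb$ and of the Haar measure, the Jacobian depends only on $\tau$. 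It therefore suffices to compute the differential of $\Phi$ at $(eM,\tau)$.

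To compute $d\Phi$, identify $T_{eM}(K/M)$ with $\mathfrak{m}^\perp\cap\mathfrak{k}$, which decomposes as $\bigoplus_{\alpha\in\Sigma^+}\mathfrak{k}_\alpha$ with $\mathfrak{k}_\alpha=\{X+\theta X : X\in\mathfrak{g}_\alpha\}$. Similarly decompose $\mathfrak{p}=\mathfrak{a}\oplus\bigoplus_{\alpha\in\Sigma^+}\mathfrak{p}_\alpha$ with $\mathfrak{p}_\alpha=\{X-\theta X\}$. Translate back to $o$ by $e^{-\tau}$. Then the $\mathfrak{a}$-direction maps isometrically onto $\mathfrak{a}$. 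A vector $Z=X+\theta X\in\mathfrak{k}_\alpha$ is sent to the $\mathfrak{p}$-component of $\mathrm{Ad}(e^{-\tau})Z = e^{-\alpha(\tau)}X+e^{\alpha(\tau)}\theta X$, which is
$$-\sinh(\alpha(\tau))\,(X-\theta X).$$
Hence $d\Phi$ is block-diagonal with respect to these root-space decompositions. On each $\mathfrak{k}_\alpha\to\mathfrak{p}_\alpha$ block it is $\sinh(\alpha(\tau))$ times the linear isomorphism $X+\theta X\mapsto X-\theta X$, and this isomorphism is an isometry for the Killing-form normalizations. The Jacobian is therefore $\prod_{\alpha\in\Sigma^+}|\sinh\alpha(\tau)|^{m_\alpha}$, which is positive on $\mathfrak{a}^+$.

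The formula in the statement then follows from the change of variables $\int_\X f\,d\Leb=\int\!\!\int f\circ\Phi\;|\mathrm{Jac}\,\Phi|$. Powers of $2$ and the total volume of $K/M$ are absorbed into the multiplicative constant, since the Haar measure is normalized to a probability measure. One remark is needed about roots $\alpha$ with $2\alpha\in\Sigma$: they appear separately in $\Sigma^+$, each with its own multiplicity, so the product runs over all of $\Sigma^+$ as stated.

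The only real care point is the block computation of $d\Phi$. One must check that the $\mathfrak{k}$-components generated by the $K$-directions do not mix different root blocks. This holds because $\mathrm{Ad}(e^{-\tau})$ preserves each root space. One must also check that the projection to $\mathfrak{p}$ is exactly the tangent map of the orbit map at $o$. Everything else is routine, and I would cite Helgason for the standard normalization details.
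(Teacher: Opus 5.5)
Your proposal is correct and follows the paper's argument: the paper also splits the tangent space at a regular point into the flat direction and the $K$-orbit direction, then computes the Jacobian of the orbit map $K/M\to K\cdot e^{v}o$ blockwise as $\mathfrak{k}_\alpha\to\mathfrak{p}_\alpha$, $X+\theta X\mapsto \pm\sinh(\alpha(v))(X-\theta X)$, which gives $\prod_{\alpha\in\Sigma^+}\sinh(\alpha(v))^{m_\alpha}$. You additionally spell out the global change of variables, namely that $(kM,\tau)\mapsto ke^\tau\cdot o$ is a bijection onto a full-measure open set (using uniqueness of the Cartan projection); the sign difference from the paper's $\pi_{\mathfrak p}(e^{\mathrm{ad}_v}k)=\sinh(\mathrm{ad}_v)k$ comes only from translating by $e^{-v}$ rather than $e^{v}$ and does not change the determinant.
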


From now on, for convenience we will rescale the Riemannian metric on $\X$ so that the Lebesgue measure $\Leb$ is rescaled exactly to satisfy this formula.

\begin{proof}
For almost every point $x \in \X$ the vector-valued distance $\vec{d}(o,x)$ is regular.
At such a regular point, $T_x\X$ splits orthogonally as $T_x((K\cdot x) \oplus T_xF$ where $F$ is the unique flat containing the base point $o\in \X$  and $x\in \X$. 
The invariant volume measure $\Leb$ therefore splits locally as a product measure of the Lebesgue measure $\Leb_{\mathfrak{a}}$ on $F \simeq \mathfrak{a}$ and a $K$-invariant measure on $K\cdot x$. 
Hence there is some $\Phi \colon \mathfrak{a}^+ \to \R_{\ge 0}$ relating this measure to the Haar probability measure on $K/M$. 

\medskip

We look at the orbit map $\phi \colon K/M \to K \cdot x$, where $x = e^v o$. 
Note that $K$ and hence $K/M$ come equipped with a natural Riemannian metric coming from the Killing form of $\mathfrak{g}$.
The determinant of the differential of $\phi$ is constant and equal to the determinant of the linear map:
$$ k\in \mathfrak{k}/\mathfrak{m} \mapsto  \pi_\mathfrak{p}(e^{\mathrm{ad}_v} k)=\sinh(\mathrm{ad}_v) k\in \mathfrak{p}/\mathfrak{a}.$$
Here $\pi_\mathfrak{p}$ is the projection onto $\mathfrak{p}$ coming from the $\mathfrak{k}\oplus \mathfrak{p}$ decomposition.
This linear map takes $\mathfrak{k}_\alpha = (\mathfrak{g}_\alpha\oplus \mathfrak{g}_{-\alpha}) \cap \mathfrak{k}$ isomorphically to $\mathfrak{p}_\alpha = (\mathfrak{g}_\alpha\oplus \mathfrak{g}_{-\alpha}) \cap \mathfrak{p}$ for all (positive) roots $\alpha$.
For $X \in \mathfrak{g}_\alpha$ the map is given by
    \[ X+ \theta X_\alpha \mapsto \sinh(\alpha(v)) (X- \theta X_\alpha), \]
where $\theta$ is the Cartan involution corresponding to $\mathfrak{g} = \mathfrak{k}\oplus \mathfrak{p}$.
Therefore the determinant is equal to :
$$ \prod_{\alpha\in \Sigma^+} \sinh(\alpha (v))^{m_\alpha} $$
where $m_\alpha$ denotes the dimension of the root space $\mathfrak{g}_\alpha$.
In particular the Riemannian measure of $K\cdot x$ is proportional to this quantity. We choose the metric on $\X$ so that we get the desired result.
\end{proof}

Recall that the \emph{restricted Weyl vector} was defined in (\ref{eqn: weyl vector}) as half the sum of positive roots, weighted by the dimension of root spaces.
Geometrically, it corresponds to a barycenter of the Weyl chamber, but weighted according to the geometry of $\X$ in directions transverse to the maximal flat.

\begin{rem}\label{rmk: def of Phi}
    It will be convenient to set 
        \[\Phi(v) \coloneqq \prod_{\alpha\in \Sigma^+} (1-e^{-2\alpha(v)})^{m_\alpha} = e^{-2 \rho(v)} \prod_{\alpha\in \Sigma^+} (2\sinh(\alpha (v)))^{m_\alpha} .\] 
    It is easy from the definitions to read off the bound:
        \[\Phi(v) \leq 1.\]
    The function $\Phi$ is close to $1$ when $v \in \mathfrak{a}^+$ is far from the walls, but tends to $0$ as $v$ gets close to the walls.
\end{rem}

\subsection{Examples}
\label{subsec:Symmetric spaces examples}

Throughout the paper we will illustrate our results with the following examples.

\subsubsection*{Let \texorpdfstring{$G=\SL(2,\R)^n$}{G=SL(2,R)}:}
The associated maximal compact is conjugated to $\SO(2,\R)^n$ and the associated symmetric space is $\X=\left(\mathbb{H}^2\right)^n=\mathbb{H}_1^2\times \cdots \times \mathbb{H}_n^2$, which has rank $n$. A restricted Cartan subalgebra $\mathfrak{a}\subset \mathfrak{sl}(2,\R)$ is simply $\mathfrak{a}=(\mathfrak{a}_0)^n\simeq \R^n$ where $\mathfrak{a}_0\subset \mathfrak{sl}(2,\R)$ is the subalgebra of traceless diagonal matrices. Flats in $\left(\mathbb{H}^2\right)^n$ are exactly of the form $\gamma_1\times \cdots \times \gamma_n$ for geodesics $\gamma_i$ in $\mathbb{H}^2$ for all $1\leq i\leq n$.    

The Weyl group is the group $\lbrace -1,1\rbrace^n$ where $(\epsilon_i)\in W$ acts on $\mathfrak{a}$ as:
$$(x_1,\cdots, x_n)\mapsto (\epsilon_1 x_1,\cdots, \epsilon_n x_n).$$

The group $\SL(2,\R)^n$ admits exactly $2n$ roots, which are $(\pm \alpha_i)$ for $1\leq i \leq n$ where $(\alpha_i)$ is the dual base to the standard basis of $\mathfrak{a}=(\mathfrak{a}_0)^n$. These roots $(\alpha_1,\cdots, \alpha_n)$ form a choice of positive roots, and also form the set of simple roots. 
The model Weyl chamber $\mathfrak{a}^+$ associated with this choice of positive roots is  the positive orthant $\R_{>0}^n\subset \R^n\simeq \mathfrak{a}$.

\medskip

The flag manifold associated to a non-empty subset $\Theta\subset \lbrace \alpha_1,\cdots, \alpha_n\rbrace=\Delta$ where $\Theta=\lbrace \alpha_i\mid i\in I\rbrace$ is the product:
$$\mathcal{F}_\Theta=\prod_{i\in I} \partial \mathbb{H}^2_i.$$
We have a natural action of $G$ on this flag manifold, where each copy of $\SL(2,\R)$
acts only on the corresponding copy of $\mathbb{H}^2$. Note that this product can be seen inside of the visual boundary of $\X$. More precisely for each $\tau \in \R_{>0}^I\subset \R_{>0}^n$ this flag manifold can be identified with the orbit of the rays with intial velocity $v=(v_1,\cdots v_n)$ when $v_j=0$ for $j\notin I$ and $\lVert v_i\rVert=\tau_i$ for $i\in I$. This orbit only depends on the projective class $[\tau]$ of $\tau$. The union of all these orbits when $\Theta$ and $[\tau]$ vary is exactly the full visual boundary.

A flag $(y_i)_{i\in I}$ lies in the boundary of the flat $\gamma_1\times \cdots \times \gamma_n$ if for all $i\in I$, $y_i\in \partial \mathbb{H}^2_i$ is one of the endpoints of $\gamma_i$.

\medskip

Now let us define some $G$-invariant metrics on $\X=\left(\mathbb{H}^2\right)^n$. For that it suffices to consider a $W$-invariant norm $N$ on $\mathfrak{a}\simeq\R^n$ and define :
$$d_N((x_i)_{1\leq i\leq n},(y_i)_{1\leq i\leq n})=N((d(x_i,y_i))_{1\leq i\leq n}).$$

More concretely let us consider the case $n=2$, we will consider the following polyhedral norms on $\mathfrak{a}^+\simeq \mathbb{R}^+\times \mathbb{R}^+$: 
$$N_\infty(x,y)=max(|x|,|y|),$$
$$N_1(x,y)=|x|+|y|,$$
$$N_{a,b}(x,y)=a|x|+b|y|.$$
The third norm depends on the data of real numbers $b>a>0$. We will also be interested in the euclidean norm, which is not a polyhedral norm, defining the Riemannian distance of $\X$:
$$N_{\text{Eu}}(x,y)=\sqrt{x^2+y^2}.$$

\subsubsection*{Let \texorpdfstring{$G=\SL(n,\R)$}{G=SL(n,R)}:}
The associated maximal compact is conjugated to $\SO(n,\R)$. A convenient model for this symmetric space is the following : $\X$ is the space of determinant one matrices in $\text{Sym}_n^+$, the space of positive definite $n$ by $n$ symmetric matrices. The action of $\SL(n,\R)$ on $\mathcal{X}$ comes from its action on $\text{Sym}_n^+$ defined as $g\cdot M=g^\perp M g$. One can also consider the \emph{projective model} $\X\simeq \mathcal{X}=\mathbb{P}(\text{Sym}_n^+)$.
The restricted Cartan subalgebra $\mathfrak{a}\subset \mathfrak{sl}(n,\R)$ is simply the subalgebra of traceless diagonal matrices, which can be viewed as a hyperplane in $\R^n$: $\mathfrak{a}=\lbrace (x_i)\in \R^{n}\mid x_1+\cdots +x_n=0\rbrace $ This has dimension $n-1$, hence the real rank of $\SL(n,\R)$ is $n-1$. Flats in $\mathcal{X}$ are exactly translates of the model flat:
$$\lbrace \Diag(e^{\sigma_1},e^{\sigma_2},\cdots, e^{\sigma_n})\mid \sigma_1+\sigma_2+\cdots +\sigma_n=0\rbrace .$$

The Weyl group is the groups of permutations on $n$ elements acting on $\R^n$ by permuting basis elements and preserving the hyperplane $\mathfrak{a}$. The set of roots $\Sigma$ consists of the restriction to $\mathfrak{a}$ of the linear forms $e_i^*-e_j^*$ for $1\leq i\neq j\leq n$, where $(e_i^*)$ is the dual base to the standard base of $\R^n$. The roots $e_i^*-e_j^*$ with $i<j$ form a set of positive roots $\Sigma^+$. The corresponding set of simple roots is $\Delta=\lbrace e_i^*-e_{i+1}^*\mid 1\leq i<n \rbrace $. Let us write $\alpha_i=e_i^*-e_{i+1}^*$ for $1\leq i<n$. The model Weyl chamber associated with this choice of positive roots is:
$$\mathfrak{a}^+=\lbrace (\sigma_1,\cdots ,\sigma_n)\in \R^n\mid \sigma_1> \cdots > \sigma_n,\, \sigma_1+\cdots +\sigma_n=0\rbrace.$$

The flag manifold $\mathcal{F}_\Theta$ associated with a non-empty subset $\Theta\subset \Delta$ where $\Theta=\lbrace \alpha_i\mid i\in I\rbrace$ for $I\subset \lbrace 1, \cdots, n-1\rbrace$ can be viewed as the space of flags $(E_i)_{i\in I}$ where $E_i$ is a dimension $i$ vector subspace of $\R^n$ and $E_i\subset E_j$ of $i<j$, $i,j\in I$. The group $\SL(n,\R)$ acts naturally on $\mathcal{F}_\Theta$.

\medskip

Note here that the visual boundary is not the boundary of the projective model for $\X$ when $n\geq 3$. Given a non-empty $\Theta\subset \Delta$ and $\tau\in \mathfrak{a}$ such that for $i\notin I$, $\alpha_i(\tau)=0$, one can define a corresponding orbit in the visual boundary which is $G$-equivariantly diffeomorphic to $\mathcal{F}_\Theta$, and the union of these orbits when $\Theta$ and the porjective class $[\tau]$ of $\tau$ is equal to the full visual boundary. 

A flag lies in the boundary of a flat of the form $\lbrace \Diag(e^{\sigma_1},e^{\sigma_2},\cdots, e^{\sigma_n})\mid \sigma_1+\sigma_2+\cdots +\sigma_n=0\rbrace $ for some basis of $\R^n$ if up to the action of the permutation group on this basis, the flag is of the form $(E_i)_{i\in I}$ where $E_i=\langle e_1,\cdots, e_i\rangle$ for all $i\in I$. 

\medskip

Let us define some $G$-invariant metrics on $\X$. For that it suffices to consider a $W$-invariant norm $N$ on $\mathfrak{a}$ and define :
$$d_N(x,y)=N(\vec{d}(x,y)).$$
Here $\vec{d}(x,y)=(\sigma_1,\cdots \sigma_n)$ are the logarithms of the eigenvalues of the matrix $yx^{-1}$ where $y,x$ are viewed as matrices in $\text{Sym}_n^+$, ordered so that $\sigma_1\geq \cdots, \geq \sigma_n$.
Then $\mathfrak{a}$ is identified with $\{ x \in \R^n : \sum x_i =0\}$, and $W \cong S_n$ acts by permuting the indices.

More concretely we will consider the following polyhedral norms on $\mathfrak{a}$: 
$$N_{\omega_1}((x_i))=\max_{1\leq i\leq n}(x_i),$$
$$N_{\iota \omega_1}((x_i))=\max_{1\leq i\leq n}(-x_i),$$
$$N_1((x_i))=\sum_{i=1}^n |x_i|.$$
One can also consider the euclidean norm, which is not a polyhedral norm, defining the Riemannian distance of $\X$:
$$N_{\text{Eu}}((x_i))=\sqrt{\sum_{i=1}^n x_i^2}.$$

\section{Horofunctions and horofunction compactifications of symmetric spaces}
\label{sec:HorofunctionsCompact}
In this section we introduce and investigate the space of horofunctions of symmetric spaces. 

\subsection{Horofunction compactifications}

We consider a metric space $(X,d)$ with possibly asymmetric distance function.
The metric induces a natural embedding of $X$ into the space $C(X)$ of continuous functions $X \to \R$:
    \begin{equation}\label{eqn: distance function}
        y \mapsto d(\cdot,y) .
    \end{equation}
A \emph{distance-like function on $X$} is any function of the form $d(\cdot,y)+c$ for some $y \in X$ and $c \in \R$.
The space of all distance-like functions is denoted $\mathcal{D}(X)$. 
We consider the closure of this set $\overline{\mathcal{D}(X)} \subset C(X)$ in the compact-open topology. 
The complement of $\mathcal{D}(X)$ in $\overline{\mathcal{D}(X)}$ is denoted $\mathcal{H}(X)$ and called the space of \emph{horofunctions}.
We let $\partial X$ denote $\mathcal{H}(X)/\R$, where $\R$ is identified with the set of constant functions, and call this quotient the \emph{horoboundary}.
When $X$ is proper, $\partial X$ is compact, and the inclusion of $X$ into $C(X)/\R$ induced by \eqref{eqn: distance function} is open and dense in its closure, which is denoted $\overline{X}$ and called the \emph{horofunction compactification}.

\begin{remark}
    In this paper we reserve the name horofunction for elements of $\mathcal{H}(X)$, which are functions on $X$.
    In the literature the elements of $\partial X$ are sometimes called horofunctions, even though these functions are only defined up to constants.
\end{remark}

\begin{remark}
    For a proper CAT(0) space such as $\X$ with its invariant Riemannian metric, the visual compactification identifies naturally with the horofunction compactification \cite{EberleinONeill1973visibility} via the map induced by
        \[ \left( c\colon [0,\infty) \to \X \right) \mapsto \left( x \mapsto \lim_{t \to \infty} d(x,c(t)) - d(c(0),c(t))  \right) .\]
    We write $\partial_{vis} \X \simeq \partial^{Riem} \X$. 
\end{remark}

\subsection{Horofunction compactifications of normed vector spaces}

As suggested by Theorem \ref{thm:Planche} and explained more precisely in the next subsection the study of invariant polyhedral Finsler metrics on $\X$ often reduces in some sense to the study of normed vector spaces. 

\medskip

Let $\mathfrak{a}$ denote a finite-dimensional vector space and let $N$ be a possibly asymmetric norm on $\mathfrak{a}$.
The unit ball is
    \[ B^N = \{ x \in \mathfrak{a} : N(x) \le 1 \} \]
and the dual unit ball is given by
    \[ (B^N)^\ast = \{ \alpha \in \mathfrak{a}^\ast : \forall v \in B^N, \, \alpha(v) \le 1 \} .\] 
Observe that for any $v \in \mathfrak{a}$, 
\begin{equation}\label{eqn: norm from the dual ball}
    N(v) = \sup \left\{ \alpha(v) : \alpha \in (B^N)^\ast \right\}
\end{equation}
which is realized for some $\omega \in (B^N)^\ast$.

\medskip

In the whole paper we will assume that $N$ is a polyhedral norm, except in Remark \ref{rem:smoothsquare norm} and Figure \ref{fig:smoothsquare norm}, and when we explicitly consider the Euclidean norm.

\medskip

The face $F$ of $B^N$ containing $v \in B^N$ is the set of points $v' \in B^N$ so that the segment from $v$ to $v'$ may be extended on both sides within $B^N$. 
Each face is relatively open, i.e.\ open in its affine span, in $\mathfrak{a}^*$. 
The singleton faces are also called vertices or extreme points, and $B^N$ is the convex hull of its set of vertices. The set of vertices is denoted by $\ver(F)$.

The set of faces of $B^N$ forms a partition of it, and it is a finite partition exactly when $N$ is a polyhedral norm, i.e.\ when $B^N$ is a polyhedron. In this case $\ver((B^N)^*)$ is finite and for $v\in \mathfrak{a}$:
$$N(v)=\max_{\omega\in \ver((B^N)^*)} \omega(v).$$

We will be mostly interested in \emph{exposed faces}, which are faces in $\partial B^N$. Associated to an exposed face $F$ of $B^N$ is its dual face which is an exposed face of $(B^N)^*$. This is the unique face such that:
    \begin{equation}\label{eqn: def of Fvee}
        \overline{F^\vee} \coloneqq \{ \beta \in (B^N)^* \mid \forall v \in F, \beta(v)=1 \}.
    \end{equation}

\begin{lem} \label{lem: Alternative definition Fvee}
Alternatively for any $v \in F$, we can write \begin{equation}\label{eqn: def of Fvee bis}
        \overline{F^\vee} \coloneqq \{ \beta \in (B^N)^* : \beta(v)=1 \} 
    \end{equation} 

Equivalently, $\overline{F^\vee}$ is exactly the convex hull of the linear forms $\omega\in \ver((B^N)^*))$ such that $\omega=1$ on $F$.
\end{lem}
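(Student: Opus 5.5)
The plan is to prove the two descriptions separately, each by a double inclusion, using only convexity of $B^N$, the definition of faces, and formula \eqref{eqn: norm from the dual ball}. Write $S_F=\{\beta\in (B^N)^*\mid \forall v'\in F,\ \beta(v')=1\}$ and $S_v=\{\beta\in (B^N)^*\mid \beta(v)=1\}$ for a fixed $v\in F$.

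For the first description, clearly $S_F\subset S_v$. For the reverse inclusion, take $\beta\in S_v$ and any $v'\in F$. By definition of a face, the segment from $v$ to $v'$ extends on both sides within $B^N$, so $v=tv'+(1-t)v''$ for some $v''\in B^N$ and $t\in(0,1)$. Since $\beta\in(B^N)^*$, we have $\beta(v')\le1$ and $\beta(v'')\le 1$. Combined with
\[ 1=\beta(v)=t\beta(v')+(1-t)\beta(v''), \]
this forces $\beta(v')=1$. Hence $\beta\in S_F$, and \eqref{eqn: def of Fvee bis} agrees with \eqref{eqn: def of Fvee}.

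For the second description, let $V=\{\omega\in\ver((B^N)^*)\mid \omega=1 \text{ on } F\}$. The inclusion $\mathrm{conv}(V)\subset S_F$ is immediate, since $S_F$ is convex and contains $V$. For the reverse inclusion:
\begin{itemize}
\item $S_F=S_v$ is the intersection of $(B^N)^*$ with the supporting hyperplane $\{\beta(v)=1\}$. This hyperplane is supporting because $\beta(v)\le N(v)\le1$ on $(B^N)^*$.
\item So $S_F$ is an exposed face of the polytope $(B^N)^*$, which is a polytope because $N$ is polyhedral.
\item An exposed face of a polytope is the convex hull of the polytope's vertices lying in it. Indeed, write $\beta\in S_F$ as a convex combination of vertices of $(B^N)^*$ with positive weights. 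Evaluating at $v$, each such vertex $\omega$ satisfies $\omega(v)\le1$ while the weighted average equals $1$, so $\omega(v)=1$.
\item Each such $\omega$ lies in $S_v=S_F$, hence $\omega\in V$.
\end{itemize}

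The only point needing care is nonemptiness: $S_v$ must actually be nonempty so that $F^\vee$ makes sense. This holds because $F$ is exposed, so $N(v)=1$ and the supremum in \eqref{eqn: norm from the dual ball} is attained. Otherwise the argument is elementary, and I expect no real obstacle.
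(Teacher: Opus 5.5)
Your proof is correct and follows the same lines as the paper's. The first part is the same segment-extension argument; the paper phrases it by contradiction, pushing past $v$ to a point of $B^N$ where $\beta$ would exceed $1$. For the second part, the paper notes that $\overline{F^\vee}$ is the convex hull of its own extreme points, which are extreme points of $(B^N)^*$. Your version decomposes over all vertices of $(B^N)^*$ and uses the supporting hyperplane $\{\beta(v)=1\}$ to discard those not in the face; this is an equivalent and slightly more self-contained way of making that step precise.
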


Note again that we consider relatively open faces. 

\begin{proof}
We first prove the first equivalence. Let $v,v'\in F$ and suppose that $\beta \in (B^N)^*$ satisfies $\beta(v)=1$ but $\beta(v')<1$ then for $\epsilon>0$ small enough, let $\beta_0=\beta-\epsilon\beta'\in F$. We have $\beta(v)>1$ which contradicts the fact that $\beta\in B^N$ and $v \in (B^N)^*$. 
We conclude that $\beta(v)=1$ is equivalent to $\beta'(v)=1$ for $v\in B^N$.

\medskip

We now prove the second characterization. Indeed $\overline{F^\vee}$ is equal to the convex hull of its extreme points, and its extreme points are extreme points of $(B^N)^*$.
\end{proof}

One can define similarly $(F')^\vee$ for a face $F'$ of $(B^N)^*\subset \mathfrak{a}^*$ as is the unique face such that: 
\begin{equation}\label{eqn: def of F'vee}
        \overline{(F')^\vee} \coloneqq \{ v \in B^N \mid \forall \beta \in F', \beta(v)=1 \}.
    \end{equation} 

\begin{prop}
\label{prop:Bijection vee}
    Let $N$ be a polyhedral norm. The map $F\mapsto F^\vee$ is a bijection between the faces of $B^N$ and those of $(B^N)^*$ with inverse $F'\mapsto (F')^\vee$. A face of dimension $p$ of $B^N$ is mapped to a face of codimension $p+1$ of $(B^N)^*$. 
 \end{prop}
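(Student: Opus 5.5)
The plan is to reduce the statement to standard polar duality for polytopes containing the origin in their interior. Since $N$ is a (possibly asymmetric) norm, $B^N$ is a compact convex polyhedron with $0$ in its interior. Its polar $(B^N)^*$ is then also a compact polyhedron with $0$ in its interior, and the bipolar theorem gives $((B^N)^*)^*=B^N$. This bipolar identity is the same as \eqref{eqn: norm from the dual ball}. In this setting the faces considered are the relatively open faces lying in the boundary sphere, i.e.\ the exposed faces.

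\textbf{Step 1: the maps are well defined.} Let $F$ be a face of $B^N$ in $\partial B^N$. By Lemma \ref{lem: Alternative definition Fvee}, $\overline{F^\vee}=\{\beta\in(B^N)^*:\beta(v)=1\}$ for any $v\in F$. This set is nonempty, by the supporting hyperplane theorem at $v\in\partial B^N$. It is the intersection of $(B^N)^*$ with the supporting hyperplane $\{\beta:\beta(v)=1\}$; the hyperplane supports $(B^N)^*$ because $v\in B^N$ gives $\beta(v)\le 1$ on $(B^N)^*$. So $\overline{F^\vee}$ is a closed exposed face of $(B^N)^*$, and we take $F^\vee$ to be its relative interior. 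By the bipolar identity, the same argument applies with the roles of $B^N$ and $(B^N)^*$ exchanged, so $F'\mapsto (F')^\vee$ is well defined as in \eqref{eqn: def of F'vee}.

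\textbf{Step 2: the two maps are mutually inverse.} I will show $(F^\vee)^\vee=F$; the other composition follows by symmetry. Since every $\beta\in\overline{F^\vee}$ equals $1$ on $F$, we have $F\subset\overline{(F^\vee)^\vee}$.

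For the reverse inclusion, use polyhedrality: $\overline F=B^N\cap\bigcap_{\omega}\{\omega=1\}$, where $\omega$ ranges over the vertices of $(B^N)^*$ that equal $1$ on $F$. This holds because a closed face of a polytope is cut out by the facet-defining inequalities that are tight on it, and the facets of $B^N$ correspond to vertices of $(B^N)^*$ through $N=\max_{\omega\in\ver((B^N)^*)}\omega$. By the second characterization in Lemma \ref{lem: Alternative definition Fvee}, these $\omega$ all lie in $\overline{F^\vee}$. Hence $\overline{(F^\vee)^\vee}\subset\overline F$. Closed faces with the same closure have the same relative interior, so $(F^\vee)^\vee=F$.

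The argument also shows the correspondence reverses inclusion of closures. Therefore it is a bijection between the boundary faces of $B^N$ and those of $(B^N)^*$.

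\textbf{Step 3: dimensions.} This is the main point. Pass to the cone over $B^N\times\{1\}$ in $\mathfrak a\oplus\R$, whose dual cone is the cone over $(B^N)^*$. For a closed face $\overline F$, let $C_F$ be the cone over $\overline F$. The dual face of $C_F$ is $C_F^\perp\cap C^*$, and it equals the cone over $\overline{F^\vee}$ (with the sign convention $\beta(v)=1$ corresponding to $(\beta,-1)$).

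Suppose $\dim F=p$, so $\dim C_F=p+1$. The key inequality is $\dim(C_F^\perp\cap C^*)\le n+1-(p+1)=n-p$, where $n=\dim\mathfrak a$. Conversely, applying the same bound to $F^\vee$ and using the involution from Step 2 gives $\dim C_F\le n+1-\dim C_{F^\vee}$.

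Together these force equality only if the lattice anti-isomorphism is dimension-complementary. I would get that from graded-lattice reasoning: face lattices of polytopes are graded, and the correspondence is an order-reversing bijection between full chains. In a maximal chain of faces from vertex to facet of $B^N$, the dimensions take every value $0,\dots,n-1$ exactly once. The order-reversing bijection maps this chain to a maximal chain in $(B^N)^*$, and both inequalities must then be equalities. So $\dim F^\vee=n-1-p$, i.e.\ $F^\vee$ has codimension $p+1$ in $\mathfrak a^*$.

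The step I expect to need the most care is this dimension count: it requires the reverse inequality and the gradedness of the face lattice. Everything else is routine bipolar bookkeeping.
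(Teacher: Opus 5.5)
Your argument is correct, but it is organized differently from the paper's.

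\textbf{The paper's route.} The paper does the dimension count first, by direct linear algebra. It writes $\overline F$ as the subset of $B^N$ cut out by the equations $\omega=1$ for $\omega\in I=\ver(\overline{F^\vee})$. Then $F$ is a nonempty open subset of that affine subspace, so $\dim F=n-\dim\Span(I)$. Since $\overline{F^\vee}=\mathrm{conv}(I)$ lies in a hyperplane $\{\beta:\beta(v)=1\}$ that misses $0$, $\dim\Span(I)=\dim F^\vee+1$. Hence $\dim F^\vee=n-1-p$. The bijection then comes for free: $\overline F\subset\overline{(F^\vee)^\vee}$, and both closed faces have dimension $p$.

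\textbf{Your route.} You prove the bijection first, from the tight-inequality description of closed faces. You then get dimension complementarity combinatorially, from the order-reversing bijection together with gradedness of face lattices. That works, but it imports a nontrivial fact about polytopes (gradedness) that is not needed here. Your own Step 2 identity $\overline F=B^N\cap\bigcap_{\omega\in I}\{\omega=1\}$ already yields the paper's dimension computation in one line.

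\textbf{Two smaller points.}
\begin{enumerate}
\item The cone bound in Step 3 gives only one inequality. Applying it to $F^\vee$ and using $(F^\vee)^\vee=F$ returns the very same inequality $\dim F+\dim F^\vee\le n-1$, not its converse. So the pigeonhole along a maximal chain is doing all the work, and the cone can be dropped.
\item In Step 2 you need not invoke a facet--vertex correspondence, which is itself a special case of what you are proving. For any finite inequality description, here $B^N=\{v:\omega(v)\le 1,\ \omega\in\ver((B^N)^*)\}$, a closed face is cut out by the inequalities that are tight at one of its relative interior points.
\end{enumerate}
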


 \begin{proof}

  Using the definition, one immediately gets that $\overline{F}\subset \overline{(F^\vee)^\vee}$. For the converse inclusion we just need to check that the codimension of $F^\vee$ is $p+1$ if the dimension of $F$ is $p$. With a similar argument we obtain that the dimension of $(F^\vee)^\vee$ is also $p$.

  \medskip

  For a polyhedral norm, faces $F$ can be written as $\lbrace v\in \mathfrak{a}\mid \forall \omega\in I, \, \omega(v)=1;\; \forall \omega\in J, \omega(v)\leq 1\rbrace$ where $I\cup J$ is a partition of $\ver((B^N)^*)$. Moreover one has $I=\ver(F^\vee)$. Hence, the dimension of $F$ is equal to the codimension of the span of $\omega\in \ver(F^\vee)$, which is equal to the codimension of $F^\vee$ minus one. 
 \end{proof}

\begin{rem}
\label{rem:smoothsquare norm}
    For general norms on vector spaces, $(F^\vee)^\vee$ may have larger dimension than $F^\vee$, see Figure \ref{fig:smoothsquare norm}. These raises technical difficulties in the general case. This is part of the reason why we restrict ourselves to polyhedral norms.
\end{rem}
\begin{figure}[h]
    \begin{center}
    \begin{tikzpicture}

    \def\side{4}     
    \def\rad{1}     
    \def\offset{3}

    \draw[thick] (0,0) [rounded corners=\rad cm] -- (\side,0) -- (\side,\side) -- (0,\side) -- cycle;
    
    \foreach \x/\y in {
        \rad/0, \offset/0,           
        \side/\rad, \side/\offset,   
        \offset/\side, \rad/\side,   
        0/\offset, 0/\rad            
    } {
        \fill[red] (\x,\y) circle (2pt);
    }

\end{tikzpicture}
\end{center}
\caption{Unit ball of a non-polyhedral Finsler metric on $\R^2$ with $8$ marked points on its boundary corresponding to faces $F$ such that $(F^\vee)^\vee\neq F$. For these faces $F^\vee$ is a point and is equal to $F_0^\vee$ where $F_0$ is the adjacent edge.}
\label{fig:smoothsquare norm}
\end{figure}

We introduce standard horofunctions. This will allow us to understand all horofunctions for the norm $N$ when $N$ is a polyhedral norm.

\begin{definition}\label{def: standard horofunction}
    Let $F$ be a face of $B^N$ and let $f \in F$ be any point (recall that, by convention, $F$ is open in its affine span). 
    The \emph{standard horofunction associated to $F$} is
        \[h_F(x) = \lim_{t \to \infty} N(tf-x) -t .\]
\end{definition}

Let us write $\ver F$ the set of vertices of $F$, \ie faces of dimension $0$, or in other words the set of extreme points of its closure.

\begin{prop}
\label{prop:Formula horofunction standard}
The standard horofunction is independent of the choice of $f\in F$ and is also given by:
\begin{equation}
    \label{eq:Formula standard horofunctions}
        h_F(x) = \sup_{\beta\in F^\vee} -\beta(x)=\max_{\omega\in \ver(\overline{F^\vee})} -\omega(x)
\end{equation}  
\end{prop}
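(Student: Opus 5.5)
We compute the limit directly from the polyhedral formula $N(v)=\max_{\omega\in \ver((B^N)^*)}\omega(v)$. For $f\in F$ and $x\in\mathfrak{a}$ we write
\[ N(tf-x)-t=\max_{\omega\in \ver((B^N)^*)}\bigl(t(\omega(f)-1)-\omega(x)\bigr). \]
Every vertex $\omega$ of the dual ball satisfies $\omega(f)\le 1$, since $f\in B^N$. We split the vertices according to whether $\omega(f)=1$ or $\omega(f)<1$.

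By Lemma \ref{lem: Alternative definition Fvee}, the vertices with $\omega(f)=1$ are exactly the vertices of $\overline{F^\vee}$. This set is independent of the choice of $f\in F$, because the lemma characterizes $\overline{F^\vee}$ through any single point of $F$. For these vertices the corresponding term equals $-\omega(x)$ for every $t$. For the finitely many remaining vertices we have $\omega(f)-1\le -\delta$ for some $\delta>0$ depending on $f$. Their terms are therefore bounded by $-t\delta+\max_\omega|\omega(x)|$, and these tend to $-\infty$.

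Since only finitely many terms are involved, there is a $t_0$ (depending on $x$, locally uniformly) beyond which the maximum is attained among the vertices of $\overline{F^\vee}$. Hence $N(tf-x)-t=\max_{\omega\in\ver(\overline{F^\vee})}-\omega(x)$ for all $t\ge t_0$, so the limit exists. It is independent of $f$, since neither the vertex set nor the resulting expression depends on $f$. The convergence is even eventually stationary on compact sets, which gives convergence in the compact-open topology if that is needed later.

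It remains to identify $\max_{\omega\in\ver(\overline{F^\vee})}-\omega(x)$ with $\sup_{\beta\in F^\vee}-\beta(x)$. The function $\beta\mapsto-\beta(x)$ is linear. Its supremum over the relatively open face $F^\vee$ equals its supremum over the closure $\overline{F^\vee}$, by continuity and because $F^\vee$ is dense in its closure. By the second part of Lemma \ref{lem: Alternative definition Fvee}, $\overline{F^\vee}$ is the convex hull of its vertices, and a linear functional attains its maximum over a polytope at a vertex. This gives \eqref{eq:Formula standard horofunctions}.

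The only mildly delicate point is the use of the relatively open face $F$: it is what makes the set $\{\omega:\omega(f)=1\}$ independent of $f$. This is handled by Lemma \ref{lem: Alternative definition Fvee}, so no step should present a real obstacle.
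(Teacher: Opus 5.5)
Your proof is correct, but it follows a different route from the paper's.

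The paper works with the whole dual ball. For each $t$ it picks a maximizer $\beta_t\in(B^N)^*$ of $\beta\mapsto\beta(tf-x)$. The comparison $t(\beta_t(f)-1)-\beta_t(x)\geq-\beta(x)$ for $\beta\in F^\vee$ then forces $\beta_t(f)\to 1$. By compactness, any limit point $\beta_\infty$ lies in $\overline{F^\vee}$ and realizes the limit, and the reverse inequality $h_F\geq-\beta$ for $\beta\in F^\vee$ is immediate. That argument never uses finiteness of the vertex set, so it essentially works for arbitrary norms. However, it only pins down the limiting value through subsequences of maximizers.

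You instead exploit polyhedrality. The finitely many vertices with $\omega(f)<1$ have a uniform gap $\delta>0$, so $N(tf-x)-t$ is eventually \emph{equal} to $\max_{\omega\in\ver(\overline{F^\vee})}-\omega(x)$, locally uniformly in $x$. This gives existence of the limit, independence of $f$, and compact-open convergence all at once, with no limit-point extraction. The cost is that the argument is tied to polyhedral norms, which is the paper's standing assumption anyway.

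One point you leave implicit: the set of vertices with $\omega(f)=1$ must be nonempty for your maximum to be finite. This holds because $N(f)=1$ for $f$ in a face contained in $\partial B^N$.
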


In particular, the standard horofunction does not depend on the choice of $f$ in $F$. 

\begin{proof}
Here we use \eqref{eqn: norm from the dual ball} to write $h_F(x)$ as the limit for $t\to \infty$ of:
$$\max_{\beta\in (B^N)^*}\beta(tf-x)-t.$$
For each $t>0$ let $\beta_t\in (B^N)^*$ be a linear form at which the maximum is reached. Let $f\in F$. Then for all $\beta\in F^\vee$ :
$$ t(\beta_t(f)-1)-\beta_t(x)=\beta_t(tf-x)-t\geq \beta(tf-x)-t=-\beta(x). $$
Since $\beta_t(x)$ is bounded by $N(x)$ and $(\beta_t(\tau)-1)\leq 0$, then $\beta_t(\tau)=1-o(t)$. Therefore if we chose $\beta_\infty$ to be any limit point of $(\beta_t)$, $\beta_\infty(f)=1$ and $h_F(x)=-\beta_\infty(x)$. Note moreover that for any $\beta\in F$ we have $h_F(x)\geq -\beta(x)$. Hence $h_F(x)=\max_{\beta\in \overline{F^\vee}} -\beta(x)$.
\end{proof}

We may now recall the following general result applied to polyhedral norms:
\begin{theorem}[{\cite[Theorem 4.7]{JS2024polyhedral}}]
\label{thm: Ji-Schilling}
    The horofunction compactification $\overline{\mathfrak{a}}^N$ of $(\mathfrak{a},N)$ for a polyhedral metric is homeomorphic to the dual of the unit ball $(B^N)^\ast$, by a homeomorphism identifying the translates of $[h_F]\in \overline{\mathfrak{a}}^N$ with the face $F$ of the dual unit ball of $N$.
\end{theorem}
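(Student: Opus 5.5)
The plan is to give an explicit parametrization of horofunctions for the polyhedral norm, and then to identify it with $(B^N)^\ast$ through a ``moment map'' in the style of toric geometry. Write $V=\ver((B^N)^*)$, which is finite. For $y\in\mathfrak{a}$ we have $d(x,y)=N(y-x)=\max_{\omega\in V}\bigl(\omega(y)-\omega(x)\bigr)$. Hence every distance-like function has the form $x\mapsto\max_{\omega\in V}(c_\omega-\omega(x))$, where $c=(c_\omega)$ is the restriction of the linear function $\omega\mapsto\omega(y)$, shifted by a constant. By Proposition \ref{prop:Formula horofunction standard}, a translate $x\mapsto h_F(x-p)+c$ has the same form: it uses only $\omega\in\ver(\overline{F^\vee})$ with $c_\omega=\omega(p)+c$, and sets $c_\omega=-\infty$ for all other $\omega$.

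\textbf{Step 1 (classification).} Let $d(\cdot,y_n)-t_n$ converge in the compact-open topology. After passing to a subsequence, each $\ell_n(\omega)=\omega(y_n)-t_n$ converges in $[-\infty,\infty)$. Let $S\subset V$ be the set of $\omega$ with a finite limit.

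I claim that $S$ is exactly the vertex set of a closed face $\overline{F'}$ of $(B^N)^*$. The idea is to extend $\ell_n$ affinely to the polytope $(B^N)^*$ and take $C$ to be the set of points where $\ell_n$ stays bounded. Since $\max\ell_n$ is bounded above, this set is convex. It is also extreme-closed: if a positive convex combination of vertices stays bounded, then each of those vertices stays bounded, because all of them are bounded above. A convex extreme-closed subset of a polytope is a face, so $C$ is a face.

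Next I use that the space of affine functions on $\operatorname{aff}(S)$ is finite-dimensional, hence closed. So the limit of $\ell_n|_S$ is the restriction of some $\omega\mapsto\omega(p)+c$. Therefore the limit function is $x\mapsto h_{F}(x-p)+c$, where $F=(F')^\vee$ by Proposition \ref{prop:Bijection vee}. When $S=V$ the limit is distance-like, i.e.\ it is a point of $\mathfrak{a}$. Conversely, every such translate is a limit, for instance along $y_n=p+nf$ with $f\in F$.

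\textbf{Step 2 (the map).} Define $\Psi\bigl[\max_{\omega\in S}(c_\omega-\omega(\cdot))\bigr]=\sum_{\omega\in S}e^{c_\omega}\omega\big/\sum_{\omega\in S}e^{c_\omega}$. This is well defined modulo constants. On the orbit of translates of $[h_F]$, with $S=\ver(\overline{F^\vee})$, it is the softmax map $p\mapsto\nabla\log\sum_{\omega\in S}e^{\omega(p)}$. This map factors through $\mathfrak{a}$ modulo the annihilator of $\operatorname{span}(S-S)$. By the classical Legendre/moment-map argument it is a diffeomorphism onto the relative interior $F^\vee$: the function $\log\sum e^{\omega(p)}$ is strictly convex on that quotient, and its gradient image is the open polytope. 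For $S=V$ this sends $\mathfrak{a}$ onto the interior of $(B^N)^*$. Since the faces partition $(B^N)^*$, Step 1 shows that $\Psi$ is a bijection from $\overline{\mathfrak{a}}^N$ onto $(B^N)^*$, and that it identifies translates of $[h_F]$ with $F^\vee$.

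\textbf{Step 3 (topology, the main obstacle).} Both spaces are compact Hausdorff, so it suffices to prove that $\Psi$ is continuous. Take $[h_n]\to[h]$, and normalize the representatives so that $\max_\omega c^{(n)}_\omega=0$, setting $c^{(n)}_\omega=-\infty$ off the support. Uniform convergence on compacta of $\max_\omega(c^{(n)}_\omega-\omega(x))$ should force $c^{(n)}_\omega\to c_\omega$ in $[-\infty,0]$ for every $\omega$. To see this, recover each $c_\omega$ as $\inf_x\bigl(h(x)+\omega(x)\bigr)$, i.e.\ by a Legendre-type transform evaluated on a large compact set, and then exploit the polyhedrality of the norm. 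The delicate point is exactly this exchange of the limit with the infimum over the non-compact space $\mathfrak{a}$. I expect to handle it by showing that the infimum is attained, up to $\varepsilon$, on a compact set that is uniform in $n$. Once the $c_\omega$ converge, $\Psi$ is continuous, since $e^{-\infty}=0$ makes the softmax formula continuous in the extended coefficients.
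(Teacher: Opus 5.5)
The paper does not prove this statement. It quotes it from Ji--Schilling \cite{JS2024polyhedral}, so your proposal is an independent, self-contained route rather than a reconstruction, and it is sound.

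Step~1 is a clean way to classify all limits of distance-like functions:
\begin{itemize}
\item boundedness of $\max\ell_n$ forces the vertices with finite limiting coefficient to form the vertex set of a closed face;
\item closedness of the linear image of $\mathfrak{a}\times\R$ in $\R^S$ forces the limit to be affine on that face.
\end{itemize}
This directly gives facts the paper later extracts from the citation: every horofunction is a translate of some $h_F$, realized along a ray $p+nf$.

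Step~2 identifies each stratum, parametrized by $\mathfrak{a}/\Span(F)$, with $F^\vee$. The map is the gradient of $\log\sum_{\omega\in S}e^{\omega(p)}$, i.e.\ the moment map of the toric variety of the dual polytope on its positive real part. The dimensions match because $\Span_0(F^\vee)$ is the annihilator of $\Span(F)$.

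Compared with the citation, your route gives two things. First, an explicit homeomorphism sending the stratum of $[h_F]$ to $F^\vee$; this is the correct reading of ``the face $F$ of the dual unit ball'' in the statement. Second, an argument that never uses $N(-v)=N(v)$, which is the generality the paper needs for asymmetric norms such as $N_{\omega_1}$ and $N_{\iota\omega_1}$.

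The one step you leave as an expectation is the convergence $c^{(n)}_\omega\to c_\omega$ in Step~3. It closes in a few lines, and no uniform compact set is needed.

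\textbf{The infimum formula.} Your formula $c_\omega=\inf_x\bigl(h(x)+\omega(x)\bigr)$ holds for every $\omega\in\ver((B^N)^*)$.
\begin{itemize}
\item For $\omega\in S$ the infimum is attained at $x=p$, where all pieces indexed by $S$ are simultaneously active.
\item For $\omega\notin S$ the infimum is $-\infty$. Indeed $\omega\notin\operatorname{conv}(S)$, since that is a face with vertex set $S$. So some $v$ satisfies $\omega(v)<\min_{\omega'\in S}\omega'(v)$, and one lets $x=p+tv$ with $t\to\infty$.
\end{itemize}
Since $c^{(n)}_\omega\le h_n(x)+\omega(x)$ for every $x$, this gives $\limsup_n c^{(n)}_\omega\le c_\omega$ for all $\omega$. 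This is the easy direction of the limit/infimum exchange, and it is the only direction you need. It already settles the case $\omega\notin S$.

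\textbf{The case $\omega\in S$.} Choose $v$ with $\omega(v)<\omega'(v)$ for all $\omega'\in S\setminus\{\omega\}$; this is possible because $\omega$ is a vertex of $\operatorname{conv}(S)$. At $x_0=p+v$ the $\omega$-piece of $h$ exceeds every other piece by some $\delta>0$. Now combine $h_n(x_0)\to h(x_0)$ with the upper bounds $\limsup_n c^{(n)}_{\omega'}\le c_{\omega'}$ on the other coefficients. For $n$ large, the maximum defining $h_n(x_0)$ must then be attained by $\omega$ alone. Hence $c^{(n)}_\omega=h_n(x_0)+\omega(x_0)\to c_\omega$.

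\textbf{Well-definedness of $\Psi$.} The same infimum formula supplies something Step~2 uses tacitly. It shows that $S$ and $(c_\omega)_{\omega\in S}$ are determined by the function $h$ itself, not merely up to a common additive constant. So $\Psi$ is indeed well defined on $\overline{\mathfrak{a}}^N$.
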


\begin{figure}[h]
\begin{center}
\begin{tikzpicture}[>=Stealth, thick, scale=.6]

    \coordinate (A) at (90:2.5);
    \coordinate (B) at (210:2.5);
    \coordinate (C) at (330:2.5);
    \draw (30:2.5) -- (150:2.5) -- (270:2.5)-- cycle;
    \fill (0,0) circle (1.5pt);
    \draw[dotted] (A) -- (B) -- (C) -- cycle;

    \coordinate (BC1) at ($(B)!0.5!(C)$);

    \coordinate (AC1) at ($(A)!0.5!(C)$);

    \coordinate (AB1) at ($(A)!0.5!(B)$);

    \fill[blue] (B) circle (2.5pt);
    \draw[blue] ($(B)+(210:1)+(300:1)$) -- ($(B)+(210:1)+(120:1)$);
    \draw[blue, ->, dotted] ($(B)+(210:1)$) -- ($(B)+(210:1.5)$);

    \fill[red] (C) circle (2.5pt);
    \draw[red] ($(C)+(330:1)+(60:1)$) -- ($(C)+(330:1)+(240:1)$);
    \draw[red, ->, dotted] ($(C)+(330:1)$) -- ($(C)+(330:1.5)$);

    \fill[olive] (A) circle (2.5pt);
    \draw[olive] ($(A)+(90:1)+(180:1)$) -- ($(A)+(90:1)+(0:1)$);
    \draw[olive, ->, dotted] ($(A)+(90:1)$) -- ($(A)+(90:1.5)$);
    
    \fill[blue!50!red] (BC1) circle (2.5pt);
    \draw[blue!50!red] ($(BC1)+(270:2.0)+(120:1)$) -- ($(BC1)+(270:2.0)$) --($(BC1)+(270:2.0)+(60:1)$);
    \draw[blue!50!red, ->, dotted] ($(BC1)+(270:2.0)$) -- ($(BC1)+(270:2.5)$);
    
        \fill[olive!50!blue] (AB1) circle (2.5pt);
    \draw[olive!50!blue] ($(AB1)+(150:2.0)+(0:1)$) -- ($(AB1)+(150:2.0)$) --($(AB1)+(150:2.0)+(300:1)$);
    \draw[olive!50!blue, ->, dotted] ($(AB1)+(150:2.0)$) -- ($(AB1)+(150:2.5)$);

        \fill[olive!50!red] (AC1) circle (2.5pt);
    \draw[olive!50!red] ($(AC1)+(30:2.0)+(240:1)$) -- ($(AC1)+(30:2.0)$) --($(AC1)+(30:2.0)+(180:1)$);
    \draw[olive!50!red, ->, dotted] ($(AC1)+(30:2.0)$) -- ($(AC1)+(30:2.5)$);

\end{tikzpicture}\hspace{1cm}\begin{tikzpicture}[>=Stealth, thick, scale=.6]

    \coordinate (A) at (90:2.5);
    \coordinate (B) at (210:2.5);
    \coordinate (C) at (330:2.5);
    \fill (0,0) circle (1.5pt);
    \draw[dotted] (A) -- (B) -- (C) -- cycle;

    \coordinate (BC1) at ($(B)!0.3!(C)$);
    \coordinate (BC2) at ($(B)!0.7!(C)$);

    \fill[blue] (B) circle (2.5pt);
    \draw[blue] ($(B)+(210:1)+(300:1)$) -- ($(B)+(210:1)+(120:1)$);
    \draw[blue] ($(300:2.1)$) -- ($(120:2.1)$);
    \draw[blue, ->, dotted] ($(B)+(210:1)$) -- ($(B)+(210:1.5)$);

    \fill[red] (C) circle (2.5pt);
    \draw[red] ($(C)+(330:1)+(60:1)$) -- ($(C)+(330:1)+(240:1)$);
    \draw[red] ($(60:2.1)$) -- ($(240:2.1)$);
    \draw[red, ->, dotted] ($(C)+(330:1)$) -- ($(C)+(330:1.5)$);

    \fill[blue!30!red] ($(B)!0.6!(C)$) circle (2.5pt);
    \draw[blue!30!red] ($(BC1)+(270:1.7)+(120:1)$) -- ($(BC1)+(270:1.7)$) --($(BC1)+(270:1.7)+(60:1)$);
    \draw[blue!30!red] ($(-.1,0)+(240:1.5)+(120:3.5)$) -- ($(-.1,0)+(240:1.5)$) --($(-.1,0)+(240:1.5)+(60:3.5)$);
    \draw[blue!30!red, ->, dotted] ($(BC1)+(270:1.7)$) -- ($(BC1)+(270:2.2)$);
    
    \fill[blue!70!red] ($(B)!0.4!(C)$) circle (2.5pt);
    \draw[blue!70!red] ($(BC2)+(270:1.7)+(120:1)$) -- ($(BC2)+(270:1.7)$) --($(BC2)+(270:1.7)+(60:1)$);
    \draw[blue!70!red] ($(.1,0)+(300:1.5)+(120:3.5)$) -- ($(.1,0)+(300:1.5)$) --($(.1,0)+(300:1.5)+(60:3.5)$);
    \draw[blue!70!red, ->, dotted] ($(BC2)+(270:1.7)$) -- ($(BC2)+(270:2.2)$);

\end{tikzpicture}
    \end{center}
    \caption{ Illustration of the horofunction compactification and the behavior of horofunctions for a metric whose unit ball is a triangle. The dual unit ball  for the metric is drawn as a dotted triangle. We illustrate for each point on this dual unit ball a part of a level set of the corresponding horofunction defined up to a constant, with an arrow pointing towards a directions on which the horofunction is increasing. In the second picture we additionally draw a level curve passing close to the origin, to visualise the convergence of the horofunctions up to a constant.}
    \label{fig:Horofunction compactification}
\end{figure}

In particular, for a polyhedral Finsler metric each horofunction is the limit point of some Euclidean geodesic ray. Note that Euclidean geodesic rays are geodesic rays in $(\mathfrak{a},N)$ up to rescaling the speed. Polyhedral metrics $(\mathfrak{a},N)$ are never uniquely geodesic. 

It will be useful to understand precisely when two translates of standard horofunctions agree. For that purpose we introduce the following vector subspaces associated to a face $F$ of the unit ball of $N$.

\begin{defn}\label{defn:Lin and Lin_0}

Let $\Span(F)$ denote the linear span, i.e.\ the smallest vector subspace of $\mathfrak{a}$ containing $F$.

Let $\Span_0(F)$ denote the linear part of the affine span of $F$, or equivalently the linear span of differences $f-f'$ for $f,f'\in F$.

\medskip 

Dually we define $\Span(F^\vee)$ as the linear span of $\mathfrak{a}$ containing $F^\vee$ and $\Span_0(F^\vee)$ the linear part of the affine span of $F^\vee$. For a vector subspace $V\subset \mathfrak{a}$ we write $V^\vee=\lbrace \beta\in \mathfrak{a}^*\mid \forall v\in V,\, \beta(v)=0\rbrace$.
\end{defn}

For polyhedral metrics these subspaces have the following property:

\begin{prop}\label{prop:basic properties Lin}
Let $F$ be a face of $B^N$, where $N$ is a polyhedral norm. We have $\Span(F^\vee)=(\Span_0(F))^\vee$ and $\Span_0(F^\vee)=(\Span(F))^\vee$.
\end{prop}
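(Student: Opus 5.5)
We will deduce both identities from the polyhedral description of faces used in the proof of Proposition~\ref{prop:Bijection vee}, together with the duality $F=(F^\vee)^\vee$ established there. Write $\ver(F^\vee)=\{\omega_1,\dots,\omega_k\}$, the vertices of $(B^N)^*$ equal to $1$ on $F$. Then $\overline{F}=\{v\in B^N\mid \omega_i(v)=1 \ \forall i\}$, and since $F$ is relatively open, it is open in the affine subspace $\{v\mid \omega_i(v)=1\ \forall i\}$. This means that no other vertex $\omega\notin\ver(F^\vee)$ is forced to equal $1$ on $F$, as such a constraint would cut out a smaller face. Consequently the affine span of $F$ is exactly $\{v\mid \omega_i(v)=1\ \forall i\}$, and its linear part is
\[
\Span_0(F)=\{v\mid \omega_i(v)=0\ \forall i\}=\bigl(\Span(\omega_1,\dots,\omega_k)\bigr)^\vee .
\]
Taking annihilators in finite dimension gives $(\Span_0(F))^\vee=\Span(\omega_1,\dots,\omega_k)=\Span(F^\vee)$, because $\overline{F^\vee}$ is the convex hull of these $\omega_i$ by Lemma~\ref{lem: Alternative definition Fvee}. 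This proves the first identity.

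For the second identity we apply the same argument with the roles of $\mathfrak{a}$ and $\mathfrak{a}^*$ exchanged. The dual ball $(B^N)^*$ is again a polyhedron containing $0$ in its interior, its dual is $B^N$, and by Proposition~\ref{prop:Bijection vee} the face $F^\vee$ satisfies $(F^\vee)^\vee=F$. The first identity applied to the face $F^\vee$ of $(B^N)^*$ then gives
\[
\Span\bigl((F^\vee)^\vee\bigr)=\bigl(\Span_0(F^\vee)\bigr)^\vee, \qquad\text{that is,}\qquad \Span(F)=\bigl(\Span_0(F^\vee)\bigr)^\vee ,
\]
where annihilators of subspaces of $\mathfrak{a}^*$ are taken in $\mathfrak{a}$. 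Taking annihilators once more yields $\Span_0(F^\vee)=(\Span(F))^\vee$.

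Alternatively, the second identity can be checked directly. We have $\beta\in(\Span(F))^\vee$ if and only if $\beta(f)=0$ for all $f\in F$. On the other hand, $\Span_0(F^\vee)$ is spanned by differences $\omega_i-\omega_j$, each of which vanishes on $F$; so $\Span_0(F^\vee)\subseteq(\Span(F))^\vee$. Equality then follows from a dimension count using Proposition~\ref{prop:Bijection vee}. If $\dim F=p$, then $\dim F^\vee=n-p-1$. Since $0\notin$ the affine span of $F$, we get $\dim\Span(F)=p+1$, so $\dim(\Span(F))^\vee=n-p-1=\dim\Span_0(F^\vee)$.

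The main obstacle is the first step: justifying that the affine span of $F$ is cut out exactly by the equations $\omega_i=1$ for $\omega_i\in\ver(F^\vee)$, with no further linear relations. This uses relative openness of $F$ together with the dimension statement of Proposition~\ref{prop:Bijection vee}: $\dim F=p$ forces $\dim\Span(\omega_i)=n-p$, which matches the codimension of the solution space. Once this is in place, everything else is linear-algebra duality.
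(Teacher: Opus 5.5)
Your argument is correct and is essentially the paper's. Both identities come down to the trivial inclusions $\Span_0(F)\subseteq \Span(F^\vee)^\vee$ and $\Span_0(F^\vee)\subseteq \Span(F)^\vee$ (every $\beta\in F^\vee$ equals $1$ on $F$), upgraded to equalities by the dimension count of Proposition~\ref{prop:Bijection vee}. Your ``affine span of $F$ equals $\{\omega_i=1\}$'' is exactly the first inclusion plus that count, your direct check of the second identity is the paper's argument, and the bipolar shortcut via $(F^\vee)^\vee=F$ is also valid.

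One point to tidy: in your first paragraph, ``since $F$ is relatively open, it is open in $\{v\mid\omega_i(v)=1\}$'' presupposes what is being proved. The step that actually does the work is the one you give at the end, $\dim\Span(F^\vee)=\dim F^\vee+1=n-p$, which holds because $F^\vee$ lies in the affine hyperplane $\{\beta(f)=1\}$ for any $f\in F$.
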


\begin{proof}
Let us first prove that in general, for any norm $N$, $\Span(F)\subset \Span_0(F^\vee)^\vee$. Let $f\in F$ and $\alpha, \beta\in F^\vee$ so that $\alpha-\beta \in \Span_0(F^\vee)$. By definition of $F^\vee$, $\alpha(f)=\beta(f)=1$ so $(\alpha-\beta)(f)=0$. Hence $f\in \Span_0(F^\vee)^\vee$, and $\Span(F)\subset\Span_0(F^\vee)^*$.

Similarly one has for any norm $N$ that $\Span_0(F)\subset \Span(F^\vee)^\vee$. Let $f,g\in F$ so $f-g\in \Span_0(F)$ and $\alpha\in F^\vee$. By definition of $F^\vee$, $\alpha(f)=\alpha(g)=1$ so $\alpha(f-g)=0$. Hence $f-g\in \Span(F^\vee)^\vee$, and $\Span_0(F)\subset\Span(F^\vee)^\vee$.

\medskip

In order to check the equality, it suffices to compute that the dimensions coincide. As in the proof of Proposition \ref{prop:Bijection vee}, if $F$ has dimension $p$, $F^\vee$ has codimension $p+1$, so the dimension of $\Span_0(F)$ and the codimension of $\Span(F^\vee)$ coincide. Similarly the dimension of $\Span(F)$ and the codimension of $\Span_0(F^\vee)$ coincide.
\end{proof}

As we mentioned, these subspaces have the following property with respect to the standard horofunctions.

\begin{lemma}\label{lem:equivalence of translated horofunctions}
    Let $F$ be a face of $B^N$ and let $v \in \mathfrak{a}$.
    The standard horofunction $x\mapsto h_F(x)$ coincides on $\mathfrak{a}$ with $x\mapsto h_F(x+v)$, if and only if $v\in \Span_0(F)$. 
    These two functions differ by a constant function if and only if $a\in \Span(F)$.
    
    \medskip
    
    Let $F,F'$ be two faces of the unit ball for $N$. 
    The difference $h_F-h_{F'}$ of the associated standard horofunctions is bounded on $\mathfrak{a}$ if and only if $F=F'$.
\end{lemma}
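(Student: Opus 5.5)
The whole argument runs through the formula of Proposition \ref{prop:Formula horofunction standard},
$$h_F(x)=\max_{\omega\in \ver(\overline{F^\vee})}-\omega(x),$$
so that $h_F$ is the support function of the polytope $-\overline{F^\vee}$ evaluated at $x$. Translating gives $h_F(x+v)=\max_{\omega}\bigl(-\omega(x)-\omega(v)\bigr)$.

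\emph{First claim.} If $v\in\Span_0(F)$, then by Proposition \ref{prop:basic properties Lin} we have $v\in \Span(F^\vee)^\vee$. Hence $\omega(v)=0$ for every $\omega\in\overline{F^\vee}$, and the two functions coincide.

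For the converse, suppose $h_F(x+v)=h_F(x)$ for all $x$. Take $x=tv$ with $t\in\R$. Then $h_F((t+1)v)=h_F(tv)$ for all $t$, so $h_F(sv)$ is periodic in $s$. But $h_F$ is positively homogeneous, so $h_F(sv)=s\,h_F(v)$ for $s>0$ and $h_F(sv)=|s|\,h_F(-v)$ for $s<0$. Periodicity therefore forces $h_F(v)=h_F(-v)=0$. This gives $\max_\omega(-\omega(v))=0$ and $\max_\omega\omega(v)=0$, so $\omega(v)=0$ for all vertices $\omega$ of $\overline{F^\vee}$. Hence $v\in\Span(F^\vee)^\vee=\Span_0(F)$.

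\emph{Second claim.} If $v\in\Span(F)=\Span_0(F^\vee)^\vee$, then $\omega(v)$ takes a common value on all of $\overline{F^\vee}$, namely the value at any $\omega_0$, since differences of points of $F^\vee$ kill $v$. So $h_F(\cdot+v)=h_F-\omega_0(v)$.

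Conversely, suppose $h_F(x+v)-h_F(x)=c$ for all $x$. Choose $x$ generic so that the maximum is attained at a single vertex $\omega$. Replacing $x$ by $sx$ with $s\to\infty$ shows that $-\omega(v)=c$ at that vertex. Every vertex of $\overline{F^\vee}$ is the unique maximizer for some open set of $x$, since vertices of a polytope are exposed. Hence $\omega(v)=-c$ for all vertices, so $v$ is killed by $\Span_0(F^\vee)$, that is, $v\in \Span(F)$. (I read the statement's ``$a$'' as $v$.)

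\emph{Third claim.} If $F=F'$ the difference is zero. Conversely, $h_F$ and $h_{F'}$ are positively homogeneous, so if $h_F-h_{F'}$ is bounded then, scaling $x$ by $s\to\infty$, the two functions are equal. Two support functions of polytopes agree only when the polytopes agree, so $\overline{F^\vee}=\overline{F'^\vee}$. Then $F^\vee=F'^\vee$, and by the bijection of Proposition \ref{prop:Bijection vee} we get $F=F'$.

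The main obstacle I expect is the converse directions. There one must pass from an identity of max-functions to an identity on every vertex, which requires the exposedness of vertices together with homogeneity, both of which are available because $N$ is polyhedral.
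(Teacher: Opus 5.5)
Your proof is correct and follows essentially the same route as the paper. Both arguments use the max-of-linear-forms formula for $h_F$, positive homogeneity, and the annihilator identities of Proposition \ref{prop:basic properties Lin}; in the third claim both use the fact that a support function determines the closed convex set, which the paper phrases as a separation argument. The only real deviation is the converse of the second claim: the paper uses $h_F(x+tf)=h_F(x)-t$ for $f\in F$ to replace $v$ by $v+cf$ and reduce to the first claim, which avoids your exposed-vertex argument.
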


\begin{proof}
Let $f\in F$, we have by Proposition \ref{prop:basic properties Lin}:
$$\Span_0(F)=\Span(F^\vee)^*=\bigcap_{\beta\in F^\vee} \Ker(\beta).$$
Clearly translations in this subspace leave invariant the standard horofunction $h_F$ given by equation \eqref{eq:Formula standard horofunctions}.
Note that $\langle f\rangle \oplus \Span_0(F)=\Span(F)$.
For all $x\in \mathfrak{a}$ one has $h_F(x)-h_F(tf+x)=t$, so translating $h_F$ by an element in $\Span(F)$ only changes the horofunction by a constant.

\medskip

Conversely suppose that translating by $v\in \mathfrak{a}$ changes the horofunction by a constant. Up to translating $v$ by a multiple of $f$ with $f\in F$ we can assume that $h_F(x)=h_F(x+v)$ for all $x\in \mathfrak{a}$. In particular since $h_F(0)=0$, one has $h_F(v)=h_F(-v)=0$ so in particulat by \eqref{eq:Formula standard horofunctions}, for all $\beta \in \overline{F}$, $\beta(v)\leq 0$ and $\beta(-v)\leq 0$ so $\beta(v)=0$ and hence $v\in \Span_0(F)$.

\medskip 

Now for the second point, note that for $v\in \mathfrak{a}$ and $t\in \R$, $h_{F}(tv)=th_F(v)$, and the same holds for $F'$, hence $h_F(v)-h_{F'}(v)=\frac{1}{t}(h_F(tv)-h_{F'}(tv))$ which converges to $0$ when $t$ goes to $+\infty$, meaning that $h_F=h_{F'}$ if the difference is bounded. If we still have $F\neq F'$ then there is up to exchanging $F$ and $F'$ a $\beta\in F^\vee$ such that $\beta\notin \overline{F'^\vee}$. Since $\beta \in F^\vee$ then $h_F\geq -\beta$, but since $\beta\notin \overline{F'^\vee}$, one can find $v\in \mathfrak{a}$ such that $-\beta(v)> 0$ and $-\beta'(v)< 0$ for all $\beta'\in \overline{F'^\vee}$, meaning that $h_{F'}(v)<-\beta(v)$. Hence $h_{F}\neq h_{F'}$. 
\end{proof}

\subsection{The stratification of the space of horofunctions}

Let $N$ be a $W$-invariant norm on $\mathfrak{a}$ and let $d^N$ denote the distance on $\X$ induced by the corresponding polyhedral Finsler metric. 

In this case, we have the following theorem relating the horofunction compactification of $(\X,d^N)$ to the horofunction compactification of $(\mathfrak{a},N)$.

\begin{theorem}[{\cite{haettel2018horofunctioncompactificationssymmetricspaces}}]
\label{thm:Comparaison Horofunction Flat and symmetric space HSWW}
    The horofunction compactification of $(\mathfrak{a},N)$ embeds into the horofunction compactification of $(\X,d^N)$, and every horofunction in $\X$ up the action of $G$ lies in the image of this embedding. 
\end{theorem}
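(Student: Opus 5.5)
The plan is to prove the two assertions separately: first construct an equivariant embedding of the horofunctions of $(\mathfrak{a},N)$ into those of $(\X,d^N)$, then show that every horofunction of $\X$ is a $G$-translate of one in the image. Throughout we use the formula $d^N(x,y)=N(\vec{d}(x,y))$ from Theorem \ref{thm:Planche} together with the fact that $F_0=A\cdot o$ is an isometrically embedded copy of $(\mathfrak{a},N)$ (up to the order-of-arguments convention).

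\textbf{Step 1: the embedding.} Take a horofunction $h$ on $\mathfrak{a}$. By Theorem \ref{thm: Ji-Schilling} and the discussion after it, $h$ is a translate of some standard horofunction $h_F$ and is obtained as a limit along a Euclidean ray, so it suffices to treat $h_F(\cdot -v)$. The natural candidate on $\X$ is
$$H_F(x)=\lim_{t\to\infty} d^N(x,e^{tf+v}o)-t, \qquad f\in F.$$
The main work is to show this limit exists uniformly on compacts.

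To do this, fix $f$ in the closed Weyl chamber and let $P_\Theta$ be the parabolic attached to $f$. Write $x=k\,u\,e^{b}o$ using the horospherical decomposition $G=K\,U_\Theta\,A$, or more conveniently $x=u\,m\,e^{b}o$ with $u\in U_\Theta$ and $m$ in the Levi part. Conjugating $u$ by $e^{-tf}$ contracts it to the identity, so
$$\vec d(x,e^{tf}o)=\vec d\bigl(m e^{b}o,\,e^{tf}o\bigr)+o(1).$$
The Levi factor commutes with $e^{tf}$, and in the Levi symmetric space $\vec d(me^b o, e^{tf}o)$ equals $tf$ plus a bounded correction given by a Cartan projection in the Levi. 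Since $N$ is polyhedral, $N(tf+w)-t$ stabilizes for large $t$ to $\max_{\omega\in\ver(\overline{F^\vee})}\omega(w)$ by Proposition \ref{prop:Formula horofunction standard}. This gives an explicit limit.

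Injectivity follows from restricting to the flat: the restriction of $H_F(\cdot-v)$ to $F_0$ is $h_F(\cdot - v)$, up to sign conventions for the asymmetric norm. Continuity of the map then gives an embedding of compactifications.

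\textbf{Step 2: surjectivity up to $G$.} Let $H$ be a horofunction on $\X$, so $H=\lim d^N(\cdot,y_n)-c_n$ with $y_n\to\infty$. Write $y_n=k_ne^{a_n}o$ using the Cartan decomposition and pass to a subsequence with $k_n\to k$. Since $\overline{\mathfrak{a}}^N$ is compact, we may also assume the functions $d^N(\cdot, e^{a_n}o)-c_n$ restricted to the flat converge to a horofunction of $\mathfrak{a}$, which has the form $h_F(\cdot-v)$.

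The key point is the first-order behaviour: uniformly on compacts in $\X$, we need
$$d^N(x,e^{a_n}o)-c_n\to H_F(x-v).$$
This is the uniform version of the Step 1 computation, now along a sequence $a_n$ that only converges to $F$ in direction rather than along a fixed ray. It is the main obstacle, because $a_n$ may approach walls of the Weyl chamber at different rates. The approach I would try is to choose the parabolic $P_\Theta$ from the face $F$ itself, namely from the roots vanishing on $\Span(F)$ intersected with the chamber. The components of $a_n$ transverse to $\Span(F)$ then stay bounded modulo $\Span_0(F)$, and can be absorbed into a Levi element and into $v$. Lemma \ref{lem:equivalence of translated horofunctions} guarantees the result does not depend on these choices.

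Finally, equicontinuity (all these functions are $1$-Lipschitz for $d^N$) lets us replace $k_n$ by $k$. This yields $H=k\cdot H_F(\cdot-v)$, which lies in the $G$-orbit of the image.
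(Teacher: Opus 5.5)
\textbf{There is a genuine gap: the uniform convergence step you call the main obstacle is only sketched, and the sketch fails as stated.} Your fixed-ray computation in Step 1 is fine, but the theorem's content lies elsewhere. The paper itself gives no proof and quotes \cite{haettel2018horofunctioncompactificationssymmetricspaces}. Three things rest on the uniform convergence statement:
\begin{enumerate}
\item the well-definedness of $h_F(\cdot-v)\mapsto H_F(\cdot-v)$, i.e.\ independence of $f\in F$ (when $\Theta_F^{\flipped}\neq\emptyset$ the parabolic attached to $f$ changes as $f$ moves in $F$);
\item the continuity you invoke to get an embedding;
\item all of Step 2.
\end{enumerate}

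\textbf{The sketch misdescribes what convergence in the flat gives.} The direction of $a_n$ need not converge into $F$. For $N_\infty$ on $\mathfrak{a}=\R^2$ (the $\mathbb{H}^2\times\mathbb{H}^2$ example) take $a_n=(n,n-\sqrt n)$. Then $N(a_n-x)-n\to -x_1$, which is $h_F$ for the edge $F=\{(1,y):|y|<1\}$, yet $a_n/n\to(1,1)$, a vertex. What flat convergence actually yields is $a_n=\lambda_nf_n+v+o(1)$, with $f_n\in F$ and $\lambda_n(1-\omega_j(f_n))\to\infty$ for every vertex $\omega_j$ of $(B^N)^\ast$ not in $\overline{F^\vee}$. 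Contracting the horospherical group needs $\alpha(a_n)\to+\infty$ for every positive root $\alpha$ with $\sigma_\alpha F\neq F$, and this must be derived from that condition. One way: pick $\omega_i\in\ver(\overline{F^\vee})$ with $\sigma_\alpha\omega_i\notin\overline{F^\vee}$, and use $(\sigma_\alpha\omega_i)(f)=1-\alpha(f)\,\omega_i(\alpha^\vee)$ for $f\in F$ (with $\alpha(\alpha^\vee)=2$), together with the fact that $\alpha$ does not vanish on $F$.

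\textbf{The choice of parabolic is also off.} The roots vanishing on $\Span(F)$ are only those of $\Theta_F^{\fixed}$. With that choice, the root groups of $\Theta_F^{\flipped}$ lie in the unipotent radical and are not contracted: take $a_n=(n,0)$ in the same example, where $\alpha_2(a_n)=0$. The parabolic that works is $P_\Theta$ with $\Theta=\Delta\setminus\Theta_F^{\stable}$. But then the Levi component $m$ of $x=um\,o$ no longer commutes with $e^{a_n}$ (only the centre of the Levi does), so your exact Step 1 formula is lost.

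\textbf{What the uniform step actually requires.} Split $a_n$ along $\mathfrak{z}(\mathfrak{l})\cap\mathfrak{a}$, $\mathfrak{a}_F^{\fixed}$ and $\mathfrak{a}_F^{\flipped}$.
\begin{itemize}
\item The $\mathfrak{a}_F^{\fixed}$-component converges, because $\mathfrak{a}_F^{\fixed}\perp\Span(F)$.
\item The $\mathfrak{a}_F^{\flipped}$-component may diverge. Up to the Weyl group of $G_F^{\flipped}$ (which preserves $F$), the corresponding part of the Levi Cartan projection of $m^{-1}e^{a_n}$ differs from it by a bounded but generally non-convergent error. This error lies in $\mathfrak{a}_F^{\flipped}\subset\Span_0(F)$.
\end{itemize}
To conclude you then need the estimate $N(\lambda_nf_n+w+\delta)-\lambda_n\to\max_{\omega\in\ver(\overline{F^\vee})}\omega(w)$, uniformly for $\delta$ in bounded subsets of $\Span_0(F)$. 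This follows from the condition on $\lambda_n(1-\omega_j(f_n))$. The unbounded components cannot be ``absorbed into $v$''. Lemma~\ref{lem:equivalence of translated horofunctions} only gives exact invariance of $h_F$ under $\Span_0(F)$ on the flat, so it does not supply this estimate in $\X$. Until the uniform convergence is established, neither the embedding nor the claim that every horofunction is a $G$-translate of one coming from the flat is proved.
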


In particular by Theorem \ref{thm:Comparaison Horofunction Flat and symmetric space HSWW} every horofunction $h$ on $\X$ is a Busemann point.
A more precise result follows from Theorem \ref{thm: Ji-Schilling}: for any horofunction $h$ there exists a geodesic ray $\gamma \colon \R\to \X$ for the Riemannian metric on $\X$, which is also a geodesic ray for $d^N$ up to a rescaling the speed by a constant, such that for all $x\in \X$:
$$ h(x)=\lim_{t\to +\infty} d^N(x,\gamma(t))-d^N(\gamma(0),\gamma(t)) .$$ 

\medskip

As a consequence, the stratification of the set of horofunctions $\mathcal{H}^N(\X)$ into $G$-orbits is reduced to $\mathcal{H}^N(\mathfrak{a})$ modulo the affine Weyl group $W \ltimes \mathfrak{a}$.

Let us set some notation.
For a face $F$ of the unit ball $B^N$ let $\mathcal{O}_F$ be the set of horofunctions in the $G$-orbit of the standard one $h_F\in \mathcal{H}^N \mathfrak{a}\subset \mathcal{H}^N\X$. 
We write $\text{Faces}(N)$ for the set of faces of $B^N$ up to the action of the Weyl group.
Then the previous discussion amounts to the following:

\begin{prop}
\label{prop:Description space Horofunctions}
    The set $\mathcal{H}^N(\X)$ of all horofunctions on $\X$ for the metric $d^N$ decomposes into $G$-orbits:
    $$\mathcal{H}^N(\X)=\bigcup_{[F]\in \text{Faces}(N)}\mathcal{O}_F .$$

    Similarly, the horoboundary decomposes as 
        \[ \partial^N(\X) = \bigcup_{[F]\in \text{Faces}(N)}\mathcal{O}_F/\R . \]
\end{prop}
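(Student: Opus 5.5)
We need to show three things: every horofunction lies in some $\mathcal{O}_F$; each $\mathcal{O}_F$ depends only on the $W$-class of $F$; and two faces that are not $W$-related give different orbits. The horoboundary statement then follows by passing to the quotient by constants, since $G$ commutes with adding constants and each $\mathcal{O}_F$ is stable under adding constants.

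\emph{Covering.} Let $h\in\mathcal{H}^N(\X)$. By Theorem \ref{thm:Comparaison Horofunction Flat and symmetric space HSWW}, after acting by some $g\in G$ we may assume $h$ is in the image of $\mathcal{H}^N(\mathfrak{a})$. Concretely, its restriction to the standard flat $A\cdot o\simeq\mathfrak{a}$ is a horofunction of $(\mathfrak{a},N)$. By Theorem \ref{thm: Ji-Schilling}, every point of the horoboundary of $(\mathfrak{a},N)$ is a translate of some $[h_F]$. Hence $h|_{\mathfrak{a}}=h_F(\cdot+v)+c$ for some face $F$, some $v\in\mathfrak{a}$ and some $c\in\R$.

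Translation by $v$ is realised by $e^{v}\in A\subset G$. Adding the constant $c$ is realised by an element $e^{tf}$ with $f\in F$, using the identity $h_F(x)-h_F(x+tf)=t$ from Lemma \ref{lem:equivalence of translated horofunctions}. For this to be legitimate, the embedding of Theorem \ref{thm:Comparaison Horofunction Flat and symmetric space HSWW} must be equivariant for $A$ (and for $N_{\mathfrak{a}}$, which acts through $W$). I would justify this from its construction as limits $d^N(\cdot,e^{tf}o)-t$ along flat rays: $e^{v}$ moves the ray to a parallel ray in the same flat, and $d^N$ is $G$-invariant. It follows that $h\in\mathcal{O}_F$.

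\emph{Weyl invariance.} Take a representative $n_w\in N_{\mathfrak{a}}$ of $w\in W$. Since $N$ is $W$-invariant, $n_w$ maps the ray $e^{tf}o$ to the ray $e^{t\,wf}o$ with $wf\in wF$. By $G$-invariance of $d^N$ it therefore sends $h_F$ to $h_{wF}$, so $\mathcal{O}_F=\mathcal{O}_{wF}$. This shows the decomposition is indexed by $\text{Faces}(N)$.

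\emph{Disjointness.} This is the main obstacle. Suppose $g\cdot h_F=h_{F'}$, and I want to show $F'\in W\cdot F$. I would use a $G$-invariant that detects the face. The first candidate is asymptotic growth: for a horofunction $h$ and a base point $x$, look at the function on unit tangent directions at $x$
\[\tau\mapsto\lim_{t\to\infty}h(\exp_x(t\tau))/t.\]
This function transforms equivariantly under $G$. On the flat through $o$, for $h=h_F$, it equals the support-type function $\max_{\omega\in\ver(\overline{F^\vee})}-\omega(\cdot)$. Away from the flat, I would compute it via the Cartan decomposition, expecting it to give the same function composed with the projection to $\mathfrak{a}$, up to $K$-conjugation.

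From this function one recovers $\overline{F^\vee}$ up to the $W$-ambiguity coming from the choice of flat. Then Lemma \ref{lem:equivalence of translated horofunctions}, which says $h_F-h_{F'}$ is bounded only if $F=F'$, together with Proposition \ref{prop:Bijection vee}, forces $F'=wF$. The delicate point is that the identification of this invariant is independent of the chosen flat; I would control that using the fact that any two points lie in a common flat.

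An alternative route for disjointness is to use stabilizers. The stabilizer of $[h_F]$ should be contained in a parabolic $P_\Theta$, where $\Theta$ is determined by the $W$-orbit of $\Span(F)$. Comparing the fibers over flag varieties would then separate the orbits, at least when the $\Theta$'s differ, and the invariant above would handle the remaining cases.
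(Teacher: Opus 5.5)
Your covering and Weyl-invariance steps are correct and match the paper. The paper deduces the proposition directly from Theorem \ref{thm:Comparaison Horofunction Flat and symmetric space HSWW} and Theorem \ref{thm: Ji-Schilling}: the $G$-orbit structure of $\mathcal{H}^N(\X)$ is reduced to that of $\mathcal{H}^N(\mathfrak{a})$ modulo the affine Weyl group $W\ltimes\mathfrak{a}$. There, distinct $W$-classes of faces are separated by Lemma \ref{lem:equivalence of translated horofunctions}.

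The gap is in your disjointness step, which carries all the content you set out to add, and which you assert rather than prove. If ``projection to $\mathfrak{a}$'' means the Cartan projection, your expected formula for the slope function off the flat would make it $K$-invariant, and that fails. Already for $h=\max(b_{\alpha,o}(p_1),b_{\beta,o'}(p_2))$ on $\mathbb{H}^2\times\mathbb{H}^2$ with $N_\infty$, the slope along a ray from the base point depends on whether its components point at $\alpha$ and $\beta$, not just on their lengths. The correct statement is that the slope is $h_F(wu)$, where $u$ is the type of the endpoint and $w\in W$ records its relative position to the flag of $h_F$. Proving even this requires putting the endpoint and the defining ray of $h_F$ in a common flat. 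Likewise, ``one recovers $\overline{F^\vee}$ up to $W$, independently of the flat'' is just the claim $g\cdot h_F=h_{F'}\Rightarrow F'\in W\cdot F$ restated, so as written the argument is circular. Once the formula is established, your invariant can be completed, for instance by comparing the types along which the slope equals $-N$. At that point, though, the direct argument below is shorter.

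The fix is the tool you mention, used directly; it is the same argument the paper uses to show $S_F\subset P_{\Delta\setminus\Theta_F^{\stable}}$.
\begin{enumerate}
\item Take $f\in F$ and $f'\in F'$, and set $\gamma_f(t)=e^{tf}o$ and $\gamma_{f'}(t)=e^{tf'}o$. If $g\cdot h_F=h_{F'}$, then $h_{F'}$ is the limit of $d^N(\cdot,g\gamma_f(t))-t$ and also of $d^N(\cdot,\gamma_{f'}(t))-t$.
\item Choose a maximal flat $A'=g''A$ whose visual boundary contains both $g\gamma_f(\infty)$ and $\gamma_{f'}(\infty)$. Replace each ray by an asymptotic ray in $A'$ with the same Riemannian speed.
\item In the CAT(0) space $\X$, such asymptotic rays stay at bounded Riemannian distance. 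Since $d^N$ is bi-Lipschitz to the Riemannian distance, the functions $d^N(\cdot,c(t))-t$ change by a uniformly bounded amount.
\item On $A'$, pulled back by $g''$, the new limits are translates of $h_{wF}$ and $h_{w'F'}$. Indeed, the new directions have the types of $f$ and $f'$, and $d^N$ restricted to a flat is given by $N$.
\item Both translates are within bounded distance of $h_{F'}\circ g''$, and these functions are Lipschitz. Hence $h_{wF}-h_{w'F'}$ is bounded on $\mathfrak{a}$, and Lemma \ref{lem:equivalence of translated horofunctions} gives $wF=w'F'$.
\end{enumerate}
Alternatively, you can cite Theorem \ref{thm:Comparaison Horofunction Flat and symmetric space HSWW} in the form the paper implicitly uses: horofunctions in the image of $\mathcal{H}^N(\mathfrak{a})$ that are $G$-equivalent are $W\ltimes\mathfrak{a}$-equivalent. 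Disjointness then follows immediately from the same lemma.
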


We note that there are finitely many such orbits precisely when $N$ is polyhedral. 

\begin{remark}
    For any invariant norm $N$ and orbit $\mathcal{O}_F$ of horofunctions of $d^N$, there exists a polyhedral norm $N'$ and a corresponding face $F'$ so that $\mathcal{O}_F \simeq \mathcal{O}_{F'}$, but the global structure of the compactifications is different.
\end{remark}

Among all faces of  the unit ball $B^N$, one face plays a particularly important role.
Recall that $\rho\in \mathfrak{a}^*$ denotes the half sum of all positive roots with multiplicity. 
We define $F_\rho$ to be the face of $B^N$ such that the maximum of $\rho$ on $B^N$ is reached exactly on $\overline{F_\rho}$. 
One can check that one has the following other characterization:

\begin{prop}
    The face $F_\rho$ is characterized by the fact that for some $\lambda>0$, $\lambda\rho\in F_\rho^\vee$. 
\end{prop}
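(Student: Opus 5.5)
This is duality for supporting hyperplanes of a compact convex set. We use the definition of $F_\rho$ as the face on whose closure the maximum of $\rho$ over $B^N$ is attained, together with the description \eqref{eqn: def of Fvee} of $\overline{F^\vee}$ and Lemma \ref{lem: Alternative definition Fvee}.

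\textbf{Forward direction.} Let $m=\max_{B^N}\rho$. Since $\rho\neq 0$ and $0$ is interior to $B^N$, we have $m>0$; set $\lambda=1/m$. For every $v\in B^N$ we get $\lambda\rho(v)\le 1$, so $\lambda\rho\in(B^N)^*$, and $\lambda\rho=1$ exactly on $\overline{F_\rho}$. By \eqref{eqn: def of Fvee} this gives $\lambda\rho\in\overline{F_\rho^\vee}$.

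To get membership in the relatively open face $F_\rho^\vee$, argue by contradiction. If $\lambda\rho$ lay in a proper face $F'$ of $\overline{F_\rho^\vee}$, then $F'$ would equal $F_1^\vee$ for some face $F_1$ with $\overline{F_\rho}\subsetneq\overline{F_1}$, by the order-reversing bijection of Proposition \ref{prop:Bijection vee}. Since $\lambda\rho\in\overline{F_1^\vee}$, we would have $\lambda\rho=1$ on $F_1$, so the maximizing set of $\rho$ would contain $\overline{F_1}$, strictly larger than $\overline{F_\rho}$. This contradicts the definition of $F_\rho$.

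\textbf{Converse.} Suppose $\lambda\rho\in F^\vee$ for some face $F$ and some $\lambda>0$. Then $\lambda\rho\in(B^N)^*$, so $\lambda\rho\le1$ on $B^N$, with equality on $\overline F$. Hence $\overline F$ is contained in the maximizing set $\overline{F_\rho}$. For the reverse inclusion, Lemma \ref{lem: Alternative definition Fvee} and Proposition \ref{prop:Bijection vee} say that $\overline{F}=\overline{(F^\vee)^\vee}$ is the set of $v\in B^N$ with $\beta(v)=1$ for all $\beta\in F^\vee$. Because $\lambda\rho$ lies in the relative interior of $F^\vee$, equality $\lambda\rho(v)=1$ for some $v\in B^N$ forces every $\beta\in F^\vee$ to equal $1$ at $v$. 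The reason is that $\lambda\rho$ is a proper convex combination of the elements of $F^\vee$, each of which is $\le 1$ at $v$. So the maximizing set of $\rho$ is contained in $\overline{F}$, and therefore $F=F_\rho$.

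\textbf{Main obstacle.} The only delicate point is the relative-interior bookkeeping: the closed dual face versus the open one. We handle it with the relative-interior convex-combination argument above, combined with the inclusion-reversing correspondence of Proposition \ref{prop:Bijection vee}.
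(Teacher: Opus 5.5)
Your proof is correct. The paper gives no argument for this proposition; it only says ``one can check''. Your duality argument, built on Proposition \ref{prop:Bijection vee} and the description of $\overline{F^\vee}$, is the natural way to supply that check.

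Both directions are sound. In the forward direction, the only real issue is whether $\lambda\rho$ lies in the open face rather than its closure. The inclusion-reversing correspondence handles this: a proper subface of $\overline{F_\rho^\vee}$ would be dual to a face whose closure strictly contains $\overline{F_\rho}$. That face would then lie in the maximizing set of $\rho$, which is impossible.

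The converse can be shortened. If $\lambda\rho\in F^\vee$, then $\lambda\rho\le 1$ on $B^N$ with equality on the nonempty set $F$, which forces $\lambda=1/\max_{B^N}\rho$. So $\lambda\rho$ is the same functional you already placed in $F_\rho^\vee$ in the forward direction. Since the relatively open faces of $(B^N)^*$ partition it, $F^\vee=F_\rho^\vee$, and $F=F_\rho$ by Proposition \ref{prop:Bijection vee}. This avoids the relative-interior convex-combination step, though that step is also valid as you wrote it.
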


\subsection{The stabilizer of a horofunction}
\label{subsec:The stabilizer of a horofunction}

Let $R_F$ be the stabilizer in $G$ of the standard horofunction $h_F\in\mathcal{O}_F \subset \mathcal{H}^N(X)$, and let $S_F$ be the stabilizer of this $[h_F]$, which is $h_F$ up to a constant, i.e.\ $S_F$ is the stabilizer of the corresponding point in $\partial^N \X$.
In this subsection we give a precise Lie theoretic description of $R_F$ and $S_F$.
By the preceding subsection this is sufficient to determine the stabilizer of an arbitrary horofunction. 
The Lie theory involved is somewhat technical: we encourage the reader to refer to the example in Section \ref{subsec:Horofunctions for SLn} that illustrates all the objects we introduce.

\medskip

First we recall some notations.
We let $\mathfrak{a}$ denote a fixed choice of Cartan subspace, endowed with a $W$-invariant asymmetric norm $N$.
We let $B^N$ denote the unit ball, and $(B^N)^\ast$ denotes the dual unit ball. 
For a face $F$ of $B^N$, recall that the linear subspaces ${\Lin_0}(F)$ and ${\Lin}(F)$ were defined in Definition \ref{defn:Lin and Lin_0}. In the case the metric is polyhedral, $\Lin(F)$ is the span of $F$ and $\Lin_0$ is the linear part of the affine span of $F$.
There is a set of simple restricted roots $\Delta$ and corresponding positive Weyl chamber $\mathfrak{a}^+$. 
We recall that $Faces(N)$ denotes the faces of $B^N$ modulo the action of $W$. 
Each element of $Faces(N)$ is represented by an element $F$ intersecting $\mathfrak{a}^+$.

Associated to a face $F$ meeting $\mathfrak{a}^+$, we introduce some distinguished subsets of simple roots.
Let $\Theta_F^{\stable}$ be the set of simple roots $\alpha\in \Delta$ such that the associated reflection in the Weyl group $\sigma_\alpha$ stabilizes $F$. 

\medskip
For every such $\alpha$, the symmetry $\sigma_\alpha$ either \emph{fixes} $F^\vee$ pointwise, or it \emph{flips} $F^\vee$, idem for $F$. 
When the metric is polyhedral, $F$ is fixed if and only if $F^\vee$ is flipped and vice-versa. 
One goal is to relate the horofunction compactification to flag manifolds, which means associated to each face $F$ of $B^N$ the appropriate subset of simple roots. 
We need to keep track not only of the walls $\ker \alpha$ which meet the face $F$, but also whether the reflection in that wall fixes the face or flips it. 
This motivates the following definition due to Guichard, Kassel, and the third author \cite{GuichardKasselWienhard}.

\begin{defn}
    The set of simple roots whose associated symmetry fixes $F$ (and hence $F^\vee$) will be denoted by $\Theta_F^\text{stable}$. This set decomposes as :
    $$\Theta_F^{\stable}=\Theta_F^{\fixed}\sqcup \Theta_F^{\flipped}.$$
    Here $\Theta_F^{\fixed}$ is the set of simple roots $\alpha\in \Theta_F^{\stable}$ such that $\alpha \in \Span_0(F^\vee)$ and $\Theta_F^{\flipped}$ the set of simple roots $\alpha\in \Theta_F^{\stable}$ with $\alpha^\vee\in \Span_0(F)$. 
\end{defn}

Note that indeed the orthogonal reflection associated to a root $\alpha \in \mathfrak{a}^*$ preserves $F$ if and only if either $\alpha \in \Span_0(F^\vee)$, in which case $F$ and $\Lin(F)$ are \emph{fixed} pointwise by the reflection, or $\alpha^\vee \in \Span_0(F)$ in which case the reflection has a fixed point in $F$, and it \emph{flips} the face. 
Note here that since $\alpha$ is a root, the reflection is in the Weyl group and necessarily preserves the unit ball $B^N$. 

\begin{rem}
We will see in Proposition \ref{prop: Existence measure} and Corollary \ref{cor: O_F_rho has invariant measure} that $F_\rho$, the face that we will be mostly interested in, as well as its subfaces, always satisfy $\Theta_{F_\rho}^{\flipped}=\emptyset$. 
Hence the following discussion becomes simpler for this orbit of horofunctions.
\end{rem}

\begin{lemma}\label{lem:flipping roots orthgonal to fixing roots}
    Each root in $\Theta_F^{\fixed}$ is orthogonal to each root in  $\Theta_F^{\flipped}$.
\end{lemma}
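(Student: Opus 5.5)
We must show: if $\alpha\in\Theta_F^{\fixed}$ and $\beta\in\Theta_F^{\flipped}$, then $\langle\alpha,\beta\rangle=0$, where the inner product is the $W$-invariant one on $\mathfrak{a}^*$ and $\beta^\vee\in\mathfrak{a}$ is the coroot. The plan is to compute $\alpha(\beta^\vee)$ directly from the defining properties and show it vanishes. Since $\alpha(\beta^\vee)$ is a positive multiple of $\langle\alpha,\beta\rangle$, this gives the orthogonality.

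By definition, $\alpha\in\Theta_F^{\fixed}$ means $\alpha\in\Span_0(F^\vee)$. Likewise, $\beta\in\Theta_F^{\flipped}$ means $\beta^\vee\in\Span_0(F)$. By Proposition \ref{prop:basic properties Lin}, for a polyhedral norm we have $\Span_0(F^\vee)=(\Span(F))^\vee$, the annihilator of $\Span(F)$. Since $\Span_0(F)\subset\Span(F)$ (differences of points of $F$ lie in the linear span of $F$), we get $\beta^\vee\in\Span(F)$. Hence $\alpha(\beta^\vee)=0$.

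Finally, recall that $\beta^\vee=2\beta^\sharp/\langle\beta,\beta\rangle$, where $\sharp$ is the identification given by the Killing-form inner product. Then $\alpha(\beta^\vee)=2\langle\alpha,\beta\rangle/\langle\beta,\beta\rangle$, so $\langle\alpha,\beta\rangle=0$.

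The only point needing care is the convention for $\alpha^\vee$ and the fact that "fixed" is genuinely encoded by $\alpha\in\Span_0(F^\vee)$. Both are exactly as stated in the definition, so no real obstacle is expected. The polyhedral hypothesis enters only through Proposition \ref{prop:basic properties Lin}.

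One could instead argue geometrically: $\sigma_\alpha$ fixes $\Span(F)$ pointwise, so $\sigma_\alpha(\beta^\vee)=\beta^\vee$, i.e.\ $\alpha(\beta^\vee)=0$. This is the same argument without invoking duality.
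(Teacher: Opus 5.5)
Your proof is correct and is essentially the paper's argument: the paper likewise notes $\beta^\vee\in\Lin_0(F)\subset\Lin(F)\subset\ker\alpha$ and concludes orthogonality. Your appeal to Proposition~\ref{prop:basic properties Lin} together with $\alpha(\beta^\vee)=2\langle\alpha,\beta\rangle/\langle\beta,\beta\rangle$ just makes explicit steps the paper leaves implicit; only the easy inclusion $\Span_0(F^\vee)\subset\Span(F)^\vee$, which is the first half of that proposition's proof and holds for any norm, is actually needed, so the polyhedral hypothesis is not used.
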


\begin{proof}
    For $\beta \in \Theta_F^{flipped}$ and $\alpha \in \Theta_F^{fixed}$ it holds that $\beta^\vee \in \Lin_0(F) \subset \Lin (F) \subset \ker(\alpha)$. Hence $\beta$ and $\alpha$ are orthogonal. 
\end{proof}

\medskip
This decomposition induces two subgroups of the Levi subgroup $L_F$ associated with the parabolic subgroup $P_{\Delta\setminus \Theta_F^{stable}}$.

\medskip

Let us denote by $\mathfrak{g}_F^{fixed},\mathfrak{g}_F^{flip}\subset \mathfrak{g}$ the Lie subalgebras generated respectively by $\mathfrak{g}_{\alpha}\oplus \mathfrak{g}_{-\alpha}$ for $\alpha$ in $\Theta_F^{\rm fixed}$, respectively $\alpha$ in $\Theta_F^{\rm flip}$. 
For some $\mathfrak{m}_F^{fixed},\mathfrak{m}_F^{\flipped}\subset \mathfrak{m}$ and $\mathfrak{a}_F^{fixed},\mathfrak{a}_F^{\flipped}\subset \mathfrak{a}$:

\begin{equation}\label{eq:Definition Gfixed flip}
\begin{split}
&\mathfrak{g}_F^{fixed}=\mathfrak{a}_F^{fixed}\oplus \mathfrak{m}_F^{fixed}\oplus \bigoplus_{\alpha \in Span(\Theta^{\fixed}_F)} \mathfrak{g}_{\alpha}\oplus \mathfrak{g}_{-\alpha}, \\
&\mathfrak{g}_F^{\flipped}=\mathfrak{a}_F^{\flipped}\oplus \mathfrak{m}_F^{\flipped}\oplus \bigoplus_{\alpha \in Span(\Theta^{\flipped}_F)} \mathfrak{g}_{\alpha}\oplus \mathfrak{g}_{-\alpha}.
\end{split}\end{equation}

We have the following :

\begin{prop}
    \label{prop:a is span of coroots}
    The Lie subalgebras $\mathfrak{a}^{\fixed}_F$ and $\mathfrak{a}^{\flipped}_F$ are respectively the spans of the co-roots $\alpha^\vee$ associated to $\alpha\in \Theta^{\fixed}_F$, repsectively $\alpha\in \Theta^{\flipped}_F$.
\end{prop}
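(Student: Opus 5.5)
The plan is to apply the standard structure theory of the Levi-type subalgebra generated by a set of simple roots, separately to $\Theta=\Theta^{\fixed}_F$ and to $\Theta=\Theta^{\flipped}_F$. Write $\mathfrak{g}_\Theta$ for the Lie subalgebra generated by $\mathfrak{g}_\alpha\oplus\mathfrak{g}_{-\alpha}$, $\alpha\in\Theta$. In the decomposition \eqref{eq:Definition Gfixed flip}, $\mathfrak{a}^{\fixed}_F$ is by definition $\mathfrak{g}^{\fixed}_F\cap\mathfrak{a}$, the zero-weight part of $\mathfrak{g}_\Theta$ projected onto $\mathfrak{a}$. We must show that this equals $\Span\{\alpha^\vee : \alpha\in\Theta\}$, where $\alpha^\vee\in\mathfrak{a}$ is the coroot, i.e.\ the vector dual to $2\alpha/\langle\alpha,\alpha\rangle$ under the Killing form.

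For the inclusion $\supseteq$, take $X\in\mathfrak{g}_\alpha$ nonzero and use the Cartan involution $\theta$. Since $B(X,\theta X)<0$ and $\theta X\in\mathfrak{g}_{-\alpha}$, the bracket $[X,\theta X]$ lies in $\mathfrak{g}_0=\mathfrak{a}\oplus\mathfrak{m}$. It is $\theta$-anti-invariant, so it lies in $\mathfrak{p}\cap\mathfrak{g}_0=\mathfrak{a}$. Pairing with $H\in\mathfrak{a}$ through the Killing form gives $B([X,\theta X],H)=\alpha(H)B(X,\theta X)$. Hence $[X,\theta X]$ is a nonzero multiple of $H_\alpha$, which is itself proportional to $\alpha^\vee$. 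This places every $\alpha^\vee$, $\alpha\in\Theta$, inside $\mathfrak{g}_\Theta\cap\mathfrak{a}$.

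The inclusion $\subseteq$ is the key step. The subalgebra $\mathfrak{g}_\Theta$ is spanned by iterated brackets of root vectors with roots in $\pm\Theta$. By weight bookkeeping, every bracket landing in weight zero is a sum of terms $[Y,Z]$ with $Y\in\mathfrak{g}_\beta$, $Z\in\mathfrak{g}_{-\beta}$, and $\beta\in\Span(\Theta)\cap\Sigma$. For such a pair, the $\mathfrak{a}$-component of $[Y,Z]$ is obtained by pairing with $H\in\mathfrak{a}$:
\[
B([Y,Z],H)=\beta(H)\,B(Y,Z).
\]
So that component is a multiple of $H_\beta$. The root $\beta$ is an integral combination of roots in $\Theta$, so $H_\beta$ lies in the span of $\{H_\alpha:\alpha\in\Theta\}=\Span\{\alpha^\vee:\alpha\in\Theta\}$. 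The $\mathfrak{m}$-components are absorbed into $\mathfrak{m}^{\fixed}_F$ and do not affect $\mathfrak{a}^{\fixed}_F$. This gives $\mathfrak{a}^{\fixed}_F\subseteq\Span\{\alpha^\vee:\alpha\in\Theta^{\fixed}_F\}$.

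The main obstacle is making the weight bookkeeping rigorous. One must check that the weight-zero part of the generated subalgebra is spanned by the brackets $[\mathfrak{g}_\beta,\mathfrak{g}_{-\beta}]$ for $\beta\in\Span(\Theta)$, and that the $\mathfrak{a}\oplus\mathfrak{m}$ splitting respects the decomposition in \eqref{eq:Definition Gfixed flip}. The first point follows by induction on bracket length, using the Jacobi identity to move weight-zero pieces outward. The second holds because $\mathfrak{a}$ and $\mathfrak{m}$ are Killing-orthogonal, so projecting onto $\mathfrak{a}$ is just taking Killing pairings with $\mathfrak{a}$, which is exactly what the argument computes. The same argument, with $\Theta=\Theta^{\flipped}_F$, gives the statement for $\mathfrak{a}^{\flipped}_F$. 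It is consistent with Lemma~\ref{lem:flipping roots orthgonal to fixing roots}, which makes the two spans orthogonal.
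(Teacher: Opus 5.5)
Your proof is correct and uses the same two facts as the paper: $\alpha^\vee$ (proportional to $H_\alpha$) lies in $[\mathfrak{g}_\alpha,\mathfrak{g}_{-\alpha}]$, and the $\mathfrak{a}$-component of $[\mathfrak{g}_\beta,\mathfrak{g}_{-\beta}]$ is a multiple of $H_\beta$, which lies in the span of the $H_\alpha$, $\alpha\in\Theta$. The only difference is packaging. The paper gets the reverse inclusion by minimality: replacing $\mathfrak{a}^{\fixed}_F$ by the coroot span still gives a subalgebra containing the generators. You instead describe the weight-zero part of the generated algebra directly; this step becomes immediate if you write the algebra as spanned by right-normed brackets $[x_1,[x_2,\ldots]]$ of generators.
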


\begin{proof}
Note that for all $\alpha\in \Sigma^+$, $\alpha^\vee \in[\mathfrak{g}_\alpha,\mathfrak{g}_{-\alpha}]$, so the spans are contained in $\mathfrak{a}^{\fixed}_F$ and $\mathfrak{a}^{\flipped}_F$ respectively. 

Moreover $[\mathfrak{g}_\alpha,\mathfrak{g}_{-\alpha}]\subset \langle \alpha^\vee \rangle \oplus \mathfrak{m}$. 
In particular, we see that in the expression \eqref{eq:Definition Gfixed flip}, replacing $\mathfrak{a}^{\fixed}_F$ and $\mathfrak{a}^{\flipped}_F$ by the respective spans of coroots still gives a Lie subalgebra. 
Hence we have the desired result.
\end{proof}

These Lie algebras are preserved by the Cartan involution associated to the basepoint: indeed each is generated by a subset that is preserved by the Cartan involution.
As a consequence these Lie algebras are reductive. 

Let $G^{\fixed}_F,G^{\flipped}_F\subset G$ be the connected subgroups of $G$ associated to the Lie subalgebras $\mathfrak{g}^{\fixed}_F,\mathfrak{g}^{\flipped}_F$ of $\mathfrak{g}$. 
To explain their structure more precisely we need a few preliminary results.

\begin{lemma}\label{lem: tau in ker flipped}
    There exists $\tau \in F$ so that \( \Theta_F^{\stable} = \{ \alpha \in \Delta : \alpha(\tau)=0 \} .\)
\end{lemma}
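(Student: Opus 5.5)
The plan is to start from any point of $F$ in the closed positive Weyl chamber and then symmetrize it so that it also becomes fixed by the flipping reflections. The key observation is that, since the (relatively open) faces of $B^N$ partition $B^N$, every point of $F\cap\overline{\mathfrak{a}^+}$ already has its vanishing set of simple roots contained in $\Theta_F^{\stable}$.

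\emph{Step 1 (vanishing simple roots are stable).} Let $\tau\in F$ and let $\beta\in\Delta$ satisfy $\beta(\tau)=0$. Then $\sigma_\beta(\tau)=\tau$. Since $\sigma_\beta\in W$ preserves $B^N$, it maps faces to faces, so $\sigma_\beta(F)$ is a face containing $\tau$. The faces partition $B^N$, hence $\sigma_\beta(F)=F$, that is, $\beta\in\Theta_F^{\stable}$. Therefore, for every $\tau\in F$,
\[
\{\alpha\in\Delta:\alpha(\tau)=0\}\subset\Theta_F^{\stable}.
\]
It remains to produce $\tau\in F\cap\overline{\mathfrak{a}^+}$ on which every root of $\Theta_F^{\stable}$ vanishes.

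\emph{Step 2 (fixed roots are automatic).} If $\alpha\in\Theta_F^{\fixed}$, then $\sigma_\alpha$ fixes $F$ pointwise, so $\alpha$ vanishes on all of $F$. Only the flipped roots need care.

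\emph{Step 3 (projection for flipped roots).} Choose $\tau_0\in F\cap\overline{\mathfrak{a}^+}$, which exists because $F$ meets the closed chamber. Let $W'$ be the subgroup of $W$ generated by the $\sigma_\alpha$ with $\alpha\in\Theta_F^{\flipped}$, and set
\[
\tau=\frac{1}{|W'|}\sum_{w\in W'} w\,\tau_0 .
\]
Each $w\,\tau_0$ lies in $F$, and $F$ is convex, so $\tau\in F$. Since $\tau$ is $W'$-invariant, $\alpha(\tau)=0$ for all $\alpha\in\Theta_F^{\flipped}$. Moreover, $W'$ preserves $\ker\alpha$ for $\alpha\in\Theta_F^{\fixed}$, because these roots are orthogonal to the flipped ones by Lemma~\ref{lem:flipping roots orthgonal to fixing roots}; this is consistent with Step 2.

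\emph{Step 4 (the average stays dominant).} The average $\tau$ is the orthogonal projection of $\tau_0$ onto $\bigcap_{\alpha\in\Theta_F^{\flipped}}\ker\alpha$. Each $w\tau_0-\tau_0$ with $w\in W'$ is a nonpositive combination of the coroots $\alpha^\vee$, $\alpha\in\Theta_F^{\flipped}$, by the standard fact that $\tau_0$ is dominant for the parabolic subsystem. Hence
\[
\tau=\tau_0-\sum_{\alpha\in\Theta_F^{\flipped}} c_\alpha\,\alpha^\vee,\qquad c_\alpha\ge0 .
\]
For a simple root $\beta\notin\Theta_F^{\flipped}$ we have $\beta(\alpha^\vee)\le0$, so $\beta(\tau)\ge\beta(\tau_0)\ge0$. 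For the roots in $\Theta_F^{\flipped}$ we already have $\beta(\tau)=0$. Thus $\tau\in\overline{\mathfrak{a}^+}$, and every root of $\Theta_F^{\stable}$ vanishes on $\tau$ by Steps 2 and 3. Combined with Step 1, this gives equality of the two sets.

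I expect the main obstacle to be Step 4, namely ensuring that the symmetrization does not leave the closed chamber. This is handled by the sign of the coefficients $c_\alpha$ in the projection, together with $\beta(\alpha^\vee)\le0$ for distinct simple roots.
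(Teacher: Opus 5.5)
Your proof is correct and is essentially the paper's argument: average a point of $F$ over the Weyl subgroup generated by the reflections in $\Theta_F^{\flipped}$, using that fixed roots vanish on all of $F$ while non-stable simple roots never vanish on $F$; your Step~1 supplies the justification for this last point, which the paper only asserts. Step~4 is not needed for the lemma as stated, since Steps~1--3 already give both inclusions, and dominance of $\tau$ comes for free anyway: Step~1 together with convexity of $F$ forces each $\beta\in\Delta\setminus\Theta_F^{\stable}$ to be strictly positive on all of $F$, which is exactly the paper's remark. Your coroot argument is nonetheless also valid.
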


\begin{proof}
    Any $\tau \in F$ satisfies $\alpha(\tau)>0$ when $\alpha \in \Delta \setminus \Theta_F^{\stable}$ and $\alpha(\tau)=0$ when $\alpha \in \Theta_F^{\fixed}$.
    Replace any $\tau \in F$ by its average $\tau'$ by the subgroup of $W$ stabilizing $F$, or equivalently by its average under the Weyl subgroup corresponding to $\Theta_F^{\flipped}$. 
    Then $\alpha(\tau') = 0$ for all $\alpha \in \Theta_F^{\flipped}$.
\end{proof}

Now we discuss how these parabolic subgroups are related to the stabilisers $R_F$ of the standard horofunction associated with $F$ and the stabilizer $S_F$ of the standard horofunction up to constant.

\begin{prop}

The subgroups $R_F\subset S_F$ are contained in the parabolic subgroup $P_{\Delta\setminus \Theta_F^{\stable}}$ and contain the horospherical subgroup $U_{\Delta\setminus \Theta_F^{\stable}}$.

\end{prop}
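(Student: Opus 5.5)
Write $\Theta := \Delta\setminus\Theta_F^{\stable}$. Since $R_F\subset S_F$ is immediate, it suffices to show $S_F\subset P_\Theta$ and $U_\Theta\subset R_F$. Both inclusions rest on realizing $h_F$ as a Busemann-type limit along a well-chosen Riemannian geodesic ray. By Lemma \ref{lem: tau in ker flipped}, pick $\tau\in F$ with $\{\alpha\in\Delta : \alpha(\tau)=0\}=\Theta_F^{\stable}$, so that $\{\alpha\in\Delta:\alpha(\tau)>0\}=\Theta$. Using Theorem \ref{thm:Comparaison Horofunction Flat and symmetric space HSWW} and Definition \ref{def: standard horofunction}, write
\[ h_F(x)=\lim_{t\to\infty} d^N(x,e^{t\tau}o)-t, \]
which restricts on the flat $A\cdot o\simeq\mathfrak{a}$ to the standard horofunction. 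The point $[\gamma_\tau]\in\partialvis\X$ then has stabilizer exactly $P_\Theta$, and its horospherical subgroup is $U_\Theta$.

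\textbf{Step 1: $U_\Theta\subset R_F$.} Take $u\in U_\Theta$. By the definition of $U_\Theta$, $d(e^{t\tau}o,ue^{t\tau}o)\to 0$. Since all Finsler metrics $d^N$ are bi-Lipschitz to the Riemannian one, $d^N(e^{t\tau}o,ue^{t\tau}o)\to0$ and $d^N(ue^{t\tau}o,e^{t\tau}o)\to 0$. The triangle inequality, applied on both sides because $d^N$ may be asymmetric, then gives
\[ h_F(u^{-1}x)=\lim_t d^N(x,ue^{t\tau}o)-t=h_F(x). \]
Hence $u\in R_F$. An alternative route is the algebraic one, $e^{-t\tau}ue^{t\tau}\to e$, which gives the same conclusion.

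\textbf{Step 2: $S_F\subset P_\Theta$.} This is the main obstacle, because the horoboundary point $[h_F]$ does not a priori determine a visual boundary point. My plan is to recover $[\gamma_\tau]$ intrinsically from $[h_F]$, namely as the direction in which $h_F$ decreases fastest along rays from $o$. By equivariance, $g\cdot h_F=h_F+c$ implies that $g$ preserves any such intrinsically defined set of visual points.

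Concretely, $h_F$ is $1$-Lipschitz for $d^N$. For a unit-speed geodesic ray $c$ from $o$, the asymptotic slope $\lim_t h_F(c(t))/t$ is invariant under adding constants and under changing the basepoint, because $h_F$ is Lipschitz. I will compute this slope. On the flat, Proposition \ref{prop:Formula horofunction standard} gives slope $-\min_{\beta\in\overline{F^\vee}}\beta(v)$ in direction $v$. This slope equals $-N(v)$, the minimal possible value, exactly when $v$ lies in the cone over $\overline{F}$. For rays leaving the flat, I would use the $K$-orbit/Cartan projection comparison to show that the slope is $\ge -N(\vec d)$, with equality only for rays asymptotic to directions in $\mathbb{R}_{>0}\overline F$ within flats through the relevant Weyl chamber face. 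The set of visual points where $h_F$ has minimal slope $-1$ (after normalizing $N(v)=1$) is therefore $G$-canonically attached to $[h_F]$, and $S_F$ preserves it.

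Among these points, the $\Theta_F^{\stable}$-averaged direction $\tau$ is the unique fixed point of the compact group of symmetries of $\overline F$, which I would identify by taking the barycenter (a Tits-geometry circumcenter argument). Consequently $S_F$ fixes $[\gamma_\tau]$, so $S_F\subset P_\Theta$. If the slope analysis off the flat turns out to be too delicate, I would fall back on a Lie-algebraic argument: show that $h_F$ restricted to each flat $g\cdot A o$ through $o$ determines $g$ modulo $P_\Theta$, using the Bruhat decomposition together with Proposition \ref{prop:Tits angle}.
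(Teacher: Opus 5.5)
Your Step 1 is correct and is the paper's argument: for $\tau$ from Lemma \ref{lem: tau in ker flipped}, $u\gamma_\tau$ and $\gamma_\tau$ are strongly asymptotic. Step 2, however, has a genuine gap. It lies where you pass from the $S_F$-invariant set $\mathcal M$ of minimal-slope directions to an $S_F$-fixed point whose stabilizer is exactly $P_\Theta$. (Your description of $\mathcal M$ off the model flat is also only asserted. Proving it requires putting $[\gamma_\tau]$ and $\xi$ in a common flat and comparing $h_F$ there, up to bounded error, with a Weyl translate of the standard horofunction.)

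The claim that $\tau$ is the unique fixed point of the symmetries of $\overline F$ is false. When $\Theta_F^{\stable}=\emptyset$, e.g.\ for the edge $F_\rho$ of $N_1$ on $\mathbb{H}^2\times\mathbb{H}^2$, that group is trivial and Lemma \ref{lem: tau in ker flipped} allows any $\tau\in F$, so nothing in $\mathcal M$ singles out $[\gamma_\tau]$.

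The Tits circumcenter is canonical, but it is the spherical circumcenter of $\overline F$, which can lie on $\partial F$ inside a wall $\ker\alpha$ with $\alpha\in\Theta$. For $(\mathbb{H}^2)^3$, take $N=\max_{w\in W}\max(w\beta,w\gamma_1,w\gamma_2)$ with $\beta=(\tfrac53,\tfrac56,\tfrac56)$, $\gamma_1=(3,0.9,0.5)$, $\gamma_2=(3,0.5,0.9)$. Then:
\begin{itemize}
\item the triangle with vertices $(0,1,0.2)$, $(0,0.2,1)$, $(0.1,0.5,0.5)$ is a face $F$ with $\Theta_F^{\stable}=\emptyset$;
\item $\mathcal M$ is this triangle inside one closed chamber at infinity;
\item its circumcenter is the direction $(0,1,1)$, whose stabilizer $P_{\{\alpha_2,\alpha_3\}}$ strictly contains $P_\Theta=P_\Delta$.
\end{itemize}
So your argument as written only gives $S_F\subset P_{\Theta'}$ for some $\Theta'\subset\Theta$. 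The Bruhat-order fallback is too vague to evaluate.

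What is missing is a reason why $[\gamma_\tau]$ itself is canonical. One fix within your approach is a type argument: $[\gamma_\tau]$ is the only point of $\mathcal M$ in the orbit $G\cdot[\gamma_\tau]$. Indeed, in a flat through $[\gamma_\tau]$ and such a $\xi$, the point $\xi$ corresponds to some $w\tau\in\overline F$. Then $wF\subset\overline F$ is a face of the same dimension as $F$, so $wF=F$, and $w\tau=\tau$ by the choice of $\tau$. Since $G$ preserves types, $S_F$ fixes $[\gamma_\tau]$.

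The paper's route is shorter and needs no slope analysis. If $g\in S_F\setminus P_\Theta$, put the distinct points $[\gamma_\tau]$ and $g[\gamma_\tau]$ in a common flat, and move it to the model flat by an element fixing $[\gamma_\tau]$. The two points become the directions $\tau$ and $w\tau\neq\tau$. Since $h_F\circ g^{-1}-h_F$ is constant, the horofunctions of these two rays differ by a bounded function. That means $h_F-h_{wF}$ is bounded on $\mathfrak a$ with $wF\neq F$, contradicting Lemma \ref{lem:equivalence of translated horofunctions}.
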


\begin{proof}
    Write $h_F$ as the limit of distance functions along $\gamma_\tau (t) = e^{t \tau}K$, with $\tau$ from Lemma \ref{lem: tau in ker flipped}, normalized to be zero at the basepoint.
    
    We first show that $U_{\Delta\setminus \Theta_F^{\stable}} $ is contained in $R_F$. 
    Note that for any $n \in U_{\Delta\setminus \Theta_F^{\stable}}$, the Riemannian geodesic ray $\gamma = \gamma_\tau$ is strongly asymptotic to its translate $n\cdot \gamma$, because $\alpha(\tau)=0$ for all $\alpha\in\Theta_F^{\stable}$.
    Being strongly asymptotic means that:
    $$ \abs{d^N(x,n \gamma(t)) - d^N(x,\gamma(t))} \to 0.$$
    Hence the limit of distance functions along $\gamma$ and $n\cdot \gamma$ are equal, and in particular $h_F \circ n = h_F$.

    \medskip
    
    Now let $g$ be an element of $S_F$, and suppose that it does not belong to the parabolic $P_{\Delta\setminus \Theta_F^{\stable}}$. 
    Hence $[\gamma]$ and $[g \gamma]$ are distinct points of $\partial_{vis}\X$. 
    Let $A$ be a maximal flat containing both  $[\gamma]$ and $[g \gamma]$ in its visual boundary, and let $\gamma'(t) = e^{t \tau'}K$ be the geodesic ray in the standard flat asymptotic to $g \gamma$ with $\gamma'(0)=K$.
    Choose $g' \in G$ fixing $[\gamma]$, taking the standard maximal flat to $A$, and satisfying $g' \gamma' = g \gamma$.
    
    Since $g' \gamma \sim \gamma$, their associated horofunctions $h_{g' \gamma}$ and $h_\gamma = h_F$ are at bounded distance.
    In particular $h_{g' \gamma}$ is at bounded distance from $h_{g \gamma} = h_F \circ g = h_F$ since $g \in S_F$. 
    These are translates of the horofunctions $h_{\gamma'}$ and $h_\gamma$ by $g'$, so those are also at bounded distance. 
    But since these correspond to distinct faces, this contradicts Lemma \ref{lem:equivalence of translated horofunctions}.
\end{proof}

Note that $\mathfrak{a}_F^{\fixed}\perp {\Lin}(F)$ since all the roots $\alpha\in \Theta^{\fixed}_F$ are in $\Span_0(F^\vee)$. 
Moreover $\mathfrak{a}_F^{\flipped}\subset {\Lin_0}(F)$ since all the coroots $\alpha^\vee$ for $\alpha\in \Theta^{\flipped}_F$ are in ${\Lin_0}(F)$. 

\begin{prop}\label{prop:comparing RF and SF to Gflip and Gfix}
    The subgroups $R_F\subset S_F$ contain $G^{\flipped}_F$ and the intersection of the maximal compact $K$ with $P_{\Delta\setminus \Theta^{\stable}_F}$. 
    The intersection of $R_F$ and $S_F$ with $G_F^{\fixed}$ is exactly the maximal compact of $G_F^{\fixed}$. 
\end{prop}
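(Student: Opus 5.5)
The plan is to reduce everything to the Levi factor $L_F$ of $P_{\Delta\setminus\Theta_F^{\stable}}$ and then to the flat $\mathfrak{a}$, where Proposition \ref{prop:Formula horofunction standard} and Lemma \ref{lem:equivalence of translated horofunctions} apply.

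\textbf{Setup.} Fix $\tau\in F$ as in Lemma \ref{lem: tau in ker flipped}, so that $\alpha(\tau)=0$ for all $\alpha\in\Theta_F^{\stable}$. Write
$$h_F(x)=\lim_{t\to\infty} d^N(x,e^{t\tau}o)-tN(\tau).$$
Since the roots of $L_F$ vanish on $\tau$, the Levi $L_F$ (the centralizer of $\tau$) commutes with $e^{t\tau}$.

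\textbf{Step 1: $K\cap P_{\Delta\setminus\Theta_F^{\stable}}\subset R_F$.} Since $K\cap U_{\Delta\setminus\Theta_F^{\stable}}$ is trivial, we have $K\cap P_{\Delta\setminus\Theta_F^{\stable}}=K\cap L_F=:K_L$. For $k\in K_L$,
$$d^N(kx,e^{t\tau}o)=d^N(x,k^{-1}e^{t\tau}o)=d^N(x,e^{t\tau}o),$$
because $k$ fixes $o$ and commutes with $e^{t\tau}$. Hence $h_F\circ k=h_F$.

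\textbf{Step 2: $G_F^{\flipped}\subset R_F$.} By Lemma \ref{lem:flipping roots orthgonal to fixing roots}, $\Theta_F^{\fixed}\perp\Theta_F^{\flipped}$. Since $\Theta_F^{\stable}=\Theta_F^{\fixed}\sqcup\Theta_F^{\flipped}$, the Levi decomposes as a commuting product
$$L_F=Z\cdot G_F^{\fixed}\cdot G_F^{\flipped},$$
with $Z$ centralizing both factors. Correspondingly $\mathfrak{a}=\mathfrak{a}'\oplus\mathfrak{a}_F^{\fixed}\oplus\mathfrak{a}_F^{\flipped}$. Since $P_{\Delta\setminus\Theta_F^{\stable}}$ acts transitively on $\X$, every point has the form $x=u\,z\,g_1g_2\,o$ with $u\in U_{\Delta\setminus\Theta_F^{\stable}}$, $z\in Z$, $g_1\in G_F^{\fixed}$, $g_2\in G_F^{\flipped}$.

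By the preceding proposition $U\subset R_F$, so $h_F(x)=h_F(zg_1g_2o)$. Using the Cartan decompositions inside each factor together with Step 1, this equals $h_F$ evaluated at a flat point $e^{v'+v_1+v_2}o$, where $v_2\in\mathfrak{a}_F^{\flipped}$ is the Cartan projection of $g_2$. Since $\mathfrak{a}_F^{\flipped}\subset\Lin_0(F)$, Lemma \ref{lem:equivalence of translated horofunctions} shows that this value does not depend on $v_2$. Hence $h_F(x)$ depends only on $(u,z,g_1)$ modulo $K_L$.

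Now let $g\in G_F^{\flipped}$. It commutes with $Z$ and $G_F^{\fixed}$ and normalizes $U$, so $gx=u'\,z\,g_1\,(gg_2)\,o$ for some $u'\in U$. Therefore $h_F(gx)=h_F(x)$, and $G_F^{\flipped}\subset R_F\subset S_F$.

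\textbf{Step 3: the fixed part.} By Step 1, $K\cap G_F^{\fixed}\subset R_F\cap G_F^{\fixed}$. For the reverse inclusion, restrict $h_F$ to the orbit $\X_F^{\fixed}=G_F^{\fixed}\cdot o$. This restriction is $K\cap G_F^{\fixed}$-invariant, and on the flat $\mathfrak{a}_F^{\fixed}$ it equals
$$v\mapsto\max_{\omega\in\ver(\overline{F^\vee})}-\omega(v),$$
a convex, positively homogeneous and $W_F^{\fixed}$-invariant function.

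I expect the main obstacle to be showing that this function is strictly positive on $\mathfrak{a}_F^{\fixed}\setminus\{0\}$. I would argue as follows. Suppose $h_F(v)\le0$, that is $\omega(v)\ge0$ for all $\omega\in F^\vee$. Averaging $v$ over $W_F^{\fixed}$ gives $0$, because $\mathfrak{a}_F^{\fixed}$ is spanned by coroots and $W_F^{\fixed}$ fixes no nonzero vector there. Since $W_F^{\fixed}$ preserves $F^\vee$, each $\omega\in F^\vee$ also satisfies $\omega(wv)\ge0$ for every $w\in W_F^{\fixed}$. Summing these nonnegative terms gives $0$, so $\omega(v)=0$ for all $\omega\in F^\vee$. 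Thus $v\in\Span(F^\vee)^\vee=\Lin_0(F)$ by Proposition \ref{prop:basic properties Lin}. Combined with $\mathfrak{a}_F^{\fixed}\perp\Lin(F)$, this forces $v=0$.

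It follows that $h_F$ restricted to $\X_F^{\fixed}$ is proper, bounded below, and has a unique minimum at $o$. Any $g\in R_F\cap G_F^{\fixed}$ preserves this function and hence fixes $o$, so $g\in K$.

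For $S_F$: suppose $g\in G_F^{\fixed}$ satisfies $h_F\circ g=h_F+c$. Then $h_F\circ g^n=h_F+nc$. If $c\neq0$, choosing $n$ of the appropriate sign makes $h_F\circ g^n(o)$ arbitrarily negative, contradicting the fact that $h_F$ is bounded below on $\X_F^{\fixed}$. Hence $c=0$ and $g\in R_F\cap G_F^{\fixed}=K\cap G_F^{\fixed}$, the maximal compact subgroup of $G_F^{\fixed}$.
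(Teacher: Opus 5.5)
Your proof is correct. Step 1 is essentially the paper's argument, but Steps 2 and 3 take a different route.

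The paper treats $G^{\flipped}_F$ and $G^{\fixed}_F$ uniformly through their Cartan decompositions. Their maximal compact subgroups lie in $K\cap P_{\Delta\setminus\Theta^{\stable}_F}\subset R_F$, so it suffices to test the elements $e^\tau$ with $\tau\in\mathfrak{a}^{\flipped}_F$ or $\tau\in\mathfrak{a}^{\fixed}_F$, which preserve the model flat. Lemma~\ref{lem:equivalence of translated horofunctions} then decides both cases:
\begin{itemize}
\item $\mathfrak{a}^{\flipped}_F\subset\Lin_0(F)$ gives invariance on the flat, promoted to all of $\X$ via Theorem~\ref{thm:Comparaison Horofunction Flat and symmetric space HSWW};
\item $\mathfrak{a}^{\fixed}_F\perp\Lin(F)$ shows that no nontrivial $e^\tau$ even changes $h_F$ by a constant, which handles $S_F$ and $R_F$ at once.
\end{itemize}

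Your Step 2 instead combines transitivity of the parabolic on $\X$ with the inclusions $U_{\Delta\setminus\Theta^{\stable}_F}\subset R_F$ and $K\cap L_F\subset R_F$. This lets you compute $h_F$ everywhere from its restriction to the flat. It is more self-contained, since you never need to pass from equality on the flat to equality on $\X$.

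Your Step 3 replaces the ``differs by a constant only if $v\in\Lin(F)$'' part of the lemma with the $W^{\fixed}_F$-averaging proof that $h_F>0$ on $\mathfrak{a}^{\fixed}_F\setminus\{0\}$. This is longer and needs the separate iteration trick for $S_F$. In exchange it proves more: $h_F$ is proper on $G^{\fixed}_F\cdot o$ with unique minimum at $o$. That is the geometric reason the fiber $\X_F$ of Proposition~\ref{prop:Space horofunctions} appears.

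Two small details should be repaired.
\begin{itemize}
\item Triviality of $K\cap U_{\Delta\setminus\Theta^{\stable}_F}$ does not by itself give $K\cap P_{\Delta\setminus\Theta^{\stable}_F}=K\cap L_F$. Use instead that the Cartan involution sends the parabolic to its opposite, so $K\cap P_{\Delta\setminus\Theta^{\stable}_F}$ lies in their intersection $L_F$. Alternatively, argue as the paper does: such a $k$ fixes $o$ and the endpoint of $\gamma_\tau$, hence the whole ray.
\item The group-level splitting $L_F=Z\cdot G^{\fixed}_F\cdot G^{\flipped}_F$, with $Z$ centralizing both factors, can fail when $L_F$ is disconnected. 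For example, take $\SL(3,\R)$ with $L_F=S(\GL(2,\R)\times\GL(1,\R))$ and $\Theta^{\fixed}_F=\{\alpha_1\}$: blocks of negative determinant are missed. It suffices to take $Z=\exp(\mathfrak{a}')$, which centralizes both factors. Indeed, $\exp(\mathfrak{a})$ times the unipotent radical of the minimal parabolic already acts transitively on $\X$ and is contained in $U_{\Delta\setminus\Theta^{\stable}_F}\exp(\mathfrak{a}')G^{\fixed}_FG^{\flipped}_F$.
\end{itemize}
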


\begin{proof}

We again write $h_F$ as the Busemann point associated to $\tau$ from Lemma \ref{lem: tau in ker flipped}, and again take $\gamma_\tau$ the corresponding geodesic ray emanating from the basepoint. 
Then every element $g\in K\cap P_{\Delta\setminus \Theta^{\stable}_F}$ fixes the geodesic ray $\gamma_\tau$, and hence preserves $h_F$.

\medskip

Now thanks to the Cartan decomposition for $G^{\flipped}_F$ and $G^{\fixed}_F$, since the maximal compacts subgroups are in $R_F$ we just need to check that the restricted Cartan subgroup of $G^{\flipped}_F$ is in $R_F$ and that no non-trivial element of the restricted Cartan subgroup are in $S_F$.

\medskip

Let $\tau \in \mathfrak{a}^{\flipped}_F$, then it is a combination of co-roots  in $\Theta^{\flipped}_F$ by Proposition \ref{prop:a is span of coroots}, hence it belongs to $\Lin_0(F)$. 
In particular $h_F\circ e^{\tau}=h_F$ on the model flat by Lemma \ref{lem:equivalence of translated horofunctions}. 
Since $h_F$ is in the horoboundary of the model flat and since $e^\tau$ preserves the model flat, this implies that the horofunctions are equal on the whole symmetric space by Theorem \ref{thm:Comparaison Horofunction Flat and symmetric space HSWW}.

\medskip

Now let $\tau \in \mathfrak{a}^{\fixed}_F$. 
It is a combination of co-roots in $\Theta^{\fixed}_F$ by Proposition \ref{prop:a is span of coroots}. 
Lemma \ref{lem:flipping roots orthgonal to fixing roots} implies that $\tau$ is orthogonal to $\Lin(F)$. Hence by Lemma \ref{lem:equivalence of translated horofunctions} the translated horofunction $h\circ e^\tau$ is to equal up to a constant to $h$ on the model flat, so $e^\tau\notin S_F$.
\end{proof}

\begin{lemma}
\label{lem:Cartan subgroup of stabiliazer}
    We have $\Lie(R_F) \cap \mathfrak{a} = \Lin_0(F)$ and $\Lie(S_F) \cap \mathfrak{a} = \Lin(F)$.
\end{lemma}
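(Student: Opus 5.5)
The plan is to reduce everything to the flat via $\exp$ and Lemma \ref{lem:equivalence of translated horofunctions}, and then to use the fact that $\Lie(R_F)$ and $\Lie(S_F)$ are $\mathfrak{a}$-stable. First, note that $A=\exp(\mathfrak{a})$ preserves the standard flat $A\cdot o$ and acts on it by translations. For $\tau\in\mathfrak{a}$, the horofunction $h_F\circ e^{\tau}$ restricted to the model flat is $x\mapsto h_F(x+\tau)$. By Theorem \ref{thm:Comparaison Horofunction Flat and symmetric space HSWW}, both $h_F$ and $h_F\circ e^\tau$ are limits of distance functions along rays in the model flat. I will use the principle already invoked in the proof of Proposition \ref{prop:comparing RF and SF to Gflip and Gfix}: two horofunctions of this form agree on $\X$, possibly up to a constant, as soon as they agree, up to the same constant, on the model flat. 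This holds because both are $\lim d^N(\cdot,e^{\tau}\gamma(t))-c_t$ with rays inside the flat. Concretely, I would write both as limits along translates of $\gamma_{f}$ with $f\in F$, and observe that a translate within $\Lin_0(F)$ or $\Lin(F)$ gives a ray whose distance functions on $\X$ differ by an $o(1)$ or a constant respectively. The reason is that translating $\gamma(t)=e^{tf}o$ by $e^{\tau}$ with $\tau\in\Lin(F)$ gives $e^{tf+\tau}o$, and $N$ is affine on the cone over $\overline F$ near $tf$. That settles the direction $\exp(\Lin_0(F))\subset R_F$ and $\exp(\Lin(F))\subset S_F$ on all of $\X$.

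Conversely, if $e^\tau\in S_F$ (respectively $R_F$), then restricting to the model flat shows that $h_F(\cdot+\tau)-h_F$ is constant (respectively zero). Lemma \ref{lem:equivalence of translated horofunctions} then gives $\tau\in\Lin(F)$ (respectively $\Lin_0(F)$). So $\exp$ maps $\Lin_0(F)$, resp.\ $\Lin(F)$, onto $R_F\cap A$, resp.\ $S_F\cap A$.

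The remaining point, which I expect to be the main obstacle, is passing from the group statement to the Lie algebra statement: we need that $X\in\Lie(S_F)\cap\mathfrak{a}$ implies $e^{tX}\in S_F\cap A$ for all $t$. This is immediate since $S_F,R_F$ are closed subgroups (stabilizers for a continuous action in the compact-open topology), so $\Lie(S_F)=\{X: e^{tX}\in S_F\ \forall t\}$. Intersecting with $\mathfrak{a}$ then gives exactly $\{X\in\mathfrak{a}: e^{tX}\in S_F\ \forall t\}=\Lin(F)$, since $\Lin(F)$ is a linear subspace, and likewise $\Lin_0(F)$ for $R_F$. The delicate part is the careful justification that equality on the flat propagates to $\X$ for translates by $e^\tau$; I would handle it as described above through the explicit ray description, or alternatively by citing the embedding of Theorem \ref{thm:Comparaison Horofunction Flat and symmetric space HSWW} as $A$-equivariant.
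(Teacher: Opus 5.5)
Your proposal is correct and follows the paper's argument: Theorem~\ref{thm:Comparaison Horofunction Flat and symmetric space HSWW} shows that $h_F\circ e^\tau$ and $h_F$ agree, or agree up to a constant, on $\X$ exactly when they do so on the model flat, and Lemma~\ref{lem:equivalence of translated horofunctions} finishes. Your closed-subgroup step passing from $R_F\cap A$, $S_F\cap A$ to $\Lie(R_F)\cap\mathfrak{a}$, $\Lie(S_F)\cap\mathfrak{a}$ spells out what the paper leaves implicit. Citing that theorem is the right way to propagate equality off the flat, since your ``$N$ affine near $tf$'' heuristic only controls points of the flat.

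The one slip is the opening remark that $\Lie(R_F)$ and $\Lie(S_F)$ are $\mathfrak{a}$-stable. This is false when $\Theta_F^{\fixed}\neq\emptyset$, because the summands $\mathfrak{k}_\alpha$ are not $\mathrm{ad}(\mathfrak{a})$-invariant. You never use it, so it does not affect the proof.
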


\begin{proof}
    Applying Theorem \ref{thm:Comparaison Horofunction Flat and symmetric space HSWW}, the horofunctions $h_F$ and $h_F \circ e^\tau$ for $\tau\in \mathfrak{a}$ are equal if and only they coincide on the model flat. 
    The result then follows from Lemma \ref{lem:equivalence of translated horofunctions}.
\end{proof}

Now we can fully understand the orbits of the horofunction compactification. They are bundles over a flag manifold, with fiber a symmetric space $\X_F$.

\begin{prop}
\label{prop:Space horofunctions}
    The $G$-homogeneous space $\mathcal{O}_F$ fibers over the flag manifold $\mathcal{F}_\Theta=G/P_\Theta$ where $\Theta=\Delta\setminus \Theta_F^{\stable}$. The fiber at each point is a copy of $\X_F$, where $\X_F$ is the  symmetric space associated to the group $G^{\fixed}_F\times e^{V}$. Here $V\subset \mathfrak{a}$ is the orthogonal complement of $\mathfrak{a}^{\fixed}_F$ in $\Lin_0(F)^\perp$.
\end{prop}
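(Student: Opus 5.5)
We realize $\mathcal{O}_F$ as $G/R_F$ and construct the fibration from the inclusion $R_F\subset P_\Theta$, where $\Theta=\Delta\setminus\Theta_F^{\stable}$. The previous proposition gives this inclusion, so the map $gR_F\mapsto gP_\Theta$ is a well-defined $G$-equivariant surjection $\mathcal{O}_F\to\mathcal{F}_\Theta$. Its fibers are all $G$-translates of the fiber over the base point, which is $P_\Theta/R_F$. It therefore remains to identify $P_\Theta/R_F$ with $\X_F$.

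We use the Langlands decomposition $P_\Theta=L_\Theta U_\Theta$, with Levi factor $L_\Theta=Z_G(\mathfrak{a}_\Theta)$, where $\mathfrak{a}_\Theta=\bigcap_{\alpha\in\Theta_F^{\stable}}\Ker\alpha$. Since $U_\Theta\subset R_F$ and $U_\Theta$ is normal in $P_\Theta$, we get $P_\Theta/R_F=L_\Theta/(L_\Theta\cap R_F)$. At the Lie algebra level we decompose
\[
\mathfrak{l}_\Theta=\mathfrak{a}\oplus\mathfrak{m}\oplus\bigoplus_{\alpha\in\Span(\Theta_F^{\stable})}\mathfrak{g}_\alpha .
\]
By Lemma \ref{lem:flipping roots orthgonal to fixing roots}, fixed and flipped roots are orthogonal, and every root in $\Span(\Theta_F^{\stable})$ lies in $\Span(\Theta_F^{\fixed})$ or in $\Span(\Theta_F^{\flipped})$. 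Hence $\mathfrak{l}_\Theta$ is the sum of $\mathfrak{g}_F^{\fixed}$, $\mathfrak{g}_F^{\flipped}$, a compact part in $\mathfrak{m}$, and a central part of $\mathfrak{a}$. We split $\mathfrak{a}$ orthogonally as
\[
\mathfrak{a}=\mathfrak{a}_F^{\fixed}\oplus V\oplus\Lin_0(F).
\]
This is a valid orthogonal splitting: $\mathfrak{a}_F^{\fixed}\perp\Lin(F)\supset\Lin_0(F)$, and $V$ is defined as the complement of $\mathfrak{a}_F^{\fixed}$ in $\Lin_0(F)^\perp$. In addition, $\mathfrak{a}_F^{\flipped}\subset\Lin_0(F)$.

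By Proposition \ref{prop:comparing RF and SF to Gflip and Gfix}, $L_\Theta\cap R_F$ contains three pieces: $G_F^{\flipped}$, the group $K\cap P_\Theta$ (which contains $K\cap L_\Theta$), and the maximal compact subgroup of $G_F^{\fixed}$. By Lemma \ref{lem:Cartan subgroup of stabiliazer} it also contains $e^{\Lin_0(F)}$. The same two results show these pieces are everything: $\Lie(R_F)\cap\mathfrak{a}=\Lin_0(F)$, and $R_F\cap G_F^{\fixed}$ is compact. Thus $L_\Theta\cap R_F$ is generated by $(K\cap L_\Theta)$, $G_F^{\flipped}$ and $e^{\Lin_0(F)}$.

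Next we write $L_\Theta$ as an almost product: $G_F^{\fixed}\cdot e^{V}$ times the subgroup generated by $G_F^{\flipped}$, $e^{\Lin_0(F)}$ and $K\cap L_\Theta$. Taking the quotient by $L_\Theta\cap R_F$ then gives
\[
L_\Theta/(L_\Theta\cap R_F)\cong (G_F^{\fixed}\times e^V)/K_F^{\fixed},
\]
which is by definition the symmetric space $\X_F$.

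The main obstacle is this last identification. The difficulty is to check that $L_\Theta\cap R_F$ is generated exactly by the listed pieces. We would argue via a $KAK$-type decomposition of $L_\Theta$ with respect to $K\cap L_\Theta$. Every element of $L_\Theta$ can be written $k_1 e^{a}k_2$ with $a\in\mathfrak{a}$. Since the compact factors lie in $R_F$, such an element lies in $R_F$ if and only if $e^a$ does, which by Lemma \ref{lem:Cartan subgroup of stabiliazer} means $a\in\Lin_0(F)$. This characterizes $L_\Theta\cap R_F$ and yields a diffeomorphism with $\X_F$, once we verify that $e^{\Lin_0(F)}$ together with $G_F^{\flipped}$ is normal in $L_\Theta$ modulo compacts. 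That normality holds because $\Lin_0(F)\ominus\mathfrak{a}_F^{\flipped}$ is centralized by all roots in $\Span(\Theta_F^{\stable})$.
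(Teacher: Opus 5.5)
Your proposal is correct and takes essentially the same route as the paper. Both arguments use the fibration $G/R_F\to G/P_\Theta$ with fiber $P_\Theta/R_F$, discard $U_\Theta$, $K\cap P_\Theta$ and $G_F^{\flipped}$ using Proposition \ref{prop:comparing RF and SF to Gflip and Gfix}, and identify what remains using Lemma \ref{lem:Cartan subgroup of stabiliazer}; your Levi-decomposition and $K_\Theta e^{\mathfrak{a}}K_\Theta$ argument showing $L_\Theta\cap R_F=K_\Theta e^{\Lin_0(F)}K_\Theta$ makes rigorous the final identification with $\X_F$, which the paper only asserts.
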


For the space $G/S_F$ of horofunctions up to a constant the fiber is associated to the group generated by $G^{\fixed}_F\times e^{V'}$ where $V'$ is the orthogonal complement of $\mathfrak{a}^{\fixed}_F$ in $\Lin(F)^\perp$.

\begin{proof}
    We have $\mathcal{O}_F = G/R_F$ with $R_F$ in $P_\Theta$, so $G/R_F$ maps to $G/P_\Theta$.
    The fiber is $P_\Theta/R_F$.

    \medskip

    The parabolic group $P_\Theta$ is generated by  the groups $K\cap P_\Theta$, $U_\Theta$, $G^{\flipped}_F$, $G^{\fixed}_F$ and $e^\mathfrak{a}$. By Proposition \ref{prop:comparing RF and SF to Gflip and Gfix} we know that 
    $$ \left(K\cap P_\Theta\right)/\left(R_F\cap K\right), U_\Theta/\left(U_\Theta\cap R_F\right), \text{ and } G^{\flipped}_F/\left(G^{\flipped}_F\cap R_F\right) $$
    are trivial, and that $G^{\fixed}_F/\left(G^{\fixed}_F\cap R_F\right)=G^{\fixed}_F/\left(G^{\fixed}_F\cap K\right)$.
    
    We use Lemma \ref{lem:Cartan subgroup of stabiliazer} and again part of Proposition \ref{prop:comparing RF and SF to Gflip and Gfix} to see that $$\langle e^\mathfrak{a}, G^{\fixed}_F\rangle/\left(\langle e^\mathfrak{a}, G^{\fixed}_F\rangle\cap R_F\right)=\X_F. $$
    Hence $P_\Theta/R_F$ is naturally identified with $\X_F$.
\end{proof}

We now describe precisely the Lie algebras of $R_F$ and $S_F$.
We write 
$$ \mathfrak{k}_\alpha = \mathfrak{k} \cap (\mathfrak{g}_\alpha \oplus \mathfrak{g}_{-\alpha}), \quad  \mathfrak{p}_\alpha = \mathfrak{p} \cap (\mathfrak{g}_\alpha \oplus \mathfrak{g}_{-\alpha}).$$

Proposition \ref{prop:comparing RF and SF to Gflip and Gfix} and Lemma \ref{lem:Cartan subgroup of stabiliazer} imply directly the following:

\begin{prop}
\label{prop:Lie algebra of stabilizer}
$${\Lie}(R_F)={\Lin_0}(F)\oplus \mathfrak{m} \oplus \bigoplus_{\alpha \in \Sigma^+ \setminus {\Span}(\Theta_F^{\stable})}\mathfrak{g}_\alpha \bigoplus_{\alpha \in {\Span}(\Theta_F^{\fixed})}\mathfrak{k}_\alpha\bigoplus_{\alpha \in {\Span}(\Theta_F^{\flipped})}\mathfrak{g}_{\alpha}\oplus\mathfrak{g}_{-\alpha},$$
$${\Lie}(S_F)={\Lin}(F)\oplus \mathfrak{m} \oplus \bigoplus_{\alpha \in  \Sigma^+ \setminus {\Span}(\Theta_F^{\stable})}\mathfrak{g}_\alpha  \bigoplus_{\alpha \in {\Span}(\Theta_F^{\fixed})}\mathfrak{k}_\alpha\bigoplus_{\alpha \in {\Span}(\Theta_F^{\flipped})}\mathfrak{g}_{\alpha}\oplus \mathfrak{g}_{-\alpha}.$$

\end{prop}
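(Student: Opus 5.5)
The plan is to determine the Lie algebra of $R_F$ from the subgroups already known to lie in it or in $P_\Theta$, with $\Theta=\Delta\setminus\Theta_F^{\stable}$, and then to pin it down exactly by a dimension count. The count comes from the identification $P_\Theta/R_F\simeq \X_F$ in Proposition \ref{prop:Space horofunctions}. Throughout, I use the Langlands-type decomposition
$$\Lie(P_\Theta)=\mathfrak{a}\oplus\mathfrak{m}\oplus\bigoplus_{\alpha\in\Sigma^+\setminus\Span(\Theta_F^{\stable})}\mathfrak{g}_\alpha\oplus\bigoplus_{\alpha\in\Span(\Theta_F^{\stable})}\mathfrak{g}_\alpha\oplus\mathfrak{g}_{-\alpha}.$$
By Lemma \ref{lem:flipping roots orthgonal to fixing roots}, the roots in $\Span(\Theta_F^{\stable})$ split as those in $\Span(\Theta_F^{\fixed})$ together with those in $\Span(\Theta_F^{\flipped})$, because these two root subsystems are orthogonal.

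\textbf{Inclusion of the right-hand side.} I check that each summand lies in $\Lie(R_F)$.
\begin{itemize}
\item The summands $\mathfrak{g}_\alpha$ for $\alpha\in\Sigma^+\setminus\Span(\Theta_F^{\stable})$ span $\Lie(U_\Theta)$, and $U_\Theta\subset R_F$ by the proposition on $R_F\subset S_F\subset P_\Theta$.
\item The summand $\mathfrak{m}$, together with the $\mathfrak{k}_\alpha$ for every $\alpha\in\Span(\Theta_F^{\stable})$, lies in $\Lie(K\cap P_\Theta)$. This group is contained in $R_F$ by Proposition \ref{prop:comparing RF and SF to Gflip and Gfix}. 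In particular this covers the $\mathfrak{k}_\alpha$ with $\alpha\in\Span(\Theta_F^{\fixed})$.
\item The summands $\mathfrak{g}_\alpha\oplus\mathfrak{g}_{-\alpha}$ for $\alpha\in\Span(\Theta_F^{\flipped})$ lie in $\mathfrak{g}_F^{\flipped}$, and $G_F^{\flipped}\subset R_F$ by the same proposition.
\item The summand $\Lin_0(F)=\Lie(R_F)\cap\mathfrak{a}$ comes from Lemma \ref{lem:Cartan subgroup of stabiliazer}.
\end{itemize}
Since $\Lie(R_F)$ is a vector space containing each summand, it contains their sum. The same argument gives the inclusion for $S_F$, using $\Lin(F)=\Lie(S_F)\cap\mathfrak{a}$ and $R_F\subset S_F$.

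\textbf{Reverse inclusion by dimension count.} By Proposition \ref{prop:Space horofunctions}, $\dim P_\Theta-\dim R_F=\dim\X_F$. The symmetric space of $G_F^{\fixed}\times e^V$ has dimension
$$\dim\mathfrak{a}_F^{\fixed}+\dim V+\sum_{\alpha\in\Span(\Theta_F^{\fixed})^+}m_\alpha=\dim\Lin_0(F)^\perp+\sum_{\alpha\in\Span(\Theta_F^{\fixed})^+}\dim\mathfrak{p}_\alpha .$$
The equality uses $\mathfrak{a}_F^{\fixed}\perp\Lin(F)$, which gives $\mathfrak{a}_F^{\fixed}\subset\Lin_0(F)^\perp$, so that $\mathfrak{a}_F^{\fixed}\oplus V=\Lin_0(F)^\perp$. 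The codimension of the right-hand side inside $\Lie(P_\Theta)$ is exactly the same number. Indeed, what is missing from $\Lie(P_\Theta)$ is $\Lin_0(F)^\perp$ inside $\mathfrak{a}$, together with the $\mathfrak{p}_\alpha$ for $\alpha\in\Span(\Theta_F^{\fixed})$, since $\mathfrak{g}_\alpha\oplus\mathfrak{g}_{-\alpha}=\mathfrak{k}_\alpha\oplus\mathfrak{p}_\alpha$. The two dimensions therefore agree, and equality of the Lie algebras follows.

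For $S_F$, I run the same count with the fibre of $G/S_F\to\mathcal{F}_\Theta$, which uses $V'$ and $\Lin(F)^\perp$ in place of $V$ and $\Lin_0(F)^\perp$.

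The main obstacle is the dimension identification of the fibre. Proposition \ref{prop:Space horofunctions} is stated somewhat loosely, so I need the precise statement that $\langle e^{\mathfrak{a}},G_F^{\fixed}\rangle/(\cdot\cap R_F)$ has dimension $\dim\Lin_0(F)^\perp+\sum\dim\mathfrak{p}_\alpha$. If this is in doubt, the first thing I would try is a direct argument bypassing the fibre. Take $X\in\Lie(R_F)$ and decompose it along the root decomposition. Conjugating by $e^{t\tau}$ with $\tau\in\Lin_0(F)$ regular for $\Theta_F^{\stable}$ keeps us in $\Lie(R_F)$, and isolating each eigencomponent reduces the question to $\Lie(R_F)\cap\mathfrak{g}_F^{\fixed}=\mathfrak{k}\cap\mathfrak{g}_F^{\fixed}$ and $\Lie(R_F)\cap\mathfrak{a}=\Lin_0(F)$. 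These two facts are given by Proposition \ref{prop:comparing RF and SF to Gflip and Gfix} and Lemma \ref{lem:Cartan subgroup of stabiliazer}.
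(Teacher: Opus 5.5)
Your proof is correct. The inclusion of the right-hand side is the same as in the paper; the reverse inclusion goes by a different route.

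\textbf{The paper's route versus yours.} The paper only says that Proposition \ref{prop:comparing RF and SF to Gflip and Gfix} and Lemma \ref{lem:Cartan subgroup of stabiliazer} ``imply directly'' the formula. It has in mind a root-space argument inside $\Lie(P_\Theta)$, using three facts: $R_F\subset P_\Theta$, $\Lie(R_F)\cap\mathfrak{a}=\Lin_0(F)$, and the fact that $R_F\cap G_F^{\fixed}$ is the maximal compact subgroup.

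You instead get the reverse inclusion from a dimension count, through the fibre $P_\Theta/R_F\simeq\X_F$ of Proposition \ref{prop:Space horofunctions}. Your count is right on both sides. The fibre has dimension $\dim\Lin_0(F)^\perp+\sum m_\alpha$, summed over positive $\alpha\in\Span(\Theta_F^{\fixed})$, with $\Lin(F)^\perp$ in place of $\Lin_0(F)^\perp$ for $S_F$. There is no circularity, since that proposition is proved before this one.

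The count spares you from analysing $\Lie(R_F)$ element by element. The cost is that the real content moves into Proposition \ref{prop:Space horofunctions}. That proposition's proof in the paper is only a generator-by-generator sketch resting on the same two facts, so your argument is valid relative to the paper but no more self-contained than the direct one.

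\textbf{Your fallback direct argument would not work as written.} Subtract from $X\in\Lie(R_F)$ the components already known to lie in $\Lie(R_F)$. What remains is $v+p$ with $v\in V$ and $p\in\mathfrak{p}\cap\mathfrak{g}_F^{\fixed}$.
\begin{itemize}
\item Conjugating by $e^{t\tau}$ with $\tau\in\Lin_0(F)$ cannot separate $v$ from $p$. Every root in $\Span(\Theta_F^{\fixed})$ vanishes on $\Lin(F)$, so $\mathrm{ad}_\tau$ kills both $v$ and $p$.
\item If $F$ is a vertex, then $\Lin_0(F)=0$ and there is no useful $\tau$ at all.
\item So the question does not reduce to the two separate intersections $\Lie(R_F)\cap\mathfrak{a}$ and $\Lie(R_F)\cap\mathfrak{g}_F^{\fixed}$. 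A mixed element $v+p$ lies in neither.
\end{itemize}

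The missing step is a $K$-conjugation rather than an $A$-conjugation.
\begin{itemize}
\item $V$ commutes with $\mathfrak{g}_F^{\fixed}$, because $V$ is orthogonal to all coroots of $\Theta_F^{\fixed}$.
\item There is $k\in K\cap G_F^{\fixed}\subset K\cap P_\Theta\subset R_F$ with $\mathrm{Ad}(k)p\in\mathfrak{a}_F^{\fixed}$.
\item Then $\mathrm{Ad}(k)(v+p)=v+\mathrm{Ad}(k)p$ lies in $\Lie(R_F)\cap\Lin_0(F)^\perp$.
\item This intersection is $\{0\}$ by Lemma \ref{lem:Cartan subgroup of stabiliazer}, and $V\perp\mathfrak{a}_F^{\fixed}$, so $v=p=0$.
\end{itemize}
This uses only $K\cap P_\Theta\subset R_F$ and $\Lie(R_F)\cap\mathfrak{a}=\Lin_0(F)$. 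It is presumably what the paper's ``directly'' is meant to cover.
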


\begin{rem}
    The roots in $\Theta_F^{\fixed}$ and $\Theta_F^{\flipped}$ are orthogonal (Lemma \ref{lem:flipping roots orthgonal to fixing roots}), hence every root in ${\Span}(\Theta_F^{\stable})$ belongs to exactly one of ${\Span}(\Theta_F^{\fixed})$ or ${\Span}(\Theta_F^{\flipped})$.
\end{rem}

\subsection{Relating horofunctions to Riemannian Busemann functions}

In order to study the bisector of two horofunctions, will need to consider configurations of a pair of horofunctions $h_1,h_2$ and a point $x$ of $\X$.
Since this triple may fail to all lie in (the closure of) a common maximal flat, we need to give a more invariant description of the horofunctions.

\medskip

We start by relating the polyhedral Finsler distance $d^N$ to certain Riemannian Busemann functions. 
For $o \in \X$ and $\xi \in \partial_{vis} \X$ we let $b_{o,\xi}$ denote the Riemannian Busemann function corresponding to the geodesic ray emanating from $o$ asymptotic to $\xi$. 

If $A$ is a maximal flat containing $o$ and $\xi$, then the restriction of $b_{o,\xi}$ to $A$ is linear and has gradient of norm $1$. 
On the other hand, if $\omega \in \mathfrak{a}^*$, is nonzero, then there is a unique ray emanating from the basepoint whose associated Busemann function restricts to $-\omega / \Abs{\omega}$ on $\mathfrak{a}$, identified with the model flat in $\X$ by the exponential map. 
In particular, there is a natural inclusion $\mathfrak{a}^\ast \setminus \{0\} \to \mathcal{H}^{Riem}(\X)$, the space of Riemannian Busemann functions. 
We will write $\omega \mapsto b_\omega$.
Note that the $G$-orbit of $b_\omega$ in $\mathcal{H}^{Riem}(\X)$ is independent of the choice of maximal flat and the action of the Weyl group.

We will restrict this map to vertices
of the dual unit ball $(B^N)^\ast$. 
Let us index the vertices of the dual unit ball $\ver((B^N)^\ast) = \{ \omega_i : i \in I \}$.

\begin{proposition}\label{prop: dN from busemanns}
    Let $o,x,y \in \X$. 
    Then:
        \[ d^N(x,y) = \max_{i \in I, \xi \in G \cdot [b_{\omega_i}]} \Abs{\omega_i}(b_{o,\xi}(x) - b_{o,\xi}(y)) .\]
    In particular by setting $o=x$ or $o=y$ we obtain:
       \[ d^N(x,y) = \max_{i \in I, \xi \in G \cdot [b_{\omega_i}]} -\Abs{\omega_i} b_{x,\xi}(y)  = \max_{i \in I, \xi \in G \cdot [b_{\omega_i}]} \Abs{\omega_i} b_{y,\xi}(x).\]
\end{proposition}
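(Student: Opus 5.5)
The plan is to reduce the identity to a single Busemann estimate, with \(o\) eliminated by a cocycle relation and \(G\)-invariance. Two facts make this possible. First, the quantity \(b_{o,\xi}(x)-b_{o,\xi}(y)\) does not depend on \(o\): changing the basepoint only adds the constant \(b_{o',\xi}(o)\). So it suffices to prove the middle formula with \(o=x\), namely
\[ d^N(x,y) = \max_{i,\xi} -\Abs{\omega_i}\, b_{x,\xi}(y). \]
The third expression then follows by taking \(o=y\) and using \(b_{y,\xi}(y)=0\). Second, both sides are \(G\)-invariant: \(d^N\) is invariant, and the sets \(G\cdot[b_{\omega_i}]\) are \(G\)-orbits. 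We may therefore assume \(x\) is the basepoint and \(y=e^{a}x\) with \(a=\vec d(x,y)\in\overline{\mathfrak{a}^+}\). Then \(d^N(x,y)=N(a)=\max_{i}\omega_i(a)\) by the polyhedral description of \(N\).

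\emph{Lower bound.} Choose a vertex \(\omega_i\) of \((B^N)^\ast\) realizing \(N(a)=\omega_i(a)\), and take \(\xi\) to be the point at infinity of the ray from \(x\) in the model flat defining \(b_{\omega_i}\). By the normalization of \(\omega\mapsto b_\omega\), the restriction of \(b_{x,\xi}\) to the model flat is \(-\omega_i/\Abs{\omega_i}\). Hence \(-\Abs{\omega_i}\,b_{x,\xi}(e^a x)=\omega_i(a)=N(a)\), so the maximum is at least \(d^N(x,y)\).

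\emph{Upper bound.} Fix \(i\) and any \(\xi=g\cdot[b_{\omega_i}]\), and let \(u\in\mathfrak{a}\) be the unit vector dual to \(\omega_i/\Abs{\omega_i}\). Write \(\xi=k\cdot\xi_0\) with \(k\in K\), where \(\xi_0\) is the endpoint of the ray \(t\mapsto e^{tu}x\). This is possible because \(K\) acts transitively on the points of \(\partial_{vis}\X\) of a given type. Then
\[ -b_{x,\xi}(y) = -b_{x,\xi_0}(k^{-1}e^{a}x). \]
By the Iwasawa description of Busemann functions, this equals \(\langle u, H(k^{-1}e^{a})\rangle\) up to sign conventions, where \(H\) is the Iwasawa \(\mathfrak{a}\)-projection taken with respect to a Borel subgroup fixing \(\xi_0\). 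Kostant's convexity theorem says that the Iwasawa projection of \(Ke^{a}\) lies in the convex hull of \(W\cdot a\). Hence
\[ -\Abs{\omega_i}\, b_{x,\xi}(y) \le \max_{w\in W}\,(w\omega_i)(a). \]
The set of vertices of \((B^N)^\ast\) is \(W\)-invariant, so each \(w\omega_i\) lies in \((B^N)^\ast\), and the right-hand side is at most \(N(a)\).

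The main obstacle is this upper bound. The Riemannian Busemann function is only Riemannian-Lipschitz, so a direct triangle-inequality argument does not give a bound in terms of \(N\), and some convexity input is needed. I would invoke Kostant's theorem, taking care to match its conventions (sign of \(H\), choice of Borel subgroup) to the normalization \(b_\omega|_{\mathfrak{a}}=-\omega/\Abs{\omega}\). Failing that, an alternative is to use the standard fact that, for \(\xi\) and \(x\) in a common flat, the Riemannian Busemann function is maximized on the \(K_x\)-orbit of \(y\) at a point in that flat.
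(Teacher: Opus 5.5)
Your proof is correct. The lower bound is the same as the paper's: reduce to $y=e^{a}x$, then evaluate $b_{x,[b_{\omega_j}]}$ on the model flat for a vertex $\omega_j$ realizing $N(a)$. Your upper bound, however, takes a different route.

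The paper handles the upper bound in one line. It asserts that each $\Abs{\omega_i}\,b_{o,\xi}$ with $\xi\in G\cdot[b_{\omega_i}]$ is a horofunction of $d^N$, hence $1$-Lipschitz for the asymmetric metric, which immediately gives $\Abs{\omega_i}(b_{o,\xi}(x)-b_{o,\xi}(y))\le d^N(x,y)$. This is short, but it presupposes that $\Abs{\omega_i}b_{\omega_i}$ is the $d^N$-horofunction of the facet dual to $\omega_i$. That is not simply the $d^N$-limit along the Riemannian ray to $\xi$, because the ray dual to $\omega_i$ need not meet that facet. The paper only makes the identification explicit later, in Proposition~\ref{prop:Explicit formula for Horofunctions}, whose proof cites the present statement; so the one-liner implicitly relies on \cite{haettel2018horofunctioncompactificationssymmetricspaces}.

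Your argument avoids this. The Busemann character on the minimal parabolic fixing $\xi_0$, combined with Kostant's nonlinear convexity theorem, proves the needed $1$-Lipschitz property directly. It even gives the sharper bound $-\Abs{\omega_i}\,b_{x,\xi}(y)\le\max_{w\in W}(w\omega_i)(\vec d(x,y))$. The cost is invoking Kostant's theorem.

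The sign convention you flag is easy to settle. If $k\in K$ normalizes $\mathfrak a$ and represents $w$, then $k^{-1}e^{a}=e^{w^{-1}a}k^{-1}$, so the Iwasawa projection of $Ke^{a}$ contains $W\cdot a$. Kostant's image is the convex hull of a single $W$-orbit, and all its points in that orbit have the same Euclidean norm. Hence the image is $\operatorname{conv}(W\cdot a)$ rather than $\operatorname{conv}(W\cdot(-a))$. This distinction only matters when $N$ is asymmetric, which forces $\iota\neq\Id$.
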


\begin{proof}
    Note that the expression is independent of $o \in \X$.
    Up to applying an isometry, we may assume that $x=o$ and $y=e^a o$ for $a \in \mathfrak{a}^*$, so that 
        \[ d^N(x,y) = N(\vec{d}(x,y)) = N(\vec{d}(o,e^a\cdot o)) = N(a) .\]
    If $a$ belongs to a face $F$ of $B^N$, then 
        \[ N(a) = \sup \left\{ \omega_i(a) : \omega_i \in vert(F^\vee) \right\} \]
    which is realized by a vertex $\omega_j \in vert(F^\vee)$ since the set of vertices is finite, so 
        \[ \omega_j(a) = \Abs{\omega_j}(b_{o,[\omega_j]}(o) - b_{o,[\omega_j]}(e^a \cdot o)) .\]

    For the other bound, note that each $\Abs{\omega_i}b_{o,\xi}$ for $\xi \in G \cdot [\omega_i]$ is a horofunction of $d^N$, and hence is $1$-Lipschitz with respect to $d^N$. 
\end{proof}

Our next goal is to write a horofunction of $d^N$ in terms of Riemannian Busemann functions. 
As we saw in the previous proof, it is possible to restrict considerations to the face $F$ containing the tangent vector from $x$ to $y$ and the vertices of its dual face. 

\medskip

Recall from Section \ref{subsec:The stabilizer of a horofunction} that $\Theta_F^{\stable}$ denotes the set of simple roots whose reflections preserve $F$, and set $\Theta = \Delta \setminus \Theta_F^{\stable}$, and recall that $P_\Theta$ denotes the corresponding parabolic subgroup. 
We also let $R_F$ denotes the stabilizer in $G$ of the standard horofunction $h_F$. 
Let $K_\Theta = K \cap P_\Theta$ be the intersection of the maximal compact and $P_\Theta$, or equivalently the maximal compact subgroup of the Levi subgroup $L_\Theta$. 
From Proposition \ref{prop:comparing RF and SF to Gflip and Gfix} we have $K_\Theta \subset R_F \subset P_\Theta$.

\begin{lem}
\label{lem:P Theta better description}
For every root $\alpha\in \Sigma$, every $f\in F$,  and every $\omega_i\in vert(F^\vee)$, if $\alpha(f)<0$ then $\langle \omega_i, \alpha\rangle=\omega_i(\alpha^\vee)\leq 0$, where $\alpha^\vee$ is the coroot of $\alpha$.
\end{lem}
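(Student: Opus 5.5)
The plan is to use the reflection $\sigma_\alpha\in W$ together with the $W$-invariance of $B^N$ and the extremality of $\omega_i$ in $(B^N)^*$. Fix $f\in F$, $\omega_i\in \ver(F^\vee)$ and a root $\alpha$ with $\alpha(f)<0$. Since $N$ is $W$-invariant, so is $(B^N)^*$, hence $\sigma_\alpha\omega_i\in (B^N)^*$, and therefore $(\sigma_\alpha\omega_i)(f)\le 1$. On the other hand,
$$(\sigma_\alpha \omega_i)(f)=\omega_i(\sigma_\alpha f)=\omega_i\big(f-\alpha(f)\alpha^\vee\big)=1-\alpha(f)\,\omega_i(\alpha^\vee),$$
using $\omega_i(f)=1$, which holds because $\omega_i\in\overline{F^\vee}$ (Lemma \ref{lem: Alternative definition Fvee}). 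Combining the two gives $-\alpha(f)\,\omega_i(\alpha^\vee)\le 0$. Since $-\alpha(f)>0$, this yields $\omega_i(\alpha^\vee)\le 0$, and $\langle\omega_i,\alpha\rangle$ has the same sign as $\omega_i(\alpha^\vee)$ since the coroot is a positive multiple of the dual of $\alpha$.

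There are two points to check. First, one should confirm that $\sigma_\alpha$ (for any root, not only simple ones) lies in $W$ and acts by $v\mapsto v-\alpha(v)\alpha^\vee$ with $\alpha(\alpha^\vee)=2$; this is standard. Second, the dual action must be compatible, i.e. $(w\omega)(v)=\omega(w^{-1}v)$ with $\sigma_\alpha^{-1}=\sigma_\alpha$, which is what was used above.

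Nothing deeper is needed: the hypothesis that $\omega_i$ is a vertex is not even used, since any $\beta\in\overline{F^\vee}$ works. The only real obstacle is bookkeeping of signs and of the identification of $\langle\omega_i,\alpha\rangle$ with $\omega_i(\alpha^\vee)$ up to a positive factor.
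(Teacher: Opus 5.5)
Your proof is correct and is essentially the paper's argument: both come down to the inequality $\omega_i(\sigma_\alpha f)\le 1=\omega_i(f)$. The paper gets it by noting $\sigma_\alpha f\in B^N$, while you get it by noting $\sigma_\alpha\omega_i\in (B^N)^*$ and evaluating at $f$. You are also right that only $\omega_i\in\overline{F^\vee}$ is used, not that $\omega_i$ is a vertex.
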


\begin{proof}
   Suppose that for a given root $\alpha \in \Sigma$, $\alpha(f)<0$. 
   Since $N$ is a norm that is invariant with respect to the Weyl group, given $f\in F\cap \mathfrak{a}^+$, 
   its reflection
   \[ f-2\alpha (f)\alpha^\vee ,\]
   where $\alpha^\vee$ is the coroot of $\alpha$,
   is still inside the unit ball of $N$. 
   Hence 
   \[ \omega_i(f-2\alpha (f)\alpha^\vee)\leq 1=\omega_i(f).\] 
   Therefore $\alpha(f)\omega_i(\alpha^\vee)\geq 0$. Since $\alpha(f)<0$, $\omega_i(\alpha^\vee)\leq 0$.
\end{proof}

We introduce the following sets. First let $I_F\subset I$ be the set of indices corresponding to vertices of $F^\vee$.

 \begin{defnprop}
\label{defnprop:Definition E_i^F}
 Let $i\in I_F$ be so that $\omega_i\in vert(F^\vee)$, and  consider $[b_{\omega_i}]\in \partial^{Riem}\X$. 
 Let $E_i^F\subset \partial^{Riem}\X$ be the following set:
 $$E_i^F:=K_\Theta\cdot [b_{\omega_i}]=P_\Theta\cdot [b_{\omega_i}].$$
 \end{defnprop}

\begin{proof}
We have to show that the two definitions agree.
The compact group $K_\Theta$ intersects every connected component of $P_\Theta$, so 
it suffices to check that the orbits agree locally. 
    
The Lie algebra $\mathfrak{p}_\Theta$ of $P_\Theta$ is the sum of $\mathfrak{g}_0$ plus the sum of all root spaces $\mathfrak{g}_\alpha$ such that $\alpha(f)\geq 0$ for every $f\in F\cap \mathfrak{a}^+$. 
Let $\tau_i \in \mathfrak{a}$ be the vector orthogonal to $\Ker(\omega_i)$ so that $\omega_i(\tau)>0$ and $N(\tau_i)=1$. 
The Lie algebra $\mathfrak{p}_i$ of the stabilizer of $[b_{\omega_i}]$ is the sum of $\mathfrak{g}_0$ plus the sum of all root spaces $\mathfrak{g}_\alpha$ with $\alpha(\tau_i)\leq 0$.
Let us write $\mathfrak{m}_i$ for the sum of all root spaces such that $\alpha(\tau_i)>0$ so that $\mathfrak{g}=\mathfrak{p}_i\oplus \mathfrak{m}_i$.

\medskip

In particular, for every $v\in \mathfrak{p}_\Theta$, we write $v=v_1+v_2$ with $v_1\in \mathfrak{p}_i$ and $v_2\in \mathfrak{m}_i$. 
By Lemma \ref{lem:P Theta better description}  if $\alpha(f)<0$ then $\alpha(\tau_i)\leq0$, hence $v_2$ lies inside the sum of all roots spaces $\mathfrak{g}_\alpha$ such that $\alpha$ vanishes on $F$. 
This sum of root spaces is preserved by the Cartan involution $\theta$ corresponding to $\mathfrak{g} = \mathfrak{k} \oplus \mathfrak{p}$. 
The projection of $v$ to $\mathfrak{g}/\mathfrak{p_i}\simeq T_{[b_{\omega_i}]}\left(G\cdot [b_{\omega_i}]\right)$ is therefore equal to the projection of $\theta(v)$. 
Hence $T_{[b_{\omega_i}]}\left(P_\Theta\cdot [b_{\omega_i}]\right)=T_{[b_{\omega_i}]}\left(K_\Theta\cdot [b_{\omega_i}]\right)$. 
This is sufficient to see that the corresponding homogeneous spaces coincide.
\end{proof}

Inside the symmetric space $\X$ there is a totally geodesic sub-symmetric space consisting of points $o\in \X$ fixed by $K_\Theta$.
Recall from Theorem \ref{thm:Comparaison Horofunction Flat and symmetric space HSWW} that the standard horofunction $h_F$ extends uniquely to a horofunction on $\X$, which we continue to denote by $h_F$.

\begin{prop}
    \label{prop:Explicit formula for Horofunctions}
    Let $o\in \X$ be a point fixed by $K_\Theta$. Let $h_F\in \mathcal{O}_F$ be the model horofunction on $\X$, associated to the standard horofunction $h_F$ on $\mathfrak{a}$ via the embedding $\partial ^N\mathfrak{a}\subset \partial^N \X$, and with $h(o)=0$. We have the following:
\begin{equation}\label{eq:Formula horofunction in symmetric space}
h(x)=\max_{i\in I_F,\, p\in E^F_i}\lVert \omega_i\rVert b_{p,o}(x).\end{equation}
\end{prop}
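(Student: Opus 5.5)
The plan is to prove \eqref{eq:Formula horofunction in symmetric space} by a two-sided inequality. Denote the right-hand side by $H(x)$. Throughout we use three facts: $h=h_F$ is a Busemann point along a Riemannian ray $\gamma(t)=e^{t\tau}o$ with $\tau\in F$ as in Lemma \ref{lem: tau in ker flipped}; Proposition \ref{prop: dN from busemanns} expresses $d^N$ through Riemannian Busemann functions; and $K_\Theta\subset R_F$ (Proposition \ref{prop:comparing RF and SF to Gflip and Gfix}). Without loss of generality $o$ is the basepoint of the model flat: if $o'$ is another $K_\Theta$-fixed point, both sides change by the same additive constant and are normalized to vanish at $o'$.

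\emph{Step 1: $H\le h$.} Fix $i\in I_F$ and write $p=k\cdot p_0$ with $p_0=[b_{\omega_i}]$ and $k\in K_\Theta$, which is possible by Definition/Proposition \ref{defnprop:Definition E_i^F}. Since $k$ fixes $o$, we have $b_{p,o}(x)=b_{p_0,o}(k^{-1}x)$. Since $k\in R_F$, we have $h(k^{-1}x)=h(x)$. It therefore suffices to show $\lVert\omega_i\rVert\, b_{p_0,o}\le h$. Apply Proposition \ref{prop: dN from busemanns} with $\xi=p_0$:
\[ d^N(x,\gamma(t))\ \ge\ \lVert\omega_i\rVert\bigl(b_{o,p_0}(x)-b_{o,p_0}(\gamma(t))\bigr). \]
On the model flat, $\lVert\omega_i\rVert\, b_{o,p_0}=-\omega_i$. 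Since $\omega_i\in\ver(F^\vee)$, this gives $\lVert\omega_i\rVert\, b_{o,p_0}(\gamma(t))=-t\,\omega_i(\tau)=-tN(\tau)$. Subtracting $d^N(o,\gamma(t))=tN(\tau)$ and letting $t\to\infty$ yields $h(x)\ge \lVert\omega_i\rVert\, b_{p_0,o}(x)$.

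\emph{Step 2: equality on the model flat.} For $x=e^a o$, Proposition \ref{prop:Formula horofunction standard} gives $h(x)=\max_{i\in I_F}-\omega_i(a)$. Taking $p=[b_{\omega_i}]\in E_i^F$ shows that $H(x)\ge h(x)$ there. Combined with Step 1, this gives $H=h$ on $A\cdot o$. By the $K_\Theta$-invariance of both sides (the set $E_i^F$ is $K_\Theta$-invariant, and $K_\Theta\subset R_F$), equality then holds on $K_\Theta A\cdot o$.

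\emph{Step 3: propagate to all of $\X$.} This is the main obstacle. Here I would use a horospherical decomposition $\X=K_\Theta\, A\, U_\Theta\cdot o$, more precisely $P_\Theta=K_\Theta A N_\Theta$ with $N_\Theta$ a suitable unipotent subgroup of $P_\Theta$ containing $U_\Theta$. The function $h$ is $U_\Theta$-invariant, since $U_\Theta\subset R_F$. For $H$ it is enough to prove $H(x)\ge h(x)$, which requires only one good term. Write $x=k\,e^a\,n\cdot o$ with $n$ unipotent. I would choose $i\in I_F$ and a Weyl chamber in the model flat realizing $h(e^a o)$ so that $n$ lies in the horospherical group of $p_0=[b_{\omega_i}]$. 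Lemma \ref{lem:P Theta better description} is exactly what makes the roots of $n$ nonnegative on $\tau_i$, which is the condition needed. Then $b_{p_0,o}(e^a n o)=b_{p_0,o}(e^a o)$ up to conjugating $n$ past $e^a$, which preserves the horospherical subgroup. The delicate point is the component of $n$ along root spaces in $\Span(\Theta_F^{\stable})$, where Lemma \ref{lem:P Theta better description} only gives $\omega_i(\alpha^\vee)\le0$. There I would first absorb this component into the $L_\Theta$-part of the decomposition: use the Cartan decomposition of $G_F^{\fixed}$ (with the flipped part inside $R_F$) to move it into $K_\Theta A$. Only afterwards would I apply the horospherical invariance to the remaining $U_\Theta$-part.
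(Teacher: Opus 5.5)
Your two-inequality strategy is sound and takes a genuinely different route from the paper. The paper never decomposes $\X$. It writes $h(x)=\lim_{t}\max_{i\in I,\,p\in G\cdot[b_{\omega_i}]}\bigl(\Abs{\omega_i}b_{p,o}(x)+\phi_{i,p}(t)\bigr)$ via Proposition~\ref{prop: dN from busemanns}, where $\phi_{i,p}(t)=-\Abs{\omega_i}b_{p,o}(\gamma(t))-t$ is non-positive and non-increasing. It then argues that $\phi_{i,p}\to-\infty$ unless $i\in I_F$ and $p\in E_i^F$, notes that $\phi_{i,p}\equiv 0$ on $E_i^F$ because $K_\Theta$ fixes the ray, and concludes with a maximizing-sequence argument, which gives both inequalities at once on all of $\X$. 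Your Step~1 is exactly the ``$\phi_{i,p}\equiv 0$ on $E_i^F$'' half. For the reverse inequality $h\le H$ you replace the asymptotic analysis, which is the step needing uniform control in $p$, by structure theory. You use the computation on the flat, the decomposition $\X=P_\Theta\cdot o=K_\Theta A U_\Theta\cdot o$, invariance of $h$ under $K_\Theta U_\Theta\subset R_F$, and invariance of each $b_{[b_{\omega_i}],o}$ under $U_\Theta$. This is more algebraic and explains why exactly the $K_\Theta$-orbits $E_i^F$ appear. On the other hand, it relies on the stabilizer results of Section~\ref{subsec:The stabilizer of a horofunction}, whereas the paper's computation is self-contained and also yields the localization remark that follows the proposition.

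Step~3 and your preliminary reduction need three repairs.

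(a) Elements of $U_\Theta$ need not lie in the horospherical subgroup of $p_0=[b_{\omega_i}]$. Lemma~\ref{lem:P Theta better description}, applied to $f=\tau$ and to $-\alpha$, only gives $\alpha(\tau_i)\ge 0$ for the roots $\alpha$ of $U_\Theta$, and equality occurs. For example, for $N_\infty$ on $\mathbb{H}^2\times\mathbb{H}^2$ with $F=F_\rho$ one has $U_\Theta=N_1\times N_2$, while the horospherical group of the point in direction $(1,0)$ is only $N_1$. The correct justification is this: $U_\Theta$ lies in the stabilizer $P_{p_0}$ of $p_0$. Moreover $b_{p_0,o}(gx)=b_{p_0,o}(x)+b_{p_0,o}(go)$ for $g\in P_{p_0}$, so $g\mapsto b_{p_0,o}(go)$ is a continuous homomorphism $P_{p_0}\to\R$. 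It vanishes on every root group because $\mathfrak{g}_\alpha=[\mathfrak{a},\mathfrak{g}_\alpha]$, hence $b_{p_0,o}$ is $U_\Theta$-invariant.

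(b) Absorb the Levi component using the Cartan decomposition of the whole Levi, $L_\Theta=K_\Theta A K_\Theta$, writing $x=lu\cdot o=k_1e^{a}(k_2uk_2^{-1})\cdot o$. Invoking $R_F$ for the flipped factor only controls $h$, whereas proving $H\ge h$ requires controlling $H$.

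(c) Drop the reduction to the model basepoint. Moving the normalization point to another $K_\Theta$-fixed $o'$ shifts the terms by $\Abs{\omega_i}b_{p,o}(o')$, which is constant on each $E_i^F$ but depends on $i$. For instance, with $N_\infty$ on $\mathbb{H}^2\times\mathbb{H}^2$, where $K_\Theta$ acts trivially, moving $o$ in one factor changes the right-hand side by a non-constant function. Instead, simply take $h$ to be the Busemann point of the ray $e^{t\tau}o$, as you in fact already do.
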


\begin{proof}

Let $f\in F\cap \mathfrak{a}^+$. Let $[f]\in \partialvis \X$ be the class of the ray from $o$ with initial derivative $f$. By definition, $h$ is equal to the following limit, where $f\in F$:
$$h(x)=\lim_{t\to +\infty}d^N(x, exp_o(tf ))-d^N(o,exp_o(tf));$$
$$h(x)=\lim_{t\to +\infty}\max_{i\in I, \, p\in G\cdot [\omega_i]}\lVert \omega_i \rVert \left(b_{p,o}(x)-b_{p,o}(exp_o(f t))\right)-tN(f).$$

Recall that $N(f)=1$ since $f\in F$ is in a face the unit ball of $N$. We individually consider the functions $\phi_{i,p}(t)=-\lVert \omega_i \rVert b_{p,o}(exp_o(tf))-t$ for $i\in I_F$ and $p\in G\cdot [\omega_i]$. These functions are negative and decreasing: indeed $\lVert\omega_i\rVert b_{p,o}$ is $1$-Lipschitz for $d^N$ by Proposition \ref{prop: dN from busemanns}. Hence their limits are well defined in $\R\cup \lbrace -\infty\rbrace$.

\medskip

Morally we are now trying to prove that for this term to appear in the maximum in the limit, we need $[p]$ to be in the $E_i$ previously defined. Else we show that this function converges to $-\infty$ and hence cannot count in the maximum.

\medskip

Let $p\in G\cdot [\omega_i]$ for some $i\in I$. Let $P_\Theta$ be the same parabolic subgroup as before. We can find $g\in P_\Theta$ such that $g\cdot p=p_0\in \partialvis \mathfrak{a}^+\subset \partialvis \X$. Let $\pi_0\in \mathfrak{a}^+$ be the vector of unit norm for $N$ corresponding to $p_0\in \partialvis\mathfrak{a}^+$.  The function $\phi_{i,t}=-\lVert \omega_i\rVert b_{p_{0},g\cdot o}(exp_o(t\pi_0))-t$ is equal up to a constant to:
$$\widetilde{\phi}_{i,p}(t)=-\lVert \omega_i\rVert b_{p_{0},o}(exp_o(t\pi_0))-t=\omega_i(t\pi_0)-t$$ 
Here the constant bounding the difference depends only on the distance between $o$ and $g\cdot o$. 

The function $\phi_{i,p}$ converges uniformly to $-\infty$ away from the set of points $p_0$ such that the associated points satisfy $\pi_0=\tau_i$, where $\tau_i$ as previously is the vector of norm one for $N$ orthogonal to $\Ker(\omega_i)$ with $\omega_i(\tau_i)>0$. This property  $\pi_0=\tau_i$ occurs excatly when $i\in I_F$ and $p\in E^F_i$

\medskip

We write :
\begin{equation}
    \label{eq: formula horofunction inside the proof}
h(x)=\lim_{t\to +\infty}\max_{i\in I, \, p\in G\cdot [\omega_i]} \left( \lVert \omega_i \rVert b_{p,o}(x)+\phi_{i,p}(t)\right).\end{equation}

Let $(i_t)_{t\geq 0}$ and $(p_t)_{t\geq 0}$ be elements of $I$ and $p_t\in G\cdot [\omega_t]$ respectively such that the maximum in \eqref{eq: formula horofunction inside the proof} is reached for this value of $t\geq 0$. Up to taking a subsequence, one can assume that $i_t=i'$ for $t$ large enough, since $I$ is finite.

\medskip
Since $h(x)\neq -\infty$, we obtain that $i'\in I_F$ and $p_\infty\in E_{i'}^F=P_\Theta\cdot [\omega_{i'}]$ so that $\widetilde{\phi}_{i_t,p_t}(t)$ does not diverge to $-\infty$, as previously .

As a consequence we obtain, as in \ref{prop:Formula horofunction standard}: 

$$h(x)=\max_{i\in I_F, \, p\in E^F_i} \left( \lVert \omega_i \rVert b_{p,o}(x)+\lim_{t\to +\infty}\phi_{i,p}(t)\right).$$

By Proposition \ref{defnprop:Definition E_i^F}, if $p\in P_\Theta\cdot [\omega]$ then $p\in E_i^F=K_\Theta\cdot [\omega_i]$. Hence we can assume that $g\in K_\Theta$, and therefore $g\cdot o=o$ which implies $\phi_{i,p}=\widetilde{\phi}_{i,p}$. 
\end{proof}
    
\begin{remark}
    A consequence of the previous proposition is that in the formula for $d^N$ only the Riemannian Busemann functions for directions ``near'' the ray extending $xy$ need to be considered. 
\end{remark}

\begin{corollary}
    Let $x,y \in \X$.
    If the segment $xy$ meets a face $F$ of the unit ball in a maximal flat containing $xy$, then 
        \[ d^N(x,y) = \max_{ \omega_i \in vert(F^\vee), p \in E_i^F} \Abs{\omega_i} (b_{o,p}(x) - b_{o,p}(y)).\]
\end{corollary}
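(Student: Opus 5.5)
The plan is to derive the corollary from Proposition \ref{prop:Explicit formula for Horofunctions}, using a limiting argument along the geodesic ray extending $xy$ and squeezing $d^N(x,y)$ between two bounds.

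\textbf{Set-up.} Up to applying an isometry of $G$, we assume $y=o$ is the base point and $x=e^{-a}\cdot o$ with $a$ in the model flat. Replacing $a$ by a Weyl translate, we arrange $a\in \R_{>0}F$ with $F$ meeting $\overline{\mathfrak{a}^+}$. The statement is independent of the auxiliary point $o$, as already noted in the proof of Proposition \ref{prop: dN from busemanns}, so we may use the base point here. Using Lemma \ref{lem: tau in ker flipped} and averaging, we may take $f\coloneqq a/N(a)\in F$. The point $o$ is then fixed by $K_\Theta$ with $\Theta=\Delta\setminus\Theta_F^{\stable}$.

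\textbf{Comparison with the horofunction.} Consider the ray $\gamma(t)=\exp_o(tf)$. Since $y=o$ lies on the ray from $x$ in direction $f$, and $N$ is linear along that ray, for every $t\ge 0$ we have
\[ d^N(x,\gamma(t)) = N(a+tf) = N(a)+t = d^N(x,y)+d^N(y,\gamma(t)). \]
Passing to the limit $t\to\infty$ gives $d^N(x,y)=h_F(x)-h_F(y)$, where $h_F$ is the horofunction defined by $\gamma$. Proposition \ref{prop:Explicit formula for Horofunctions} now expresses $h_F(x)$ as $\max_{i\in I_F,\,p\in E_i^F}\Abs{\omega_i}b_{p,o}(x)$, with $h_F(y)=h_F(o)=0$.

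\textbf{Upper bound.} Each function $\Abs{\omega_i}b_{p,\cdot}$ is $1$-Lipschitz for $d^N$ by Proposition \ref{prop: dN from busemanns}. Hence the maximum over the restricted index set $i\in I_F$, $p\in E_i^F$ is at most $d^N(x,y)$.

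\textbf{Lower bound.} Let $p_j\in\partialvis$ of the model flat correspond to a vertex $\omega_j$ of $F^\vee$. On the flat, $\Abs{\omega_j}\bigl(b_{o,p_j}(x)-b_{o,p_j}(y)\bigr)=\omega_j(a)=N(a)$, because $\omega_j=1$ on $F$. This gives equality.

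\textbf{Main obstacle.} The delicate point is the sign and orientation conventions. One must check that $b_{p,o}$ in Proposition \ref{prop:Explicit formula for Horofunctions} matches $b_{o,p}$ in the corollary, and whether $F$ should contain $\vec d(x,y)$ or its opposite $\iota\vec d(x,y)$. I would first verify this directly on the model flat against Proposition \ref{prop: dN from busemanns}, and then fix the orientation of $a$ accordingly. The rest of the argument is formal.
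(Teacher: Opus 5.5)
Your proposal is correct and is essentially the paper's argument. The paper gives no separate proof and presents the corollary as an immediate consequence of Proposition \ref{prop:Explicit formula for Horofunctions}, which is your comparison step: the horofunction at the end of the ray extending $xy$, normalized at $o=y$ so that the $b_{o,p}(y)$ terms vanish. Your squeeze is the same argument as in the proof of Proposition \ref{prop: dN from busemanns}, restricted to $\ver(F^\vee)$ and using $[b_{\omega_j}]\in E_j^F$.

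Either half alone already gives the equality, so the squeeze is redundant. The appeal to Lemma \ref{lem: tau in ker flipped} and averaging is also unnecessary, since $a/N(a)$ already lies in $F$ and $h_F$ does not depend on the chosen point of $F$.
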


\subsection{Example  \texorpdfstring{$\X=\mathbb{H}^2 \times \mathbb{H}^2$}{X=H2*H2}}
\label{subsec:H2H2}
We now illustrate the notions introduced in the section with concrete examples, with $G=\PSL(2,\R)\times \PSL(2,\R)$. We keep the notations introduced in Section \ref{subsec:Symmetric spaces examples}.

Let $G=\PSL(2,\R)\times \PSL(2,\R)$, so that $\X=\mathbb{H}^2 \times \mathbb{H}^2$. We consider the following norms on $\mathfrak{a}^+\simeq \mathbb{R}^+\times \mathbb{R}^+$: 
$$N_\infty(x,y)=max(|x|,|y|),$$
$$N_1(x,y)=|x|+|y|,$$
$$N_{a,b}(x,y)=a|x|+b|y|.$$

In the third example we will take any real numbers $a>b>0$. The half sum of all positive roots $\rho$ is a positive multiple of the limear functional $(x,y)\mapsto x+y$. Therefore the face $F_\rho$ on the closure of which $\rho$ is maximum is a vertex for $N=N_\infty$ and $N=N_{a,b}$, but it is an edge for $N=N_1$. Recall that the Weyl group in this case is the group of tansformations $(x_1,x_2)\mapsto (\pm x_1,\pm x_2)$. We write $b_{\gamma,o}:\mathbb{H}^2\to \R$ for the Busemann function centered at $\gamma\in \partial \mathbb{H}^2$ which is equal to zero at $o\in \mathbb{H}^2$.

\begin{figure}[h!]
\begin{center}
\resizebox{0.9\textwidth}{!}{

\begin{tikzpicture}[scale=1.3]

    \draw[olive, fill=olive!20, thick] (.75,.75) -- (-.75,.75)--(-.75,-.75) --(.75,-.75)-- cycle;

    \draw[red] (2.2,-0.3) -- (-0.3,2.2);
    \fill[red] (.75,.75) circle (.05);

    \draw[thick] (2,0) -- (-2,0);
    \draw[thick] (0,2) -- (0,-2);
    \node[below left] (A) at (.75,.75) {$F_\rho$};
    \node[below left] (A) at (0,0) {$N_\infty$};
    \node[above right] (A) at (1.25,1.25) {$\rho=c$};
\end{tikzpicture}
\begin{tikzpicture}[scale=1.3]

    \draw[olive, fill=olive!20, thick] (1,0) -- (0,1)--(-1,0) --(0,-1)-- cycle;

    \draw[red] (2,-0.5) -- (-0.5,2);
    \draw[red, very thick] (1,0) -- (0,1);

    \draw[thick] (2,0) -- (-2,0);
    \draw[thick] (0,2) -- (0,-2);
    \node[below left] (A) at (.6,.6) {$F_\rho$};
    \node[below left] (A) at (0,0) {$N_1$};
    \node[above right] (A) at (.75,.75) {$\rho=c$};
\end{tikzpicture}
\begin{tikzpicture}[scale=1.3]
    \draw[olive, fill=olive!20, thick] (.5,0) -- (0,1)--(-.5,0) --(0,-1)-- cycle;

    \draw[red] (2,-0.5) -- (-0.5,2);
    \fill[red] (0,1) circle (.05);

    \draw[thick] (2,0) -- (-2,0);
    \draw[thick] (0,2) -- (0,-2);
    \node[left] (A) at (0,1) {$F_\rho$};
    \node[below left] (A) at (0,0) {$N_{a,b}$};
    \node[above right] (A) at (.75,.75) {$\rho=c$};
\end{tikzpicture}}
\end{center}
\caption{Illustration of the unit balls for the norms $N_\infty$, $N_1$ and $N_{a,b}$, together with the face $F_\rho$.}
\label{fig:Unit ball Norms H2}
\end{figure}

\medskip

For $N=N_\infty$, the unit ball has $3$ faces up to the Weyl group action: a vertex $F_\rho$ and two edges $F_1=\lbrace (1,x_2)\mid x_2\in [-1,1]\rbrace$ and $F_2=\lbrace (x_1,1)\mid x_1\in [-1,1]\rbrace$. The orbit of horofunctions $\mathcal{O}_{F_\rho}$ is exactly the space of functions $h:\mathbb{H}^2 \times \mathbb{H}^2\to \R$ of the form $h(p_1,p_2)=\max(b_{\alpha,o}(p_1),b_{\beta,o'}(p_2))$ for some $(\alpha,\beta)\in \partial \mathbb{H}^2 \times \partial \mathbb{H}^2$ and $(o,o')\in \mathbb{H}^2 \times \mathbb{H}^2$.  This  orbit $\mathcal{O}_{F_\rho}$ is a plane bundle over $\partial \mathbb{H}^2\times \partial \mathbb{H}^2$.

The other two orbits $\mathcal{O}_{F_1}$ and $\mathcal{O}_{F_2}$ consist respectively of horofunctions of the form $h(p_1,p_2)= b_{\alpha, o}(p_1)$ and $h(p_1,p_2)= b_{\alpha, o}(p_2)$ for $\alpha \in \partial \mathbb{H}^2$ and $o\in \mathbb{H}^2$. These orbits $\mathcal{O}_{F}$ are line bundles over $\partial \mathbb{H}^2$.

\medskip

For $N=N_1$, the unit ball has $3$ faces up to the Weyl group action: an edge $F_e=F_\rho$ and two vertices $F_1=\lbrace (1,0)\rbrace$, $F_2=\lbrace (0,1)\rbrace$. The orbit of horofunctions $\mathcal{O}_{F_\rho}$ is exactly the space of functions $h:\mathbb{H}^2 \times \mathbb{H}^2\to \R$ of the form $h(p_1,p_2)=b_{\alpha,o}(p_1)+b_{\beta,o}(p_2)$ for some $(\alpha,\beta)\in \partial \mathbb{H}^2 \times \partial \mathbb{H}^2$ and $o\in \mathbb{H}^2$. This  orbit $\mathcal{O}_{F_\rho}$ is a line bundle over $\partial \mathbb{H}^2\times \partial \mathbb{H}^2$.

The other two orbits $\mathcal{O}_{F_1}$ and $\mathcal{O}_{F_2}$ consist respectively of horofunctions of the form $h(p_1,p_2)= b_{\alpha, o}(p_1)+d(o',p_2)$ and $h(p_1,p_2)= d(o',p_1)+b_{\alpha, o}(p_2)$ for $\alpha\in \partial \mathbb{H}^2$ and $(o,o')\in \mathbb{H}^2 \times \mathbb{H}^2$. These orbits $\mathcal{O}_{F_i}$ are bundles over $\partial \mathbb{H}^2$ with fiber $\mathbb{H}^2 \times \R$.

\medskip

For $N=N_{a,b}$ and $a>b>0$, the faces are similar to the ones for $N_1$, but the difference is that $F_\rho=F_2$ contains the horofunctions of the form $h(x,y)=a \times d(o,x)+b\times b_{\beta,o'}(y)$ for $\beta \in \partial \mathbb{H}^2$ and $o,o'\in \mathbb{H}^2\times \mathbb{H}^2$.

\medskip

This discussion about $\mathbb{H}^2\times \mathbb{H}^2$ is prolonged in Section \ref{subsec:ExamplesV2}, where we apply the main results of the paper about ideal Poisson--Voronoi tessellations.

\subsection{Example \texorpdfstring{$G=\SL(d,\R)$}{G=SL(d,R)}}
\label{subsec:Horofunctions for SLn}

    Consider the case when $G= \SL(d,\R)$ and the Finsler metric is $d_{\omega_1}$. The horofunction compactification $\overline{\mathcal{X}}^{d_{\omega_1}}$ decomposes into $G$-orbits $\mathcal{O}_{F_1}, \cdots, \mathcal{O}_{F_{d-1}}$ where for $1\leq k\leq d-1$ the face $F_k\subset \mathfrak{a}$ is the following:
    $$F_k=\left\lbrace \Diag(1,\cdots, 1,\lambda_{k+1},\lambda_{k+2},\cdots, \lambda_{d})\mid \lambda_{k+1}+\cdots +\lambda_{d}+k=0,\, \lambda_i\leq 1\right\rbrace.$$

    \begin{rem}
        The conventions for labelling the faces are opposite to that of Section \ref{sec:Selberg} for a good reason: the Selberg invariant is close to the metric $d_{\iota\omega_1}$ and not $d_{\omega_1}$.
    \end{rem}

    In this case, the set of roots stabilizing $F_{k}$ is $\Theta_{F_k}^{\stable}=\Delta \setminus \{\alpha_k\}$. 
    Indeed the symmetry associated to the root $\alpha_\ell=\lambda_\ell^*-\lambda_{\ell+1}^*:(\lambda_1,\cdots, \lambda_d)\mapsto \lambda_\ell-\lambda_{\ell+1}$ for $1\leq \ell\leq d-1$ exchanges the $\ell$-th entry and the $(\ell+1)$-th entry, which preserves $F_k$ except if $\ell=k$. The corresponding parabolic subgroup $P_{\Delta\setminus \Theta_{F_k}^{\stable}}$ is the group of elements of the following form in a block decomposition of dimensions $(k,d-k)$ :
    $$ \begin{bmatrix}
        * & * \\
    0 &  * 
    \end{bmatrix}.$$

    The corresponding flag manifold $\mathcal{F}_{\Delta\setminus \Theta^{\stable}_{F_k}}$ is naturally the Grassmanian of $k$-dimensional subspaces.

    \medskip

    Among these roots, the ones whose associated symmetry fixes $F_k$ pointwise are $\Theta_{F_k}^{\fixed}=\{\alpha_1,\dots,\alpha_{k-1}\}$. 
    Indeed the dual face, described in the dual basis $(\lambda_i^*)$ of $\mathfrak{a}$ is the following:
    $$F_k^\vee=\left\lbrace (\mu_1,\mu_2,\cdots,\mu_k, -1,\cdots,-1) \mid \mu_1+\cdots \mu_k=k,\, \mu_i\geq -1\right\rbrace\subset \mathfrak{a}^*.$$
    We see that $\Lin_0(F_k)=\left\lbrace (\mu_1,\mu_2,\cdots,\mu_k, 0,\cdots,0) \mid \mu_1+\cdots \mu_k=0\right\rbrace$ contains $\alpha_\ell$ for $\ell< k$.

    The other roots whose associated symmetry fixes $F_k$ pointwise are $\Theta_{F_k}^{\flipped}= \{\alpha_{k+1},\dots,\alpha_{d-1}\}$. Indeed $\alpha_\ell\in \Lin(F_k)$ for $\ell \geq k+1$.

    \medskip

 The Lie subgroup $G_{F_k}^{\fixed}$ can be written as the group of elements of the following form for $U\in \SL(k,\R)$:
 $$ \begin{bmatrix}
        U & 0 \\
        0 &  \Id_{d-k} 
    \end{bmatrix}.$$
Its maximal compact $K_{F_k}^{\fixed}$ is obtained by setting $U\in \SO(k,\R)$.

The Lie subgroup $G_{F_k}^{\flipped}$ can be written as the group of elements of the following form for $B\in \SL(d-k,\R)$:
 $$ \begin{bmatrix}
        \Id_{k} & 0 \\
        0 &  B
    \end{bmatrix}.$$

    In our case, $\Lin_0(F_k)$ is already equal to the span of all the roots in $\Theta_{F_k}^\text{fixed}$. The orthogonal complement of $\Lin_0(F_k)$ in $\Lin(F_k)$ is the line generated by  $v=( d-k,\cdots, d-k, k, \cdots, k)\in \mathfrak{a}$. This characterises the Lie algebra of $R_{F_k}$ and $S_{F_k}$. However these groups are not exactly the exponential of their Lie algebras\footnote{They are a degree $2$ cover over it.}: one needs to use Proposition \ref{prop:comparing RF and SF to Gflip and Gfix} to see that they contain all of $\SO(n,\R)\cap P_{\Delta\setminus \Theta^{\stable}_{F_k}}$. 

    As a consequence, the stabiliser $R_{F_k}$ of the standard horofunction in $\mathcal{O}_{F_k}$ for our choice of model flat and baseoint in $\X$ is the group of elements of the form:
    $$ \begin{bmatrix}
        U  & A \\
        0 &  B 
    \end{bmatrix}.$$
    Here $(U,B)\in S(O(k,\R)\times\GL(n-k,\R))$ and $A$ is any $k\times (n-k)$ matrix. The elements in $S_{F_k}$ have the form:
    $$ \begin{bmatrix}
       \lambda^{n-k} U  & A \\
        0 &  \lambda^kB 
    \end{bmatrix}.$$
    Here $(U,B)\in S(O(k,\R)\times\GL(n-k,\R))$, $A$ is any $k\times (n-k)$ matrix and $\lambda\in \R_{> 0}$. 

    We provide a more concrete description of the homogeneous spaces $\mathcal{O}_{F_k}=G/R_{F_k}$ in Section \ref{subsec:Selberg}.

 \section{Bisectors for Finsler horofunctions}\label{sec:bisectors}
We now turn to the investigation of bisectors. 
Let us fix any basepoint $o\in \X$, and a Finsler metric $d^N$ on $\X$ associated to a polyhedral norm $N$ on $\mathfrak{a}$. 
Let $F$ be a face of the unit ball $B^N$. 
Recall that $\mathcal{O}_F$ is a fiber bundle over a flag manifold $\mathcal{F}_\Theta$, where $\Theta=\Delta\setminus \Theta_F^\text{stable}$.

\begin{defn}
    We say that two horofunctions $h_1,h_2\in \mathcal{O}_F$ are in \emph{generic position} if the corresponding flags $f_1,f_2\in \mathcal{F}_\Theta$ are in generic position, \ie their relative position in $W_\Theta\backslash W/W_\Theta$ is the maximal one for the Bruhat order.
\end{defn}

Now given two horofunctions $h_1,h_2\in \mathcal{O}_F$ in generic position we consider their \emph{bisector}:
$$ \Bis(h_1,h_2)=\lbrace x\in \X\mid h_1(x)=h_2(x)\rbrace. $$

A crucial property for the behavior of ideal Poisson--Voronoi diagrams is the behavior of $h_1$ (or $h_2$) on $\Bis(h_1,h_2)$. 
Namely we will ask whether or not $h_1$ is proper and bounded from below on $\Bis(h_1,h_2)$. 

We give characterizations of this property, in particular in the case of the orbit of horofunctions associated with the face $F_\rho$.
We then prove that bisectors for horofunctions in generic position have empty interior in $\X$, despite the fact that bisectors in a polyhedral normed space $(\mathfrak{a},N)$ may have non-empty interior.

\subsection{Characterizing properness on bisectors}

We consider a face $F$ which meets the model Weyl chamber $\mathfrak{a}^+$. 
Recall that $w_0$ denotes the longest element of the Weyl group.

\begin{defn}
    We say that $\mathcal{O}_F$ has the \emph{proper-on-bisector property} if the model horofunction $h_F$ on $\mathfrak{a}$ is proper and bounded from below on the following bisector:
    $$\lbrace v \in \mathfrak{a}\mid h_F(v)=h_F(w_0(v))\rbrace.$$
\end{defn}

\begin{prop}
    \label{prop: proper_on-bisector Plus}
    Equivalently $\mathcal{O}_F$ has the proper-on-bisector property if and only if for no non-zero $v\in \mathfrak{a}$, $h_F(v)\leq 0$ and $h_F(w_0(v))\leq 0$.
\end{prop}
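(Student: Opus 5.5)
We will use the homogeneity of $h_F$. By Proposition \ref{prop:Formula horofunction standard}, $h_F(x)=\max_{\omega\in \ver(\overline{F^\vee})} -\omega(x)$ is a maximum of finitely many linear forms. It is therefore convex, continuous and positively $1$-homogeneous: $h_F(tv)=t\,h_F(v)$ for $t\geq 0$. The map $v\mapsto h_F(w_0(v))$ has the same properties, since $w_0$ is linear. Hence the bisector $B=\lbrace v\mid h_F(v)=h_F(w_0 v)\rbrace$ is a closed cone in $\mathfrak{a}$, and $h_F$ restricted to $B$ is homogeneous. Throughout, we fix an auxiliary Euclidean norm $\lVert\cdot\rVert$ on $\mathfrak{a}$ and write $S$ for its unit sphere.

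\textbf{Step 1: reduction to the unit sphere.} On a closed cone, a continuous $1$-homogeneous function $\phi$ is proper and bounded from below if and only if $\phi>0$ on $(B\cap S)$. Suppose $\phi>0$ there. By compactness of $B\cap S$ we get $\phi\geq c>0$ on it, so $\phi(v)\geq c\lVert v\rVert$ on $B$, and $\phi$ is proper and nonnegative. Conversely, suppose some $v\in B\cap S$ has $\phi(v)\leq 0$. If $\phi(v)<0$, then $\phi(tv)\to -\infty$, so $\phi$ is unbounded below. If $\phi(v)=0$, then $\phi(tv)=0$ for all $t\geq 0$, so the preimage $\phi^{-1}(\{0\})$ contains an unbounded ray and $\phi$ is not proper. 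Applying this with $\phi=h_F|_B$, the proper-on-bisector property is equivalent to: no non-zero $v$ satisfies both $h_F(v)=h_F(w_0 v)$ and $h_F(v)\leq 0$.

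\textbf{Step 2: removing the equality constraint.} It remains to show that this condition is equivalent to the one in the statement, namely that no non-zero $v$ has $h_F(v)\leq 0$ and $h_F(w_0v)\leq 0$. One direction is immediate: a non-zero $v\in B$ with $h_F(v)\leq 0$ also has $h_F(w_0v)=h_F(v)\leq 0$.

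For the other direction, assume some $v\neq 0$ has $h_F(v)\leq 0$ and $h_F(w_0v)\leq 0$; we must produce a point of $B\setminus\{0\}$ where $h_F\leq 0$. Using $w_0^2=\Id$, set $u=v+w_0v$, so that $w_0u=u$ and hence $u\in B$. By convexity and homogeneity, $h_F(u)\leq h_F(v)+h_F(w_0v)\leq 0$. If $u\neq 0$, then $u$ is the required point.

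The case $u=0$, that is $w_0v=-v$, is the one subtle point. Here we instead show that $v$ itself lies in $B$. The hypotheses read $h_F(v)\leq 0$ and $h_F(-v)\leq 0$. Since $h_F(0)=0$ and $h_F$ is convex, $0\leq \tfrac12\big(h_F(v)+h_F(-v)\big)$, so $h_F(v)=h_F(-v)=0$. Therefore $h_F(v)=h_F(w_0v)=0$, so $v\in B\setminus\{0\}$ with $h_F(v)\leq 0$. This closes the argument.
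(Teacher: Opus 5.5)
Your proof is correct, and it finds the bisector point by a different argument than the paper.

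The paper's proof only writes out the direction ``a non-zero $v$ with $h_F(v)\le 0$ and $h_F(w_0v)\le 0$ breaks properness''. It leaves your Step 1 (the homogeneity reduction) implicit. To locate a point on the bisector, it picks $f\in F$ and notes that $h_F(f)<0$ and $h_F(f)<h_F(w_0f)$. It then applies the intermediate value theorem to $h_F-h_F\circ w_0$ along the broken path $[f,v]\cup[v,w_0f]$. Convexity of $h_F$ on the first segment and of $h_F\circ w_0$ on the second gives a point with $h_F(v')=h_F(w_0v')\le 0$. The paper does not say why $v'\neq 0$; this holds because $h_F(-f)=1>0$, which keeps the path away from the origin.

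You instead symmetrize: $u=v+w_0v$ is $w_0$-fixed, so it lies on the bisector automatically, and sublinearity gives $h_F(u)\le 0$. You settle the degenerate case $w_0v=-v$ via $h_F(v)+h_F(-v)\ge h_F(0)=0$.

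What your route buys: it is purely algebraic. It uses only that $h_F$ is a finite maximum of linear forms and that $w_0$ is a linear involution. You need neither the inequality $h_F(f)<h_F(w_0f)$, which takes a small argument about how $w_0$ moves $F$, nor a non-vanishing check. It is the same symmetrization trick the paper itself uses later in the proof of Proposition~\ref{prop: proper bisectors}.

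What the paper's route buys: it needs only a sign change plus convexity along segments, so it survives perturbation. The paper reuses it almost verbatim in the second half of the proof of Proposition~\ref{prop:MainProp of proper horofunctions}. There $h_2$ is only within bounded distance of $h_F\circ w_0$, so a point like $v+w_0v$ would no longer lie on $\Bis(h_1,h_2)$.
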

\begin{proof}
    We need to show that if such a $v$ exists, then one can find a $v'\in\mathfrak{a}$ such that $h_F(v')=h_F(w_0(v'))\leq0$. Note that for all $f\in F$, $h_F(f)<h_F(w_0(f))$ and $h_F(f)<0$. Hence one can find such a $v'$ in the union of two segments in $\mathfrak{a}$: $[f,v]\cup [v, w_0f]$ such that $h_F(v')=h_F(w_0(v'))$, and automatically $h_F(v')\leq 0$ if $v'$ is in the first segment and $h_F(w_0(v'))\leq 0$ if $v'$ is in the second segment.
\end{proof}

\begin{figure}
\begin{center}
\begin{tikzpicture}[scale=1.2]

    \draw[olive, fill=olive!20, thick] (.5,.5) -- (-.5,.5)--(-.5,-.5) --(.5,-.5)-- cycle;

    \draw[blue, thick] (-3,1) -- (2,1) -- (2,-2);
    \draw[blue, thick] (-3,1.5) -- (2.5,1.5) -- (2.5,-2);
    \draw[red, thick] (3,-1) -- (-2,-1) -- (-2,2);
    \draw[red, thick] (3,-1.5) -- (-2.5,-1.5) -- (-2.5,2);

        \draw[thick] (3,0) -- (-3,0);
    \draw[thick] (0,2) -- (0,-2);
    \draw[gray, very thick] (-3,2)-- (-1,0) -- (1,0) -- (3,-2);

\end{tikzpicture}
\end{center}
\caption{Illustration of two opposite horofunctions in the orbit $F_\rho$ for the metric $N_\infty$ in the case $G=\SL(2,\R)\times \SL(2,\R)$,  illustrating an orbit with the proper-on-bisector property. The bisector is the gray skew line. Several level sets of $h_F$ and $h_F(w_0(\cdot))$ are illustrated in color.}
\end{figure}

This property that we formulated only in the model flat $\mathfrak{a}$ gives us information on the behavior of horofunctions on bisectors in $\X$.

\begin{prop}
\label{prop:MainProp of proper horofunctions}
    If $\mathcal{O}_F$ has the proper-on-bisector property, then for any horofunctions $h_1,h_2\in \mathcal{O}_F$ in generic position, there exist $A,B>0$ such that for all $x\in\Bis(h_1,h_2)$:
    $$h_1(x)=h_2(x)\geq Ad^N(o,x)-B.$$
If $\mathcal{O}_F$ does not have the proper-on-bisector property, then for any pair of horofunctions $h_1,h_2\in \mathcal{O}_F$ in generic position, $h_1$ is not proper and bounded from below on $\Bis(f,g)$.
\end{prop}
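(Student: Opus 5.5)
Since $G$ acts transitively on pairs of flags in generic position, and $\mathcal{O}_F\to\mathcal{F}_\Theta$ is a bundle whose fibers are the symmetric spaces $\X_F$, I will first normalize the pair: up to the action of $G$, assume $h_1=h_F+c_1$ and $h_2=(h_F\circ w_0)+c_2$. Here $w_0$ is realized by an element of $N_\mathfrak{a}\cap K$, and $h_2$ is obtained from this by a further element of the stabilizer $P_{\iota\Theta}$ of the opposite flag. The cleanest route is to use the formula of Proposition \ref{prop:Explicit formula for Horofunctions}, which writes each horofunction as a maximum of scaled Riemannian Busemann functions $\lVert\omega_i\rVert b_{p,o}$ with $p$ ranging over the compact sets $E_i^F$. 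For a general point $x\in\X$, I will choose a maximal flat containing $x$ together with the two (transverse) flags $f_1,f_2$ in its visual boundary. Such a flat exists because $f_1,f_2$ are opposite, so the parallel set of the geodesics joining them contains flats through every point. In that flat, $h_1$ and $h_2$ restrict to horofunctions of $(\mathfrak{a},N)$ of types $F$ and $w_0F$, each up to translation by vectors in $\Lin(F)$ and the corresponding fiber data.

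The key reduction step is to compare, on such a flat $A_x\ni x$, the restrictions $h_1|_{A_x}$ and $h_2|_{A_x}$ with translates of $h_F$ and $h_F\circ w_0$. The difficulty is the fiber $\X_F$: the restriction of $h_1$ to $A_x$ need not be exactly a translate of $h_F$, but only $\le$ one, since restricting the max over $E_i^F$ to the finitely many points in $\partial A_x$ gives a lower bound. I will therefore use two inequalities.
\begin{itemize}
\item[(a)] The lower bound $h_j(x)\ge \tilde h_j(x)$, where $\tilde h_j$ is the horofunction of the flat $A_x$ obtained by keeping only the Busemann points in $\partial A_x$. This $\tilde h_j$ is a translate of $h_F$ (respectively of $h_F\circ w_0$), with translation vectors $v_1(x),v_2(x)$.
\item[(b)] A uniform control of $v_1(x)-v_2(x)$ up to $\Lin_0$-directions. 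Since $\tilde h_1-\tilde h_2$ at $x$ only depends on the position of $x$ relative to the bi-infinite data, this control should follow from $G$-invariance and compactness of $K_\Theta$.
\end{itemize}
With these, $x\in \Bis(h_1,h_2)$ gives a point in the model flat that lies on a bisector of $h_F$ and $h_F\circ w_0$ shifted by a bounded constant. The proper-on-bisector property then gives linear growth: by Proposition \ref{prop: proper_on-bisector Plus} and homogeneity of $h_F$, $\max(h_F(v),h_F(w_0v))\ge A\,N(v)$ for some $A>0$. This holds because the function is positive on the unit sphere, by compactness. Finally, $N(v)$ is comparable to $d^N(o,x)$ up to an additive constant.

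For the converse, if the property fails, Proposition \ref{prop: proper_on-bisector Plus} gives a nonzero $v\in\mathfrak{a}$ with $h_F(v)=h_F(w_0v)\le0$. By homogeneity, the ray $tv$ lies in the bisector of $h_F$ and $h_F\circ w_0$ within the model flat, and $h_F$ is bounded above (by $0$) along it. Translating by $G$ gives the same for any generic pair $(h_1,h_2)$, using the exact equality on the common flat from Theorem \ref{thm:Comparaison Horofunction Flat and symmetric space HSWW}. Hence $h_1$ is not proper and bounded below on the bisector.

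The main obstacle is step (b): making the normalization independent of the point $x$, because the flat through $x$ containing $f_1,f_2$ varies with $x$ in the parallel set and transversally. I would handle this as follows. Using the Iwasawa/Bruhat-type decomposition $\X = U_\Theta\cdot(\text{parallel set})$, with $h_1$ invariant under $U_\Theta$ and $h_2$ controlled, reduce to $x$ in the parallel set $P(f_1,f_2)\cong \X_{\mathrm{Levi}}$. There, both $h_1$ and $h_2$ are functions on the Levi symmetric space, and I apply the flat argument inside it via the Cartan decomposition of the Levi factor.
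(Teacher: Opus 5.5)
Your forward direction has a genuine gap, and it sits exactly at what you call the main obstacle.

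The flat $A_x$ you start from does not exist for general $x$. The maximal flats having both $f_1$ and $f_2$ in their visual boundary sweep out only a proper subset of $\X$; already in $\mathbb{H}^2$ this is the single geodesic joining the two points. When $\iota\Theta\neq\Theta$, two generic flags do not even share a Levi subgroup, so $P(f_1,f_2)\cong\X_{\mathrm{Levi}}$ is unavailable. An example is the orbit $\mathcal{O}_{F_1}$ for $d_{\iota\omega_1}$ with $n\ge 3$, whose flags are lines.

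In your reduction $\X=U_\Theta\cdot P(f_1,f_2)$, $h_1$ is $U_\Theta$-invariant but $h_2$ is not. To get $h_1(uy)\ge A\,d^N(o,uy)-B$, rather than only $\ge A\,d^N(o,y)-B$, you need a uniform estimate such as $h_2(uy)\ge h_2(y)+c\,d^N(y,uy)-C$. That estimate is the real content of ``$h_2$ controlled'', and you do not supply it. Step (b) is likewise only asserted: the translation vectors, and the error in replacing $h_j|_{A_x}$ by a translate of $h_F$, depend on the flat, and uniformity in $x$ is the whole difficulty.

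The paper needs no uniformity over $x$.
It argues by contradiction from diverging $x_n\in\Bis(h_1,h_2)$ with $h_1(x_n)/d^N(o,x_n)\to\ell\le 0$. Each $h_i$ is Riemannian-convex, being a maximum of Busemann functions (Proposition \ref{prop:Explicit formula for Horofunctions}). So the segments $[o,x_n]$ subconverge to a ray $\gamma$ along which $h_i(\gamma(t))\le h_i(o)$ for $i=1,2$.
Only this one ray is then analyzed, using a \emph{different} flat for each $i$. Take a chamber $c$ containing $\gamma(\infty)$ and opposite full flags $c_1,c_2$ refining $f_1,f_2$. A flat containing $c_i$ and $c$ gives $h_i(\gamma(t))=t\,h_F(w_iu)+O(1)$ with $w_i=\mathrm{pos}(c_i,c)$, so $h_F(w_1u)\le 0$ and $h_F(w_2u)\le 0$.
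Proposition \ref{prop:Tits angle} gives $w_0w_1\le w_2$, and Lemma \ref{lem:Bruhat order} then yields $h_F(w_0w_1u)\le h_F(w_2u)\le 0$. Thus $v=w_1u$ contradicts Proposition \ref{prop: proper_on-bisector Plus}.
This Bruhat-order comparison is the missing idea: it replaces the common flat through $x$, $f_1$, $f_2$ that you were trying to build.

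Your converse also has a gap. Generic pairs are not all $G$-equivalent to $(h_F+c,\,h_F\circ w_0+c)$, because the fibers $\X_F$ carry additional invariants of the pair. Your own first paragraph already concedes that $h_2$ is obtained from $h_F\circ w_0$ only up to a further element of the stabilizer of the opposite flag. After normalizing $h_1=h_F$, you only know $|h_2-h_F\circ w_0|\le A$. The bisector is then no longer a cone, and the ray $tv$ need not lie on it.

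The paper repairs this with an intermediate value argument. For $f\in F$ and $r$ large, $h_1-h_2<0$ at $rf$ and $h_1-h_2>0$ at $rw_0f$. So the broken segment $[rf,rv]\cup[rv,rw_0f]$ contains a point $x\in\Bis(h_1,h_2)$. By convexity, $h_F(x)\le 0$ on the first piece and $h_F(w_0x)\le 0$ on the second, hence $h_1(x)=h_2(x)\le A$. These points diverge as $r\to\infty$.
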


We first prove a result concerning the Bruhat order (from Section \ref{sec:Flag manifolds}) and the standard horofunction. 

\begin{lem}
\label{lem:Bruhat order}
Let $F$ be a face of the unit ball for $B^N$ that intersects $\overline{\mathfrak{a}^+}$. 
Let $x\in \overline{\mathfrak{a}^+}$ and let $w_1,w_2\in W$ such that $w_1\leq w_2$ for the Bruhat order. 
Then $h_F(w_1\cdot x)\leq h_F(w_2\cdot x)$.
\end{lem}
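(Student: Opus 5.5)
We reduce to the case of a single reflection and then use the explicit formula $h_F(y)=\max_{\omega\in \ver(\overline{F^\vee})} -\omega(y)$ from Proposition \ref{prop:Formula horofunction standard}. The Bruhat order on $W$ is generated by covering relations of the form $w \leq s_\beta w$, where $s_\beta$ is the reflection in a positive root $\beta$ and $\ell(s_\beta w)>\ell(w)$. It therefore suffices to show that $h_F(w\cdot x)\leq h_F(s_\beta w\cdot x)$ whenever $x\in\overline{\mathfrak{a}^+}$ and $\ell(s_\beta w)>\ell(w)$, with $\beta\in\Sigma^+$.

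Set $y=w\cdot x$. The standard fact $\ell(s_\beta w)>\ell(w)\iff w^{-1}\beta\in\Sigma^+$ gives $\beta(y)=(w^{-1}\beta)(x)\geq 0$, because $x$ lies in the closed positive chamber. Now $s_\beta y=y-\beta(y)\beta^\vee$, with $\beta(y)\geq 0$. So the claim reduces to the following: for every $\omega\in\ver(\overline{F^\vee})$ and every positive root $\beta$ we want $\omega(\beta^\vee)\geq 0$. Granting this,
\[
-\omega(s_\beta y)=-\omega(y)+\beta(y)\,\omega(\beta^\vee)\geq -\omega(y),
\]
and taking the maximum over $\omega$ yields $h_F(s_\beta y)\geq h_F(y)$.

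The needed sign comes from Lemma \ref{lem:P Theta better description} applied to the root $-\beta$. Pick $f\in F\cap\overline{\mathfrak{a}^+}$, which is possible because $F$ meets $\overline{\mathfrak{a}^+}$. Then $\beta(f)\geq 0$.
\begin{itemize}
\item If $\beta(f)>0$, then $(-\beta)(f)<0$. The lemma gives $\omega(-\beta^\vee)\leq 0$, that is, $\omega(\beta^\vee)\geq 0$.
\item If $\beta(f)=0$, then $s_\beta f=f$. Since $s_\beta$ preserves $B^N$ and fixes a point of the relatively open face $F$, it stabilizes $F$, and hence also $\overline{F^\vee}$. This case needs extra care, as it can produce $\omega(\beta^\vee)<0$ for some vertices.
\end{itemize}

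This degenerate case is the main obstacle, and the plan is to handle it by perturbation. Choose $f\in F$ in the relative interior, as regular as possible inside $\overline{\mathfrak{a}^+}$, and instead use continuity of $h_F$ together with an argument in which $x$ varies. First, perturb $x$ to a regular point of $\mathfrak{a}^+$; by continuity of $h_F$ it suffices to treat regular $x$. Next, in the term $-\omega(y)+\beta(y)\omega(\beta^\vee)$, observe the following. If $s_\beta$ stabilizes $\overline{F^\vee}$, then $s_\beta$ permutes the vertices $\omega$, and $\max_\omega -\omega(s_\beta y)=\max_\omega -(s_\beta\omega)(y)=h_F(y)$. So in that case equality holds, which is harmless. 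Combining the two cases, every covering step weakly increases $h_F$, and transitivity along a chain from $w_1$ to $w_2$ concludes the proof.
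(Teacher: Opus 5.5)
Your proof is correct, and it is organized differently from the paper's. The paper observes that each vertex $\omega_i$ of $\overline{F^\vee}$ maximizes $\omega\mapsto\omega(v)$ on its Weyl orbit, where $v\in F\cap\overline{\mathfrak{a}^+}$, so it lies in $W_v\cdot(\mathfrak{a}^*)^+$. It then quotes \cite[Lemma 3.4]{KapovichLeebPorti2018dynamicsonflagmanifolds}: for dominant $\omega$ and $x$, $w_1\leq w_2$ implies $\omega(w_1x)\geq\omega(w_2x)$. It concludes from the $W_v$-invariance of the vertex set.

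You instead go one reflection $w\to s_\beta w$ at a time and split on whether $s_\beta$ fixes $f$. If $\beta(f)>0$, Lemma \ref{lem:P Theta better description} gives $\omega(\beta^\vee)\geq 0$ for every vertex. If $\beta(f)=0$, then $h_F\circ s_\beta=h_F$.

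What your route buys is a self-contained argument that treats non-dominant vertices explicitly. Your identity $-\omega(s_\beta y)=-\omega(y)+\beta(y)\,\omega(\beta^\vee)$ is essentially the proof of the cited lemma. In the paper's version, passing from dominant forms to $h_F$ needs a further coset step that is left implicit: one compares every element of $W_v w_1$ with the longest element of $W_v w_2$. That step is genuinely needed. For $N_1$ on $\R^2$ and $F=\{(1,0)\}$, the maximum defining $h_F(x)$ for $x\in\mathfrak{a}^+$ is attained only at the non-dominant vertex $(1,-1)$. The paper's route, in exchange, is shorter given the citation.

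Two small remarks. First, your perturbation paragraph is superfluous: neither regularity of $x$ nor a special choice of $f$ is ever used, and the symmetry argument settles the case $\beta(f)=0$ for all $x\in\overline{\mathfrak{a}^+}$. Second, you tacitly use the standard fact that the reflection description of the Bruhat order agrees with the closure order on Bruhat cells, which is how the paper defines it.
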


\begin{proof}

The vertices of $f^\vee$ define linear forms $\omega_i\in \mathfrak{a}^*$ so that  for all $a \in \mathfrak{a}$, $h_F(a) = \max_i \{-\omega_i(a)\}$. 

Each of these functionals is maximized among its Weyl group orbit on $v$, so lies in $W_v \cdot (\mathfrak{a}^*)^+$, where $(\mathfrak{a}^*)^+$ denotes the dominant weights.
By \cite[Lemma 3.4]{KapovichLeebPorti2018dynamicsonflagmanifolds}, it holds that each $\omega_i \in (\mathfrak{a}^*)^+$ satisfies $\omega_i( w_1 x) \ge \omega_i(w_2 x)$. 
Since the set $\{\omega_i\}$ is preserved by $W_v$, this concludes the proof.
\end{proof}

With this we can now prove Proposition \ref{prop:MainProp of proper horofunctions}. 
\begin{proof}[{Proof of Proposition \ref{prop:MainProp of proper horofunctions}}] 

We begin by proving that the proper-on-bisector property implies that generic pairs of horofunctions are proper on their bisectors.

Suppose that there exists a diverging sequence $(x_n)$ in $\X$ such that $h_1(x_n)=h_2(x_n)$ for all $n\in \mathbb{N}$ and $h_1(x_n)/d^N(o,x_n)$ converges to $\ell\leq 0$. Both $h_1$ and $h_2$ are convex for the Riemannian metric as a maximum of Riemannian Busemann funcctions (see Proposition \ref{prop:Explicit formula for Horofunctions}). Up to taking a subsequence, we can assume that the Riemannian segments $[o,x_n]$ converge to a Riemannian geodesic ray $\gamma:\R_{\geq 0}\to \X$. We have in particular that $h_1(\gamma(t))\leq h_1(o)$ and $h_2(\gamma(t))\leq h_2(o)$ for all $t\in \R_{\geq 0}$.

Let $c^o_1,c^o_2$ be the flags associated to $h_1,h_2$.
Since they are in generic position, there exist opposite
full flags $c_1,c_2\in\mathcal{F}_\Delta$ projecting to
$c^o_1,c^o_2$, respectively.
Choose a Weyl chamber $c$ at infinity containing
$\xi=\gamma(\infty)$, and let
$u\in\overline{\mathfrak{a}^+}$ be the unit vector
representing the type of $\xi$.
Write
\[
w_i=\operatorname{pos}(c_i,c)\in W
\qquad(i=1,2).
\]

We claim that
\[
h_i(\gamma(t))=t\,h_F(w_i u)+O(1).
\]
Indeed, choose a maximal flat whose boundary contains
$c_i$ and $c$.
A ray defining $h_i$, chosen with the type supplied by
Lemma \ref{lem: tau in ker flipped}, and the ray $\gamma$
can both be replaced by asymptotic rays in this flat.
These replacements change the relevant function values
by uniformly bounded quantities.
In coordinates identifying $c_i$ with the model chamber,
the resulting horofunction is a translate of $h_F$,
and the direction of the second ray is $w_i u$.
The claim follows from the fact that $h_F$ is positive homogeneous and Lipschitz.

Since both $h_i$ are bounded above along $\gamma$, we have
\[
h_F(w_1u)\leq0,\qquad h_F(w_2u)\leq0.
\]
Proposition \ref{prop:Tits angle}, applied to the opposite
full flags $c_1,c_2$ and the flag $c$, gives
\[
w_0w_1\leq w_2.
\]
By Lemma \ref{lem:Bruhat order},
\[
h_F(w_0w_1u)\leq h_F(w_2u)\leq0.
\]
Thus the nonzero vector $v=w_1u$ satisfies
$h_F(v)\leq0$ and $h_F(w_0v)\leq0$, contradicting
Proposition \ref{prop: proper_on-bisector Plus}.

\medskip

Now suppose that the proper-on-bisector property is not satisfied.  Take $(h_1,h_2)$ a generic pair of horofunctions. They can be both seen as a limit of renormalized distance functions along geodesics. Up to acting by $G$ we can suppose that these geodesic are at bounded distance from the model flat and hence up to acting by this element of $G$,  $h_1=h_F$ and $h_2$ is at distance from $h_F\cdot w_0$ uniformly bounded by some $A>0$. In the model flat, the failure of the proper-on-bisector property guarantees that there is a $v\in \mathfrak{a}$ such that $h_F(v)\leq 0$ and $h_F(w_0\cdot v)\leq 0$. The difficulty that remains is that the bisector $\Bis(h_1,h_2)$ is not necessarily at bounded distance from the bisector  $\Bis(h_F,h_F\circ w_0)$.

To circumvent this difficulty we find other poins on the bisector. If $f\in F$, $h_F(w_0(f))-h_F(f)> 0$ since $w_0(f)\notin \overline{F}$. In particular for $r$ large enough $h_1(rf)<h_2(rf)$ and $h_1(rw_0f)>h_2(rw_0f)$. In particular there is an element $x$ in the union of two segments: $[rf,rv]\cup [rv,rw_0f]\subset \mathfrak{a}$ on the bisector $\Bis(h_1,h_2)$. But $h_F(v)\leq 0$ and $h_F(w_0\cdot v)\leq 0$ and $h_F(f)<0$. Hence the element $x$ satisfies $h_F(x)\leq 0$ or $h_F(w_0\cdot x)\leq 0$ which implies that $h_1(x)=h_2(x)\leq A$. When $r$ goes to $+\infty$ the segments in which $x$ lie uniformly go to $+\infty$ so we can find a diverging sequence of such points in the bisector. Hence $h_1$ is not proper and bounded from below on the bisector. 

\end{proof}

The rest of this subsection and the next are devoted to understanding when the $G$ orbit $\mathcal{O}_{F_\rho}$ has the proper-on-bisector property. 
We begin by a result that is valid for all faces $F$ of the unit ball $B^N$ intersecting $\mathfrak{a}^+$.

Recall that the dual face $F^\vee$ is the subset of $(B^N)^\ast$ which is maximized on $F$.

\begin{prop}\label{prop: proper bisectors}
    The orbit $\mathcal{O}_F$ has the proper-on-bisector property if and only if:
    \begin{equation}\label{eq:Convex combination}
    \Cone(F^\vee \cup w_0\cdot F^\vee) =\mathfrak{a}^* ,\end{equation} 
    where $\Cone(S)$ denotes the smallest convex cone in $\mathfrak{a}^\ast$ containing $S$.
\end{prop}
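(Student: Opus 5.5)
By Proposition \ref{prop: proper_on-bisector Plus}, $\mathcal{O}_F$ has the proper-on-bisector property if and only if there is no non-zero $v\in\mathfrak{a}$ with $h_F(v)\le 0$ and $h_F(w_0 v)\le 0$. I will turn this condition into a statement about linear forms and then apply a separation (Farkas-type) argument in $\mathfrak{a}^*$.

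First, Proposition \ref{prop:Formula horofunction standard} gives $h_F(v)=\max_{\beta\in\overline{F^\vee}}-\beta(v)$. Hence $h_F(v)\le 0$ is equivalent to $\beta(v)\ge 0$ for all $\beta\in\overline{F^\vee}$, and by continuity it suffices to ask this for $\beta\in F^\vee$. Similarly,
\[
h_F(w_0v)=\max_{\beta\in\overline{F^\vee}}-\beta(w_0v)=\max_{\beta'\in w_0\cdot\overline{F^\vee}}-\beta'(v),
\]
using that $w_0$ is an involution and that the action on $\mathfrak{a}^*$ is the dual action, so $\beta(w_0v)=(w_0\cdot\beta)(v)$. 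Thus the failure of the proper-on-bisector property is equivalent to the existence of a non-zero $v\in\mathfrak{a}$ with $\gamma(v)\ge 0$ for every $\gamma\in F^\vee\cup w_0\cdot F^\vee$. Equivalently, it is equivalent to the existence of a non-zero $v$ that is nonnegative on the closed convex cone $C:=\Cone(F^\vee\cup w_0\cdot F^\vee)$, that is, to the dual cone $C^*\subset\mathfrak{a}$ being non-zero.

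Second, I will show that $C^*=\{0\}$ if and only if $C=\mathfrak{a}^*$. This is the standard fact that a closed convex cone in finite dimension equals the whole space exactly when its dual cone is trivial. The cone $C$ is closed and finitely generated, since it is spanned by the finitely many vertices of $\overline{F^\vee}$ and $w_0\cdot\overline{F^\vee}$ (Lemma \ref{lem: Alternative definition Fvee}); using the closure or the open face gives the same cone up to closure, and $C$ is polyhedral. If $C\neq\mathfrak{a}^*$, pick $\eta\notin C$. The separating hyperplane theorem then gives $v\neq 0$ with $\gamma(v)\ge 0>\eta(v)$ for all $\gamma\in C$, so $v\in C^*\setminus\{0\}$. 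Conversely, if $C=\mathfrak{a}^*$, any $v\in C^*$ satisfies $\gamma(v)\ge 0$ for $\gamma=\pm$ any form, hence $v=0$.

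Combining the two steps gives the equivalence. The only delicate point is the bookkeeping between open and closed faces and the convention for the $W$-action on $\mathfrak{a}^*$. The open-versus-closed issue is harmless because the cone generated by $F^\vee$ has the same closure as the cone generated by $\overline{F^\vee}$, and a convex cone is all of $\mathfrak{a}^*$ if and only if its closure is. I expect this step, confirming that $\Cone$ taken with the relatively open face still gives all of $\mathfrak{a}^*$ whenever its closure does, to be the main point to check carefully.
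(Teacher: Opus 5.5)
Your proof is correct and takes the paper's route: reduce via Proposition~\ref{prop: proper_on-bisector Plus} to the existence of a nonzero $v$ on which every form in $F^\vee\cup w_0\cdot F^\vee$ is nonnegative, then argue by duality for the cone $\Cone(F^\vee\cup w_0\cdot F^\vee)$. The only difference is in the converse direction: the paper does not reuse Proposition~\ref{prop: proper_on-bisector Plus} there but builds bisector points directly by symmetrizing $v+w_0v$ inside the $w_0$-invariant dual cone, whereas you spell out the separation step (nontriviality of the dual cone, including the open-versus-closed face issue) that the paper leaves implicit.
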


\begin{proof}

Suppose that we don't have the proper-on-bisector property, so that for some non zero $v\in \mathfrak{a}$, $-\omega_i(v)\leq 0$ and $-w_0\omega_i(v)\leq 0$ for all $\omega_i \in \ver(F^\vee)$ by Proposition \ref{prop: proper_on-bisector Plus}.

Then for every $\omega\in \mathfrak{a}^*$ such that $\omega(v)<0$, $\omega$ does not lie in the cone from \eqref{eq:Convex combination}, so cone is not all of $\mathfrak{a}^*$.

\medskip

On the other hand, if \eqref{eq:Convex combination} does not hold, we may consider the non-empty cone
\begin{equation*}
    V_F = \{ v \in \mathfrak{a} : \forall \beta \in \Cone( F^\vee \cup w_0\cdot F^\vee ), \beta(v) \ge 0\} .
\end{equation*}
Observe that $V_F$ is a convex cone and preserved by $w_0$. 
If $w_0$ is $-\Id$ on $V_F$ then $V_F$ is a vector subspace of $\mathfrak{a}$ where $h_F$ and $h_{w_0 F}$ are identically zero. 
Otherwise, there is $v \in V_F$ with $w=v+ w_0 v$ nonzero. 
Then $h_F( t w) = h_{w_0 F}(t w) \le 0 $ for all $t \ge 0$.
In either case, $h_F$ cannot be proper and bounded below on the bisector.
\end{proof}

When the opposition involution does not preserve $F$, the situation becomes more complicated.
We will now restrict attention to the distinguished face $F_\rho$.
Note that the opposition involution may fail to preserve $F_\rho$, even though it fixes $\rho$ itself, as the norm $N$ is not necessarily symmetric, \ie not necessarily fixed by the opposition involution.
Recall that $\Lin_0(F)$ denotes the linear subspace of $\mathfrak{a}$ parallel to the affine span of $F$. 

\begin{figure}[h!]
\begin{center}
\begin{tikzpicture}[scale=1.2]

    \draw[olive, fill=olive!20, thick] (1,0) -- (1.5*.5,1.5*0.866) -- (-.5,0.866) --(-1.5,0)--(-.5,-0.866) -- (1.5*.5,-1.5*0.866) -- cycle;

    \draw[red] (2*.5+.6*.866,-2*0.866+.3) -- (-2*.5+.6*.866,2*0.866+.3);
        \fill[red] (1.5*.5,1.5*0.866) circle (.05);

    \draw[thick] (2*.5,2*0.866) -- (-2*.5,-2*0.866);
    \draw[thick] (2*.5,-2*0.866) -- (-2*.5,2*0.866);
    \draw[thick] (2,0)--(-2,0);
    
    \node[above left] (A) at (1.5*.5,1.5*0.866) {$F_\rho$};
    \node[below left] (A) at (-.1,0) {$N$};
    \node[right] (A) at (-2*.5+.6*.866,2*0.866+.3) {$\rho=c$};
\end{tikzpicture}
\hfill
\begin{tikzpicture}[scale=1.2]

    \draw[olive, fill=olive!20, thick] (1,0) -- (1.5*.5,1.5*0.866) -- (-.5,0.866) --(-1.5,0)--(-.5,-0.866) -- (1.5*.5,-1.5*0.866) -- cycle;

    \draw[red, thick] (-2,.46) -- (.83,1.44) -- (1.5,-2);
    \draw[red, thick] (-2,.62) -- (.95,1.65) -- (1.65,-2);

    \draw[blue, thick] (.65,2) -- (-1.66,0) -- (.65,-2);
    \draw[blue, thick] (.4,2) -- (-1.91,0) -- (.4,-2);

    \draw[thick] (2*.5,2*0.866) -- (-2*.5,-2*0.866);
    \draw[thick,gray] (2*.5,-2*0.866) -- (-2*.5,2*0.866);
    \draw[thick] (2,0)--(-2,0);
    
    \node[right] (A) at (2*.5,-2*0.866) {$h_1=h_2$};
    \node[right] (A) at (1.2,.75) {$h_2=c$};
    \node[left] (A) at (-.75,-.75) {$h_1=c$};
    \node[below left] (A) at (-.1,0) {$N$};

\end{tikzpicture}
\end{center}
\caption{
Illustration of $F_{\rho}$ for a hexagonal norm $N$ for $\X=\SL(3,\R)/\SO(3,\R)$, as well as the level lines of two opposite horofunctions on the orbit $\mathcal{O}_{F_\rho}$.}
\end{figure}

\begin{prop}
\label{prop:Caracterisation proper bisectors Frho}
The orbit $\mathcal{O}_{F_\rho}$ has the proper-on-bisector property if and only if: $$w_0\cdot \Lin_0(F_\rho)\cap \Lin_0(F_\rho)=\lbrace 0\rbrace.$$ 
In particular if $F_\rho$ is a vertex, $\mathcal{O}_{F_\rho}$ has the proper-on-bisector property.

\end{prop}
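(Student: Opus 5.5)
The plan is to reduce the statement, via Proposition \ref{prop: proper_on-bisector Plus}, to a question about linear forms on $\mathfrak{a}$. By that proposition, $\mathcal{O}_{F_\rho}$ fails the proper-on-bisector property if and only if there is a nonzero $v\in\mathfrak{a}$ with $h_{F_\rho}(v)\le 0$ and $h_{F_\rho}(w_0 v)\le 0$. By Proposition \ref{prop:Formula horofunction standard}, $h_{F_\rho}(x)=\sup_{\beta\in F_\rho^\vee}-\beta(x)$. So the two conditions say that $\beta(v)\ge 0$ and $\beta(w_0v)\ge 0$ for every $\beta\in F_\rho^\vee$, or equivalently for every $\beta\in\overline{F_\rho^\vee}$. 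I will show that such a $v$ exists exactly when $w_0\cdot\Lin_0(F_\rho)\cap\Lin_0(F_\rho)\neq\{0\}$.

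The key input is the characterization of $F_\rho$ stated earlier: $\lambda\rho\in F_\rho^\vee$ for some $\lambda>0$. Since faces are relatively open, $\lambda\rho$ lies in the relative interior of $\overline{F_\rho^\vee}$. I will also use the standard fact $w_0\cdot\rho=-\rho$, which holds because $w_0$ sends $\Sigma^+$ bijectively to $-\Sigma^+$ and preserves root multiplicities.

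For the direction ``intersection nontrivial $\Rightarrow$ failure'', take a nonzero $v\in\Lin_0(F_\rho)\cap w_0\cdot\Lin_0(F_\rho)$. Since $w_0^2=\Id$, both $v$ and $w_0v$ lie in $\Lin_0(F_\rho)$. By Proposition \ref{prop:basic properties Lin}, $\Lin_0(F_\rho)$ is the annihilator of $\Lin(F_\rho^\vee)$. Hence every $\beta\in F_\rho^\vee$ vanishes on both $v$ and $w_0v$, which gives $h_{F_\rho}(v)=h_{F_\rho}(w_0v)=0$. This is a failure of the property.

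For the converse, take a nonzero $v$ with $\beta(v)\ge0$ and $\beta(w_0v)\ge0$ for all $\beta\in F_\rho^\vee$. Applying these inequalities to $\beta=\lambda\rho$ gives $\rho(v)\ge 0$ and $\rho(w_0 v)=(w_0\rho)(v)=-\rho(v)\ge0$, so $\rho(v)=0=\rho(w_0v)$. The map $\beta\mapsto\beta(v)$ is affine on the affine span of $F_\rho^\vee$ and nonnegative on the convex set $\overline{F_\rho^\vee}$. It vanishes at the relative interior point $\lambda\rho$, so it must vanish identically on $F_\rho^\vee$. Thus $v$ lies in the annihilator of $\Lin(F_\rho^\vee)$, which equals $\Lin_0(F_\rho)$ by Proposition \ref{prop:basic properties Lin}. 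The same argument applied to $w_0v$ gives $w_0v\in\Lin_0(F_\rho)$, so $v=w_0(w_0v)\in w_0\cdot\Lin_0(F_\rho)$. Hence $v$ is a nonzero vector in the intersection.

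Finally, if $F_\rho$ is a vertex then $\Lin_0(F_\rho)=\{0\}$, so the intersection is trivial and the property holds. The step that needs the most care is the relative-interior argument: it rests on the point $\lambda\rho$ lying in the open face $F_\rho^\vee$ and not merely in its closure. This is exactly the content of the characterization of $F_\rho$ together with the convention that faces are relatively open.
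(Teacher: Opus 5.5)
Your proof is correct. The difference from the paper is that you argue on the primal side, in $\mathfrak{a}$, via Proposition~\ref{prop: proper_on-bisector Plus}, whereas the paper argues on the dual side via the cone criterion of Proposition~\ref{prop: proper bisectors}.

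In the paper, a nontrivial intersection forces failure because $\Lin(F_\rho^\vee)+w_0\cdot\Lin(F_\rho^\vee)$ is the annihilator of $\Lin_0(F_\rho)\cap w_0\cdot\Lin_0(F_\rho)$. So even the linear span of $F_\rho^\vee\cup w_0\cdot F_\rho^\vee$ is then a proper subspace. Conversely, a trivial intersection gives the property because $\lambda\rho$ and $w_0(\lambda\rho)=-\lambda\rho$ lie in the relative interiors of $F_\rho^\vee$ and $w_0\cdot F_\rho^\vee$. Hence $0$ is a relative interior point of the convex hull of their union, and the cone they generate is their full span, which is then $\mathfrak{a}^*$.

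The two arguments are linear duals of each other: a convex cone is all of $\mathfrak{a}^*$ exactly when no nonzero $v$ is nonnegative on it. Both rest on the same two inputs, $\lambda\rho\in F_\rho^\vee$ (relatively open) and $w_0\rho=-\rho$.

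Your version avoids the separation argument inside Proposition~\ref{prop: proper bisectors}. It also gives a sharper picture: the nonzero $v$ with $h_{F_\rho}(v)\le 0$ and $h_{F_\rho}(w_0v)\le 0$ are exactly the nonzero vectors of $\Lin_0(F_\rho)\cap w_0\cdot\Lin_0(F_\rho)$, and both functions vanish there. So when the property fails, the bisector in $\mathfrak{a}$ contains a whole linear subspace on which $h_{F_\rho}$ is zero.

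The paper's route, in exchange, reuses a criterion valid for every face. This isolates exactly where the special position of $\rho$ enters.
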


\begin{proof}[{Proof of Proposition \ref{prop:Caracterisation proper bisectors Frho}}]
    Let $h_{F_\rho}$ be a standard horofunction. Its restriction to $\mathfrak{a}$ can be written as the maximum of finitely many functionals $\omega_i$, for $i\in I_{F_\rho}$.
If $ w_0\cdot \Lin_0(F_\rho)\cap \Lin_0(F_\rho)\neq\lbrace 0\rbrace$, then the linear span of ${F_\rho}^\vee$ and $w_0\cdot {F_\rho}^\vee$ is not equal to $\mathfrak{a}^*$, and hence \eqref{eq:Convex combination} is not satisfied.

Conversely, by definition of $F_\rho$, $\rho$ is in the interior of $F_\rho^\vee$. 
Then also $-\rho = w_0 \rho$ is in the interior of $w_0 F_\rho^\vee$. 
It follows that $\rho$ is in the relative interior of the convex hull of $F_\rho^\vee$ and $2\rho + w_0 F_\rho^\vee$.
When $ \Lin_0(F_\rho) \cap \Lin_0(w_0 F_\rho) = \{0\}$, it holds that the span of $F_\rho^\vee \cup w_0 F_\rho^\vee$ is $\mathfrak{a}^\ast$. 
It follows that the origin lies in the interior of the convex hull of $F_\rho$ and $w_0 F_\rho$, so we may apply Proposition \ref{prop: proper bisectors} to see that $\mathcal{O}_{F_\rho}$ has the proper-on-bisector property.

\end{proof}

\begin{remark}
    The "if" part of Proposition \ref{prop:Caracterisation proper bisectors Frho} does not hold for a general face $F$. 
    For example, take Figure \ref{fig:A3 example} with the roles of $B^N$ and $(B^N)^\ast$ exchanged. 
    Then the affine span is one-dimensional, but this intersection is zero, and the convex cone is still contained in a proper half-space.
\end{remark}

\begin{corollary}\label{cor: proper bisectors when opposition involution is trivial}
    If the opposition involution is trivial, then the orbit $\mathcal{O}_{F_\rho}$ has the proper-on-bisector property if and only if 
    $F_\rho$ is a vertex.
\end{corollary}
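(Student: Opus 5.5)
This is a direct specialization of Proposition \ref{prop:Caracterisation proper bisectors Frho}, which says that $\mathcal{O}_{F_\rho}$ has the proper-on-bisector property if and only if $w_0\cdot \Lin_0(F_\rho)\cap \Lin_0(F_\rho)=\lbrace 0\rbrace$. So the plan is to show that, when the opposition involution is trivial, this intersection condition holds exactly when $F_\rho$ is a vertex.

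The first step uses the definition of the opposition involution. By definition $\iota=-w_0$, so triviality of $\iota$ means $w_0=-\Id$ on $\mathfrak{a}$. Then $w_0$ preserves every linear subspace, and in particular
\[
w_0\cdot \Lin_0(F_\rho)=-\Lin_0(F_\rho)=\Lin_0(F_\rho).
\]
The intersection appearing in Proposition \ref{prop:Caracterisation proper bisectors Frho} is therefore $\Lin_0(F_\rho)$ itself. By that proposition, the proper-on-bisector property holds if and only if $\Lin_0(F_\rho)=\lbrace 0\rbrace$.

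The second step converts this into a statement about $F_\rho$. By Definition \ref{defn:Lin and Lin_0}, $\Lin_0(F_\rho)$ is the linear span of the differences $f-f'$ with $f,f'\in F_\rho$. It is zero exactly when $F_\rho$ is a single point. A relatively open face that is a single point is a vertex. Conversely, a vertex is a singleton, so its $\Lin_0$ vanishes. This proves both directions.

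No step is a genuine obstacle here. The only point to check is that Proposition \ref{prop:Caracterisation proper bisectors Frho} applies without extra hypotheses, since it holds for any polyhedral $W$-invariant norm. It is also worth noting, as a consistency remark, that $w_0=-\Id$ forces $N$ to be symmetric, though the argument does not need this.
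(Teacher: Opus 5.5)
Your proposal is correct and is exactly the derivation the paper intends, which it states without a written proof. Triviality of $\iota=-w_0$ gives $w_0=-\Id$, so $\Lin_0(F_\rho)$ is $w_0$-stable, and the criterion of Proposition \ref{prop:Caracterisation proper bisectors Frho} reduces to $\Lin_0(F_\rho)=\{0\}$, i.e.\ $F_\rho$ is a singleton face, i.e.\ a vertex.
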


In the case when the opposition involution is not trivial, we can construct many examples of polyhedral norms for which $\mathcal{O}_{F_\rho}$ has the proper-on-bisector property.

\begin{prop}
Let $P$ be any polyhedron in $\mathfrak{a}^+\cap \lbrace \rho=1\rbrace $. 
There exist a polyhedral norm $N$ that is Weyl group invariant on $\mathfrak{a}$ such that $F_\rho$ is the relative interior of $P$.
\end{prop}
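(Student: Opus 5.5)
Take $P$ to be a compact convex polytope inside the open region $\mathfrak{a}^+\cap\{\rho=1\}$. We will build $B^N$ as the convex hull of the $W$-orbit of $P$ together with a small $W$-invariant neighbourhood of the origin. Set
\[
B \;=\; \mathrm{Conv}\Big(\bigcup_{w\in W} w\cdot P \;\cup\; \epsilon B_0\Big),
\]
where $B_0$ is a $W$-invariant polytope containing $0$ in its interior (for instance $\{v: \max_{w}\rho(wv)\le 1\}$, which is a norm ball since $G$ has no compact factors and $\rho$ is strictly positive on $\overline{\mathfrak{a}^+}\setminus\{0\}$), and $\epsilon>0$ is small. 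Then $B$ is a $W$-invariant polytope with $0$ in its interior, so its gauge $N$ is a $W$-invariant polyhedral norm. It may be asymmetric, which is allowed.

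First, compute the maximum of $\rho$ on $B$. For any $w\in W$ and $x\in\overline{\mathfrak{a}^+}$ we have $\rho(wx)\le\rho(x)$, since $x-wx$ is a nonnegative combination of positive coroots and $\rho$ is positive on each positive coroot. Equality holds only when $wx=x$. On $\epsilon B_0$ the maximum of $\rho$ is $\epsilon<1$. Hence the maximum of $\rho$ on $B$ equals $1$, attained on $\mathrm{Conv}\{wx : x\in P,\ \rho(wx)=1\}$.

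Next, identify the points $wx$ with $x\in P$ and $\rho(wx)=1$. Since $P$ lies in the open chamber, every $x\in P$ is regular. Therefore $wx=x$ forces $w=1$, and $\rho(wx)<1$ for every $w\ne 1$. So $\{\rho=1\}\cap B=P$, and the face $F_\rho$, defined as the face whose closure is the maximizing set, is exactly the relative interior of $P$.

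The main obstacle is the strictness of the inequality $\rho(wx)<\rho(x)$ for $w\neq 1$ and $x$ regular, together with the claim that no other point of $B$ reaches level $1$. The latter needs one more observation: $\rho\le 1$ on every generator of $B$, and any convex combination reaching $1$ must use only generators lying at level $1$. For the strictness, I would use that $x-wx$ is a nonzero nonnegative combination of positive coroots whenever $x$ is regular and $w\ne1$, and that $\rho(\alpha^\vee)>0$ for every positive root $\alpha$. This holds because $\rho$ lies in the open dominant chamber, being a positive combination of fundamental weights: $\langle\rho,\alpha^\vee\rangle>0$ for each simple $\alpha$.
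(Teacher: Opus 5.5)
Your proof is correct, but it works on the opposite side of the duality from the paper's proof.

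The paper works with linear forms, i.e.\ with the dual ball. It chooses forms $\omega_i$, with $\rho$ among them, that cut out $P$ inside $\{\rho=1\}$. It then sets $N_t(x)=\max_{i,w}\big((1-t)\omega_i+t\rho\big)(w\cdot x)$. For $t$ close to $1$, a compactness argument (left implicit in the paper) shows that only the $w=1$ terms are active near $P$. So $B^{N_t}$ agrees near $P$ with $\{x : (1-t)\omega_i(x)+t\rho(x)\le 1\ \forall i\}$, whose $\rho$-maximal face is $P$.

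You instead describe the unit ball itself by vertices, $B=\mathrm{Conv}(W\cdot P\cup\epsilon B_0)$. The $\rho$-maximal set is then read off from finitely many generators. This removes both the interpolation parameter $t$ and the uniformity estimate keeping the $w\neq 1$ terms below $1$ near $P$.

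Both arguments rest on the same fact: for $x\in\overline{\mathfrak{a}^+}$, $\rho(wx)\le\rho(x)$, with equality iff $wx=x$. With that fact your argument does not need $P$ to consist of regular elements. If $\rho(wx)=1$ for some $x\in P$, then $wx=x\in P$, so your construction works verbatim for $P\subset\overline{\mathfrak{a}^+}\cap\{\rho=1\}$.

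The paper's version has its own advantage. It produces $N$ directly in the form $\max_i|\cdot|_{\omega_i'}$ with explicit forms $\omega_i'=(1-t)\omega_i+t\rho$. This gives immediate access to the dual face $F_\rho^\vee$, which is the object in the proper-on-bisector criteria. Your construction instead gives immediate access to $B^N$ and its vertices.
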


Recall that in our convention faces of polyhedra are relatively open, but $P$ can have empty interior in $\mathfrak{a}^+\cap \lbrace \rho=1\rbrace$. 

\begin{proof}
 Let us fix a finite family of linear forms $(\omega_i)_{i\in I_0}$ such that $\omega_i\leq 1$ on $P$ and $P\cap \lbrace \rho=1\rbrace=\lbrace x\in \mathfrak{a}\mid \rho(x)=1,\, \forall I\in I_0,\, \omega_i(x)\leq 1\rbrace$. 
 We moreover assume that one of these linear forms is $\rho$. 

We define the norm $N_t$ for all $0\leq t<1$ to be equal for $x\in \mathfrak{a}$ to:
$$N_t(x)=\max_{i\in I_0,w\in W} (1-t)\omega_i(w\cdot x)+t\rho(w\cdot x).$$

For all $0\leq t<1$, $N(x)\geq 1$ for $x\in P_0$. Note that for all $x\in \mathfrak{a}$, and $w\in W$, $\rho(x)\geq \rho(w\cdot x)$ with equality if and only if $x=w\cdot x$. For $t$ close enough to $1$, for $x$ in a neighborhood of $\mathfrak{a}^+\cap P_0$ one has $N_t(x)=\max_{i\in I_0}(1-t)\omega_i(x)+t\rho(x)$. Hence for this norm $F_\rho=P_0$.
\end{proof}

\begin{example}
    It can happen that $\mathcal{O}_{F_\rho}$ has the proper-on-bisector property while $F_\rho$ is not a vertex when the opposition involution is non-trivial. 

    In the $D_3 \simeq A_3$ root system, i.e.\ $\R^3$ with 
        \[ \Delta = \{ (1,-1,0),(0,1,-1),(0,1,1)\} \]
    and $\rho$ is (a multiple of) the sum of fundamental weights $(2,1,0)$, take $\omega = (2,2,1)$, see Figure \ref{fig:A3 example}.
    Then the face $F_\rho$ of $B^{N_\omega}$ is an edge joining two rescaled fundamental coweights. 
    The dual ball $(B^N)^\ast$ is a truncated simplex, and one checks that the origin lies in the interior of $\Cone(F_\rho^\vee \cup w_0 F_\rho^\vee)$.
    Hence $F_\rho$ is not a vertex and $\mathcal{O}_{F_\rho}$ has the proper-on-bisector property. 

    Notice also that $F_\rho \cup w_0 F_\rho$ spans $\mathfrak{a}$, but generates a convex cone in a proper half-space. 
    So the dual face of the dual norm does not have the proper-on-bisector property. 
\end{example}

\begin{figure}
    \centering
    \includegraphics[width=0.45\linewidth]{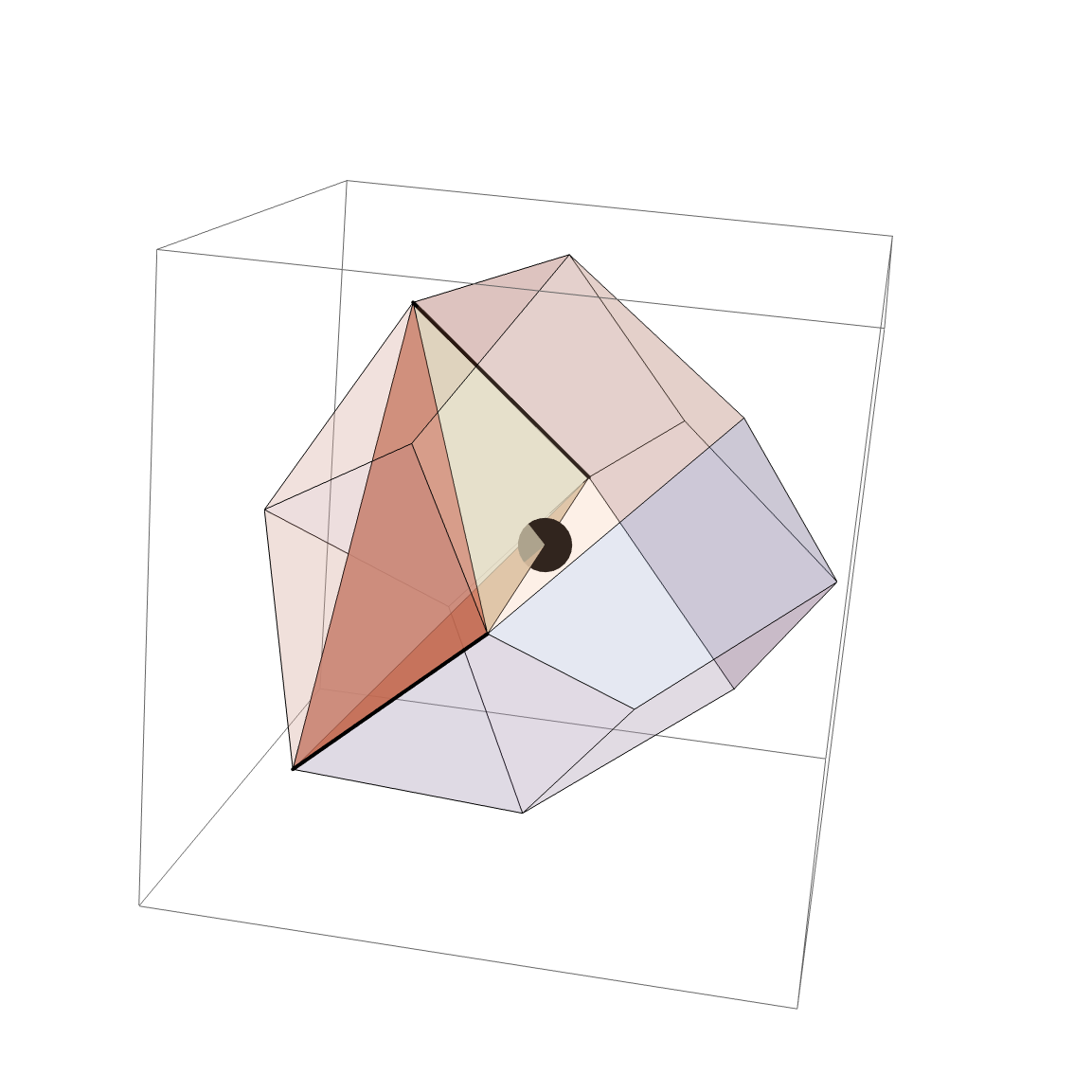}
    \hfill
    \includegraphics[width=0.45\linewidth]{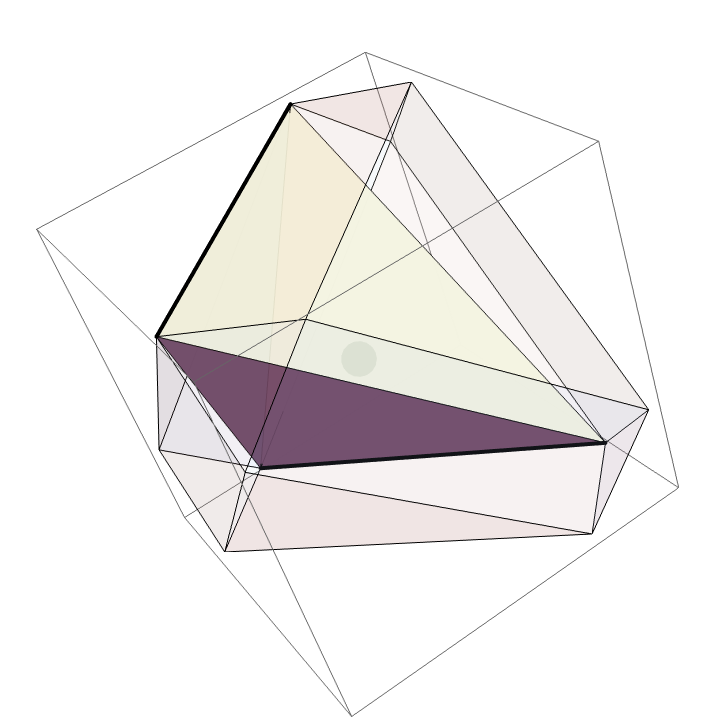}
    \caption{(Left) $B^N$ and the convex hull of $F_\rho\cup w_0 F_\rho$. (Right) $(B^N)^\ast$ and the convex hull of $F_\rho^\vee \cup w_0 F_\rho^\vee$.}
    \label{fig:A3 example}
\end{figure}

\subsection{Polyhedral norms associated to a single linear form}
\label{subsection:Tau_i}
We now investigate in more detail whether, for a given polyhedral norm, $F_\rho$ is a vertex. For this we restrict to  norms associated to a single linear form $\omega\in \mathfrak{a}^*$:
    \[ N = N_\omega(x) = \max \{\omega (wx) : w \in W\}. \]
We may assume that $\omega$ is dominant or in other words $\omega\in (\mathfrak{a}^*)^+$, i.e. it is dual to an element of $\mathfrak{a}^+$, up to acting by the Weyl group.

This situation is already interesting: we will see that $F_\rho$ may have any dimension from $0$ to $\dim(\mathfrak{a})-1$, and this dimension depends on the position of $\omega$ with respect to a decomposition of ${(\mathfrak{a}^*)}^+$.

\medskip

Let $\{\varpi_i\}_{i \in \Delta}$ be the fundamental weights, defined so that for $i,j\in \Delta$:
    \[ 2\frac{\langle \alpha_i, \varpi_j \rangle}{\langle \alpha_i,\alpha_i \rangle} = \delta_{ij}. \]
The fundamental coweights $\{\varpi_i^\vee\}_{i \in \Delta}$ are defined by
    \[ \beta(\varpi_i^\vee) = \langle \beta, \varpi_i \rangle .\]

The convex cone generated by the fundamental coweights is precisely $\mathfrak{a}^+$. 
Let us define a renormalisation of the fundamental coweights using $\rho$  for $i\in \Delta$:
$$\tau_i=\frac{1}{\varpi_i^\vee(\rho)}\varpi^\vee_i.$$

The description of $F_\rho$ for the metric $N_\omega$ depends on the position of $\omega$ in the Voronoi decomposition of $\mathfrak{a}^*$ with respect to the linear foms on $\mathfrak{a}^*$ defined by the $(\tau_i)$.

\begin{prop}
\label{prop:Proper bisectors polyhedral dominant}
Let $N=N_\omega$ with $\omega$ dominant and let $F_\rho$ be the face of $B^{N_\omega}$ maximizing $\rho$. Let $\Theta\subset \Delta$ be the set of indices $i$ such that 
$\omega(\tau_i) =\min_{j\in \Delta}\omega(\tau_j)=c$. Then $F_\rho$ is the convex hull of the $\frac{1}{\omega(\tau_i)}\tau_i= \frac{\varpi_i^\vee}{\omega(\varpi_i^\vee)}$ for $i\in \Theta$, or in other words:
$$F_\rho=\Cone(\tau_i,\, i\in \Theta) \cap \lbrace a\mid \rho(a)=\frac{1}{c}\rbrace,$$
$$F_\rho=\Cone(\tau_i,\, i\in \Theta) \cap \lbrace a\mid \omega(a)=1\rbrace.$$
In particular  $\dim(F_\rho)= \#\Theta-1$.
\end{prop}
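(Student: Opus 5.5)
The plan is to find the maximum of $\rho$ on $B^{N_\omega}\cap\overline{\mathfrak{a}^+}$ directly, and then identify the set where it is attained. Since $\rho$ is dominant, for any $x\in\mathfrak{a}$ and $w\in W$ we have $\rho(w\cdot x)\le\rho(x^+)$, where $x^+$ is the representative of $Wx$ in $\overline{\mathfrak{a}^+}$. Also $N_\omega(x)=N_\omega(x^+)$. Hence the maximum of $\rho$ on $B^{N_\omega}$ equals its maximum on $B^{N_\omega}\cap\overline{\mathfrak{a}^+}$, and $\overline{F_\rho}\cap\overline{\mathfrak{a}^+}$ is the maximizing set there. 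Because $F_\rho$ is chosen to meet $\mathfrak{a}^+$ (by the paper's convention on representatives), it suffices to understand this set and then check that it is not cut by walls in a way that changes the face.

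The first step is to compute $N_\omega$ on $\overline{\mathfrak{a}^+}$. Both $\omega$ and $x\in\overline{\mathfrak{a}^+}$ are dominant, so the same inequality $\omega(w x)\le\omega(x)$, which is the one underlying Lemma \ref{lem:Bruhat order} (or \cite[Lemma 3.4]{KapovichLeebPorti2018dynamicsonflagmanifolds}), gives $N_\omega(x)=\omega(x)$. Therefore $B^{N_\omega}\cap\overline{\mathfrak{a}^+}=\{x\in\overline{\mathfrak{a}^+}\mid\omega(x)\le 1\}$. This is a simplex with vertices $0$ and $\varpi_i^\vee/\omega(\varpi_i^\vee)=\tau_i/\omega(\tau_i)$, because the fundamental coweights span the cone $\overline{\mathfrak{a}^+}$ and $\omega(\varpi_i^\vee)>0$ when $\omega$ is dominant and nonzero, as required for $N_\omega$ to be a norm.

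The second step is the linear program. Maximizing the linear form $\rho$ over this simplex, the maximum is attained at the vertices with the largest $\rho$-value, namely $\rho(\tau_i)/\omega(\tau_i)=1/\omega(\tau_i)$. So the maximum value is $1/c$ and the maximizing set is the closed convex hull of $\tau_i/c$ for $i\in\Theta$. Since the $\tau_i$ are linearly independent, this set equals $\Cone(\tau_i, i\in\Theta)\cap\{\rho=1/c\}$. On this cone we have $\omega=c\rho$, which gives the second description $\Cone(\tau_i,i\in\Theta)\cap\{\omega=1\}$, and it also gives $\dim=\#\Theta-1$.

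The last step, which I expect to be the main obstacle, is to check that $\overline{F_\rho}$ (the full maximizing set in $\mathfrak{a}$) is exactly this simplex and not the union of its Weyl translates. The translates could only enlarge the set through points $x$ with $\rho(x)=\rho(x^+)$. But $\rho(wx^+)<\rho(x^+)$ unless $wx^+=x^+$, because $\rho$ is strictly positive on all simple coroots (this is the fact used in the preceding proposition's proof). So any maximizer already lies in $\overline{\mathfrak{a}^+}$, and $\overline{F_\rho}$ is the simplex. Its relative interior is the relatively open face $F_\rho$. The statement's equalities should be read with this closure convention.
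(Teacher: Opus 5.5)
Your proposal is correct and follows essentially the same route as the paper. Both arguments observe that $N_\omega=\omega$ on $\overline{\mathfrak{a}^+}$, so $B^{N_\omega}\cap\overline{\mathfrak{a}^+}$ is the simplex on $0$ and the rescaled coweights $\varpi_i^\vee/\omega(\varpi_i^\vee)$. Strict dominance of $\rho$ then forces every maximizer into $\overline{\mathfrak{a}^+}$, and maximizing $\rho$ over the simplex selects exactly the vertices with $\omega(\tau_i)=c$.

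One justification should be tightened: $\omega(\varpi_i^\vee)>0$ holds because $\omega(\varpi_i^\vee)=N_\omega(\varpi_i^\vee)$ and $N_\omega$ is a norm. Dominance and $\omega\neq 0$ alone do not suffice when $G$ is not simple.
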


\begin{proof}
Since $\omega$ is dominant, any $a \in \mathfrak{a}^+$ satisfies 
        \[ N_\omega(a) = \max_{w\in W} \omega(w a) = \omega(a).\]
    In particular, for each $i \in \Delta$, 
        \[ \frac{\varpi_i^\vee}{\omega(\varpi_i^\vee)} \in \partial B^N = \left\{a \in \mathfrak{a} : N_\omega(a)=1 \right\} .\]

         Moreover, the intersection of $\partial B^N$ with $\mathfrak{a}^+$ is precisely the convex hull of these rescaled fundamental coweights
    \[ \frac{\varpi_i^\vee}{\langle \omega, \varpi_i \rangle}, i \in \Delta. \]

    The face $F_\rho$ is contained in $\mathfrak{a}^+$ since $\rho(w a)$ is uniquely maximized (as $w$ ranges over $W$), on the singleton $W \cdot a \cap \mathfrak{a}^+$. 
    In particular, the closure of $F_\rho$ is the convex hull of 
        \[ \left\{ \frac{\varpi_i^\vee}{\omega(\varpi_i^\vee)} : \frac{\rho(\varpi_i^\vee)}{\omega(\varpi_i^\vee)} \text{ is maximized among } i \in \Delta \right\} .\] 

Note that the inverse of $\frac{\rho(\varpi_i^\vee)}{\omega(\varpi_i^\vee)}$ is precisely $\omega(\tau_i)$, and that $\frac{1}{\omega(\tau_i)}\tau_i=\frac{\varpi_i^\vee}{\omega(\varpi_i^\vee)}$.
\end{proof}

\medskip

In the cases when the opposition involution $\iota = -w_0$ is not trivial, the following proposition tells us when $F_\rho$ has the proper-on-bisector property. 

\begin{prop}
\label{prop:ProperbissectorLinear forms}
Let $\omega$ be a dominant linear form on $\mathfrak{a}$ and let $\Theta\subset \Delta$ be the set of indices $i$ such that 
$\omega(\tau_i) =\min_{j\in \Delta}\omega(\tau_j)$. The face $F_\rho$ for the metric $N_\omega$ has the proper-on-bisector property if and only if $\#(\Theta\cap \iota(\Theta))\leq 1$.
\end{prop}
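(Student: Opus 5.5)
This follows by combining Proposition \ref{prop:Proper bisectors polyhedral dominant} with the criterion of Proposition \ref{prop:Caracterisation proper bisectors Frho}. By Proposition \ref{prop:Proper bisectors polyhedral dominant}, $\overline{F_\rho}$ is the convex hull of the points $p_i=\tau_i/\omega(\tau_i)$ for $i\in\Theta$. These all lie on the affine hyperplane $\{\omega=1\}$ and also on $\{\rho=1/c\}$. Hence $\Lin_0(F_\rho)$ is spanned by the differences $\tau_i-\tau_j$ for $i,j\in\Theta$, using $\omega(\tau_i)=c$ for all $i\in\Theta$, so each $p_i=\tau_i/c$. Since the $\tau_i$ are linearly independent, this space is
\[
\Lin_0(F_\rho)=\Big\{\textstyle\sum_{i\in\Theta}a_i\tau_i \;:\; \sum_{i\in\Theta} a_i=0\Big\}.
\]

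Next we compute the action of $w_0$. The opposition involution $\iota=-w_0$ permutes the simple roots, hence the fundamental coweights: $\iota(\varpi_i^\vee)=\varpi^\vee_{\iota(i)}$. Since $\rho$ is fixed by $\iota$, we have $\rho(\varpi_{\iota(i)}^\vee)=\rho(\varpi_i^\vee)$, so $\iota(\tau_i)=\tau_{\iota(i)}$. Therefore
\[
w_0\cdot \Lin_0(F_\rho)=-\iota\Lin_0(F_\rho)=\Big\{\textstyle\sum_{i\in\iota(\Theta)} b_i\tau_i \;:\; \sum b_i=0\Big\}.
\]
By linear independence of the $\tau_i$, the intersection of these two spaces is
\[
\Big\{\textstyle\sum_{i\in \Theta\cap\iota(\Theta)} a_i\tau_i \;:\; \sum a_i=0\Big\}.
\]
This is zero exactly when $\#(\Theta\cap\iota(\Theta))\le 1$, and it has dimension $\#(\Theta\cap\iota(\Theta))-1$ otherwise.

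Applying Proposition \ref{prop:Caracterisation proper bisectors Frho} then gives the equivalence.

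The step needing most care is the description of $w_0\cdot\Lin_0(F_\rho)$. The coefficient-sum condition must be preserved under $\iota$. It is, because $\iota$ merely relabels the basis vectors $\tau_i$, and the overall sign from $w_0=-\iota$ does not affect a linear subspace.

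A second point to check is that the intersection coordinates are computed correctly. A vector in both subspaces has coefficients supported on $\Theta\cap\iota(\Theta)$, and both sum constraints then reduce to the same condition.
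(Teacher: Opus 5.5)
Your proof is correct and follows the paper's route: you read off $\Lin_0(F_\rho)$ from Proposition \ref{prop:Proper bisectors polyhedral dominant}, use $\iota(\tau_i)=\tau_{\iota(i)}$ to describe $w_0\cdot\Lin_0(F_\rho)$, and conclude with Proposition \ref{prop:Caracterisation proper bisectors Frho}. The one difference is that you compute the intersection uniformly by comparing coefficients in the basis $(\tau_i)$, whereas the paper splits into the cases $\#(\Theta\cap\iota(\Theta))\geq 2$, $=0$ and $=1$ (the last via projection parallel to $\tau_{i_0}$); your description of $w_0\cdot\Lin_0(F_\rho)$ as the zero-sum combinations supported on $\iota(\Theta)$ is also the accurate one, since $\omega$ need not be $\iota$-invariant and so this space need not lie in $\Ker(\omega)$.
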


Hence this property only depends on the set of cell closures in which $\omega$ belongs in the Voronoi decomposition of $\mathfrak{a}^+$. 

\begin{proof}
Proposition \ref{prop:Proper bisectors polyhedral dominant} implies that $\Lin_0(F_\rho)=\langle \tau_i-\tau_j\mid i,j\in \Theta\rangle\cap \Ker(\omega)$, and since $\iota(\hat{\tau_i})=\hat{\tau_{\iota(i)}}$ we have $\Lin_0(w_0\cdot F_\rho)=\langle \tau_i-\tau_j\mid i,j\in \iota(\Theta)\rangle\cap \Ker(\omega)$.

\medskip

We then apply Proposition \ref{prop:Caracterisation proper bisectors Frho}. We need to determine when these two vector subspaces intersect trivially, knowing that the $(\tau_i)$ form a basis and that $\rho(\tau_i)\neq 0$ for $i\in \Delta$. If $i\neq j\in \Theta\cap \iota(\Theta)$ then $\tau_i-\tau_j\neq 0$ belongs to the intersection, so the face $F_\rho$ doesn't have the proper-on-bisector property. On the contrary if $\#(\Theta\cap \iota(\Theta))=0$ the intersection is clearly trivial. If $\#(\Theta\cap \iota(\Theta))=1$ let $i_0$ be the intersection and let $H$ be the span of $\tau_i$ for $i\neq i_0$ in $\Delta$. Since $\omega(\tau_{i_0})\neq 0$ the projection of $\Lin_0(F_\rho)$ and $\Lin_0(w_0\cdot F_\rho)$ parallel to $\tau_{i_0}$ are injective. However their image clearly intersect trivially, so these subspaces intersect trivially.

\end{proof}

\begin{proposition}\label{prop: rho in terms of fundamental weights}
    The restricted Weyl vector $\rho$, see (\ref{eqn: weyl vector}), satisfies 
        \[ \rho = \sum_{\alpha \in \Delta} (m_\alpha + 2 m_{2\alpha}) \varpi_\alpha \]
\end{proposition}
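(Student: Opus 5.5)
Since the fundamental weights $\{\varpi_\alpha\}_{\alpha\in\Delta}$ form a basis of $\mathfrak{a}^*$ dual (up to normalization) to the simple coroots, the plan is to compute the coefficients of $\rho$ by pairing with the simple coroots. Concretely, for $\beta\in\Delta$ write $\beta^\vee = 2\beta/\langle\beta,\beta\rangle$ (identified with an element of $\mathfrak{a}$). By the defining relation $2\langle \beta,\varpi_\alpha\rangle/\langle\beta,\beta\rangle=\delta_{\alpha\beta}$, any $\lambda\in\mathfrak{a}^*$ satisfies $\lambda=\sum_{\alpha\in\Delta}\lambda(\alpha^\vee)\varpi_\alpha$, where $\lambda(\alpha^\vee)=2\langle\lambda,\alpha\rangle/\langle\alpha,\alpha\rangle$. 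So it suffices to show $\rho(\beta^\vee)=m_\beta+2m_{2\beta}$ for each simple root $\beta$.

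The key step is the standard fact that the simple reflection $\sigma_\beta$ permutes the positive roots not proportional to $\beta$, i.e.\ $\sigma_\beta$ preserves $\Sigma^+\setminus\{\beta,2\beta\}$, while sending $\beta\mapsto-\beta$ and $2\beta\mapsto -2\beta$. Moreover $\sigma_\beta$ is induced by an element of $N_{\mathfrak{a}}\subset G$ (via $K$), so it maps root spaces to root spaces and preserves multiplicities: $m_{\sigma_\beta\gamma}=m_\gamma$. I would justify the permutation claim in the usual way: a positive root $\gamma\neq\beta,2\beta$ has a positive coefficient on some simple root other than $\beta$, and $\sigma_\beta\gamma=\gamma-\gamma(\beta^\vee)\beta$ only changes the $\beta$-coefficient, so it stays positive. (In the non-reduced case one must remember that $\Delta$ excludes $\alpha$ with $\alpha/2\in\Sigma$, and that the only roots proportional to a simple $\beta$ are $\pm\beta,\pm2\beta$; $\beta/2$ is not a root since $\beta$ is simple.)

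Then write $2\rho = \rho' + m_\beta\beta + m_{2\beta}\,2\beta$ with $\rho'=\sum_{\gamma\in\Sigma^+\setminus\{\beta,2\beta\}} m_\gamma\gamma$. By the previous step $\sigma_\beta\rho'=\rho'$, hence $\rho'(\beta^\vee)=0$. Therefore $2\rho(\beta^\vee)=(m_\beta+2m_{2\beta})\beta(\beta^\vee)=2(m_\beta+2m_{2\beta})$, giving $\rho(\beta^\vee)=m_\beta+2m_{2\beta}$ (with $m_{2\beta}=0$ if $2\beta\notin\Sigma$), which yields the formula. The only delicate point I expect is bookkeeping in the non-reduced ($BC$-type) case and the invariance of multiplicities under $W$; both are standard and I would cite them rather than reprove them.
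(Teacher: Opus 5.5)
Your proposal is correct and follows essentially the same route as the paper: both rest on the fact that the simple reflection $\sigma_\beta$ negates $\beta$ and $2\beta$ while permuting the remaining positive roots (with their multiplicities), and both read off $\rho(\beta^\vee)=m_\beta+2m_{2\beta}$ from this. The differences are cosmetic. The paper extracts the value via the identity $\rho(\beta^\vee)=(\sigma_\beta\rho)(\sigma_\beta\beta^\vee)$, whereas you observe that the $\sigma_\beta$-fixed part $\rho'$ pairs to zero with $\beta^\vee$. You also make explicit two points the paper leaves implicit: the expansion of $\rho$ in the basis of fundamental weights, and the $W$-invariance of multiplicities.
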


As an easy consequence, note that $\rho$ depends on $\mathfrak{g}$, not just the restricted root system. 
Moreover, $\rho$ is always strictly dominant, i.e.\ when written as a linear combination of the fundamental weights, every coefficient is positive.

\begin{proof}
    If $\alpha$ is simple then the reflection $s_\alpha$ negates $\alpha$ and permutes the other positive roots. 
    Hence $s_\alpha(\rho)=\rho-m_\alpha\alpha-m_{2 \alpha}2 \alpha$ and 
    \begin{align*}
        \rho(\widehat{\alpha}) & = (s_\alpha \rho)(s_\alpha \widehat{\alpha}) = (\rho-m_\alpha\alpha-m_{2\alpha}2\alpha)(-\widehat{\alpha}) \\
        & = - \rho(\widehat{\alpha}) + m_\alpha\alpha(\widehat{\alpha}) + m_{2\alpha}2\alpha(\widehat{\alpha}) = - \rho(\widehat{\alpha}) + 2 m_\alpha + 4 m_{2 \alpha} 
    \end{align*}
     implies $\rho(\widehat{\alpha})=m_\alpha+2m_{2\alpha}>0$.
\end{proof}

\begin{remark}
    Proposition \ref{prop: rho in terms of fundamental weights} allows one to explicitly determine the fractions $\frac{\rho(\varpi_i^\vee)}{\omega(\varpi_i^\vee)}$ for a given $\omega$.
    The dimensions $m_\beta = \dim \mathfrak{g}_\beta$ are recorded in e.g.\ \cite[p 532]{helgason1979differential} and the pairings $\langle \varpi_i, \varpi_j \rangle$ depend only on the root system and may be read off from the tables in \cite[p 295]{onishchikvinberg2012liegroups}. 
\end{remark}

\begin{remark}
\label{rem:drho}
Using Proposition \ref{prop:ProperbissectorLinear forms}, one can see that for the linear form $\omega=\rho$, the set $\Theta$ is equal to $\Delta$. Hence in this particular case $\mathcal{O}_{F_\rho}$ does not have the proper-on bisector property in higher rank. 
In this case $F_\rho$ is a codimension one face, so $\mathcal{O}_{F_\rho}$ corresponds to a $G$-orbit of Riemannian Busemann functions. 
This $G$-orbit is exactly the $G$-orbit of horofunctions for the Riemannian metric that supports the measure constructed by \cite{FMW23poissonvoronoi}. 

\end{remark}

\subsection{Bisectors have empty interior in \texorpdfstring{$\X$}{X}}

 In a vector space with polyhedral norm, the bisector between two points may have interior. 
 We will show this doesn't happen for polyhedral Finsler metrics on $\X$.

\begin{thm} 
\label{thm:Empty Interior}
Suppose that $G$ is semi-simple.
    Let $h_1,h_2 \in \mathcal{O}_F$ be horofunctions in generic position.
    The set $\Bis(h_1,h_2) \subset \X$ has empty interior.
\end{thm}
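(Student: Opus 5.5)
The plan is to argue by contradiction, using convexity and the $1$-Lipschitz property of horofunctions along well-chosen geodesic lines. Suppose $\Bis(h_1,h_2)$ contains a nonempty open set $U$. Let $\xi_1\in\partialvis\X$ be the endpoint of a Riemannian ray defining $h_1$, with type $\tau\in F$ chosen as in Lemma \ref{lem: tau in ker flipped}. For $x\in U$ let $\ell_x\colon\R\to\X$ be a Riemannian geodesic line with $\ell_x(0)=x$ and $\ell_x(+\infty)=\xi_1$. Its speed is normalized so that $N$ of its velocity is $1$.

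\textbf{Step 1 ($h_1$ is affine along $\ell_x$).} Since $h_1$ is a limit of $d^N(\cdot,\gamma(s))-s$ along a ray asymptotic to $\ell_x$, we get $h_1(\ell_x(t))=h_1(x)-t$ for $t\ge 0$. By Proposition \ref{prop:Explicit formula for Horofunctions}, $h_1$ is a maximum of Riemannian Busemann functions, hence convex along $\ell_x$. It is also $1$-Lipschitz for $d^N$ (Proposition \ref{prop: dN from busemanns}). Convexity forces the slope to be $\le -1$ for $t<0$, and the Lipschitz bound forces it to be $\ge -1$. Hence $h_1\circ\ell_x$ is affine of slope $-1$ on all of $\R$.

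\textbf{Step 2 (transfer to $h_2$).} Since $h_2=h_1$ on a neighbourhood of $x$, $h_2\circ\ell_x$ has slope $-1$ on some interval $(-\varepsilon,\varepsilon)$. The same convexity and Lipschitz argument then shows that $h_2\circ\ell_x$ has slope exactly $-1$ on $(-\infty,0]$. Thus $h_2(\ell_x(-s))=h_2(x)+s$ for all $s\ge 0$.

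\textbf{Step 3 (asymptotic slope of $h_2$ at the backward end).} Let $\eta_x=\ell_x(-\infty)$, and let $c_2$ be the flag of $h_2$. I would argue exactly as in the claim inside the proof of Proposition \ref{prop:MainProp of proper horofunctions}: choose a maximal flat containing $c_2$ and a chamber containing $\eta_x$ in its boundary. This gives
\[
h_2(\ell_x(-s))=s\,h_F(w\,u)+O(1),
\]
where $u\in\overline{\mathfrak{a}^+}$ is the type of $\eta_x$ and $w\in W$ is the relative position of $c_2$ and $\eta_x$. Step 2 then forces $h_F(wu)=1$. By Proposition \ref{prop:Formula horofunction standard}, $h_F(v)=\max_{\beta\in \overline{F^\vee}}-\beta(v)\le N(-v)$. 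So $h_F(wu)=1$ means that $-wu$ lies in $\overline{F}$ and is maximized by all of $\overline{F^\vee}$. This is a closed, non-generic condition on the relative position $w$.

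\textbf{Step 4 (varying $x$).} As $x$ ranges over the open set $U$ with $\xi_1$ fixed, the backward endpoints $\eta_x$ sweep out an open subset of the $G$-orbit of $\eta_x$ in $\partialvis\X$. This orbit is a flag manifold opposite to the one containing $\xi_1$, and the sweeping is realized by the horospherical group $U_\Theta$ fixing $\xi_1$ acting near $x$. An open set of such points contains points in the maximal (transverse) Bruhat position relative to $c_2$. For those points $w$ is the longest class, and $u=\iota(\tau)$ up to the Weyl group. The condition $h_F(wu)=1$ then reads $h_F(\tau')=N(-\tau')$ with $-\tau'\in\overline{F}$ for a specific $\tau'\in W\tau$. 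This must contradict $\tau\in F$ together with Lemma \ref{lem:Bruhat order} and the bounded-difference criterion of Lemma \ref{lem:equivalence of translated horofunctions}: equality would force $w_0$-type opposite directions to lie in $\overline{F}$ simultaneously, which is impossible for a face of a norm ball.

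The main obstacle is Step 4. It requires pinning down precisely which relative positions between $\eta_x$ and $c_2$ can occur for an open set of $x$, and showing that for the generic one $h_F(wu)<1$ strictly. Here the semisimplicity hypothesis must enter, to rule out Euclidean directions along which $h_1$ and $h_2$ could agree identically. I would first try to settle this in the model flat, computing $h_F(w_0\cdot(-\tau))$ directly from the vertices of $F^\vee$. I would then use Lemma \ref{lem:Bruhat order} to pass from $w_0$ to the positions realized near the generic one.
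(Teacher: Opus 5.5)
There is a genuine gap, and it lies in Steps~3--4, not only in the bookkeeping you flagged. Steps~1--2 are essentially correct. One caveat on Step~1: the reason is not that $h_1$ is the horofunction of the ray $\ell_x$, which is in general a different point of the fiber. The reason is that $U_\Theta\subset R_F$, that $L_\Theta$ centralizes $\tau$, and that $h_1\circ e^{t\tau}=h_1-t$.

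In Step~3 you keep only the asymptotic slope along $\ell_x(-s)$, and that slope carries no information. Put $c_2$ in standard position. A point $\eta$ of type $\iota\tau$ in the maximal Bruhat position relative to $c_2$ then sits in the flat in direction $w_0\iota\tau=-\tau$, and
\[
h_F(-\tau)=\max_{\beta\in\overline{F^\vee}}\beta(\tau)=1,
\]
since every $\beta\in\overline{F^\vee}$ equals $1$ on $\tau\in F$.

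So $h_F(wu)=1$ is not a closed non-generic condition on $w$. It holds precisely at the generic position, which is exactly the position Step~4 selects, so no contradiction can come out. Your reformulation is also off: $h_F(v)=1$ only requires some $\beta\in\overline{F^\vee}$ with $\beta(-v)=1$, not $-v\in\overline F$.

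Varying $x$ cannot rescue this either. If $U$ is a small neighbourhood of a point whose $\eta_x$ is transverse to $c_2$, then every $\eta_x$ with $x\in U$ is transverse. The case $h_k=b_{\xi_k}$ on $\mathbb{H}^2$ already shows the issue: $b_{\xi_2}$ has asymptotic rate $1$ along every ray toward any $\eta\neq\xi_2$. What is actually violated is the \emph{exact} rate near $x$, i.e.\ first-order information at $x$.

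The missing ingredient is a first-order comparison at $x$, combined with the orbit structure; this is what the paper does.
\begin{enumerate}
\item By Proposition~\ref{prop:Explicit formula for Horofunctions}, $h_k=\max_{i\in I_F,\,p\in E^k_i}\lVert\omega_i\rVert\, b_{p,o_k}$. Here the $E^1_i$ are $P_\Theta$-orbits and the $E^2_j$ are $w_0P_\Theta w_0^{-1}$-orbits in $\partialvis\X$.
\item Compare the growth of $h_1$ and $h_2$ along rays leaving $x$ directly away from a maximizing point. This shows that local equality forces the maximizing sets $M(h_1^{\ell_0},x)$ and $M(h_2^{\ell_0},x)$ to coincide.
\item Since $K_\Theta\subset R_F$ acts transitively on each $E^1_i$, these sets sweep out open subsets of $\bigcup_iE^1_i$ and of $\bigcup_jE^2_j$ as $x$ ranges over a neighbourhood.
\item By analyticity, some $E^1_i$ and some $E^2_j$ then share an open piece. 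This is impossible because $P$ and $w_0Pw_0^{-1}$ have no common orbit on $G/P$, and this is exactly where the generic position of $h_1,h_2$ is used.
\end{enumerate}
Your outline has no substitute for this last step.
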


We will prove the theorem by establishing several lemmata.

 By Proposition \ref{prop:Explicit formula for Horofunctions} we can write $h_1$ and $h_2$ (up to acting by $G$) as follows,  for some $o_1, o_2\in \X$ :
\begin{equation}
    \label{eq:horofunction 1}
    h_1(x)=\max_{i\in I_F,\, p\in E^1_i}\lVert \omega_i\rVert b_{p,o_1}(x), 
\end{equation}
\begin{equation}
    \label{eq:horofunction 2}
h_2(x)=\max_{i\in I_F,\, p\in w_0\cdot E^2_i}\lVert \omega_i\rVert b_{p,o_2}(x).
\end{equation}

Here $E_i^1=P^1_\Theta\cdot [\omega_i]$ and $E_i^1=P^2_\Theta\cdot [\omega_i]$ for some conjugates $P^1_\Theta, P^2_\Theta$ of $P_\Theta$. Since $h_1$ and $h_2$ are opposite horofunctions, we can assume $P_\Theta^1=P_\Theta$ and $P_\Theta^2=w_0P_\Theta w_0$ where $w_0\in G$ is a representative of the longest element of the Weyl group.

\medskip

 Let $M^1(x)\subset \bigcup_{i\in I_F} E^1_i\subset \partialvis \X$ and $M^2(x)\subset \bigcup_{i\in I_F}  E_i^2\subset \partialvis \X$ be respectively the sets of points on which the maximum is reached in \eqref{eq:horofunction 1}, respectively \eqref{eq:horofunction 2}, at the point $x\in \X$. 

 \medskip
Since the $\omega_i$ may have different norms, we need to decompose the sets $M^k(x)$. Let $C_1>C_2>\cdots >C_m$ be the distinct values of $\lVert \omega_i\rVert$ for $i\in I_F$. For $1\leq \ell\leq m$ and $k=1,2$ we consider the function obtained by keeping only the terms of weight at most $C_j$ in \eqref{eq:horofunction 1} and \eqref{eq:horofunction 2}:
$$h^\ell_k(x)=\max_{i\in I_F,\, \lVert\omega_i\rVert\leq C_\ell,\, p\in E^k_i}\lVert \omega_i\rVert b_{p,o_k}(x).$$
In particular $h^1_k=h_k$ for $k=1,2$. For such a function $h^\ell_k$ and $x\in \X$ we denote by $M(h^\ell_k,x)$ the set of points $p$ at which the maximum defining $h^\ell_k(x)$ is reached. 

\medskip

Let $\ell_0$ be the smallest index such that $M(h^{\ell_0}_1,x_0)\neq \emptyset$. In particular this means that by the compactness of the $E_i^1$, in a neighborhood of $x$, $h^{\ell_0}_1=h_1$.

\medskip

We need the following: 
\begin{lem}
    If $h_1=h_2$ on a neighborhood of $x_0\in \X$, $M(h^{\ell_0}_1,x)=M(h^{\ell_0}_2,x)$ for all $x$ close enough to $x_0$. 
\end{lem}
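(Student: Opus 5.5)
The plan is to use convex analysis: we compare subdifferentials of the two max-type functions at each point near $x_0$. Each $h_k^{\ell}$ is a supremum over a compact family of Riemannian Busemann functions $\lVert\omega_i\rVert b_{p,o_k}$. Each of these is smooth and convex, and its gradient at $x$ is the vector $-\lVert\omega_i\rVert u_{x,p}$, where $u_{x,p}$ is the unit vector at $x$ pointing towards $p\in\partialvis\X$. By Danskin's theorem, the subdifferential of $h_k^{\ell_0}$ at $x$ is the closed convex hull of the gradients $-\lVert\omega_i\rVert u_{x,p}$ for $p\in M(h_k^{\ell_0},x)$. Since $h_1=h_2$ on a neighborhood $U$ of $x_0$, and $h_k^{\ell_0}=h_k$ near $x_0$ (by the choice of $\ell_0$, applied to both $k=1,2$, shrinking $U$ and possibly arguing symmetrically so that the same level $\ell_0$ works for $h_2$), the two convex functions coincide on $U$. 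Their subdifferentials therefore agree at every $x\in U$:
\[ \overline{\mathrm{conv}}\{\lVert\omega_i\rVert u_{x,p} : p\in M(h_1^{\ell_0},x)\}=\overline{\mathrm{conv}}\{\lVert\omega_i\rVert u_{x,p} : p\in M(h_2^{\ell_0},x)\}. \]

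The key step is to pass from equality of convex hulls to equality of the generating sets. For this I would use that every term retained at level $\ell_0$ has the same weight, namely $\lVert\omega_i\rVert=C_{\ell_0}$. This is exactly why $\ell_0$ is chosen minimal: only maximal-weight terms that actually attain the maximum matter. With equal weights, all generating vectors lie on the sphere of radius $C_{\ell_0}$ in $T_x\X$. The sphere is strictly convex, so every point of it is an extreme point of any convex set contained in the closed ball. Hence a generating vector of one hull lies on the sphere and in the other hull. Since the other hull is the hull of a compact subset of that sphere, the vector must belong to that subset. We get $\{u_{x,p}:p\in M(h_1^{\ell_0},x)\}=\{u_{x,p}:p\in M(h_2^{\ell_0},x)\}$. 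Finally, the map $p\mapsto u_{x,p}$ is a bijection $\partialvis\X\to T^1_x\X$, so $M(h_1^{\ell_0},x)=M(h_2^{\ell_0},x)$.

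I expect the main obstacle to be justifying that the maximizers at level $\ell_0$ all carry the single weight $C_{\ell_0}$, and that the same $\ell_0$ serves for $h_2$. If lower-weight terms can also attain the maximum, the strict-convexity argument only recovers the maximizers of largest weight. In that case I would first apply the argument to the outermost shell of the subdifferential to identify the weight-$C_{\ell_0}$ maximizers. Then, since $h_1$ and $h_2$ have the same set of weights $\{\lVert\omega_i\rVert : i\in I_F\}$, I would induct on the remaining weights by removing the identified directions. A secondary technical point is the Danskin formula for a supremum over a compact family of $C^1$ convex functions, which applies since each $E_i^k$ is compact and Busemann functions depend continuously, in $C^1$, on $p$.
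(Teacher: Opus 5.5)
Your core argument is correct and is essentially the paper's proof in dual form. The paper takes a maximizer $p$ of weight $C_{\ell_0}$ of $h_1$ at $x$ and moves along the geodesic ray leaving $x$ directly away from $p$. Along that ray $h_1$ grows at rate $C_{\ell_0}$, while $h_2$ grows strictly slower: each $h_2$-maximizer $q\neq p$ contributes at most $\lVert\omega\rVert\langle u_{x,q},u_{x,p}\rangle<C_{\ell_0}$, uniformly by compactness. That growth rate is exactly the support function of your Danskin hull in the direction $-u_{x,p}$. So it is the same fact as your observation that $-C_{\ell_0}u_{x,p}$ is an extreme point of the common hull.

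Your framework also settles the symmetric point you flagged. The largest norm of a point of the common hull at $x_0$ is intrinsic. It is attained at an extreme point, hence by a generator of each hull. So the top active weight of $h_2$ at $x_0$ is also $C_{\ell_0}$, and $h_2=h_2^{\ell_0}$ near $x_0$ by compactness. The paper obtains this as $\ell_0=\ell_0'$ from its two inclusions.

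Drop the proposed induction on lower weights, though: it fails. The equality $\mathrm{conv}(T\cup S_1')=\mathrm{conv}(T\cup S_2')$ says nothing about generators of smaller norm lying inside the hull. For example, take $T=\{\pm e_1,\pm e_2\}\subset\R^2$, $S_1'=\{\tfrac12 e_1\}$ and $S_2'=\emptyset$. A lower-weight active gradient in the interior of the hull is simply invisible to first-order data.

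The induction is also unnecessary. The paper's proof likewise handles only maximizers of weight $C_{\ell_0}$: its identity $h_1(\gamma(t))=h_1(x_0)+C_{\ell_0}t$ uses that $p$ carries that weight. The next step of the proof of Theorem~\ref{thm:Empty Interior} only needs an open piece of some top-weight $E^1_i$ to lie in $\bigcup_j E^2_j$. So read $M(h^{\ell_0}_k,x)$ as the set of weight-$C_{\ell_0}$ maximizers; for that statement your extreme-point argument is complete.
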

\begin{proof}
Suppose for the sake of contradiction that there is a point $p\in M(h^{\ell_0}_1,x)\setminus M(h^{\ell_0}_2,x)$. 
Let $\gamma\colon \R_{\geq 0}\to \X$ be the geodesic ray with Riemannian speed one starting from $x_0$ going in the direction opposite to $p$.
Since Riemannian Busemann functions have unit gradient, we have that $b_{p,o}(\gamma(t))=b_{p,o}(o)+t$ but for all $q\neq p$ in $\partialvis \X$,  $b_{q,o}(\gamma(t))\leq b_{q,o}(o)+(1-\epsilon) t$ with $\epsilon>0$ depending continuously on $q$. 

In particular since $p\in M^1$, $h_1(\gamma(t))=h_1(x_0)+C_{\ell_0}t$ and $h_2(\gamma(t))<h_1(o)-C_{\ell_0}\epsilon t$ for some $\epsilon>0$. Hence if $h_1$ and $h_2$ coincide on a neighborhhod of $o$,  $M(h^{\ell_0}_1x)\subset M(h^{\ell_0}_2,x)$. 

\medskip

For the converse inclusion, we obtain simillarly $M(h^{\ell_0'}_2,x)\subset M(h^{\ell_0'}_1,x)$ where $\ell_0'$ is the smallest index such that $M(h^{\ell_0'}_2,x_0)\neq \emptyset$. As a consequence of the txo inclusions $\ell_0=\ell_0'$ and we have the desired equality.
\end{proof}

\begin{lem}
    If $h_1=h_2$ on a neighborhood of $x_0\in \X$, then, for any small enough neighborhood $U$ of $x_0$, the sets $\bigcup_{x\in  U} M(h^{\ell_0}_1,x)$ and $ \bigcup_{x\in  U} M(h^{\ell_0}_1,x)$  are open subsets of $\bigcup_{i\in I_F}E^1_i$ and $\bigcup_{i\in I_F} E^2_i$ respectively.
\end{lem}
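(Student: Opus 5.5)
The key observation is that the compact group acting transitively on each $E^1_i$ also preserves $h_1$ and all of its truncations $h^\ell_1$. So nearby maximizers can be produced by moving the point $x$ with this group, rather than by any analytic implicit-function argument. The hypothesis $h_1=h_2$ near $x_0$ is only used, through the previous lemma, to know that $\ell_0$ and the maximizing sets $M(h^{\ell_0}_k,x)$ behave consistently for all $x$ in a small neighborhood $U$. I treat the first union in detail; the second (which I read as the one with $h^{\ell_0}_2$ and the sets $E^2_i$) is identical after conjugating by $w_0$.

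First I will set up the group. Up to acting by $G$ as in \eqref{eq:horofunction 1}, I take $P^1_\Theta=P_\Theta$ and choose $o_1$ fixed by $K_\Theta=K\cap P_\Theta$. This is allowed by Proposition~\ref{prop:Explicit formula for Horofunctions}, since the basepoint there is any point fixed by $K_\Theta$. By Definition/Proposition~\ref{defnprop:Definition E_i^F}, each $E^1_i=K_\Theta\cdot[b_{\omega_i}]$ is a single $K_\Theta$-orbit, so it is invariant under $K_\Theta$. For $k\in K_\Theta$ we have $b_{kp,o_1}(kx)=b_{p,k^{-1}o_1}(x)=b_{p,o_1}(x)$. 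Since the index set $\{i\in I_F:\lVert\omega_i\rVert\le C_\ell\}$ is unchanged by $k$, every truncation satisfies $h^\ell_1(kx)=h^\ell_1(x)$. The same substitution $p\mapsto kp$ in the maximum gives the equivariance
\[
M(h^{\ell}_1,kx)=k\cdot M(h^{\ell}_1,x)\qquad\text{for all }\ell,\ k\in K_\Theta,\ x\in\X .
\]

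Next comes the openness. Let $U$ be small enough that the previous lemma applies on $U$, so that $\ell_0$ is the relevant index throughout. Fix $x\in U$ and $p\in M(h^{\ell_0}_1,x)$, say with $p\in E^1_i$. Choose a neighborhood $V$ of the identity in $K_\Theta$ with $V\cdot x\subset U$; this is possible because the action is continuous and $U$ is open. By equivariance, for every $k\in V$ we get $kp\in M(h^{\ell_0}_1,kx)$ with $kx\in U$. Hence the union $\bigcup_{y\in U}M(h^{\ell_0}_1,y)$ contains $V\cdot p$. The orbit map $K_\Theta\to E^1_i$, $k\mapsto kp$, is a submersion onto the homogeneous space $E^1_i$, so it is open. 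Therefore $V\cdot p$ is a neighborhood of $p$ in $E^1_i$. The sets $E^1_i$ for different values of $\lVert\omega_i\rVert$, or for distinct $i$, are distinct closed $K_\Theta$-orbits, so $E^1_i$ is open in the finite disjoint union $\bigcup_{i\in I_F}E^1_i$. This proves openness of the first union. For the second union I will repeat the argument with $w_0K_\Theta w_0$ in place of $K_\Theta$ and $o_2$ fixed by it.

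I expect the main obstacle to be the bookkeeping around $\ell_0$. One has to make sure that the maximum defining $h^{\ell_0}_1$ is attained for every $y\in U$, not only at $x_0$, so that the union is over nonempty sets of the right type. One also has to make sure that the $E^1_i$ of different norm levels are disjoint, so that openness in one $E^1_i$ gives openness in the union. If disjointness fails for some coinciding orbits, I will instead note that the argument shows openness inside each orbit separately, which is what the subsequent argument should need.
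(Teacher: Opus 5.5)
Your argument is correct and is essentially the paper's proof. $K_\Theta\subset R_F$ fixes $o_1$ and preserves each $E^1_i$, hence every truncation $h^\ell_1$, so $M(h^{\ell}_1,kx)=k\cdot M(h^{\ell}_1,x)$; local transitivity of $K_\Theta$ on $E^1_i$ together with $V\cdot x\subset U$ then gives openness, and you make explicit the invariance of the truncations that the paper uses tacitly.

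Two small inaccuracies are harmless. First, distinct indices $i$ can give the same orbit $E^1_i$ (e.g.\ when $W_{\Theta_F^{\fixed}}$ permutes vertices of $F^\vee$), but orbits are either equal or disjoint and are compact, so each $E^1_i$ is open in the finite union anyway and no fallback is needed. Second, for $h_2$ the right group is the conjugate of $K_\Theta$ that lies in the stabilizer of $h_2$ and fixes $o_2$, which need not be $w_0K_\Theta w_0^{-1}$ itself.
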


\begin{proof}
The stabilizer $R_F$ of the horofunction $h_1$ contains $K_\Theta$ which acts transitively on $E^1_i$ by the Definition/Proposition \ref{defnprop:Definition E_i^F} and hence also locally transitively on  $E_i$, for all $i\in I_F$. Hence let $p\in E_i$ be close enough to $p_0\in M^1(o)$, then there exist $g\in R$ such that $g\cdot o\in U$ and $g\cdot p_0=p$. (Note that $K_\Theta$ preserves $o_1$ but not necessarily $o$.) Since $g\cdot h_1=h_1$ we also have $M(h_1^{\ell_0},g\cdot o)=g\cdot M(h_1^{\ell_0},o)$. 

We conclude by applying the same argument to $h_2$. 
\end{proof}

 In particular, since $\bigcup_{i\in I_F}E_i^1$ and $\bigcup_{i\in I_F}E_i^2$. are finite unions of analytic sets that coincide on the non-empty open set $\bigcup_{x\in  U} M(h^{\ell_0}_1,x)$, they share at least one component. 
 Hence there must be some $i,j\in I_F$ such that $E_i^1$ and $E^2_j$ have a common analytic subset. This is contradicted by the following:

\begin{lem}
    Let $G$ be a semisimple Lie group, let $P$ be a parabolic subgroup, and let $w_0$ be a lift of the longest element of the Weyl group.
    Then $P$ and $w_0 P w_0^{-1}$ do not share an orbit in $G/P$.
\end{lem}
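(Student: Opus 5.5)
The plan is to show that an orbit common to $P$ and to $P^- \coloneqq w_0 P w_0^{-1}$ would be invariant under all of $G$, and then to use that $P$ has a fixed point in $G/P$. I read ``share an orbit'' in the strong form needed just above: a $P$-orbit $\mathcal{O}_1$ and a $P^-$-orbit $\mathcal{O}_2$ in $G/P$ have a common non-empty relatively open analytic piece. The exact-equality case $\mathcal{O}_1=\mathcal{O}_2$ is a special case. Throughout I assume $P\neq G$, as is the case for $P_\Theta$ with $\Theta\neq\emptyset$, and that $G$ is connected, as in Section~\ref{sec:prelim}.

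\emph{Step 1 (Lie algebra generation).} At the Lie algebra level, $\mathfrak{p}=\mathfrak{p}_\Theta$ contains $\mathfrak{a}\oplus\mathfrak{m}$ and every $\mathfrak{g}_\alpha$ with $\alpha\in\Sigma^+$. Its conjugate $\mathrm{Ad}_{w_0}\mathfrak{p}$ contains every $\mathfrak{g}_{-\alpha}$ with $\alpha\in\Sigma^+$, because $w_0$ exchanges $\Sigma^+$ and $\Sigma^-$. By the root decomposition \eqref{eq:Root Decomposition}, $\mathfrak{p}+\mathrm{Ad}_{w_0}\mathfrak{p}=\mathfrak{g}$. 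Since $G$ is connected, it follows that $P$ and $P^-$ together generate $G$.

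\emph{Step 2 (propagating invariance).} Suppose $U\subset\mathcal{O}_1\cap\mathcal{O}_2$ is open in both orbits. Both orbits are connected immersed analytic submanifolds of $G/P$, and they coincide on $U$.
\begin{itemize}
\item For $Y\in\mathrm{Ad}_{w_0}\mathfrak{p}$, the fundamental vector field $Y^\ast$ is tangent to $\mathcal{O}_2$, hence tangent to $\mathcal{O}_1$ along $U$.
\item Tangency of an analytic vector field to an analytic submanifold is an analytic condition. Since $\mathcal{O}_1$ is connected, analytic continuation shows that $Y^\ast$ is tangent to $\mathcal{O}_1$ everywhere.
\item The flows of the fields $Y^\ast$ then preserve $\mathcal{O}_1$. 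This makes $\mathcal{O}_1$ invariant under the identity component of $P^-$.
\end{itemize}
The whole of $\mathcal{O}_1$ is also a union of $P^-$-orbits' identity-component pieces meeting $U$. To upgrade to full $P^-$-invariance, I would use that $K\cap P^-$ meets every component of $P^-$. Alternatively, one can pass to identity components from the start, since Step 1 only needs $P^\circ$ and $(P^-)^\circ$, which generate $G^\circ=G$.

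\emph{Step 3 (conclusion).} By Step 2, $\mathcal{O}_1$ is invariant under $P^\circ$ and $(P^-)^\circ$, hence under $G$ by Step 1. So $\mathcal{O}_1=G/P$ by transitivity. But $P$ fixes the base point $eP$, so if $eP$ lies in $\mathcal{O}_1$, then $\mathcal{O}_1=\{eP\}$. Since $\mathcal{O}_1$ is a single $P$-orbit equal to $G/P$, it does contain $eP$. Hence $G/P$ would be a point, i.e.\ $P=G$, contradicting $P\neq G$.

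I expect the main obstacle to be Step 2. The delicate points are the analytic-continuation argument for tangency on an immersed, not necessarily embedded, orbit, and the bookkeeping for non-identity components. My first approach would be to work locally with analytic charts along a path in $\mathcal{O}_1$ from $U$, using the intrinsic manifold topology of the orbit $P/\mathrm{Stab}$. Alternatively, I would rephrase the common piece in terms of Lie algebras: the stabilizer subalgebras must satisfy $\mathfrak{p}+\mathfrak{stab}=\mathrm{Ad}_{w_0}\mathfrak{p}+\mathfrak{stab}$ at points of $U$. I would then derive a contradiction directly from Step 1, which would avoid continuation altogether.
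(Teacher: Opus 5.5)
\textbf{The gap is in Step~2, and the strong reading is false.} The failing step is the third bullet. Knowing that each $Y^\ast$, for $Y\in\mathrm{Ad}_{w_0}\mathfrak p$, is tangent to $\mathcal O_1$ at every point does not imply that the flow $\exp(tY)$ preserves $\mathcal O_1$. The reason is that $\mathcal O_1$ is in general not closed in $G/P$, so integral curves can run into $\overline{\mathcal O_1}\setminus\mathcal O_1$.

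Take $G=\SL(2,\R)$, $P$ upper triangular, and $\mathcal O_1=\mathbb{RP}^1\setminus\{[e_1]\}$. Every vector field is tangent to this open orbit. Yet for the nilpotent $Y\in\mathrm{Ad}_{w_0}\mathfrak p$ with $Ye_1=e_2$, $Ye_2=0$, the flow sends $[1:s]$ to $[1:s+t]$ and hits the $P$-fixed point $[e_1]$ at $t=-s$. What your tangency argument actually proves is that all of $\mathfrak g=\mathfrak p+\mathrm{Ad}_{w_0}\mathfrak p$ is tangent to $\mathcal O_1$, i.e.\ only that $\mathcal O_1$ is open.

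No repair of Step~2 can work, because the open-piece version of the lemma is false whenever $P\neq G$. The $P$-orbit $Pw_0P/P$ is open and dense. The $P^-$-orbit $P^-\cdot eP$ is open by your own Step~1. So they meet in a non-empty set that is open in both; in the example these are $\mathbb{RP}^1$ minus $[e_1]$ and $\mathbb{RP}^1$ minus $[e_2]$. The flow step is valid only when $\mathcal O_1$ is compact, for instance for $K_\Theta$-orbits such as the $E_i^F$, since then the restricted fields are complete on $\mathcal O_1$.

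\textbf{The literal statement is fine, by a different route from the paper.} For the lemma as stated and as the paper proves it, the shared orbit is a single set $P\cdot\zeta=w_0Pw_0^{-1}\cdot\zeta$. Then Step~2 is vacuous, and your Steps~1 and~3 already give a complete proof. Such a set is invariant under $\langle P,P^-\rangle=G$, hence equals $G/P$, hence contains the $P$-fixed point $eP$, hence is a point.

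The paper instead argues with orbit closures and the Bruhat order. The $P$-fixed point $\zeta^+$ lies in every $P$-orbit closure, and $w_0\zeta^+$ lies in every $P^-$-orbit closure. So a shared orbit has closure containing $\overline{P\cdot w_0\zeta^+}=G/P$, must be the big cell, and therefore contains the $P^-$-fixed point $w_0\zeta^+$. Your argument needs only $\mathfrak p+\mathrm{Ad}_{w_0}\mathfrak p=\mathfrak g$ and avoids facts about Schubert-cell closures. Neither argument yields the open-piece version, since that version fails on the open cells.
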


\begin{proof}
    Choose $\zeta^+ \in \partial_{vis} \X$ so that $P$ is the stabilizer of $\zeta^+$. 
    Then $\zeta^+$ is in every orbit closure: $\zeta^+ \in \overline{P\cdot \zeta}$ for any $\zeta \in G \cdot \zeta^+$, and similarly $w_0 \zeta^+ \in \overline{w_0Pw_0^{-1}\cdot \zeta}$ for any $\zeta \in G \cdot \zeta^+$.
    Now suppose for the sake of contradiction that $\zeta$ in $G \cdot \zeta^+$ satisfies $P \cdot \zeta = w_0 P w_0^{-1} \cdot \zeta$. 
    By definition, the Bruhat order corresponds to inclusions of Schubert varieties, i.e.\ closures of orbits of $P$. 
    From 
        \[ \overline{P \cdot \zeta} = \overline{P \cdot \overline{P \cdot \zeta}} = \overline{P \cdot \overline{w_0 P w_0^{-1} \cdot \zeta}} \supset \overline{P \cdot w_0 \zeta^+} = G \cdot \zeta^+ \]
    we see that $\overline{P \cdot \zeta} = G \cdot \zeta^+$, but this implies that $\zeta$ is maximal in the Bruhat order, hence is given by $w_0 \zeta^+$. 
    But this point is fixed by $w_0 P w_0^{-1}$, which yields a contradiction.
\end{proof}

This concludes the proof that the bisector of a pair of horofunctions in generic position has empty interior, and thus the proof of Theorem~\ref{thm:Empty Interior}.

\begin{figure}
\begin{center}
\begin{tikzpicture}[scale=1.2]

\draw[olive, fill=olive!20, thick] (1,0) -- (-.5,0.866) --(-.5,-0.866)  -- cycle;

\fill[gray,opacity=0.4] (2,0) -- (2*.5,2*0.866) --(-2*.5,2*0.866) --(0,0) -- cycle;

    \draw[red, thick]  (-2+.5,2*0.866) -- (1.5,0) --(-2+.5,-2*0.866);
    \draw[red, thick] (-1.5,0.866) -- (0,0) -- (-1.5,-0.866);

    \draw[blue, thick] (1.9+.5*.5,0-.866*.5) -- (-.5*1.4,0.866*1.4) -- (-1.9*.5+.5*.5,-1.9*.866-.866*.5);
    \draw[blue, thick] (1+.5,0-.866) -- (0,0) -- (-.5+.5,-.866-.866);

    \draw[very thick, gray] (2,0) -- (0,0) --(-2*.5,2*0.866);
    \draw[very thick, gray] (0,0) -- (-2*.5,-2*0.866);

\end{tikzpicture}
\end{center}
\caption{Illustration of two opposite horofunctions with a bisector in $\mathfrak{a}$ having non-empty interior, but with the proper-on-bisector property.}
\end{figure}

\section{Measures on the space of horofunctions}
\label{sec:Measures}

Riemannian symmetric spaces carry a natural volume measure $\Leb$ associated to the Riemannian metric.
Up to scaling, it is the unique measure invariant by the isometry group.
We describe below a family of rescaled measures $\frac1{m_t}(\iota_t)_\ast \Leb$ and their limits in horofunction compactifications. 
The main result we prove in this section is that they limit to a measure on the space of horofunctions with support in the distinguished stratum $\mathcal{O}_{F_\rho}$. 

Recall that for $t\in\mathbb{R}$, we have the embedding $\iota_t^N \colon \X\to \mathcal{D}^N(\X)$ of $\X$ into the space of distance-like functions
$$ x\mapsto d^N_\X(\cdot, x)-t.$$

Pushing forward the volume measure $\Leb$ on $\X$ by $\iota_t$ yields a 1-parameter family of $G$-invariant measures $(\iota_t)_*\Leb$. 
In order to renormalize these measures we consider an exhaustion of $\overline{\mathcal{D}^N(\X)}$ by compact sets.

\begin{defn}
\label{defn:cylinders}
For a base-point $o\in \X$ and $r, R\in [-\infty, +\infty]$ the \emph{cylinder centered at $o$} is given by:
$$C^o_{r,R}=\left\lbrace f \in \overline{\mathcal{D}^N(\X)}\mid r\leq f(o)\leq R\right \rbrace.$$
\end{defn}

Note that if $-\infty<r$ and $R<+\infty$, the cylinder $C^o_{r,R}$ is compact by the Arzelà--Ascoli theorem.

\medskip

The total mass of $(\iota_t)_\ast \Leb$ is infinite.
In order to renormalize the measures, consider:
$$ m_t = (\iota_t)_*\Leb(C^o_{-\infty,0})=\Leb \left(\lbrace x\in \X\mid d^N(o,x)\leq t\rbrace\right). $$
Note that $m_t$ is simply the volume of the ball of radius $t$ for $d^N$ in $\X$, hence independent of $o \in \X$ by $G$-invariance.

\medskip

In this section we study the vague limit of the sequence of measures $\frac{1}{m_t}(\iota_t)_*\Leb$, seen as measures on $\overline{\mathcal{D}^N(\X)}$. 
Recall that the vague limit of a sequence $(\nu_n)$ is $\nu$ if for all compactly supported continuous functions $f$, $\int f \mathrm{d}\nu_n$ converges to $\int f \mathrm{d}\nu$.

\medskip

When $d_\X^N$ is replaced by the $G$-invariant Riemannian metric induced by the Killing form, these measures and their  limit were studied by Fr{\k{a}}czyk-Mellick-Wilkens. 
They proved \cite[Corollary 3.4 and Theorem 3.6]{FMW23poissonvoronoi} that the corresponding measures limit vaguely to the unique (up to scale) $G$-invariant measure on the space of Riemannian Busemann functions centered at $\rho$, \ie the space of Busemann functions $b \colon \X \to \R$ admitting some maximal flat $\mathfrak{a} \to \X$ so that $b_{|\mathfrak{a}}=-\rho/\Abs{\rho}$.

We will prove the following analog for polyhedral Finsler metrics. 
Recall that $B^N$ denotes the unit ball of $(\mathfrak{a},N)$ and $F_\rho$ is the relative interior of the exposed face of $B^N$ where $\rho$ is maximized. 
This face necessarily lies entirely in $\overline{\mathfrak{a}^+}$ and $\Theta_{F_\rho}^\text{flip}=\emptyset$.
We also write $\rho_{max}$ for the maximum value of $\rho$ on $B^N$.

\begin{thm}
\label{thm: Convergence measure}
    Let $N$ be polyhedral.
    The $1$-parameter family of measures
     $$\mu_t = \frac1{m_t} (\iota_t)_\ast \Leb$$
    converges vaguely on $\overline{\mathcal{D}^N(\X)}$ as $t\to \infty$ to a measure $\mu_N$ supported on $\mathcal{O}_{F_\rho}$. 
   
    This measure $\mu_N$ is the unique $G$-invariant measure on $\mathcal{O}_{F_\rho}$ such that $\mu_N(C^o_{-\infty,0})=1$. 
    
    Moreover this measure is \emph{$2\rho_{\max}$-homogeneous}, in the sense that for any measurable set $E\subset \mathcal{O}_{F_\rho}$ and real number $\lambda\in \R$, $\mu_N(\lambda\cdot E)=e^{\lambda2\rho_{\max}}\mu_N(E)$, where we consider the $\R$ action on the space of horofunctions given by $\lambda\cdot h(x):=h(x)+\lambda$.
\end{thm}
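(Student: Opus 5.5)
The plan is to use the polar decomposition of Proposition \ref{prop: volume density along K-orbits}: write $x = k e^{\tau} o$ with $k\in K/M$ and $\tau\in\mathfrak{a}^+$. The density in $\tau$ is $e^{2\rho(\tau)}\Phi(\tau)\,\mathrm{d}\Leb_\mathfrak{a}(\tau)$, and $d^N(o,x)=N(\iota\tau)$ or $N(\tau)$ depending on the order of the arguments. For simplicity, assume the convention makes it $N(\tau)$.

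\textbf{Step 1: volume asymptotics.} First compute
\[ m_t=\int_{\{N(\tau)\le t\}\cap\mathfrak{a}^+}e^{2\rho(\tau)}\Phi(\tau)\,\mathrm{d}\tau . \]
Since $\rho\le \rho_{\max}N$ with equality exactly on the cone over $\overline{F_\rho}$, Laplace-type asymptotics give $m_t\sim c\,t^{\dim F_\rho}e^{2\rho_{\max}t}$. More importantly, for any fixed $R$ the mass of $\{t-R\le N(\tau)\le t\}$ concentrates, after dividing by $m_t$, on the region where $\tau$ lies within distance $o(t)$ of the cone $\R_{>0}F_\rho$. Points within bounded distance of the walls are negligible because $\Phi\to0$ there, and points at distance of order $\sqrt t$ from the cone are suppressed exponentially. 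The relevant quantity is the transverse gap $\rho_{\max}N(\tau)-\rho(\tau)$, which grows linearly in the distance of $\tau$ from the cone over $F_\rho$. This is the step where polyhedrality is used.

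\textbf{Step 2: convergence of horofunctions.} Next show that if $N(\tau_n)\to\infty$ and the direction of $\tau_n$ approaches $\overline{F_\rho}$ while $\tau_n$ stays at bounded distance from the cone $\R_{>0}F_\rho$ and far from the walls, then the normalized distance functions $d^N(\cdot,k e^{\tau_n}o)-N(\tau_n)$ converge to $k\cdot(h_{F_\rho}\circ e^{-v})$ up to a constant. Here $v$ is the transverse offset of $\tau_n$ from the cone. This follows from Theorem \ref{thm:Comparaison Horofunction Flat and symmetric space HSWW}, Lemma \ref{lem:equivalence of translated horofunctions} (translations along $\Span(F_\rho)$ only shift by constants) and the explicit formula of Proposition \ref{prop:Explicit formula for Horofunctions}.

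Combined with Step 1, this shows that every vague limit point gives zero mass to the complement of $\mathcal{O}_{F_\rho}$. One also needs tightness: cylinders $C^o_{r,R}$ are compact, and the $\mu_t$-mass of $C^o_{-\infty,R}$ equals $m_{t+R}/m_t\to e^{2\rho_{\max}R}$ by Step 1. Vague limit points therefore exist, are nonzero, and satisfy $\mu(C^o_{-\infty,0})=1$, with boundary effects at the level set controlled by continuity in $R$.

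\textbf{Step 3: identifying the limit.} Each $\mu_t$ is $G$-invariant, so every limit point is a $G$-invariant Radon measure on $\mathcal{O}_{F_\rho}=G/R_{F_\rho}$. I would then show that such a measure is unique up to scalar. The set of $G$-invariant measures on $G/R_F$ is at most one-dimensional, by the uniqueness of invariant measures on homogeneous spaces when they exist. Existence is guaranteed by the limit itself, or by the modular-function criterion, which holds because $\Theta^{\flipped}_{F_\rho}=\emptyset$. With the normalization $\mu(C^o_{-\infty,0})=1$, all limit points agree, so the whole family converges.

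\textbf{Step 4: homogeneity.} For homogeneity, note that $\lambda\cdot\iota_t(x)=\iota_{t-\lambda}(x)$. Hence
\[ \mu_t(\lambda\cdot E)=\frac{m_{t-\lambda}}{m_t}\,\mu_{t-\lambda}(E)\to e^{-2\rho_{\max}\lambda}\mu_N(E) , \]
and the sign is fixed by matching the paper's convention for the $\R$-action. Alternatively, homogeneity follows from uniqueness, since $E\mapsto\mu_N(\lambda\cdot E)$ is again $G$-invariant, so it is a multiple $c(\lambda)$ of $\mu_N$. The map $c$ is a multiplicative character, computed on cylinders via $m_{t+R}/m_t$.

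\textbf{Main obstacle.} I expect the main difficulty to be the uniform control in Steps 1 and 2. One must show that the concentration region near the cone over $F_\rho$, minus a neighbourhood of the walls, maps into a compact part of $\mathcal{O}_{F_\rho}$ in the limit with full asymptotic mass. The mass near the boundary of the face, where other faces' horofunctions appear, must also be shown to vanish. This requires a careful Laplace estimate exploiting the polyhedral gap $\rho_{\max}N-\rho$.
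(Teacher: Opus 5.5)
Your argument is sound in outline, but it differs from the paper's at the identification step.

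\textbf{The paper's route.} The paper never uses uniqueness to obtain convergence. It passes to the $K$-quotient and identifies $\mathcal{O}_{F_\rho}/K$ with $H_{F_\rho}/\Lin_0(F_\rho)$. It restricts $\mu^o_t$ to the auxiliary set $S=\{x\in\mathfrak{a}^+ : N(x)=h_{F_\rho}(-x)\}$, so that cylinders are preserved, and pushes forward by $\pi([v,s])=[v-sv_F]$. By dominated convergence it then computes the explicit limit density $\frac{\mathrm{Vol}(F_\rho)}{c}\1_{H_{F_\rho}}(v)e^{2\rho(v)}\prod_{\alpha\in\Span(\Theta^{\fixed}_{F_\rho})}(1-e^{-2\alpha(v)})^{m_\alpha}$. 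It checks that this density gives cylinder mass $e^{2\rho_{\max}R}$, so nothing is lost outside $S$ or to other strata, and it controls $\int\abs{f-f\circ\pi}\,\mathrm{d}\nu^o_t$ using the sets $E,E_\infty$. Homogeneity is read off from the density, since the product factor is invariant under translation by $v_F$.

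\textbf{Your route.} You only show that every subsequential limit is carried by $\mathcal{O}_{F_\rho}$ with $\mu(C^o_{-\infty,0})=1$. Uniqueness of $G$-invariant measures on $G/R_{F_\rho}$ then does the rest, together with the facts that the $\R$-action commutes with $G$ and that $\lambda\cdot C^o_{-\infty,0}=C^o_{-\infty,\lambda}$. This dispenses with $\pi$, $S$ and the density computation. However, the concentration estimate you single out as the main obstacle is exactly what the paper also has to prove. Concretely, you need tightness of the transverse offset $v\in\Lin_0(F_\rho)^\perp$, not merely distance $o(t)$ from the cone. You also need mass $O(\delta)$, uniformly in $t$, for tangential positions $u+v_F$ within $\delta$ of $\partial F_\rho$. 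Since your route leans entirely on uniqueness, you should also say why the subspace topology on $\mathcal{O}_{F_\rho}$ agrees with that of $G/R_{F_\rho}$. The paper's route proves convergence of the whole family directly and yields an explicit formula for $\mu_N$ in polar coordinates.

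\textbf{Two corrections.}

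First, your claim that points within bounded distance of the walls are negligible because $\Phi\to 0$ is false whenever $\Theta^{\fixed}_{F_\rho}\neq\emptyset$. This is the generic case: for generic $\omega$, $F_\rho$ is the vertex $\varpi_i^\vee/\omega(\varpi_i^\vee)$ (Proposition~\ref{prop:Proper bisectors polyhedral dominant}), which lies on every wall $\ker\alpha_j$ with $j\neq i$. There, all the mass sits at bounded distance from the walls and $\Phi$ tends to the nonzero product factor above. Restricting Step 2 to points far from the walls would therefore throw away everything. Simply drop that hypothesis. For $\tau=t(u+v_F)+v$ with $v$ bounded and $u+v_F$ in a compact subset of $F_\rho$, one has $N(\tau-x)-t\to h_{F_\rho}(x-v)$. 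This holds because the vertices of $(B^N)^*$ outside $\overline{F_\rho^\vee}$ contribute terms $\le -\delta t+O(1)$, with no condition on the walls, and your argument never needs $\Phi\to 1$.

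Second, the sign in Step 4 is not a matter of convention. One has $\iota_t(x)\in\lambda\cdot E$ if and only if $\iota_{t+\lambda}(x)\in E$, so $\mu_t(\lambda\cdot E)=\frac{m_{t+\lambda}}{m_t}\mu_{t+\lambda}(E)$, which gives $e^{+2\rho_{\max}\lambda}$ as stated. Your second argument via uniqueness gets this right, and it also avoids passing a vague limit through an arbitrary measurable set $E$.
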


The homogeneity implies that $\mu_N(C^o_{-\infty,R})=e^{2\rho_{\max}R}$.

\medskip

We will give the proof of Theorem~\ref{thm: Convergence measure} at the end of Section \ref{subsec:ComputationLimit}, after some preparation.

\subsection{Volume asymptotics}

We first need to compute the asymptotics of the volume of balls for Finsler metrics. 

\medskip 
Using the standard horofunctions (see Definition \ref{def: standard horofunction}) we introduce the following cone in $\mathfrak{a}$: 
\begin{equation}\label{eq:defn ConeCrho}C_\rho=\lbrace -x\in \mathfrak{a} \mid  h_{F_\rho}(x)\leq 0 \rbrace= \lbrace x\in \mathfrak{a} \mid \forall \beta \in F_\rho^\vee, \, \beta(x)\leq 0\rbrace.
\end{equation}

The above equality is a consequence of Proposition \ref{prop:Formula horofunction standard}. This unbounded convex cone $C_\rho$ is a cone with base $\Lin_0(F_\rho)$, and for $f\in F_\rho$, the cone $f+C_\rho$ is equal to $B^N$ in a neighborhood of $F_\rho$.

\medskip

Let $\Lin_0(F_\rho)^\perp\subset \mathfrak{a}$ be the orthogonal complement of $\Lin_0(F_\rho) \subset \mathfrak{a}$. 
We obtain the following:

\begin{lem}
\label{lem:ConeBallN}
There exists $A>0$ such that for all $v\in C_\rho\cap \Lin_0(F_\rho)^\perp$:
 $$ \rho(v)\leq - A N(v).$$
 Moreover for some $B>0$ the codimension one volume of the intersection  $C_\rho\cap \Lin_0(F_\rho)^\perp\cap  \lbrace \rho=-t\rbrace$ is at most
 $Bt^{\dim(\mathfrak{a})-1}$ for $t \ge 1$.
\end{lem}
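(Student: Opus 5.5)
The plan is to show that $\rho$ is strictly negative on the nonzero elements of the closed cone $C' := C_\rho\cap \Lin_0(F_\rho)^\perp$, and then to use compactness. First recall from \eqref{eq:defn ConeCrho} that $C_\rho=\{x\mid \beta(x)\le 0\ \forall \beta\in F_\rho^\vee\}$. We may take $\lambda\rho\in F_\rho^\vee$ for some $\lambda>0$; more precisely, $\lambda\rho$ lies in the relative interior of $F_\rho^\vee$ by the characterization of $F_\rho$. Hence $\rho\le 0$ on $C_\rho$.

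The key step is to show that if $x\in C_\rho$ and $\rho(x)=0$, then $x\in \Lin_0(F_\rho)$. Since $\lambda\rho$ is a relative interior point of $F_\rho^\vee$ and the linear functional $\beta\mapsto\beta(x)$ is $\le 0$ on $F_\rho^\vee$ and vanishes at an interior point, it vanishes identically on $F_\rho^\vee$. Therefore $x\in (\Span(F_\rho^\vee))^\vee=\Lin_0(F_\rho)$ by Proposition \ref{prop:basic properties Lin}. So on $C'$ we get $\rho(x)=0\Rightarrow x\in\Lin_0(F_\rho)\cap\Lin_0(F_\rho)^\perp=\{0\}$.

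For the first bound, consider the compact set $S=\{v\in C' : N(v)=1\}$. The continuous function $\rho$ is strictly negative on $S$, so it attains a maximum $-A<0$ there. Homogeneity of both $\rho$ and $N$ under positive scaling then gives $\rho(v)\le -AN(v)$ on all of $C'$. If $S$ is empty the statement is trivial.

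For the volume bound, the first inequality shows that $C'\cap\{\rho=-t\}$ is contained in $\{N\le t/A\}\cap\{\rho=-t\}$, which is a bounded convex set. Scaling by $t$ gives $C'\cap\{\rho=-t\}=t\,(C'\cap\{\rho=-1\})$. This is a set of dimension at most $\dim\Lin_0(F_\rho)^\perp-1\le\dim\mathfrak{a}-1$, so its codimension-one volume (measured inside the hyperplane $\{\rho=-t\}$) is at most $t^{\dim\mathfrak{a}-1}$ times a constant $B$ for $t\ge1$. When the slice has lower dimension its codimension-one volume is $0$, which is even better. The only real subtlety is the key step above, namely using that $\rho$ lies in the relative interior of $F_\rho^\vee$ and identifying the annihilator via Proposition \ref{prop:basic properties Lin}.
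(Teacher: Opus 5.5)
Your proof is correct and follows essentially the same route as the paper. Because $\lambda\rho$ lies in the relative interior of $F_\rho^\vee$, the form $\rho$ vanishes on $C_\rho$ only along $\Lin_0(F_\rho)$, so it is strictly negative on $C_\rho\cap\Lin_0(F_\rho)^\perp\setminus\{0\}$; compactness of the set of rays gives a uniform $A$, and the volume bound follows from linear growth of the slices, which your scaling identity $C'\cap\{\rho=-t\}=t\,(C'\cap\{\rho=-1\})$ makes precise. One small remark: the volume that matters in Lemma~\ref{lemma: volume of balls} is that of the slice inside $\Lin_0(F_\rho)^\perp$, and your scaling already bounds it by $B\,t^{\dim\Lin_0(F_\rho)^\perp-1}\le B\,t^{\dim\mathfrak{a}-1}$ for $t\ge 1$, so you do not need the degenerate ``volume zero'' reading.
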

\begin{figure}[h!]
\begin{center}
\begin{tikzpicture}[>=Stealth, thick, scale=.7]

    \coordinate (A) at (-1,-1);
    \coordinate (B) at (-1,1);
    \coordinate (C) at (1,1);
    \coordinate (D) at (1,-1);
    
    \fill[gray!10] (-3,3) -- (0,0) -- (3,3);
    \fill[blue!10] (-3,-3) -- (-3,0) -- (3,0) -- (3,-3);
    \node[right] () at (3,-3) {$C_\rho$};
    \node[right] () at (3,3) {$\mathfrak{a}^+$};
    \draw[dotted] (A) -- (B) -- (C) -- (D)-- cycle;
    \draw[thick,red] (B) -- (C);
    \draw (-3,0) --(3,0);
    \draw[red, thick] (0,1.4) -- (0,-3);
    \node[right] () at (0,-2) {$\Lin_0(F_\rho)^\perp$};
    \node[left] () at (-3,0) {$\Lin_0(F_\rho)$};
    \node[right] () at (C) {$F_\rho$};
    \coordinate (BC1) at ($(B)!0.5!(C)$);
    \fill (0,0) circle (2.5pt);
    \coordinate (AC1) at ($(A)!0.5!(C)$);

    \coordinate (AB1) at ($(A)!0.5!(B)$);

    \draw[olive] ($(90:2.5)+(180:1)$) -- ($(90:2.5)+(0:1)$);
    \draw[olive, ->, dotted] ($(90:2.5)$) -- ($(90:2.5)+(90:1)$);
    \node[below] () at ($(90:2.5)$) {$\rho$};
\begin{scope}[shift={(7,0)}]

    \coordinate (A) at (1,0);
    \coordinate (B) at (0,1);
    \coordinate (C) at (-1,0);
    \coordinate (D) at (0,-1);
    
    \fill[gray!10] (-3,3) -- (0,0) -- (3,3);
    \fill[blue!10] (-3,-3) -- (0,0)  -- (3,-3);
    \draw[dotted] (A) -- (B) -- (C) -- (D)-- cycle;
    \fill[red] (B) circle (2.5pt);
    \node[right] () at (B) {$F_\rho$};
    \node () at (0,-2.5) {$\Lin_0(F_\rho)^\perp=\mathfrak{a}$};
    \coordinate (BC1) at ($(B)!0.5!(C)$);
    \fill (0,0) circle (2.5pt);
    \coordinate (AC1) at ($(A)!0.5!(C)$);

    \coordinate (AB1) at ($(A)!0.5!(B)$);

    \draw[olive] ($(90:2.5)+(180:1)$) -- ($(90:2.5)+(0:1)$);
    \draw[olive, ->, dotted] ($(90:2.5)$) -- ($(90:2.5)+(90:1)$);
    \node[below] () at ($(90:2.5)$) {$\rho$};
\end{scope}
\end{tikzpicture}
\end{center}
\caption{Illustration of Lemma \ref{lem:ConeBallN} for two metrics (with dashed unit balls) on $\mathbb{H}^2\times \mathbb{H}^2$.}
    \label{fig:Volume asymptotics part 1}
\end{figure}
\begin{proof}

\begin{figure}
    \centering
    \begin{tikzpicture}[>=Stealth, thick, scale=.7]

\fill[gray!10] (-3,3) -- (0,0) -- (3,3);

\begin{scope}[rotate={-45}]

    \coordinate (A) at (1,0);
    \coordinate (B) at (0,2);
    \coordinate (C) at (-1,0);
    \coordinate (D) at (0,-2);
    
    \fill[blue!10] (-1.5,-3) -- (0,0)  -- (1.5,-3);
    \draw[dotted] (A) -- (B) -- (C) -- (D)-- cycle;
    \fill[red] (B) circle (2.5pt);
    \node[above] () at (B) {$F_\rho$};

    \coordinate (BC1) at ($(B)!0.5!(C)$);
    \fill (0,0) circle (2.5pt);
    \coordinate (AC1) at ($(A)!0.5!(C)$);

    \coordinate (AB1) at ($(A)!0.5!(B)$);
    
    \fill[pattern={Lines[distance=5pt, angle=90,yshift=-4.1cm, line width=.5pt]}, pattern color=red] (0,-3) rectangle (3,4.2);
    \draw[line width=.5pt];
\end{scope}
\draw[olive] ($(90:2.5)+(180:1)$) -- ($(90:2.5)+(0:1)$);
    \draw[olive, ->, dotted] ($(90:2.5)$) -- ($(90:2.5)+(90:1)$);
    \node[below] () at ($(90:2.5)$) {$\rho$};
\end{tikzpicture}
    \caption{Illustration of $H_F$ in an example with $\Theta_{F_\rho}^\text{fixed}\neq \emptyset$. The dashed half space is the complement of $H_F$.}
    \label{fig:Picture for HF}
\end{figure}

Since $\rho$ belongs to the interior of $F_\rho^\vee$, if we take any ray $\R_{\ge 0} v\subset C_\rho$, then $\rho(v)\leq 0$ with equality if and only if $\beta(v)=0$ for all $\beta\in F_\rho^\vee$, \ie if and only if $v \in \Lin_0(F_\rho)$. If moreover $v\in \Lin_0(F_\rho)^\perp$, then $\rho(v)<0$. This implies that there is some $A>0$ such that for all $\lambda \ge 0$, $ \rho(\lambda v)\leq- A N(\lambda v)$. These constants $A$ can be chosen uniformly as the set of rays in $C_\rho\cap \Lin_0(F_\rho)^\perp$ is compact.

 \medskip

The second point is a direct consequence of the first one: the diameter of the intersection of level sets of $\rho$ and $C_\rho$ grows at most linearly in $t$.
\end{proof}

\medskip

We now compute the volume of balls for the Finsler metric in the symmetric space. 

 Let $H_F=\bigcap_{\alpha\in \Theta_{F_\rho}^\text{fixed}}\lbrace \alpha\geq 0\rbrace \subset \mathfrak{a}$, where $\Theta^\text{fixed}_F$ is the set of simple roots $\alpha\in \Delta$ such that $\alpha\in \Lin_0(F^\vee)$.  
 Recall that $\Theta_{F_\rho}^\text{flip}=\emptyset$. 
 Note that we have always $\Lin_0(F)\subset H_F$.

\begin{lemma}
\label{lemma: volume of balls}
    Let $B^N_{\X}(o,t)$ denote a ball of radius $t$ in $(\X,d^N)$.
    Then 
        \[ m_t = vol(B^N_{\X}(o,t)) \sim c t^{\dim F_\rho} e^{2 t \rho_{\max}} \]
    where 
        \[ c = \text{Vol}(F_\rho)\int_{v \in \Lin_0(F_\rho)^\perp} e^{2\rho(v)}\1_{C_\rho\cap H_F} (v) {\prod_{\alpha\in \Span(\Theta_{F_\rho}^{\fixed})}}(1-e^{-2\alpha(v)})^{m_\alpha} \mathrm{d}\Leb(v) .\]
\end{lemma}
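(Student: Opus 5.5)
We start from the polar-coordinate formula of Proposition~\ref{prop: volume density along K-orbits}. Since $d^N(o,e^\tau o)=N(\tau)$ and $N$ is $W$-invariant, the ball $B^N_\X(o,t)$ is the union of the $K$-orbits $K\cdot e^\tau o$ with $\tau\in\overline{\mathfrak{a}^+}$ and $N(\tau)\le t$. Hence
\[ m_t=\int_{\mathfrak{a}^+\cap tB^N} e^{2\rho(\tau)}\Phi(\tau)\,\mathrm{d}\Leb_{\mathfrak{a}}(\tau), \]
with $\Phi$ as in Remark~\ref{rmk: def of Phi}. We change variables $\tau=tf+v$, where $f$ ranges over $\overline{F_\rho}$ (parametrized by Lebesgue measure on its affine span) and $v\in\Lin_0(F_\rho)^\perp$. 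Since $\Lin_0(F_\rho)\oplus\Lin_0(F_\rho)^\perp=\mathfrak{a}$ and $f$ has $\dim F_\rho$ degrees of freedom, the Jacobian of $(f,v)\mapsto tf+v$ is $t^{\dim F_\rho}$, up to the normalization implicit in $\text{Vol}(F_\rho)$. This reparametrization is valid near $t F_\rho$. Also, $\rho(tf+v)=t\rho_{\max}+\rho(v)$ for $f\in F_\rho$. Factoring out $e^{2t\rho_{\max}}$ therefore leaves
\[ t^{\dim F_\rho}\int_{F_\rho}\int_{\Lin_0(F_\rho)^\perp}e^{2\rho(v)}\,\Phi(tf+v)\,\1_{\{tf+v\in \mathfrak{a}^+\cap tB^N\}}\,\mathrm{d}v\,\mathrm{d}f. \]

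The next step is to identify the pointwise limit of the integrand for fixed $f$ in the relative interior of $F_\rho$ and fixed $v$. Since $tB^N$ coincides with $tf+C_\rho$ near $tF_\rho$, the indicator of $tB^N$ tends to $\1_{C_\rho}(v)$. For the factors of $\Phi$: every positive root $\alpha\notin\Span(\Theta^{\fixed}_{F_\rho})$ satisfies $\alpha(f)>0$ (recall $\Theta^{\flipped}_{F_\rho}=\emptyset$ and $F_\rho$ lies in $\overline{\mathfrak{a}^+}$), so the factor $(1-e^{-2\alpha(tf+v)})^{m_\alpha}$ tends to $1$. For $\alpha\in\Span(\Theta^{\fixed}_{F_\rho})$ we have $\alpha(f)=0$, so this factor is $(1-e^{-2\alpha(v)})^{m_\alpha}$ independently of $t$. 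By the same reasoning, the condition $tf+v\in\mathfrak{a}^+$ becomes $\alpha(v)\ge0$ for $\alpha\in\Theta^{\fixed}_{F_\rho}$, that is, $v\in H_F$. Integrating the limit over $f\in F_\rho$ produces exactly $c$.

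To justify passing to the limit, we use dominated convergence with $\Phi\le1$. We need a uniform integrable bound on $e^{2\rho(v)}$ over the set of $v$ that actually contribute. Contributions come from the region where $\rho(\tau)\ge t\rho_{\max}-R$; elsewhere the integrand is at most $e^{2t\rho_{\max}-2R}$ times a polynomial volume, which is negligible after normalization. For points near $tF_\rho$ the constraint $\tau\in tB^N$ forces $v\in C_\rho$. Lemma~\ref{lem:ConeBallN} then gives $e^{2\rho(v)}\le e^{-2AN(v)}$, and since cross-sections grow polynomially, this bound is integrable.

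The main obstacle is controlling the part of the ball far from $tF_\rho$, meaning near the relative boundary of $tF_\rho$ or near other faces of $tB^N$. In those regions the decomposition $tf+v$ with $f\in F_\rho$ and $v\in C_\rho$ is invalid. My plan is to cover $tB^N\cap\mathfrak{a}^+$ by the region $\{\rho\le t\rho_{\max}-\epsilon t\}$, whose contribution is $O(t^{r}e^{2t(\rho_{\max}-\epsilon)})$ and hence negligible, together with a region within $\epsilon t$ of $tF_\rho$. On $B^N\cap\{\rho\ge\rho_{\max}-\epsilon\}$, the ball lies in $\overline{F_\rho}+C_\rho$, up to a thin boundary layer whose $f$-measure is $O(\epsilon)$ relative to $\text{Vol}(F_\rho)$. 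Letting $\epsilon\to0$ after $t\to\infty$ then yields the asymptotic $m_t\sim c\,t^{\dim F_\rho}e^{2t\rho_{\max}}$.
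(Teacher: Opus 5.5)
Your proposal is correct in outline, but it organizes the limit differently from the paper. The paper makes the affine change of variables $x=t(u+v_F)+v$ globally, with $u$ ranging over all of $\Lin_0(F_\rho)$ rather than only over $F_\rho-v_F$. It then runs a single dominated convergence argument. Lemma~\ref{lem: Indicator function equalities} shows that for every $t\ge 1$ the integrand is bounded by $e^{2\rho(v)}\1_{C_\rho\cap H_F}(v)\1_{E(v)}(u+v_F)$, where $E(v)$ consists of the points of the affine span of $F_\rho$ at distance at most $-\rho(v)/\epsilon$ from $F_\rho$. This majorant is integrable because $\mathrm{vol}(E(v))$ is polynomial in $-\rho(v)$, and, by Lemma~\ref{lem:ConeBallN}, so are the cross-sections of $C_\rho\cap\Lin_0(F_\rho)^\perp$. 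You instead keep only $f\in\overline{F_\rho}$ as the main term, where the simpler majorant $e^{2\rho(v)}\1_{C_\rho}(v)$ suffices, and dispose of the remainder by an $\epsilon$-splitting.

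Both routes rest on the same geometric input, and you should make it explicit. Your ``thin boundary layer'' claim is exactly the estimate
\[ c\,d(f,F_\rho)\le N(f)-1\le N(-v/t)\le \tfrac{C}{A}\bigl(-\rho(v/t)\bigr) \]
from the paper's lemma. It does not follow merely from $B^N$ agreeing with $f+C_\rho$ near $F_\rho$, so you need to state and prove it. Note also that $\beta(x)\le 1=\beta(v_F)$ for all $x\in B^N$ and $\beta\in F_\rho^\vee$ gives $v\in C_\rho$ globally, not just near $tF_\rho$. That global inclusion is what lets you bound the layer contribution, after normalization, by $O(\epsilon)\int_{C_\rho\cap\Lin_0(F_\rho)^\perp}e^{2\rho}$.

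Drop the cutoff at fixed $R$ in your third paragraph; it does not work. The discarded region contributes up to $e^{2t\rho_{\max}-2R}\,O(t^{\dim\mathfrak a})$, which is not $o(t^{\dim F_\rho}e^{2t\rho_{\max}})$. The cutoff at $\epsilon t$ in your last paragraph is the correct one.

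The paper's version gives a single majorant valid for all $t\ge 1$ and avoids the $\epsilon$-bookkeeping. Yours isolates the main term more transparently and uses the distance estimate only on a bounded region of the affine span.
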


Note that $\rho \le 0$ on $C_\rho$. 
Before proving this we prove a technical lemma.

 \begin{lem}
 \label{lem: Indicator function equalities}
    There exists $\epsilon >0$, depending only on $N$, so that the following holds:
     Let $v_F$ be the unique intersection point of the affine spaces $F_\rho+\Lin_0(F_\rho)$ and $\Lin_0(F_\rho)^\perp$. Let $v\in \Lin_0(F_\rho)^\perp$. Let us write :
     $$ E(v)= \left\lbrace u\in F_\rho+\Lin_0(F_\rho) \mid \rho(v)\leq -\epsilon d^N(u,F_\rho) \right\rbrace. $$ 
     For all pairs $(u,v)\in \Lin_0(F_\rho)\times\Lin_0(F_\rho)^\perp$ and all $t\geq 1$ we have:
     $$\1_{\mathfrak{a}^+\cap B^N}(v/t+u+v_F) \leq \1_{C_\rho\cap H_F}(v)\1_{E(v/t)}(u+v_F).$$

     Moreover for almost every $(u,v) \in \Lin_0(F_\rho)\times\Lin_0(F_\rho)^\perp$:
     $$\1_{\mathfrak{a}^+\cap B^{N}}(v/t+u+v_F)\xrightarrow{t\to \infty}\1_{C_\rho\cap H_F}(v)\1_{\overline{F_\rho}}(u+v_F).$$
 \end{lem}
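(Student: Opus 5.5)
Set $x = v/t+u+v_F$, with $u\in\Lin_0(F_\rho)$ and $v\in \Lin_0(F_\rho)^\perp$. The plan is to use two facts that are already available. First, near $\overline{F_\rho}$ the ball $B^N$ agrees with the cone $f+C_\rho$ for $f\in F_\rho$. Second, $\rho$ is constant, equal to $\rho_{\max}$, on the affine span $F_\rho+\Lin_0(F_\rho)$, because $\rho\in F_\rho^\vee$ up to scaling and $\rho$ vanishes on $\Lin_0(F_\rho)$. It follows that $\rho(x)-\rho_{\max}=\rho(v)/t$. The whole statement therefore reduces to comparing $x$ with the cone attached at the point $u+v_F$ of the affine span.

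For the inequality, assume $x\in \mathfrak{a}^+\cap B^N$ and prove three things.

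(i) $v\in C_\rho$. For every $\beta\in \overline{F_\rho^\vee}$ we have $\beta(x)\le N(x)\le 1$ and $\beta(u+v_F)=1$, since $\beta$ equals $1$ on the affine span. Hence $\beta(v/t)\le 0$. By homogeneity of the cone, $v\in C_\rho$.

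(ii) $v\in H_F$. Take $\alpha\in\Theta^{\fixed}_{F_\rho}$. Then $\alpha\in \Lin_0(F_\rho^\vee)=\Lin(F_\rho)^\vee$ by Proposition \ref{prop:basic properties Lin}, so $\alpha$ vanishes on $u+v_F$. From $\alpha(x)\ge 0$ we get $\alpha(v)\ge 0$.

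(iii) $u+v_F\in E(v/t)$. This is the real content, and I expect it to be the main obstacle. The estimate to aim for is: for $y$ in the affine span at $N$-distance $s$ from $\overline{F_\rho}$, every point of $(y+\Lin_0(F_\rho)^\perp)\cap B^N\cap\mathfrak{a}^+$ satisfies $\rho\le \rho_{\max}-\epsilon s$. I would prove this by compactness and linearity. Because $B^N$ is a polyhedron, the function
\[
s(y)=\sup\{\rho_{\max}-\rho(z) : z\in (y+\Lin_0(F_\rho)^\perp)\cap B^N\}
\]
is a piecewise-linear convex function of $y$. Its value is $0$ exactly when $y\in\overline{F_\rho}$, since $\overline{F_\rho}$ is precisely the set where $\rho$ attains its maximum on $B^N$. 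Being positive off $\overline{F_\rho}$ and polyhedral, it grows at least linearly in $d^N(y,\overline{F_\rho})$ for $y$ in the bounded relevant region. For $y$ far away the slice is empty, so there is nothing to check there. This gives the constant $\epsilon$.

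The sign convention needs care. The inequality should read $\rho(v/t)\le-\epsilon\, d^N(u+v_F,F_\rho)$, i.e. $\rho(v/t)=\rho(x)-\rho_{\max}\le -\epsilon s$, which is exactly the inequality obtained above. The same argument also shows the claim holds trivially when $u+v_F\in\overline{F_\rho}$, since then the required bound is $\rho(v)\le 0$, which follows from (i).

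For the almost-everywhere convergence, fix $(u,v)$ off the null set where $v\in\partial(C_\rho\cap H_F)$ or $u+v_F\in\partial F_\rho$ (relative boundary), both of which have measure zero. If $u+v_F\notin\overline{F_\rho}$ or $v\notin C_\rho\cap H_F$, then $x\to u+v_F$ as $t\to\infty$, and the indicator eventually vanishes. This uses the inequality already proved, together with the fact that $E(v/t)$ shrinks to $\overline{F_\rho}$ as $v/t\to 0$. If instead $u+v_F\in F_\rho$ and $v$ lies in the interior of $C_\rho\cap H_F$, then for large $t$ the point $x$ lies in a neighbourhood of $F_\rho$ where $B^N=(u+v_F)+C_\rho$, so $N(x)\le 1$. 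Moreover $F_\rho\subset\mathfrak{a}^+$ up to the walls $\ker\alpha$, $\alpha\in\Theta^{\fixed}_{F_\rho}$, on which $\alpha(x)=\alpha(v)/t>0$, while the other simple roots are positive near $F_\rho$. Hence $x\in\mathfrak{a}^+\cap B^N$, and the indicator converges to $1$.
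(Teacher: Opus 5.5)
Your argument is correct in substance, but the auxiliary function in step (iii) uses the wrong extremum. As written, $s(y)=\sup\{\rho_{\max}-\rho(z)\}$ is concave in $y$, not convex. It is also strictly positive at every $y\in F_\rho$: the slice through such a $y$ contains $y-\lambda v_F\in B^N$ for small $\lambda>0$, where $\rho<\rho_{\max}$. So neither property you invoke holds for it.

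What you need is a lower bound on $\rho_{\max}-\rho(z)$ over the whole slice, i.e.
\[ s(y)=\rho_{\max}-\max\{\rho(z) : z\in (y+\Lin_0(F_\rho)^\perp)\cap B^N\}. \]
This corrected $s$ has the three properties you want:
\begin{enumerate}
\item It is convex, because the maximum of a linear form over the fibres of the convex polyhedron $\{(y,z): z-y\in\Lin_0(F_\rho)^\perp,\ z\in B^N\}$ is concave in $y$.
\item It is piecewise linear.
\item It vanishes exactly on $\overline{F_\rho}$. Indeed, if $\rho(z)=\rho_{\max}$ with $z\in B^N$, then $z\in\overline{F_\rho}$ lies in the affine span, so $z-y\in\Lin_0(F_\rho)\cap\Lin_0(F_\rho)^\perp=\{0\}$.
\end{enumerate}
With this correction your linear-growth argument goes through and gives $\rho(v/t)=\rho(x)-\rho_{\max}\le -s(u+v_F)\le -\epsilon\, d^N(u+v_F,F_\rho)$.

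Parts (i), (ii) and the almost-everywhere convergence follow the paper's proof. Your version of the last part is slightly more careful: excluding $\partial(C_\rho\cap H_F)$ makes the roots in $\Theta^{\fixed}_{F_\rho}$ strictly positive on $x$, so $x$ lands in the open chamber $\mathfrak{a}^+$ rather than only its closure.

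For (iii) the paper argues differently. It combines:
\begin{itemize}
\item the linear growth $N(y)-1\ge c\, d(y,F_\rho)$ along the affine span $F_\rho+\Lin_0(F_\rho)$;
\item the triangle inequality $N(u+v_F)\le N(x)+N(-v/t)\le 1+N(-v/t)$;
\item the asymmetry constant $C=\max N(x)/N(-x)$;
\item Lemma \ref{lem:ConeBallN}, which needs $v\in C_\rho$ (your step (i)) and converts $N(-v/t)$ into a multiple of $-\rho(v/t)$.
\end{itemize}
This yields the explicit constant $\epsilon=cA/C$. Your parametric linear program bundles the tangential and normal estimates into one convex piecewise-linear function. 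It is self-contained and does not depend on (i) or on Lemma \ref{lem:ConeBallN}, but it needs the concavity of an LP value function and a Hoffman-type linear-growth bound. The paper's route instead reuses an estimate already proved and only needs linear growth of $N$ itself on the affine span.
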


\begin{figure}[h!]
\begin{center}
\begin{tikzpicture}[>=Stealth, thick, scale=.7]

    \coordinate (A) at (-1,-1);
    \coordinate (B) at (-1,1);
    \coordinate (C) at (1,1);
    \coordinate (D) at (1,-1);
    
    \fill[gray!10] (-3,3) -- (0,0) -- (3,3);
    \fill[blue!10] (-3,-2) -- (-3,0) -- (3,0) -- (3,-2);
    \node[right] () at (3,3) {$\mathfrak{a}^+$};
    \draw[dotted] (A) -- (B) -- (C) -- (D)-- cycle;
    \draw[thick,red] (B) -- (C);
    \draw (-3,0) --(3,0);
    \draw (0,1.4) -- (0,-3);
    \node[right] () at (C) {$F_\rho$};
    \coordinate (BC1) at ($(B)!0.5!(C)$);
    \fill (0,0) circle (2.5pt);
    \fill[blue] (0,1) circle (2.5pt);
    \fill[blue, opacity=.4] (-4,-2) -- (-1,1) -- (1,1) -- (4,-2) -- cycle;
    \draw[thick,blue] (-4,-2) -- (-1,1) -- (1,1) -- (4,-2) ;
    \fill[blue, opacity=.4] (-2.5,-2) -- (-1,1) -- (1,1) -- (2.5,-2) -- cycle;
    \draw[thick,blue] (-2.5,-2) -- (-1,1) -- (1,1) -- (2.5,-2);
    \node[above left] () at (0,1) {$v_F$};
    \coordinate (AC1) at ($(A)!0.5!(C)$);

    \coordinate (AB1) at ($(A)!0.5!(B)$);

    \draw[olive] ($(90:2.5)+(180:1)$) -- ($(90:2.5)+(0:1)$);
    \draw[olive, ->, dotted] ($(90:2.5)$) -- ($(90:2.5)+(90:1)$);
    \node[below] () at ($(90:2.5)$) {$\rho$};
    
\end{tikzpicture}
\end{center}
\caption{Illustration of Lemma \ref{lem: Indicator function equalities}. The two trapezoids are respectively the set of elements of $\mathfrak{a}$ of the form $u+v_F+v$ with $(u,v)\in\Lin_0(F_\rho)\times\Lin_0(F_\rho)^\perp$ such that $u+v_F\in E(v)$ and $u+v_F\in E(v/2)$.}
    \label{fig:Volume asymptotics part 2}
\end{figure}

Here we write $\1_X(x)=1$ if $x\in X$ and $\1_X=0$ for $x\notin X$.

 \begin{proof}

 Let us prove the first part of the statement, namely we prove that if $v/t+u+v_F\in \mathfrak{a}^+\cap B^N$, then $v\in C_\rho\cap H_F$ and $u+v_F\in E(v/t)$.

Let us assume that $v/t+u+v_F\in \mathfrak{a}^+\cap B^N$. 
First we see that $v\in C_\rho$. 
Indeed let $\beta\in F^\vee$, so $\beta(v_F)= 1$ since $v_F\in F+\Lin_0(F_\rho)$, but $\beta(v/t+u+v_F)\leq N(v/t+u+v_F)\leq 1$. Since $u\in \Lin_0(F_\rho)$, $\beta(u)=0$ so $\beta(v)\leq 0$. Since this happens for all $\beta\in F^\vee$, then $v\in C_\rho$.

Now since $v/t+u+v_F\in \mathfrak{a}^+$, for all $\alpha \in \Theta_F^\text{fixed}$, $\alpha(u+v_F)=0$ as $u+v_F\in \Lin(F)$ so $\alpha(v/t+u+v_F)=\alpha(v/t)\geq 0$. Hence $v\in H_F$.

Since $N$ is convex and piecewise linear, there exists $c>0$ so that $N(u+v_F) - 1 \ge c d(u+v_F, F_\rho)$. 
Let $C$ be the maximum of $N(x)/N(-x)$ for $x$ nonzero, and let $A$ be the constant from Lemma \ref{lem:ConeBallN}.
Then 
    \[ c d(u+v_F,F_\rho) \le N(u+v_F)-1 \le N(-v/t) \le \frac{C}{A} \rho(-v/t) \]
shows that we may take $\epsilon$ to be $cA/C$.

\medskip

Now for the second part note that the intersection of $E(v/t)$ for $t\to +\infty$ is $\overline{F_\rho}$, so we already have that the right hand side is an upper bound on the limsup everywhere. Now assume that $u+v_F\in F_\rho$ and that $v\in C_\rho\cap H_F$.
For every simple root $\alpha \in \Delta\setminus  \Theta_F^\text{fixed}$, we have $\alpha(u+v_F)>0$ and hence $\alpha(v/t+u+v_F)>0$ for $t$ large enough. 
For the other roots $\alpha\in \Theta_F^\text{fixed}$, $\alpha(u+v_F)=0$ but $\alpha(v)\geq 0$ so $\alpha(v/t+u+v_F)\geq 0$ for all $t>0$. As a consequence for $t$ large enough $v/t+u+v_F\in \mathfrak{a}^+$.

Finally let us write $N(v/t+u+v_F)=\max_{\omega\in \ver((B^N)^*)}\omega(v/t+u+v_F)$. 
Note that if $\omega \in \ver((B^N)^*)\setminus \ver(F^\vee)$ then $\omega(u+v_F)< 1$ so for $t$ large enough $\omega(v/t+u+v_F)< 1$. 
For $\omega \in  \ver(F^\vee)$, $\omega(v)\leq 0$ since $v\in C_\rho$ so $\omega(v/t+u+v_F)\leq 1$ for all $t>0$.  As a consequence for $t$ large enough $v/t+u+v_F\in B^N$. This proves that for almost every pair $(u,v)$, meaning that $u\notin \partial F_\rho-v_F$, we have this convergence.
\end{proof}

We now prove the asymptotics of the volume of balls for the Finsler metric.

\begin{proof}[Proof of Lemma \ref{lemma: volume of balls}]
     First we recall the formula for volume and rescale into the unit ball.
   Recall that $B^N$ denotes the unit ball of $(\mathfrak{a},N)$, and let $B^N_t$ denote the ball of radius $t$ in $(\mathfrak{a},N)$.
        \begin{align*}
            m_t & = \text{vol}(B_{\X}^N(o,t)) \\
                & = \int_{\mathfrak{a}^+} \1_{B^N_t}(x) d \mu_{\mathfrak{a}^+}(x) \\
                & = \int_{\mathfrak{a}^+} \1_{B^N_t}(x) \prod_{\alpha\in \Sigma^+} \left( 2\sinh(\alpha(x)) \right)^{m_\alpha} \mathrm{d} \Leb(x) \\ 
                & = \int_{\mathfrak{a}^+}  e^{2\rho(x)} \Phi(x) \1_{B^N_t}(x) \mathrm{d} \Leb(x).
        \end{align*} 

    Here we lightened the notation by writing as in Remark \ref{rmk: def of Phi}:
    \[ \Phi(x) = e^{-2\rho(x)}\prod_{\alpha\in \Sigma^+} \left( 2\sinh(\alpha(x)) \right)^{m_\alpha}, \] 
    Recall that $\Phi(x) \le 1$ for $x \in \mathfrak{a}^+$, and $\Phi(x)\to 1$ whenever $\alpha(x)\to +\infty$ for all $\alpha\in \Sigma^+$. 

    In order to study the limit of this function we decompose an element $x$ of $\mathfrak{a}$ as $u_0+v_0$ with $(u_0,v_0)\in \Lin_0(F_\rho)\times \Lin_0(F_\rho)^\perp$. We fix the intersection $v_F\in \Lin_0(F_\rho)^\perp\cap (F+\Lin_0(F))$, so that in particular $\rho(v_F)=\rho_{\max}$ and apply an affine change of variable $(u,v)\in \Lin_0(F_\rho)\times \Lin_0(F_\rho)^\perp$ depending on $t$ : $u=u_0/t$, $v=v_0-tv_F$.
    Note that if $u_0\in \Lin_0(F_\rho)$, $\rho(u_0)=0$.
    Then 
    \begin{align*}
        m_t & = \int\limits_{\Lin_0(F_\rho)^\perp} \int\limits_{\Lin_0(F_\rho)} e^{2\rho(v_0)}\Phi\left( u_0+v_0 \right)\1_{\mathfrak{a}^+ \cap B_t^N}\left(u_0+v_0\right)  \mathrm{d} \Leb(u_0) \mathrm{d} \Leb(v_0) \\
        & = t^{\dim F_\rho}\int\int e^{2\rho(t(u+v_F)+v)}\Phi\left( t(u+v_F)+v \right)\1_{\mathfrak{a}^+ \cap B_t^N}\left(v+t(u+v_F)\right)  \mathrm{d} \Leb(u) \mathrm{d} \Leb(v) \\
        & =e^{2t\rho_{\max}}t^{\dim F_\rho}\int\int e^{2\rho(v)}\Phi\left( t(u+v_F)+v \right)\1_{\mathfrak{a}^+ \cap B_t^N}\left(v+t(u+v_F)\right)  \mathrm{d} \Leb(u) \mathrm{d} \Leb(v)
    \end{align*}

Now we apply Lemma \ref{lem: Indicator function equalities}.
For $t\geq 1$, if we write $E(v)=\lbrace u\in \mathfrak{a} \mid \rho(v)\leq - \epsilon d(u,F_\rho)\rbrace$ for some $\epsilon>0$ small enough, we obtain : 
$$\Phi\left( v+t(u+v_F) \right)\1_{\mathfrak{a}^+ \cap B_t^N}\left(v+t(u+v_F)\right)\leq \1_{C_\rho\cap H_F}\left(v\right)\1_{E(v)}(u+v_F) .$$

Note that for all $t\geq 1$, $E(v/t)\subset E(v)$. Hence one can apply the dominated convergence theorem to compute the limit of $m_t e^{-2t\rho_{\max}}t^{-\dim F_\rho}$ if the following integral is finite :
\[ \int\int e^{2\rho(v)}\1_{C_\rho\cap H_\rho}\left(v\right)\1_{E_{v}}(u+v_F)  \mathrm{d} \Leb(u) \mathrm{d} \Leb(v).\]

But this integral decomposes as:

\[ \int e^{2\rho(v)}\1_{C_\rho\cap H_\rho}\left(v\right)\text{vol}(E(v))\mathrm{d} \Leb(v) <+\infty.\]

This integral is finite as $\text{vol}(E(v))$ is polynomial in $-\rho(v)$, as the volume of the $\rho(v)/\rho_{\max}$-neighborhood of $F$ is $F+\Lin_0(F)$, and Lemma \ref{lem:ConeBallN} implies that the volume of the level set of $\rho$ in $C_\rho\cap \Lin_0(F)^\perp$ also grows polynomially in $-\rho(v)$.

\medskip

Now let us compute the limit. Under the assumption that $u+v_F\in F_\rho$, then the limit of $\alpha(v+t(u+v_F))$ is equal to $+\infty$, for all roots, except the ones that vanish on $F_\rho$, which are exactly the roots in $\Span(\Theta_{F_\rho}^\text{fixed})$ , for which $\alpha(u+v_F)=0$ (recall that $\Theta_{F_\rho}^{\flipped}=\emptyset$).
Using the second part of Lemma \ref{lem: Indicator function equalities} and this remark we obtain that for almost every $(u,v)\in \Lin_0(F_\rho)\times \Lin_0(F_\rho)^\perp\cap H_F$, when $t\to +\infty$:
$$\Phi\left( v+t(u+v_F) \right)\1_{\mathfrak{a}^+ \cap B_t^N}\left(v+t(u+v_F)\right)$$
$$\to \prod_{\alpha\in \Span(\Theta_{F_\rho}^{\fixed})}(1-e^{-2\alpha(v)})^{m_\alpha}\1_{C_\rho\cap H_F}\left(v\right)\1_{F_\rho}(u+v_F).$$

\medskip

The limit of $m_t e^{-2t\rho_{\max}}t^{-\dim F_\rho}$ is then the following integral, using the dominated convergence theorem:
\[ \int\int e^{2\rho(v)}\1_{C_\rho\cap H_\rho}\left(v\right)\1_{F_\rho}(u+v_F) {\prod_{\alpha\in \Span(\Theta_{F_\rho}^{\fixed})}}(1-e^{-2\alpha(v)})^{m_\alpha}\mathrm{d} \Leb(u) \mathrm{d} \Leb(v) = c .\]

\end{proof}

\begin{rem}
    Note that the peculiar expression ${\prod_{\alpha\in \Span(\Theta_{F_\rho}^{\fixed})}}(1-e^{-2\alpha(v)})^{m_\alpha}$ does not change when $v$ is replaced by $v+tv_F$ for $t\in \R$: it is invariant by translation in the direction $v_F$.
\end{rem}

Now we can prove the following.

\begin{prop}
\label{prop:Existence Limiting measure}
    The family of measures $\mu_t = \frac1{m_t}(\iota_t)_*\Leb$ converges as $t \to \infty$ up to subsequence to a $\sigma$-finite measure $\mu_N$ supported on the space of horofunctions $\mathcal{H}^N(\X)$. 
    Every such limit is $G$-invariant.
\end{prop}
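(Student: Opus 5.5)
The plan is to establish three things in order: uniform local mass bounds on the compact cylinders $C^o_{r,R}$, which give precompactness; the fact that no mass survives on $\iota_t^N(\X)$, so every limit lives on $\mathcal{H}^N(\X)$; and $G$-invariance, which is inherited directly from the approximants.

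\textbf{Step 1: tightness.} Since $\overline{\mathcal{D}^N(\X)}$ is exhausted by the compact cylinders $C^o_{r,R}$ (Arzelà--Ascoli), vague precompactness of $(\mu_t)$ follows once we have, for every $R\in\R$,
$$\limsup_{t\to\infty}\mu_t\left(C^o_{-\infty,R}\right)<\infty .$$
By definition,
$$\mu_t\left(C^o_{-\infty,R}\right)=\frac{m_{t+R}}{m_t}.$$
Lemma \ref{lemma: volume of balls} gives $m_t\sim c\,t^{\dim F_\rho}e^{2t\rho_{\max}}$, so this ratio tends to $e^{2\rho_{\max}R}$. Hence the family is uniformly bounded on each $C^o_{-\infty,R}$, and a fortiori on each $C^o_{r,R}$. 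The space $\overline{\mathcal{D}^N(\X)}$ is locally compact, second countable and metrizable, because the compact-open topology on $1$-Lipschitz functions is metrizable. We can therefore extract a subsequence $t_n\to\infty$ along which $\mu_{t_n}$ converges vaguely to a Radon measure $\mu_N$. This limit is $\sigma$-finite, being finite on every $C^o_{-\infty,R}$.

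\textbf{Step 2: the limit is supported on horofunctions.} The expected main obstacle is showing that $\mu_N(\mathcal{D}^N(\X))=0$, i.e.\ that no mass survives on genuine distance-like functions. Fix a compact set $L\subset \mathcal{D}^N(\X)$ inside the open set of distance-like functions. Write elements of $L$ as $d^N(\cdot,y)-s$. Then $y$ ranges in a compact set $Y\subset\X$ and $s$ in a bounded interval $[a,b]$. The embedding $(y,s)\mapsto d^N(\cdot,y)-s$ is a homeomorphism onto its image; continuity of the inverse follows from recovering $y$ as the unique minimum point and $s$ as minus the minimum value. An element $\iota_t^N(x)=d^N(\cdot,x)-t$ can lie in $L$ only if $x\in Y$ and $t\in[a,b]$. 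Therefore $\mu_t(L)=0$ for $t>b$. By regularity, any compact subset of $\mathcal{D}^N(\X)$ gets limit measure bounded by $\liminf \mu_t$ of a slightly larger compact neighbourhood still inside $\mathcal{D}^N(\X)$. To justify this we need $\mathcal{D}^N(\X)$ to be open in its closure, which we would check via local compactness of $\X\times\R$ and properness, following the same argument as for the horofunction compactification being a compactification. We conclude that $\mu_N$ gives zero mass to $\mathcal{D}^N(\X)$, so it is supported on $\mathcal{H}^N(\X)=\overline{\mathcal{D}^N(\X)}\setminus\mathcal{D}^N(\X)$.

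\textbf{Step 3: $G$-invariance.} The map $\iota_t^N$ is $G$-equivariant: $(g\cdot\iota_t^N(x))=d^N(g^{-1}\cdot,x)-t=\iota_t^N(gx)$. Combined with the $G$-invariance of $\Leb$, this gives $g_*\mu_t=\mu_t$. Since $G$ acts on $\overline{\mathcal{D}^N(\X)}$ by homeomorphisms, pushforward by $g$ is vaguely continuous. Passing to the limit along the chosen subsequence yields $g_*\mu_N=\mu_N$ for every subsequential limit.
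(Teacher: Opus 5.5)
Your proposal is correct and follows the paper's argument: you get uniform bounds on the cylinders from $m_{t+R}/m_t\to e^{2\rho_{\max}R}$ (Lemma \ref{lemma: volume of balls}), extract a vague limit by sequential compactness, and inherit $G$-invariance from the $G$-equivariance of $\iota_t^N$. Your Step 2 (support on $\mathcal{H}^N(\X)$) is sound and fills in something the paper's proof leaves implicit, since there the support on $\mathcal{O}_{F_\rho}$ only becomes explicit in Theorem \ref{thm: Convergence measure}; the one detail to tighten is that continuity of $d^N(\cdot,y)-s\mapsto(y,s)$ requires the minimizers $y_n$ to stay bounded, which holds because $d^N(\cdot,y_n)-s_n$ decreases at unit rate along a $d^N$-geodesic towards $y_n$.
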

 
\begin{proof}
    For all $-\infty<r<R<+\infty$, the sequence $\left(m_{t+R}/m_{t+r}\right)$ converges to $e^{2\rho_{\max}(R-r)}$ by Lemma \ref{lemma: volume of balls}. Therefore, once restricted to $C^o_{r,R}$ and up to scaling the measures by a sequence of real numbers converging to a positive number, the sequence $\frac1{m_t}(\iota_t)_*\Leb$ restricts to a sequence of probability measures on the compact $C^o_{r,R}$. Hence, up to taking a subsequence, one can assume that we have a weak-* convergence of these measures on all $C^o_{r,R}$ for $-\infty<r<R<+\infty$, and therefore up to subsequence the sequence $\frac1{m_t}(\iota_t)_*\Leb$ converges vaguely to a measure $\mu^N$. This measure is $G$-invariant since the measures $(\iota_t)_*\Leb$ are $G$-invariant.
\end{proof}

Proposition \ref{prop:Existence Limiting measure} already shows that any limiting measure $\mu_N$ up to subsequence is $G$-invariant and hence the support is a union of $G$-orbits $\mathcal{O}_{F}$.  
However we will see that the only orbit on which the limit measure has non-zero mass is $\mathcal{O}_{F_\rho}$.
Intuitively, this corresponds to the facts that $F_\rho$ is the face of the unit ball $B^N$ where $\rho$ is maximized, and $\rho$ is the direction where the volume of $\X$ concentrates.

\subsection{Computation of the limit}
\label{subsec:ComputationLimit}

As we prepare the proof of Theorem \ref{thm: Convergence measure} we give a more precise description of where the mass of the following measures concentrates: 
    \[ \mu_t = \frac1{m_t}(\iota_t)_*\Leb_{\X} \]
Let $K$ be the stabilizer in $G$ of a point $o \in \X$, and let $\mathfrak{a}$ be a Cartan subspace so that $\exp(\mathfrak{a})o$ is a maximal flat. 
Consider the identification:
    \[ \Psi \colon (\mathfrak{a} \times \R)/W \xrightarrow{\cong} \mathcal{D}^N(\X)/K \]
induced by $(v,t) \mapsto \iota_t(e^v\cdot o)$.
    
Each measure $\mu_t$ on $\mathcal{D}^N(\X)$ pushes forward to $\mathcal{D}^N(\X)/K$ and then to $(\mathfrak{a} \times \R)/W$ via $\Psi^{-1}$, yielding a measure we denote by $\mu_t^o$ on $(\mathfrak{a} \times \R)/W$.
Since $\mu_t$ is $G$-invariant (and in particular $K$-invariant), the measure $\mu_t$ is recovered from $\mu_t^o$ as the unique $K$-invariant measure agreeing on $K$-invariant measurable sets.

We need to play a similar game in the closure $\overline{\mathcal{D}^N(\X)}$. 

\begin{proposition}\label{prop: identifying a/Lin0(F)/W_F^fixed with O_F/K}
    For each $F \in Faces(N)$, we have a natural identification 
        \[ \Psi_F \colon (\mathfrak{a}/\Lin_0(F))/W_F^{\fixed} \to \mathcal{O}_F /K .\]
    Moreover, these identifications naturally extend $\Psi$ in the following sense:
   If $(v_n,t_n)$ is a sequence in $\mathfrak{a} \times \R$ such that $t_n \to \infty$ and $v_n - t_n f_0$ converges to $v' \in \mathfrak{a}$ for some $f_0 \in F$, then $\Psi([(v_n,t_n)])$ converges to $\Psi_F([v'])$ in $\overline{\mathcal{D}^N(\X)}/K$.
\end{proposition}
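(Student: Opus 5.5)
The plan is to use the structure of $\mathcal{O}_F$ from Proposition \ref{prop:Space horofunctions}, together with the restriction of horofunctions to the model flat. Define $\Psi_F$ on $\mathfrak{a}$ by $v' \mapsto [h_F(\cdot - v')]$, i.e.\ the standard horofunction translated by $v'$ inside the model flat and then extended to $\X$ via Theorem \ref{thm:Comparaison Horofunction Flat and symmetric space HSWW}. Equivalently, this is $e^{v'}\cdot h_F$ modulo $K$.

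\textbf{Surjectivity and descent.} Surjectivity modulo $K$ should follow from $G=KP_\Theta$ and the identification $P_\Theta/R_F\simeq \X_F$. The fiber $\X_F$ is the symmetric space of $G_F^{\fixed}\times e^{V}$, and by the Cartan decomposition of $G_F^{\fixed}$ every point of it is $K_\Theta$-conjugate to a point of the form $e^{a}$ with $a\in \mathfrak{a}$. Hence every element of $\mathcal{O}_F$ is $K$-equivalent to some $e^{v'}h_F$. By Lemma \ref{lem:Cartan subgroup of stabiliazer}, $e^{v'}h_F$ depends only on $v'$ modulo $\Lin_0(F)$. The Weyl group elements of $W_F^{\fixed}$ have representatives in $K\cap G_F^{\fixed}\subset R_F$. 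Since such a representative $k$ fixes $h_F$, we get $k e^{v'}h_F=e^{wv'}h_F$, so the map descends to $(\mathfrak{a}/\Lin_0(F))/W_F^{\fixed}$.

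\textbf{Injectivity.} This is the main obstacle: if $ke^{v'}h_F=e^{v''}h_F$ with $k\in K$, we must show $v''\in W_F^{\fixed}v'+\Lin_0(F)$. First, projecting to the flag manifold shows that $k$ fixes the flag $P_\Theta$, so $k\in K_\Theta$. Then $e^{-v''}ke^{v'}\in R_F$. Projecting to the fiber $\X_F$, this says that the points $e^{v'}$ and $e^{v''}$ of $\X_F$ are related by $k\in K_\Theta$. The vector-valued distance in $\X_F$, computed from the base point fixed by $K_\Theta$, is invariant under $K_\Theta$. Its component along $\mathfrak{a}^{\fixed}_F$ is therefore well defined up to $W_F^{\fixed}$, while the $V$-component is preserved, since $K_\Theta$ centralizes $e^V$ modulo $R_F$. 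This gives the required equality modulo $\Lin_0(F)$. I would carefully check that $K_\Theta$ acts on $\X_F$ through the compact part of $G_F^{\fixed}$, which uses $K_\Theta\subset R_F$ from Proposition \ref{prop:comparing RF and SF to Gflip and Gfix}.

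\textbf{Extension property.} We have $\Psi([(v_n,t_n)])=[d^N(\cdot,e^{v_n}o)-t_n]$. Write $v_n=t_nf_0+v'+\epsilon_n$ with $\epsilon_n\to 0$. On the model flat,
\[
d^N(x,e^{v_n}o)-t_n=N(t_nf_0+v'+\epsilon_n-x)-t_n .
\]
By Definition \ref{def: standard horofunction} and the Lipschitz property of $N$, this converges locally uniformly to $h_F(x-v')$. To pass from the flat to all of $\X$, I would apply the explicit formula of Proposition \ref{prop:Explicit formula for Horofunctions} (or the argument of Theorem \ref{thm:Comparaison Horofunction Flat and symmetric space HSWW}). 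Concretely, $d^N(\cdot,e^{v_n}o)$ equals $e^{v'}$ applied to $d^N(\cdot,e^{t_nf_0+\epsilon_n}o)$. The latter converges on $\X$ to $h_F$, uniformly on compacts, by the same limit computation as in the proof of Proposition \ref{prop:Explicit formula for Horofunctions}, since $\epsilon_n\to 0$ only perturbs by $o(1)$. Convergence in $\overline{\mathcal{D}^N(\X)}$ then passes to the quotient by $K$, because the quotient map is continuous.
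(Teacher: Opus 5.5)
Your proposal is correct, and for the convergence claim it follows the paper's proof: the paper expands $N(v_n-x)-t_n=\max_\beta\beta(v_n-t_nf_0-x)+t_n(\beta(f_0)-1)$, notes that on compact sets only $\beta\in\ver(F^\vee)$ survive for large $n$, and sets $\Psi_F([v'])=h_F(\cdot-v')$, which is well defined modulo $\Lin_0(F)$ by Lemma \ref{lem:equivalence of translated horofunctions}. The paper does not argue bijectivity, the $W_F^{\fixed}$-descent, or the passage from the flat to $\X$, so those steps of yours are additions that fill omissions rather than a different route. One caution on the check you flagged: $K_\Theta$ need not act on $\X_F$ through $K\cap G_F^{\fixed}$. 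For example, take $\SL(3,\R)$ with $\Theta_F^{\fixed}=\{\alpha_1\}$; the element $\mathrm{diag}(-1,1,-1)\in M$ acts on the $\mathbb{H}^2$ factor by a reflection. This does not break your argument. It suffices that each $k\in K_\Theta$ normalizes $G_F^{\fixed}$, and that $K_\Theta$-conjugate elements of $\mathfrak{a}$ are $W_{\Theta_F^{\stable}}$-conjugate, a group that acts on $\mathfrak{a}_F^{\fixed}$ through $W_F^{\fixed}$.
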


\begin{proof}

Consider the distance-like functions on $\mathfrak{a}$ associated to $(v_n,t_n)$:
    \begin{align*}
        \iota_{t_n}(v_n)(x) & = N(v_n -x) - t_n \\
        & = \max_{\beta \in (B^N)^\ast} \beta(v_n -x) - t_n \\
        & = \max_{\beta \in (B^N)^\ast} \beta( v_n - t_n f_0 -x) + t_n( \beta(f_0) -1)
    \end{align*}
For all $x\in \mathfrak{a}$ in a fixed compact, the term 
$\beta( v_n - t_n f_0 -x)$
stays bounded and the term $t_n(\beta(f_0)-1)$ goes to $-\infty$ except if $\beta \in F^\vee$, in which case this term vanishes. 
Therefore one has for all $n$ large enough: 
\[ \iota_{t_n}(v_n)(x) = \max_{\beta \in \ver(F^\vee)}\beta( v_n - t_n f_0 -x) = h_{F}(-(v_n-t_n f_0 -x)). \]
Then defining $\Psi_F([v'])$ to be the following horofunction 
    \[\Psi_F([v'])( x)\coloneqq h_{F}(x-v').\]
yields the desired continuity result. 

The fact that this map is well-defined on the quotient by $\Lin_0(F_\rho)$ is a consequence of Lemma \ref{lem:equivalence of translated horofunctions}. The minus sign in the definition of $\Psi$ comes from the minus sign in the definition of the standard horofunctions $h_{F_\rho}$.
\end{proof}

Recall that if $v_n \in \mathfrak{a}$ for all $n$ then $v'$ is in the cone $H_\rho$:
    \[ H_\rho \coloneqq \bigcap_{\alpha \in \Theta_{F_\rho}^{\fixed}} \{x : \alpha(x) \ge 0\}. \]
This cone is a fundamental domain for the action of $W_F^{\fixed}$ on $\mathfrak{a}$ and is invariant by translations in $\Lin_0(F)$. 

\medskip
Fixing $v_F$ as the intersection of $(F+\Lin_0(F))\cap \Lin_0(F)^\perp$, we consider the map:
    \begin{equation}\label{eqn: def of pi}
        \pi \colon (\mathfrak{a} \times \R)/W \to (\mathfrak{a}/\Lin_0(F))/W_F^{\fixed}, \quad \pi([v,s]) = [v-sv_F] .
    \end{equation}

In this expression, we chose the unique representative $v\in \mathfrak{a}^+$ of the class  $[v,s]\in (\mathfrak{a} \times \R)/W$, and we obtain an element in $H_{\rho}/\Lin_0(F_\rho)$ which is naturally identified with $(\mathfrak{a}/\Lin_0(F))/W_F^{\fixed}$. 
Intuitively, $\pi$ is a map which pushes points to the boundary, or, slightly more precisely, replaces a distance-like function with the ``best'' horofunction in the same direction, as seen from the basepoint $o$.

\begin{remark}\label{remark: vF vs f0}
    Note that for any $f_0 \in F$, the class $[v-s v_F] = [v-sf_0]$, so $\pi$ can equivalently be defined with $f_0$ in place of $v_F$. 
    This shows that still $\Psi_F$ continuously extends $\Psi$  if $v_F$ replaces $f_0$ in its definition.
\end{remark}

\medskip

Our next goal is to study the limiting behavior of the measures $\pi_\ast \nu_t^o$.
One problem with $\pi$ is that it does not preserve the cylinders $C^o_{-r,R}/K$ in general (via the identifications $\Psi^{-1}$ and $\Psi_{F_\rho}^{-1}$). 
We need in some cases to restrict the measure $\mu^o_t$ to a smaller set $S$ to obtain this behavior.

\medskip

Let $S$ be defined as the set:\footnote{As it turns out $S$ can also be described as the cone over $\lbrace x\in \mathfrak{a}^+\mid h_{F_\rho}(-x)=1,\, x\in B^N\rbrace$, which is the closure of $F_\rho$ and all faces $F'$ such that $F_\rho\subset \overline{F'}$.}
$$S=\lbrace x\in \mathfrak{a}^+\mid N(x)=h_{F_\rho}(-x)\rbrace.$$ 

Recall that $h_{F_\rho}(-x)\leq N(x)$ in general. In all the examples so far that we drew, $S=\mathfrak{a}^+$, and we illustrate in Figure \ref{fig:ImageS} the unit ball of two metrics for which $S$ is a proper subset of $\mathfrak{a}^+$. We have the following fact. 

\begin{lem}
    \label{lem:LittleLemS}
    Recall that $v_F$ is the intersection of $F_\rho+\Lin_0(F_\rho)$ and $\Lin_0(F_\rho)^\perp$. Let $w\in \Lin_0(F)$ be such that $v_F+w\in \mathfrak{a}^+$. Then $v_F+w\in S$ if and only if $v_F+w\in \overline{F_\rho}$.  If $v_F+w\in F_\rho$ then $v_F+w$ is in the interior of $S$. 
\end{lem}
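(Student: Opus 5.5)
The plan is to make both sides of the comparison explicit. By Proposition \ref{prop:Formula horofunction standard},
\[ h_{F_\rho}(-x)=\max_{\omega\in\ver(\overline{F_\rho^\vee})}\omega(x), \qquad N(x)=\max_{\omega\in\ver((B^N)^*)}\omega(x). \]
Hence $h_{F_\rho}(-x)\le N(x)$ always, with equality exactly when the maximum defining $N(x)$ is attained by a vertex of $\overline{F_\rho^\vee}$. Put $x=v_F+w$ with $w\in\Lin_0(F_\rho)$. Then $x$ lies in the affine span $F_\rho+\Lin_0(F_\rho)$, and every $\beta\in\overline{F_\rho^\vee}$ is identically $1$ on this affine span. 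Indeed, $\beta$ equals $1$ on $F_\rho$, and Proposition \ref{prop:basic properties Lin} gives $\beta\in\Lin(F_\rho^\vee)=\Lin_0(F_\rho)^\vee$, so $\beta$ vanishes on $\Lin_0(F_\rho)$. Consequently $h_{F_\rho}(-x)=1$, and the condition $x\in S$ reduces to $N(x)=1$.

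For the equivalence, first suppose $x\in\overline{F_\rho}$. Then $x\in\mathfrak{a}^+$ by hypothesis and $N(x)=1$, so $x\in S$. Conversely, suppose $x\in S$, so $N(x)=1$. Then $x\in\partial B^N$ and $x$ lies in the affine span of $F_\rho$. The intersection of $B^N$ with the affine span of an exposed face is the closure of that face: $\overline{F_\rho}$ is cut out of $B^N$ by the supporting hyperplane $\{\beta=1\}$ for any $\beta\in F_\rho^\vee$, and this hyperplane contains the affine span. Thus $x\in\overline{F_\rho}$. This is the cleanest route and I would write it exactly this way.

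For the interior statement, take $x\in F_\rho$ with $x\in\mathfrak{a}^+$. I show that a whole neighbourhood $U$ of $x$ in $\mathfrak{a}$ lies in $S$, using two facts.
\begin{itemize}
\item Every $\omega\in\ver((B^N)^*)\setminus\ver(\overline{F_\rho^\vee})$ satisfies $\omega(x)<1$, since $x$ is in the relative interior $F_\rho$ and $\overline{F_\rho^\vee}$ is exactly the set of dual vertices equal to $1$ on $F_\rho$ (Lemma \ref{lem: Alternative definition Fvee}).
\item Every vertex of $\overline{F_\rho^\vee}$ takes the value $1$ at $x$.
\end{itemize}
There are finitely many vertices, so by continuity, for $y$ near $x$ the maximum over all vertices of $\omega(y)$ is attained among $\ver(\overline{F_\rho^\vee})$. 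Hence $N(y)=h_{F_\rho}(-y)$ on $U$. Since $\mathfrak{a}^+$ is open, shrinking $U$ keeps $U\subset\mathfrak{a}^+$, so $U\subset S$. Here $S$ is viewed as a cone in $\mathfrak{a}^+$, and interior means interior in $\mathfrak{a}$.

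The only delicate point is the standing hypothesis $v_F+w\in\mathfrak{a}^+$. Since $\mathfrak{a}^+$ is defined with strict inequalities, the interior claim needs no extra work. If the intended reading is the closed chamber $\overline{\mathfrak{a}^+}$, then points of $F_\rho$ on walls $\ker\alpha$ with $\alpha\in\Theta^{\fixed}_{F_\rho}$ cause trouble. In that case the statement should be read as interior relative to $\overline{\mathfrak{a}^+}$ (equivalently, relative to $H_F\cap\overline{\mathfrak{a}^+}$). The same continuity argument applies unchanged, because it only uses the dual vertices and not the chamber.
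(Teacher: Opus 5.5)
Your proof is correct and follows essentially the same route as the paper: every $\beta\in\overline{F_\rho^\vee}$ equals $1$ at $v_F+w$, so $x\in S$ reduces to $N(x)=1$, i.e.\ $x\in\partial B^N\cap(F_\rho+\Lin_0(F_\rho))=\overline{F_\rho}$, and the interior claim follows because the dual vertices outside $\overline{F_\rho^\vee}$ are strictly less than $1$ at points of $F_\rho$ and remain so nearby. Your caveat about the open versus closed chamber is a fair observation: $F_\rho$ lies in the walls $\ker\alpha$ for $\alpha\in\Theta^{\fixed}_{F_\rho}$, so the relative-interior reading is the meaningful one. It does not affect the argument.
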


\begin{proof} 

Recall that for all $\beta\in F_\rho^\vee$, $\beta(v_F+w)=1$. Hence one has $v_F+w\in \overline{F_\rho}$ if and only if $v_F+w\in \partial B^N$, which is equivalent to having $v_F+w\in S$. 

If $v_F+w\in F_\rho$, then for all $\beta\in\ver\left((B^N)^*\right)$ with $\beta\notin \ver(\overline{F^\vee})$ then $\beta(v_F+w)<1$. This remains true in a neighborhood of $v_F+w$, so $v_F+w$ is in the interior of $S$.

\end{proof}

Let $\nu^o_t$ be the measure on $(\mathfrak{a}\times \R)/W$ such that:
\begin{equation}\label{eqn: nuto}
    \mathrm{d}\nu^o_t=\1_{S\times \R}\mathrm{d}\mu^o_t.    
\end{equation}
Here $S\times \R$ is viewed as a subset of $\overline{\mathfrak{a}^+}\times \R \cong (\mathfrak{a}\times \R)/W$.

\begin{figure}
\begin{center}
\begin{tikzpicture}[scale=.6]

\draw[olive, fill=olive!20, thick]
  ({2*cos(22.5)},{2*sin(22.5)})
  \foreach \a in {67.5,112.5,...,337.5}
    { -- ({2*cos(\a)},{2*sin(\a)}) }
  -- cycle;

\fill[gray!45, opacity=0.2]
  (0,0)
  -- ({3*cos(0)},{3*sin(0)})
  -- ({3*cos(90)},{3*sin(90)})
  -- cycle;
\node at ({3.2*cos(45)},{3.2*sin(45)}) {$S$};
  
\fill[olive!45, opacity=0.65]
  (0,0)
  -- ({3*cos(22.5)},{3*sin(22.5)})
  -- ({3*cos(67.5)},{3*sin(67.5)})
  -- cycle;
\node at (2.5,.3) {$\mathfrak{a}^+$};

\draw[red, very thick]
  ({2*cos(22.5)},{2*sin(22.5)})
  -- ({2*cos(67.5)},{2*sin(67.5)});

\draw[black] (-3,0) -- (3,0);
\draw[black] (0,-3) -- (0,3);

\fill[black] (0,0) circle (1pt);

\end{tikzpicture}
\begin{tikzpicture}[scale=.6]

\draw[olive, fill=olive!20, thick]
  ({2*cos(36)},{2*sin(36)})
  \foreach \a in {72,108,...,360}
    { -- ({2*cos(\a)},{2*sin(\a)}) }
  -- cycle;

\fill[gray!45, opacity=0.2]
  (0,0)
  -- ({3*cos(0)},{3*sin(0)})
  -- ({3*cos(90)},{3*sin(90)})
  -- cycle;
\node at ({3.2*cos(45)},{3.2*sin(45)}) {$S$};
  
\fill[olive!45, opacity=0.65]
  (0,0)
  -- ({3*cos(0)},{3*sin(0)})
  -- ({3*cos(72)},{3*sin(72)})
  -- cycle;
\node at (2.5,.3) {$\mathfrak{a}^+$};

\draw[olive, thick]
  ({2*cos(0)},{2*sin(0)})
  -- ({2*cos(36)},{2*sin(36)})
  -- ({2*cos(72)},{2*sin(72)});

    \fill[red] ({2*cos(36)},{2*sin(36)}) circle (1.5pt);

\draw[black] (-3,0) -- (3,0);
\draw[black] (0,-3) -- (0,3);

\fill[black] (0,0) circle (1pt);

\end{tikzpicture}
\end{center}
\caption{Illustration of $S$ for two polyhedral Finsler metrics on $\mathbb{H}^2\times \mathbb{H}^2$ .}
\label{fig:ImageS}
\end{figure}

\begin{proposition}\label{prop: vague limit of measures in flat}
    Let $\Leb$ denote the pullback of the Lebesgue measure on $\Lin_0(F_\rho)^\perp$ under the identification $\Lin_0(F_\rho)^\perp \cong \mathfrak{a}/\Lin_0(F_\rho)$.
    The measures $\pi_\ast(\nu^o_t)$ converge vaguely to the measure $\mu_N^o$ satisfying: 
        \[ \mathrm{d}\mu_N^o([v]) = \1_{H_{F_\rho}}(v)e^{2\rho(v)} \frac{Vol(F_\rho)}{c} \prod_{\alpha\in \Span(\Theta_F^{\fixed})}(1-e^{-2\alpha(v)})^{m_\alpha}\mathrm{d} \Leb([v]).\]
    Here $c$ is the constant from Lemma \ref{lemma: volume of balls}.
    There is a unique $K$-invariant measure $\mu_N$ on $\mathcal{O}_{F_\rho}$ satisfying $\mu_N^o(K \cdot A) = \mu_N(K \cdot A)$ for all measurable subsets $A \subset \mathcal{O}_{F_\rho}$.
    Moreover $\mu_N(C^o_{-\infty,R}/K)= e^{2\rho_{\max}R}$ for all $R \in \R$.

\end{proposition}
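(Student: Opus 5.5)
The plan is to compute $\int \phi \,\mathrm{d}\pi_\ast\nu^o_t$ directly for a compactly supported continuous test function $\phi$ on $(\mathfrak{a}/\Lin_0(F_\rho))/W_{F_\rho}^{\fixed}\cong H_{F_\rho}/\Lin_0(F_\rho)$, reusing the change of variables from the proof of Lemma \ref{lemma: volume of balls}.

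First I unwind the definitions. Since $\mu_t=\frac1{m_t}(\iota_t)_\ast\Leb$, Proposition \ref{prop: volume density along K-orbits} gives
\[
\int\phi\,\mathrm{d}\pi_\ast\nu^o_t=\frac1{m_t}\int_{\mathfrak{a}^+}\int_{\R}\1_S(x)\,\phi([x-s'v_F])\,e^{2\rho(x)}\Phi(x)\,\mathrm{d}(\ldots),
\]
where the pair $(x,s')$ records the distance-like function $d^N(\cdot,e^x o)-t$. The normalisation must be tracked carefully: the second coordinate of $\Psi$ is the shift $t$, which is fixed, and this is what selects the fibre. Writing $x=u_0+v_0$ with $u_0\in\Lin_0(F_\rho)$ and $v_0\in\Lin_0(F_\rho)^\perp$, the class $[x-tv_F]$ only depends on $v:=v_0-tv_F$. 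Substituting $u=u_0/t$ as before yields
\[
\frac{t^{\dim F_\rho}e^{2t\rho_{\max}}}{m_t}\int\!\!\int e^{2\rho(v)}\,\Phi\bigl(t(u+v_F)+v\bigr)\,\1_{S\cap\mathfrak{a}^+}\bigl(t(u+v_F)+v\bigr)\,\phi([v])\,\mathrm{d}u\,\mathrm{d}v .
\]
The prefactor tends to $1/c$ by Lemma \ref{lemma: volume of balls}.

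Next I take pointwise limits. For $u+v_F\in F_\rho$, Lemma \ref{lem:LittleLemS} shows that $t(u+v_F)+v$ eventually lies in the interior of $S$, using that $S$ is a cone. The roots outside $\Span(\Theta_{F_\rho}^{\fixed})$ tend to $+\infty$ along this point, while those inside are unchanged, and $\alpha(v)\ge0$ forces $v\in H_{F_\rho}$. So the integrand converges to
\[
e^{2\rho(v)}\,\1_{H_{F_\rho}}(v)\,\1_{F_\rho}(u+v_F)\prod_{\alpha\in\Span(\Theta^{\fixed}_{F_\rho})}(1-e^{-2\alpha(v)})^{m_\alpha}\,\phi([v]).
\]
When $u+v_F\notin\overline{F_\rho}$, I need the integrand to vanish eventually. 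Here the restriction to $S$ replaces the ball condition used in Lemma \ref{lem: Indicator function equalities}. On $S$ one has $N(x)=h_{F_\rho}(-x)=\max_{\beta\in F_\rho^\vee}\beta(x)=t+\max_{\beta}\beta(v)$, which is linear in $u$, so membership of $S$ combined with compact support of $\phi$ in $v$ confines $u+v_F$ to a neighbourhood of $F_\rho$ that shrinks like $1/t$. Integrating over $u$ then produces the factor $\mathrm{Vol}(F_\rho)$, which gives exactly the claimed density.

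The main obstacle is the domination needed for dominated convergence. Since $\phi$ has compact support in $[v]$, the variable $v$ ranges over a compact set modulo $\Lin_0(F_\rho)$. The $u$-range must then be bounded uniformly in $t$, and this is precisely where the set $S$ is needed. My first attempt is to show that the piecewise-linear function $x\mapsto N(x)-h_{F_\rho}(-x)$ on $\mathfrak{a}^+$ vanishes only on the cone over the closed star of $F_\rho$, as in the footnote. Then $x\in S$ with $x-tv_F$ bounded forces $d(u+v_F,F_\rho)=O(1/t)$, and the integrand is bounded by $C\,e^{2\rho(v)}\1_{K'}(v)\1_{E}(u)$ with $E$ bounded.

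For the last two assertions, I define $\mu_N$ as the $K$-average lift of $\mu^o_N$ through $\Psi_{F_\rho}$ (Proposition \ref{prop: identifying a/Lin0(F)/W_F^fixed with O_F/K}). For the cylinder mass, $h(o)\le R$ corresponds to $h_{F_\rho}(-v)\le R$, that is $\max_{\beta\in F_\rho^\vee}\beta(v)\le R$. Translating $v\mapsto v+Rv_F$ multiplies the density by $e^{2R\rho_{\max}}$ while leaving the product factor invariant. At $R=0$ the remaining integral over $C_\rho\cap H_{F_\rho}\cap\Lin_0(F_\rho)^\perp$, multiplied by $\mathrm{Vol}(F_\rho)/c$, equals $1$ by the definition of $c$, which gives $\mu_N(C^o_{-\infty,R}/K)=e^{2\rho_{\max}R}$.
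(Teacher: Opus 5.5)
Your proposal is correct and follows essentially the same route as the paper. You use the same density for $\pi_\ast\nu^o_t$ with the rescaling $u=u_0/t$, dominated convergence with the pointwise limit coming from Lemma \ref{lem:LittleLemS}, and the cylinder mass obtained by translating by $Rv_F$ and invoking the definition of $c$. The only real difference is how you dominate the integrand. The paper bounds $\1_S$, for each fixed $v$, by the indicator of $\mathfrak{a}^++B^N$ intersected with the affine span of $F_\rho$. You instead use the identity $N(x)=t+\max_{\beta\in F_\rho^\vee}\beta(v)$ on $S$ to confine $u+v_F$ to an $O(1/t)$-neighbourhood of $\overline{F_\rho}$. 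This also works, provided you read ``$x-tv_F$ bounded'' as ``$[x-tv_F]$ bounded modulo $\Lin_0(F_\rho)$'' and use that $N\ge 1$ on the affine span of $F_\rho$, with equality exactly on $\overline{F_\rho}$.
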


The fact about the total measure will be telling us that in fact we did not loose any mass in the limit by restricting ourselves to the subset $S$, allowing us to prove the last fact.

\begin{proof}[{Proof of Proposition~\ref{prop: vague limit of measures in flat}}]
Recall that for a pair $[v,s]\in(\mathfrak{a} \times \R)/W$ to be in the support of $\mu^o_t$, and hence of $\nu$, we need to have $s=t$. 
The measure $\pi_\ast(\1_{C_\rho}\nu_t^o)$ can be written as $\phi_t(v) d \Leb([v])$ where $\phi_t(v)$ is equal to the following :
$$\frac{1}{m_t}\int_{w_0\in \Lin_0(F_\rho)}\prod_{\alpha\in \Sigma^+}(2\sinh(\alpha(v+w_0+tv_F)))^{m_\alpha}\1_{S}(v+w_0+tv_F)\mathrm{d}\Leb (w_0).$$

After a change of variable $tw=w_0$ we obtain:
$$ \int_{w\in \Lin_0(F_\rho)}\frac{t^{\dim F_\rho}}{m_t}\prod_{\alpha\in \Sigma^+}(2\sinh(\alpha(v+t(w+v_F))))^{m_\alpha}\1_{S}(v+t(w+v_F))\mathrm{d}\Leb (w).$$

We want to apply the dominated convergence to this integral, for $v$ fixed, using the estimate of $m_t$ of Lemma \ref{lemma: volume of balls}. First note that for $v$ fixed, $\1_{S}(v+t(w+v_F))\leq \1_{\mathfrak{a}^++B^N}(w+v_F)$ for all $t$ such that $N(-v)\leq t$, where $\mathfrak{a}^++B^N$ is the $1$-neighborhood of $\mathfrak{a}^+$ for $N$. 
Using this plus the control $\Phi\leq 1$ as in the proof of Lemma \ref{lemma: volume of balls} allows us to apply the dominated convergence theorem to the previous integral, for $v$ fixed, as one can bound the integral by:
$$\frac{1}{c}e^{2\rho(v)}\text{Vol}\left((\mathfrak{a}^++B^N)\cap (F+\Lin_0(F_\rho)\right).$$

Once $v$ is fixed, $\1_{S}(v+t(w+v_F))$ converges for almost all $w\in \Lin_0(F_\rho)$ towards $\1_{F_\rho}(w+v_F)$ by Lemma \ref{lem:LittleLemS}

Hence the previous integral, which depends on a parameter $v \in \mathfrak{a}$, converges to the following expression when $t\to +\infty$ via the dominated convergence theorem, as in the proof of Lemma \ref{lemma: volume of balls}:
$$\frac{1}{c}\int_{w\in \Lin_0(F_\rho)} e^{2\rho(v)} {\prod_{\alpha\in \Span(\Theta_F^{\fixed})}}(1-e^{-2\alpha(v)})^{m_\alpha}\mathrm{d}\Leb w), $$ 
which can be simplified to:
$$\text{Vol}(F_\rho)\frac{e^{2\rho(v)}}{c}\prod_{\alpha\in \Span(\Theta_F^{\fixed})}(1-e^{-2\alpha(v)})^{m_\alpha}.$$

\medskip

Finally we compute the measure of $C_{-\infty,R}^o$, for $R \in \R$, with respect to $\mu_N^o$:

\begin{align*}
    & \frac{1}{c}\int_{v\in \Lin_0(F_\rho)^\perp}e^{2\rho(v)}\1_{\lbrace h_{F_\rho}\leq R\rbrace\cap H_{F_\rho}}(v) \text{Vol}(F_\rho) {\prod_{\alpha\in \Span(\Theta_F^{\fixed})}}(1-e^{-2\alpha(v)})^{m_\alpha}\mathrm{d}\lambda(v) \\
    = & \frac{e^{2\rho(Rf_0)}}{c}\int_{v\in \Lin_0(F_\rho)^\perp}e^{2\rho(v)}\1_{\lbrace h_{F_\rho}\leq 0\rbrace\cap H_{F_\rho}}(v) \text{Vol}(F_\rho) {\prod_{\alpha\in \Span(\Theta_F^{\fixed})}}(1-e^{-2\alpha(v)})^{m_\alpha}\mathrm{d}\mathcal{L}(v) \\
    = & e^{2\rho_{\max}R}.
\end{align*}

\end{proof}

\begin{proof}[Concluding the proof of Theorem \ref{thm: Convergence measure}]

We already computed the limit of the projected measures $\pi_\ast\nu_t^o$ where $\nu^o_t$ was defined in (\ref{eqn: nuto}). 
To conclude the proof, we first check that $\mu_N^o$ is the limit of the measures $\nu_t^o$, and then we check that the differences $\mu_t^o-\nu_t^o$, which are positive measures, converge to the zero measure, so we have computed the limit of $\mu_t^o$.

\medskip

Let $f$ be a continuous and compactly supported function on $\overline{\mathcal{D}^N(\X)}/K$.
We write $\pi \colon \mathcal{D}(X)/K \to \mathcal{O}_{F_\rho}/K$ using the identifications:
    \[ \overline{\mathfrak{a}^+} \times \R \cong  \mathcal{D}(X)/K \text{ and } H_{F_\rho}/\Lin_o(F_\rho) \cong  \mathcal{O}_{F_\rho}/K.\]
Let $g=|f-f\circ \pi|$, which is a measurable function defined $\nu_t^o$ almost everywhere.

We already proved in Proposition \ref{prop: vague limit of measures in flat} that :

$$\int_{\overline{\mathcal{D}^N(\X)}/K} f\circ\pi\mathrm{d}\nu^o_t = \int_{\overline{\mathcal{D}^N(\X)}/K} f\mathrm{d}(\pi_*\nu^o_t)\to_{t\to +\infty}\int f \mathrm{d}\mu^o_N .$$

We need to prove that:
$$ \int_{\overline{\mathcal{D}^N(\X)}/K} g \mathrm{d}\nu^o_t\to_{t\to +\infty} 0. $$

Fix an $\epsilon >0$ and choose $r,R\in \mathbb{R}$ so that $C^o_{r,R}/K$ contains the supports of $f$ and $f \circ \pi$ in $S \times \R$. 

We find a compact $E_\infty \subset H_{F_\rho}/\Lin_0(F_\rho)\cong \mathcal{O}_{F_\rho}/K$ such that $\mu_N^o( E_\infty\cap C^o_{-\infty,R}/K)\geq e^{2\rho_{\max}R}-e^{2 \rho_{max}r} - \epsilon$. 
This exists since $\mu^o_N(C^o_{-\infty,R}/K)=e^{2\rho_{\max}R}$ by Proposition \ref{prop: vague limit of measures in flat}. 
Moreover we may suppose that $\mu_N^o(\partial E_\infty)=0$, and also that $r$ is small enough that $e^{2 \rho_{max}r}< \epsilon$. 

Define $E=\pi^{-1}(E_\infty)\cap (\overline{\mathfrak{a}^+} \times \R_+) \cong \mathcal{D}^N(\X)/K$, meaning the pre-image of $E_\infty$, restricted to pairs $(v,s)$ with $s\geq 0$.
Note that for each $t \in \R$, the intersection of $E$ with $\overline{\mathfrak{a}^+} \times \{t\}$ is compact, and the intersection is empty for $t<0$.

\medskip

Now we show that the closure $\overline{E}$ of $E$ in $\overline{\mathcal{D}^N(\X)}$ is included $E \cup E_\infty$.
For any sequence $(v_n,s_n)$ in $E$, the sequence $[v_n - s_n v_F]$ is in the compact set $E_\infty$, so up to subsequence $[v_n-sv_F]$ converges to some $[v_\infty]\in E_\infty$. 
If $s_n$ converges then $[v_n]$ is bounded in $\mathfrak{a}/\Lin_0(F_\rho)$, so $(\rho(v_n))$ is bounded, $v_n$ is in $\overline{\mathfrak{a}^+}$. 
Hence $(v_n)$ is bounded in $\mathfrak{a}$ and converges to a point in $E$.
If $s_n$ diverges, it goes to $+\infty$ as $s_n\geq 0$. 
Then by Proposition \ref{prop: identifying a/Lin0(F)/W_F^fixed with O_F/K} (and remark \ref{remark: vF vs f0}) the sequence converges to a point in $E_\infty$. 
Hence $\overline{E}$ is compact in $\overline{\mathcal{D}^N(\X)}$. 

\medskip

We split the following integral: 
$$\int_{\overline{\mathcal{D}^N(\X)}/K} g \1_{C^o_{-\infty,R}}\mathrm{d}\nu^o_t\leq \int_{E} g \1_{C^o_{-\infty,R}}\mathrm{d}\nu^o_t+\int_{E^c} g \1_{C^o_{-\infty,R}} \mathrm{d}\nu^o_t .$$

On $E$, one has $g(x)\to 0$ uniformly as $x\to +\infty$, again by Proposition \ref{prop: identifying a/Lin0(F)/W_F^fixed with O_F/K} and by the uniform continuity of $f$ on the compact $\overline{E}$. 

\medskip

Now on $E^c$, we show that $\nu_t^o(E^c\cap C^o_{-\infty,R}\cap S)\leq 3\epsilon $ for $t$ large enough, which controls the previous integral on $E^c$ for $t$ large enough (note  that we use here the fact that the support of $\nu^o_t$ is on $S\cap( \overline{\mathfrak{a}^+}\times \lbrace t\rbrace)$).
We use the inclusion $E^c\cap C^o_{-\infty,R}\cap S\subset \pi^{-1}(C^o_{-\infty,R}/K\setminus E_\infty)$. 
Hence the measure $\nu^o_t(E^c\cap C^o_{-\infty,R})$ is less than $\pi_*\nu^o_t(C^o_{-\infty,R}/K\setminus E_\infty)$. 
For $t$ large enough, this measure is $2\epsilon$-close to $\mu^o_N(C^o_{-\infty,R}/K\setminus E_\infty)$. 
Here we use that $r$ satisfies $e^{2\rho_{\max}r}<\epsilon$. 
By Lemma \ref{lemma: volume of balls} $\nu_t(C^o_{-\infty,R})\leq \epsilon$ for all $t$ large enough. 
Since $C^o_{r,R}$ is compact, the vague convergence of measures from  Proposition \ref{prop: vague limit of measures in flat} show that $\pi_*\nu^o_t(C^o_{r,R}/K\setminus E_\infty)$ converges to $\mu^o_N(C^o_{r,R}/K\setminus E_\infty)\leq \epsilon$ (we have chosen $E_\infty$ so that $\mu^o_N(\partial E_\infty)=0$). 

Finally $\mu^o_N(C^o_{-\infty,R}/K\setminus E_\infty)\leq \epsilon$ by construction of $E_\infty$, which concludes the proof that $\nu_t^o(E^c\cap C^o_{-\infty,R})\leq 3\epsilon $.
Hence the previous integral over $E^c$ is less than $3 \epsilon\sup(g)$ for $t$ large enough. We have this for all $R$ and $\epsilon>0$.

This concludes the proof of the fact that $\nu_t^o$ converges to $\mu_N^o$.

\medskip

Finally we compute for all $R>0$ the limit of $(\mu_t^o-\nu_t^o)(C^o_{-\infty,R})$. By the asymptotic computation of the volume of balls, $\mu_t^o(C^o_{-\infty,R})=\frac{m_{t+R}}{m_t}$ which converges to $e^{2\rho_{\max} R}$. But for all $r\in \R$, the limit of $\nu^o_t(C^o_{r,R})$ is equal to $\mu_N^o(C^o_{r,R})$ so the limit of $\nu^o_t(C^o_{-\infty,R})$ is equal at least to $\mu_N^o(C^o_{-\infty,R})$, which is equal to $e^{2\rho_{\max} R}$ by Proposition \ref{prop: vague limit of measures in flat}. This suffices to prove the vague convergence of $\mu_t^o-\nu_t^o$ to zero as any compact set in $\overline{\mathcal{D}^N(\X)}/K$ is contained in some of these cylinders.
\end{proof}

\subsection{Invariant measures on other strata}

As is well-known, invariant measures on homogeneous spaces come about in the following way.

\begin{prop}
    Let $G/H$ be a  homogeneous space with $H$ having only finitely many connected components. 
    It admits a $G$-invariant non-zero and locally finite measure if and only if for all $h\in H$, the trace of $ad_h$ on $\mathfrak{g}/\mathfrak{h}$ vanishes, in which case this measure comes from a smooth volume form and is unique up to multiplication by a scalar.
\end{prop}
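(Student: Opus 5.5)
This is the classical criterion for invariant measures on homogeneous spaces. The plan is to reduce it to a statement about invariant volume forms and modular functions. The statement as written should read: the \emph{absolute value of the determinant} of $\mathrm{Ad}_h$ on $\mathfrak{g}/\mathfrak{h}$ equals $1$ for all $h\in H$. For $h$ in the identity component this is equivalent to $\Tr(\mathrm{ad}_X)=0$ on $\mathfrak{g}/\mathfrak{h}$ for $X\in\mathfrak{h}$. Since $H$ has finitely many components, the determinant condition on the finite group of components is automatic up to sign, because a homomorphism from a finite group to $\R_{>0}$ is trivial. So I will prove the version
\[ \Delta(h):=\abs{\det(\mathrm{Ad}_h|_{\mathfrak{g}/\mathfrak{h}})}=1 \quad\text{for all } h\in H, \]
and note that it is equivalent to the trace condition on $\mathfrak{h}$ by the finite-components remark.

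\emph{Sufficiency and smoothness.} A $G$-invariant smooth density on $G/H$ is determined by its value at the base point $eH$. That value is a density on $T_{eH}(G/H)\cong\mathfrak{g}/\mathfrak{h}$, and it must be invariant under the isotropy action of $H$, which is $\mathrm{Ad}_h$ on $\mathfrak{g}/\mathfrak{h}$. A density on a vector space is multiplied by $\Delta(h)$ under $\mathrm{Ad}_h$. So a nonzero invariant density at $eH$ exists iff $\Delta\equiv1$, and it is then unique up to scalar. Translating by $G$ gives a well-defined smooth invariant density, which is well defined precisely because of this $H$-invariance. I would cite the standard formulation (e.g.\ Raghunathan or Knapp) rather than redo it.

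\emph{Necessity and uniqueness among all locally finite measures.} This is the main obstacle: an arbitrary locally finite invariant measure need not a priori be smooth. I would invoke the Weil/Mackey criterion: $G/H$ carries a nonzero locally finite (Radon) $G$-invariant measure iff $\Delta_G|_H=\Delta_H$, in which case it is unique up to scalar. Uniqueness comes from the quotient integral formula $\int_G f=\int_{G/H}\int_H f(gh)\,dh\,d\mu$ together with uniqueness of Haar measure on $G$. Then one computes $\Delta_G(h)=\abs{\det\mathrm{Ad}_G(h)}^{-1}$ and $\Delta_H(h)=\abs{\det\mathrm{Ad}_H(h)}^{-1}$, and the ratio is $\abs{\det\mathrm{Ad}_h|_{\mathfrak{g}/\mathfrak{h}}}$ by multiplicativity of the determinant on the invariant flag $\mathfrak{h}\subset\mathfrak{g}$. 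This gives the equivalence with $\Delta\equiv1$. Uniqueness then forces the Radon measure to coincide with the smooth one constructed above, which proves that the measure comes from a smooth volume form.

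Finally, I will pass from $\Delta\equiv1$ on $H$ to the trace formulation. On $H^0$ we have $\Delta(\exp X)=e^{\Tr(\mathrm{ad}_X|_{\mathfrak{g}/\mathfrak{h}})}$. The component group is finite, so $\Delta$ restricted to it is trivial once it is trivial on $H^0$. This is the only place the finitely-many-components hypothesis is used.
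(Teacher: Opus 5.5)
Your proposal is correct and follows essentially the paper's route: an $H$-invariant density at $eH$ translated around by $G$ for sufficiency, and for necessity, uniqueness and smoothness the combination of an invariant measure on $G/H$ with Haar measure on $H$ into a Haar measure on $G$, which is exactly the Weil criterion $\Delta_G|_H=\Delta_H$ you cite. Working with densities, and noting that a homomorphism from the finite group $H/H^0$ to $\R_{>0}$ is trivial, slightly improves on the paper's averaging $\sum_i h_i\cdot\omega_H$ of volume forms, which can cancel to zero when some component reverses the orientation of $\mathfrak{g}/\mathfrak{h}$.
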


\begin{proof}
    Suppose that the condition is satisfied. Pick any volume form $\omega_H$ of $\mathfrak{g}/\mathfrak{h}\simeq T_H G/H$. For any $g\in G$, define $\omega_{gH}$ to be the pushforward by left multiplication by $g$ of $\omega_H$. 
    When $H$ is connected, $\omega_H$ is $H$-invariant exactly when our assumption on traces holds.
    When $H$ has finitely many connected components $H_i$, $i\in I$, we pick any $h_i\in H_i$ for $i\in I$ and consider the volume form $\sum_{i\in I}h_i\cdot \omega_H$. This volume form is $H$-invariant, and so induces a $G$ invariant measure on $G/H$.
    
\medskip
    Conversely any $G$-invariant measure on $G/H$, together with the $H$-invariant measure on $H$ define a $G$-invariant measure on $G$, which must be the smooth Haar measure up to a scalar. 
\end{proof}

We now apply this result to the case when $H=R_F$. 
Recall that $\Aff(F)$ denotes the linear subspace of $\mathfrak{a}$ parallel to the affine span of $F$.
\begin{prop}
\label{prop: Existence measure}
    The space of horofunctions of a given type $\mathcal{O}_F\simeq G/R_F$ admits a $G$-invariant measure if and only if the following linear form on $\mathfrak{a}$ vanishes on $\Span_0(F)$:
    \begin{equation}\label{eqn: form on aff(F)}
        \sum_{\alpha \in \Sigma^+ \setminus \text{Span}(\Theta_F^\text{flip})} m_{\alpha}\alpha.    
    \end{equation}
\end{prop}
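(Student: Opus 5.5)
We apply the preceding criterion with $H=R_F$. The group $R_F$ has finitely many connected components: it is a closed subgroup of $P_\Theta$ with Lie algebra given by Proposition \ref{prop:Lie algebra of stabilizer}. So $\mathcal{O}_F$ carries a $G$-invariant measure if and only if $\Tr(\mathrm{Ad}_h|_{\mathfrak{g}/\mathfrak{r}_F})=0$ for every $h\in R_F$, where $\mathfrak{r}_F=\Lie(R_F)$. For general $h$ the correct statement is that $|\det \mathrm{Ad}_h|_{\mathfrak{g}/\mathfrak{r}_F}|=1$. At the Lie algebra level this is the vanishing of $\Tr(\mathrm{ad}_X|_{\mathfrak{g}/\mathfrak{r}_F})$ for $X\in\mathfrak{r}_F$, and this is the form in which we use the criterion.

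\textbf{Step 1: the quotient as an $\mathfrak{a}$-module.} Using Proposition \ref{prop:Lie algebra of stabilizer}, we list the complement of $\mathfrak{r}_F$ in $\mathfrak{g}$ as a sum of $\mathfrak{a}$-weight spaces. The complement of $\Lin_0(F)$ in $\mathfrak{a}$ contributes only zero weights. The negative root spaces $\mathfrak{g}_{-\alpha}$ for $\alpha\in\Sigma^+\setminus\Span(\Theta_F^{\stable})$ contribute weights $-\alpha$. For $\alpha\in\Span(\Theta_F^{\fixed})$, only $\mathfrak{k}_\alpha$ lies in $\mathfrak{r}_F$, so the quotient contains a copy of $\mathfrak{p}_\alpha$, identified with $(\mathfrak{g}_\alpha\oplus\mathfrak{g}_{-\alpha})/\mathfrak{k}_\alpha$. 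The roots in $\Span(\Theta_F^{\flipped})$ contribute nothing.

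\textbf{Step 2: the condition from $\mathfrak{a}$.} Take $X\in\Lin_0(F)=\mathfrak{r}_F\cap\mathfrak{a}$. The weight spaces above are not $\mathrm{ad}_X$-invariant modulo $\mathfrak{r}_F$ individually, but $\mathrm{ad}_X$ preserves the filtration by $\mathfrak{a}$-weights. Computing on the associated graded, the trace is
\[
-\sum_{\alpha\in\Sigma^+\setminus\Span(\Theta_F^{\stable})} m_\alpha\,\alpha(X)\;+\;\sum_{\alpha\in\Span(\Theta_F^{\fixed})}\text{(contribution of } \mathfrak{p}_\alpha).
\]
Each root of $\Span(\Theta_F^{\fixed})$ lies in $\Span_0(F^\vee)=\Lin(F)^\vee$, so it vanishes on $\Lin_0(F)$. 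Hence those terms are zero, and we may add them back into the sum. This gives exactly minus the form \eqref{eqn: form on aff(F)} evaluated on $X$, so vanishing on $\Span_0(F)$ is necessary.

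\textbf{Step 3: sufficiency.} We check the remaining generators of $\mathfrak{r}_F$.
\begin{itemize}
\item $\mathfrak{m}$ and the $\mathfrak{k}_\alpha$ lie in a compact subalgebra and the relevant group is compact, so their traces vanish.
\item The positive root spaces $\mathfrak{g}_\alpha$ with $\alpha\notin\Span(\Theta_F^{\stable})$ act nilpotently on the filtered quotient, since they strictly raise weight, so their trace is zero.
\item $\mathfrak{g}_F^{\flipped}$ is semisimple, hence perfect, so it has zero trace on any representation.
\end{itemize}
By the Lie algebra description, these together with $\Lin_0(F)$ generate $\mathfrak{r}_F$, so the only condition is the one from Step 2. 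To pass to the full group we use that $R_F$ is generated by $R_F^0$ and $K\cap P_\Theta$, which is compact; on compact elements $|\det|=1$ automatically.

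\textbf{Main obstacle.} The delicate point is making the trace computation rigorous on the non-$\mathrm{ad}$-invariant complement, which is why we pass to the weight filtration, and likewise the nilpotence argument for the positive root spaces. We also need to confirm that the fixed-root terms may be inserted into or dropped from the sum without changing its restriction to $\Span_0(F)$; this follows from $\Span_0(F)\subset\Lin(F)\perp\mathfrak{a}^{\fixed}_F$.
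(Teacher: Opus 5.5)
Your proposal is correct. Its core computation is the same as the paper's: in Step 2, for $X\in\Lin_0(F)$ the trace on $\mathfrak g/\Lie(R_F)$ is $-\sum_{\alpha\in\Sigma^+\setminus\Span(\Theta_F^{\stable})}m_\alpha\alpha(X)$, and the roots of $\Span(\Theta_F^{\fixed})$ can be added back because they vanish on $\Lin(F)$.

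Where you differ is in how a general element of $\Lie(R_F)$ is reduced to $\Lin_0(F)$.
\begin{itemize}
\item The paper writes an arbitrary $h_0\in\Lie(R_F)$ as $h+k$ with $h\in\mathfrak p$ and $k\in\mathfrak k$. It then discards $k$ and conjugates $h$ into $\Lin_0(F)$.
\item You instead use that $X\mapsto\Tr(\mathrm{ad}_X|_{\mathfrak g/\Lie(R_F)})$ is linear, indeed a character of $\Lie(R_F)$. You then check it on a spanning set: the compact piece $\mathfrak m\oplus\bigoplus\mathfrak k_\alpha$, the nilpotent positive root spaces, the perfect subalgebra $\mathfrak g_F^{\flipped}$, and $\Lin_0(F)$.
\end{itemize}
Your route is longer but more robust. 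The $\mathfrak k$- and $\mathfrak p$-components of an element of $\Lie(R_F)$ need not preserve $\Lie(R_F)$. For instance, if $Y\in\mathfrak g_\alpha$ with $\alpha\in\Sigma^+\setminus\Span(\Theta_F^{\stable})$, then $Y\pm\theta Y$ have a component in $\mathfrak g_{-\alpha}$. So their adjoint actions do not individually descend to the quotient, and your version needs no conjugation step. You also correctly phrase the group-level condition through $|\det\mathrm{Ad}|$ and treat the component group separately. The paper is shorter but glosses over these points.

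Two small remarks.
\begin{itemize}
\item The ``delicate point'' you flag is not delicate. For $X\in\mathfrak a$, $\mathrm{ad}_X$ is diagonalizable on $\mathfrak g$ and preserves $\Lie(R_F)$. Hence its trace on the quotient is $\Tr(\mathrm{ad}_X|_{\mathfrak g})-\Tr(\mathrm{ad}_X|_{\Lie(R_F)})=-\Tr(\mathrm{ad}_X|_{\Lie(R_F)})$. Likewise, an operator nilpotent on $\mathfrak g$ is nilpotent on any invariant subspace and on the quotient. No filtration is needed.
\item Being a closed subgroup of $P_\Theta$ does not by itself give finitely many components. The fact you actually use is $R_F=(K\cap P_\Theta)R_F^0$. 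It follows because $K\cap P_\Theta\subset R_F$ meets every component of $P_\Theta$, and $P_\Theta/R_F\cong\X_F$ is simply connected. Together these force $R_F\cap P_\Theta^0$ to be connected.
\end{itemize}
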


\begin{proof}
Let $h_0\in R_F$ be an element, we want to compute the trace of $\text{ad}_h$ on :
$$\mathfrak{g}/\text{Lie}(R_F)=\mathfrak{a}/\Span_0(F) \oplus \bigoplus_{\alpha \in \Sigma^+ \setminus \text{Span}(\Theta_F^\text{stable})}\mathfrak{g}_{-\alpha} \bigoplus_{\alpha \in \text{Span}(\Theta_F^\text{fixed})}\mathfrak{p}_\alpha.$$
We can write $h_0=h+k$ with $h\in \mathfrak{p}$, $k\in \mathfrak{k}$ since the trace of the adjoint action of elements in $\mathfrak{k}$ on a y invariant subspace of $\mathfrak{g}$ vanishes. Hence we it suffices to study $h$. Moreover up to conjugating $h$, one can assume that $h\in \Span_0(F)$. 
In this case, the action of $\text{ad}_h$ is diagonal in the previous decomposition. 
Recall that $\alpha \in \Theta^{\rm fixed}_F$ if and only if $\Lin(F) \subset \ker \alpha$, so
for every $\alpha \in \text{Span}(\Theta_F^\text{fixed})$, one has $\alpha(h)=0$. Hence the trace is equal to :
$$\sum_{\alpha \in \Sigma^+ \setminus \text{Span}(\Theta_F^\text{stable})} m_\alpha\alpha(h)=\sum_{\alpha \in \Sigma^+ \setminus \text{Span}(\Theta_F^\text{flip})} m_{\alpha}\alpha(h).$$
\end{proof}

One can always find some orbits satisfying this criterion. 
Let $F_{\rho}$ be the face of the unit ball of $N$ on the closure of which the linear form $\rho$ is maximal.

\begin{corollary}
\label{cor: O_F_rho has invariant measure}
    If $F$ is a subface of $F_\rho$, $\mathcal{O}_F\simeq G/R_F$ admits a $G$-invariant measure.
\end{corollary}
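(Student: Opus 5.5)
By Proposition \ref{prop: Existence measure}, it suffices to show that the linear form $\sum_{\alpha \in \Sigma^+ \setminus \Span(\Theta_F^{\flipped})} m_\alpha \alpha$ vanishes on $\Span_0(F)$. The plan has two steps: first show that $\Theta_F^{\flipped}=\emptyset$ for every subface $F$ of $F_\rho$, and then identify the form with $2\rho$ and check that $2\rho$ vanishes on $\Span_0(F)$.

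\textbf{First step: $\Theta_F^{\flipped}$ is empty.} Suppose $\alpha\in\Theta_F^{\flipped}$. Then the reflection $\sigma_\alpha$ preserves $F$ and $\alpha^\vee\in\Span_0(F)$. In particular, $F$ contains points $f$ with $\alpha(f)<0$, because $\sigma_\alpha$ acts on $F$ by flipping it across the wall $\ker\alpha$. We may take $F$ to meet $\overline{\mathfrak{a}^+}$, or more precisely work with $F\subset\overline{F_\rho}\subset\overline{\mathfrak{a}^+}$ (recall that $F_\rho$ lies in $\overline{\mathfrak{a}^+}$). Since $\alpha$ is a root, it has constant sign on $\overline{\mathfrak{a}^+}$: if $\alpha$ is positive, then $\alpha\ge 0$ on $F$. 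Because $\sigma_\alpha$ preserves $F$, we also get $\alpha\le0$ on $F$, hence $\alpha=0$ on $F$. That would mean $\alpha\in\Span_0(F^\vee)$, so $\alpha$ is fixed rather than flipped, and the two cases are exclusive since $\alpha(\alpha^\vee)=2\neq0$. This contradiction shows $\Theta_F^{\flipped}=\emptyset$, which is consistent with the remark already made in the paper.

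\textbf{Second step: identifying the form with $2\rho$.} With $\Theta_F^{\flipped}$ empty, the sum in question runs over all of $\Sigma^+$. This gives $\sum_{\alpha\in\Sigma^+} m_\alpha\alpha = 2\rho$, where the roots $2\alpha$ are counted with their own multiplicities as separate positive roots, in agreement with the definition \eqref{eqn: weyl vector}.

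\textbf{Third step: $\rho$ vanishes on $\Span_0(F)$.} Since $F$ is a subface of $F_\rho$, we have $\overline F\subset\overline{F_\rho}$, and $\rho$ is constant, equal to $\rho_{\max}$, on $\overline{F_\rho}$. Hence $\rho$ is constant on $F$, so $\rho(f-f')=0$ for all $f,f'\in F$. By Definition \ref{defn:Lin and Lin_0}, this is exactly the statement that $\rho$ vanishes on $\Span_0(F)$. Combined with the second step, the form $2\rho$ vanishes on $\Span_0(F)$, and Proposition \ref{prop: Existence measure} gives the invariant measure.

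The main obstacle is the first step: the claim that no simple root flips a subface of $F_\rho$. This relies on $\overline{F_\rho}\subset\overline{\mathfrak{a}^+}$, which in turn follows from the strict dominance of $\rho$ (Proposition \ref{prop: rho in terms of fundamental weights}). For $\rho$ maximized among the Weyl orbit of $a$, $\rho(wa)\le\rho(a)$ holds with equality exactly when $wa=a$ for $a\in\overline{\mathfrak{a}^+}$, which forces the whole closed face into the closed chamber.
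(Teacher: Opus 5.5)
Your proof is correct and has the same three-step structure as the paper's: show $\Theta_F^{\flipped}=\emptyset$, identify the form in Proposition~\ref{prop: Existence measure} with $2\rho$, and note that $2\rho$ vanishes on $\Span_0(F)$ because it is constant on $\overline{F_\rho}$. The paper leaves that last step implicit; you make it explicit.

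The difference lies in how you prove the first step.

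\textbf{The paper's argument.} If $\alpha\in\Theta_F^{\flipped}$, then $\alpha^\vee\in\Span_0(F)\subset\Span_0(F_\rho)$. Since $\rho$ is constant on $F_\rho$, this gives $\rho(\alpha^\vee)=0$. That contradicts $\rho(\alpha^\vee)=m_\alpha+2m_{2\alpha}>0$ from Proposition~\ref{prop: rho in terms of fundamental weights}.

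\textbf{Your argument.} You first establish $\overline{F_\rho}\subset\overline{\mathfrak{a}^+}$. This rests on the same strict dominance of $\rho$, together with the standard fact that $a-wa$ is a nonnegative combination of simple coroots when $a$ is dominant. You then observe that a simple reflection preserving a set inside the closed chamber forces $\alpha\equiv 0$ on that set. This is incompatible with $\alpha^\vee\in\Span_0(F)$, since $\alpha(\alpha^\vee)=2$.

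\textbf{Comparison.} The paper's route is a one-line pairing argument and never needs to locate $F_\rho$ relative to the Weyl chamber. Yours costs the extra containment lemma. In exchange it proves something more general: no face contained in $\overline{\mathfrak{a}^+}$ has a flipped simple root.

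A minor point: once you note that $F$ contains points with $\alpha(f)<0$, the containment $F\subset\overline{\mathfrak{a}^+}$ already gives the contradiction. Your subsequent derivation of $\alpha\equiv0$ on $F$ is a redundant but harmless second route to the same conclusion.
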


\begin{proof}
Recall that for a face $F$ of $B^N$, $\Theta_F^{\flipped}$ denotes the set of simple roots $\alpha\in \Delta$ whose co-root $\widehat\alpha$ is in $\Aff(F)$.
Note that when $\Theta_F^\text{flip}$ is empty, the linear form \eqref{eqn: form on aff(F)} is $2\rho$.

We first show that for a subface $F$ of $F_\rho$, it holds that $\Theta_{F}^{\flipped} = \emptyset$.
Indeed, if there is some simple root $\alpha\in \Delta$ whose co-root $\widehat{\alpha}$ is in $\Aff(F)\subset \Aff(F_\rho)$, then $\rho(\widehat{\alpha})=0$.
This is not possible since $\rho$ lies in the model Weyl chamber in $\mathfrak{a}^*$ by Proposition \ref{prop: rho in terms of fundamental weights}.

\medskip
Now that $\Theta_F^\text{flip}=\emptyset$, one can directly apply Proposition \ref{prop: Existence measure} to conclude.
\end{proof}

Note that other orbits than the one associated to $F_\rho$ can admit invariant measures. 

\begin{example}
\label{exmp:Existence Invariant Measures}
If $G=G_1\times G_2$, and $N$ a polyhedral norm on the Cartan subalgebra for $G$ defined as $\max(N_1,N_2)$ for polyhedral norms on the Cartan subalgebras of $G_1,G_2$, then the face $F_\rho^{1}$ maximizing the sum of all positive roots of $G_1$ admits an invariant measure. Indeed $\Sigma^+ \setminus \text{Span}(\Theta_{F_\rho^1}^\text{flip})=\Sigma^+_{G_1}$ and hence the sum \eqref{eqn: form on aff(F)} is maximized on $F_\rho^1$.

\medskip

    For $G=\SL(n,\R)$ and $N=N_{\omega_1}$ as in Section \ref{subsec:Symmetric spaces examples}, One can check that all the faces $F_i$, $1\leq i \leq n-1$ satisfy this property, and hence admit $G$-invariant measures. Indeed for $F=F_i$, $\Sigma^+ \setminus \text{Span}(\Theta_F^\text{flip})$ is the sum \eqref{eqn: form on aff(F)} equals a linear form $2\rho_i+\nu$ where $\rho_i$ is the half sum of all positive roots of a copy of $\SL(i,\R)$ in $\SL(n,\R)$, these roots being of the form $\lambda_j-\lambda_{k}$, $1\leq j,k\leq i$, and $\nu$ a sum of roots of the form $\lambda_j-\lambda_{k}$, $1\leq j\leq i<k$. We see that this linear form $2\rho_i+\nu$ is maximized on $F_i$.
\end{example}

\section{Ideal Poisson Voronoi tessellations}
\label{sec:IdealPoisson}
In this section we study the properties of the ideal Poisson--Voronoi tessellations associated to Finsler metrics on symmetric spaces. 
These are the tessellations associated to the Poisson point process on $\mathcal{O}_{F_\rho} \subset \mathcal{H}^N(\X)$ with respect to the limiting measure $\mu_N$ constructed in Section~\ref{sec:Measures}.

\subsection{Poisson point processes}
We recall some background on Poisson point processes. 
We refer the reader to \cite{LP17} as a reference for the results we recall. Let $(X,\mu)$ be a locally compact, second-countable metrizable space equipped with a Radon measure with no atoms. A random variable $\Pi$ with values in the set $\mathcal{S}$ of locally finite subsets of X \footnote{The $\sigma$-algebra on $\mathcal{S}$ that we consider is the one generated by the sets $\lbrace A\in \mathcal{S} \mid k=|A\cap U|\rbrace $ for all measurable set $U\subset X$ and all $k\geq 0$.} is called a \emph{Poisson point process with intensity $\mu$} if it satisfies the following two axioms:
\begin{itemize}
    \item[-] for all disjoint measurable sets $U,V\subset X$, the random variables $U\cap \Pi$ and $V\cap \Pi$ are independent,
    \item[-] for all measurable sets $U$, $\mathbb{E}(|U\cap \Pi|)=\mu(U)$.
\end{itemize}

Such a random variable exists for all Radon measure with no atoms $\mu$ on $X$, and the law of this variable is determined by $\mu$. More precisely for all measurable sets $U$ the random variable $|U\cap\Pi|$ follows a Poisson law of parameter $\mu(U)$, meaning that for all $k\geq 0$, $\mathbb{P}(|U\cap\Pi|=k)=e^{-\mu(U)}\frac{\mu(U)^k}{k!}$.

\medskip

Poisson point processes are very natural, and satisfy the Mecke formula, which we recall in Proposition~\ref{prop:Mecke}. We give the standard and the bivariate version here.

\begin{prop}\label{prop:Mecke}
Let $\Pi$ be a Poisson point process with intensity $\mu$. For every measurable positive function $\phi:\mathcal{S}\times X\to \R_{\geq 0}$ on the space of locally finite collections of points:
    $$\mathbb{E}\left(\sum_{f\in \Pi}\phi(\Pi,f)\right)=\int \mathbb{E}\left(\phi(\Pi\cup \lbrace f\rbrace,f) \right) \mathrm{d}\mu(f).$$
The bivariate version of this formula states that for every measurable non-negative function $\phi:\mathcal{S}\times X\times X\to \R_{\geq 0}$ on the space of locally finite collections of points: 
    $$\mathbb{E}\left(\sum_{f \ne g\in \Pi}\phi(\Pi,f,g)\right)=\int \int \mathbb{E}\left(\phi(\Pi\cup \lbrace f, g\rbrace,f,g) \right)d\mu(f) \mathrm{d}\mu(g).$$
\end{prop}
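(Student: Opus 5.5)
We prove the univariate Mecke formula by reducing to a generating class of test functions and then computing directly from the Poisson law. The bivariate version follows by applying the univariate formula twice.

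\emph{Step 1: reduction to a generating class.} Both sides of the univariate formula are additive in $\phi$ and monotone under increasing limits, the left side by monotone convergence. So it suffices to check the formula on a class of indicator functions closed under finite intersections and generating the product $\sigma$-algebra on $\mathcal{S}\times X$. A monotone class ($\pi$--$\lambda$) argument then extends it to all non-negative measurable $\phi$. Since $X$ is second countable and $\mu$ is Radon, $X$ is exhausted by sets of finite measure, so we may also restrict to test functions supported over a set $B\subset X$ with $\mu(B)<\infty$. We take functions of the form
\[
\phi(\Pi,f)=\1_{A}(f)\,\1\{|\Pi\cap U_1|=k_1,\dots,|\Pi\cap U_m|=k_m\},
\]
with $A, U_1,\dots,U_m$ measurable of finite measure. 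Refining the $U_j$ into the atoms of the partition they generate together with $A$, we may assume the $U_j$ are pairwise disjoint and that $A$ is a union of some of them.

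\emph{Step 2: the computation.} Suppose $A=U_1$. On the right side, adding one point $f\in U_1$ raises the count in $U_1$ by one and leaves the other counts unchanged. Using independence and the Poisson laws, the right side becomes
\[
\mu(U_1)\,\mathbb{P}(|\Pi\cap U_1|=k_1-1)\prod_{j\ge2}\mathbb{P}(|\Pi\cap U_j|=k_j).
\]
On the left side, the sum over $f\in\Pi$ gives
\[
k_1\,\mathbb{P}(|\Pi\cap U_1|=k_1)\prod_{j\ge2}\mathbb{P}(|\Pi\cap U_j|=k_j).
\]
The two agree by the identity $k\,e^{-\lambda}\lambda^k/k!=\lambda\, e^{-\lambda}\lambda^{k-1}/(k-1)!$. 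The case where $A$ is a union of several atoms then follows by linearity in $A$.

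\emph{Step 3: the bivariate formula.} Apply the univariate formula to
\[
\psi(\Pi,f)=\sum_{g\in\Pi\setminus\{f\}}\phi(\Pi,f,g).
\]
This produces $\int\mathbb{E}\bigl(\sum_{g\in\Pi}\phi(\Pi\cup\{f\},f,g)\bigr)\,d\mu(f)$, using that almost surely $f\notin\Pi$ for each fixed $f$, since $\mu$ has no atoms. Applying the univariate formula again inside the expectation, for fixed $f$ and with the function $(\Pi,g)\mapsto\phi(\Pi\cup\{f\},f,g)$, gives the claim. Measurability of $\psi$ and Fubini/Tonelli are routine because everything is non-negative.

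The main obstacle is Step 1: the measure-theoretic justification that the class of test functions generates the $\sigma$-algebra on $\mathcal{S}\times X$, and that the monotone class extension is valid when $\mu$ is infinite. The plan is to handle this by localizing to sets of finite measure, where the process is a.s.\ finite, and then passing to the limit by monotone convergence. Since the paper cites \cite{LP17} for this background, one may also simply invoke the standard result there.
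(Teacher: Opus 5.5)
Your argument is correct and is the standard proof. The paper gives no proof of its own and simply recalls the formula from \cite{LP17}, where it is established essentially as you propose: a monotone class reduction localized to sets of finite $\mu$-measure, the direct computation on indicators of count events on disjoint sets via $k\lambda^k/k!=\lambda\cdot\lambda^{k-1}/(k-1)!$, and then an iteration of the univariate formula to get the bivariate one. One point should be stated already in Step~2, not only in Step~3: $\mathbb{P}(f\in\Pi)=0$ for each fixed $f$ because $\mu$ has no atoms, and this is what guarantees that $\Pi\cup\{f\}$ raises the count in $U_1$ by exactly one (with the counting-measure formulation $\eta+\delta_f$ of \cite{LP17} the issue does not arise).
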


This will allow us to verify certain almost sure properties of Poisson point process, by fixing one or two points.

\begin{lem}
\label{lem:MeckeApplied}
    Let $\mathcal{M}$ be a measurable subset of $\mathcal{S}_X \times X \times X$. 
    Let $\Pi$ be a Poisson point process with intensity measure $\mu$ on $X$. 
    Suppose that for $(\mu \times \mu)$-almost all $(f,g) \in X \times X$, one has almost surely that $(\Pi \cup \lbrace f,g\rbrace, f,g) \in \mathcal{M}$. 
    Then almost surely, for all $f \ne g \in \Pi$, $(\Pi,f,g) \in \mathcal{M}$.
\end{lem}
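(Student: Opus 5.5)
The plan is to apply the bivariate Mecke formula (Proposition~\ref{prop:Mecke}) to the indicator of the bad event. Define
\[ \phi(\Pi,f,g) = \1_{\mathcal{M}^c}(\Pi,f,g), \]
where $\mathcal{M}^c$ is the complement of $\mathcal{M}$ in $\mathcal{S}_X\times X\times X$. This function is measurable and non-negative because $\mathcal{M}$ is measurable. The random variable
\[ Z=\sum_{f\neq g\in\Pi}\phi(\Pi,f,g) \]
then counts the ordered pairs of distinct points of $\Pi$ for which the desired property fails.

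Applying the bivariate Mecke formula gives
\[ \mathbb{E}(Z)=\int\int \mathbb{P}\big((\Pi\cup\lbrace f,g\rbrace,f,g)\notin\mathcal{M}\big)\,\mathrm{d}\mu(f)\,\mathrm{d}\mu(g). \]
By hypothesis the integrand vanishes for $(\mu\times\mu)$-almost every $(f,g)$, so $\mathbb{E}(Z)=0$. Since $Z$ takes values in $\lbrace 0,1,2,\dots\rbrace\cup\lbrace\infty\rbrace$ and has zero expectation, $Z=0$ almost surely. This is exactly the statement that almost surely $(\Pi,f,g)\in\mathcal{M}$ for every pair $f\neq g$ in $\Pi$.

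The only point needing care is the measurability of $Z$, that is, of the map $\Pi\mapsto\sum_{f\neq g\in\Pi}\phi(\Pi,f,g)$. This is already implicit in the statement of the Mecke formula, which is asserted for every measurable non-negative $\phi$, so I would simply invoke it. A secondary subtlety is that $\mu$ has no atoms, so $\Pi$ almost surely has no repeated points and the sum over distinct pairs is well defined. With these two remarks in place, the argument is a direct application of Proposition~\ref{prop:Mecke}, and I do not expect any real obstacle.
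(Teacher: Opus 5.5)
Your proposal is correct and is exactly the paper's argument: apply the bivariate Mecke formula (Proposition~\ref{prop:Mecke}) to the indicator of the complement of $\mathcal{M}$, observe that the right-hand side vanishes by hypothesis, and conclude that the nonnegative integer-valued count of bad pairs is zero almost surely. Your remarks on measurability and on the absence of atoms add care but do not change the route.
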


\begin{proof}
    Apply Proposition \ref{prop:Mecke} to the indicator function of the complement of $\mathcal{M}$.
\end{proof}

\subsection{Tessellations of symmetric spaces}

A locally finite \emph{tessellation} of a locally compact metric space $X$, $(\mathcal{C}_h)_{h\in \Pi_0}$, is a countable collection $\mathcal{T}$ of closed sets $\mathcal{C}\subset X$ such that:
\begin{itemize}
\item[-]$X=\bigcup_{\mathcal{C}\in \mathcal{T}}\mathcal{C},$
\item[-]for every distinct $\mathcal{C}_1,\mathcal{C}_2\in \mathcal{T}$, $\mathcal{C}_1\cap \mathcal{C}_2$ has empty interior.
\item[-] for every compact $K\subset X$, $\lbrace \mathcal{C}\in \mathcal{T}\mid \mathcal{C}\cap K\neq \emptyset\rbrace$ is finite.
\end{itemize}
Let $\text{Tess}(X)$ be the space of locally finite tessellations of $\X$.

\medskip

Let $\Pi_0$ be a Poisson point process on $\mathcal{O}_{F_\rho}$ with measure intensity $\mu_N$. The corresponding Poisson--Voronoi tessellation is the collection $(\mathcal{C}_h)_{h\in \Pi_0}$ where for $h\in \Pi_0$:
$$\mathcal{C}_h=\lbrace x\in \X \mid \forall h'\in \Pi_0, h(x)\leq h'(x)\rbrace.$$

\begin{prop}\label{prop: Tessellation}
    Almost surely $(\mathcal{C}_h)_{h\in \Pi_0}$ is a locally finite tessellation of $\X$.
\end{prop}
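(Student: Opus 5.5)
To prove Proposition \ref{prop: Tessellation}, I would check the three axioms of a locally finite tessellation one by one, working throughout on a fixed compact ball $B=B^N_\X(o,r)$. Every such ball is contained in a countable union of these, so it suffices to prove each property almost surely on a fixed $B$ and then take a countable intersection of events.

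\emph{Local finiteness and covering.} All horofunctions are $1$-Lipschitz for $d^N$, so on $B$ each $h\in\Pi_0$ satisfies $|h(x)-h(o)|\le C r$ for some constant $C$ (the constant accounts for the asymmetry of $N$). Consider the random quantity $m=\inf_{h\in\Pi_0}h(o)$. The homogeneity in Theorem \ref{thm: Convergence measure} gives $\mu_N(C^o_{-\infty,R})=e^{2\rho_{\max}R}$. Two consequences follow. First, the number of points with $h(o)\le R$ is Poisson with finite parameter, so it is almost surely finite. Second, this parameter tends to $\infty$ as $R\to\infty$, so $\Pi_0\cap C^o_{-\infty,R}$ is almost surely nonempty for $R$ large. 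Hence $m$ is finite and attained. A cell $\mathcal{C}_h$ can meet $B$ only if $h(x)\le h'(x)$ for some $x\in B$, where $h'$ is a minimizer of $h(o)$; this forces $h(o)\le m+2Cr$. So only the finitely many $h$ in $C^o_{-\infty,m+2Cr}$ can meet $B$, which gives local finiteness. For every $x\in B$, the minimum of $h(x)$ over this finite set is attained, and the minimizer is $\le h'(x)$ for all $h'\in\Pi_0$, since all other $h'$ are larger at $x$. So $x$ lies in some cell, which gives the covering. Each cell is closed, being an intersection of closed sets.

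\emph{Empty interior of intersections.} Any $\mathcal{C}_h\cap\mathcal{C}_{h'}$ is contained in $\Bis(h,h')$. By Theorem \ref{thm:Empty Interior}, the bisector has empty interior whenever $h,h'$ are in generic position. I would then apply the bivariate Mecke formula in the form of Lemma \ref{lem:MeckeApplied}, taking $\mathcal{M}$ to be the set of triples for which the flags of $f,g$ are in generic position. This reduces the problem to showing that $\mu_N\otimes\mu_N$-almost every pair $(f,g)\in\mathcal{O}_{F_\rho}^2$ is generic. For this I would use the fibration $\mathcal{O}_{F_\rho}\to\mathcal{F}_\Theta$ of Proposition \ref{prop:Space horofunctions}. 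The measure $\mu_N$ is $G$-invariant and comes from a smooth volume form, so its pushforward to $\mathcal{F}_\Theta$ is in the smooth measure class. The non-generic pairs of flags form a proper algebraic subvariety, which is a finite union of lower-dimensional Bruhat cells, so this set has measure zero. One technical point is that the pushforward of a $\sigma$-finite measure need not be $\sigma$-finite. I would handle this by restricting to the cylinders $C^o_{r,R}$, which have finite mass, and arguing in local trivializations.

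I expect the main subtlety to be this genericity step: ensuring the reduction is measurable and that the distinct points of $\Pi_0$ really are distinct horofunctions. The latter follows because $\mu_N$ has no atoms.
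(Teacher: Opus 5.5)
Your proposal is correct and follows essentially the same route as the paper: covering and local finiteness come from the almost-sure finiteness and eventual nonemptiness of $\Pi_0\cap C^o_{-\infty,R}$ combined with the Lipschitz property of horofunctions, and the empty interior of pairwise intersections comes from Theorem \ref{thm:Empty Interior}, once almost every pair is shown to be in generic position via the Lebesgue-class projection to the flag manifold. Your version is slightly more careful than the paper's: you invoke the bivariate Mecke formula explicitly, flag the $\sigma$-finiteness issue for the projected measure, and obtain the covering simultaneously for all points of a ball rather than one fixed point at a time.
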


\begin{proof}
    For any point $o\in \X$ recall that given $R\in \R$, $C^o_{-\infty,R}$ is the set of horofunctions $h\in \overline{\mathcal{D}^N(\X)}\supset \mathcal{O}_{F_\rho}$ such that $h(o)\leq R$. We know by Theorem \ref{thm: Convergence measure} that for every $R>0$, $\mu_N(C^o_{-\infty,R})<+\infty$ and that when $R\to +\infty$, $\mu_N(C^o_{-\infty,R})\to +\infty$. For every $R>0$ the random variable $\#\lbrace h\in \Pi_0\mid h(o)\leq R\rbrace$ follows a Poisson law with parameter $\mu_N(C^o_{-\infty,R})$, hence it is almost surely finite, and almost surely it is non zero for $R$ large enough. In particular there exist $h\in \Pi_0$ that minimizes $h(o)$ and hence $o\in \mathcal{C}_h$. t
    This proves that $\X=\bigcup_{h\in \Pi_0}\mathcal{C}_h$.

    \medskip

    The $G$-orbit $\mathcal{O}_{F_\rho}$ in the space of horofunction is a fiber bundle over a flag manifold $\mathcal{F}_{\Delta\setminus {\Theta^{\fixed}_{F_\rho}}}$.
    The projection of the measure is $K$-invariant and hence is comparable with the Lebesque measure. In particular almost surely every pair of points in $\Pi_0$ are opposite.
    Hence by Theorem \ref{thm:Empty Interior} almost surely for every $h,h'\in \Pi_0$, the set $\lbrace x\in \X \mid h(x)=h'(x)\rbrace$ has empty interior. In particular almost surely for all $h\neq h'\in \Pi_0$, $\mathcal{C}_h\cap \mathcal{C}_{h'}$ has empty interior.

    \medskip
    
  If we fix a compact $V\subset \X$ it is contained in the ball centered at some $o\in \X$ with radius $R'>0$. Almost surely $o\in \mathcal{C}_h$ for some $h\in \Pi_0$, and also almost surely $\Pi_0\cap C^o_{-\infty,h(o)+2R'+1}$ is finite. for every $h'\in \Pi_0$ with $h'(o)\geq R'+2h(o)+1$, the $1$-Lipschitz property of $h,h'$ implies that for all $x\in V$, $h'(x)\geq  h(x)+1$ so $x\notin \mathcal{C}_{h'}$, proving that the tessellation is locally finite.
\end{proof}

\subsection{Sides of the ideal Poisson--Voronoi tessellation.}

Given a collection of points $\Pi\subset \overline{\mathcal{D}^N(\X)}$ we define the associated \emph{Voronoi tessellation} $\lbrace\mathcal{C}_h\rbrace_{h\in \Pi}$ so that:
$$\mathcal{C}_h=\lbrace x\in \X\mid \forall h'\in \Pi,\, h'(x)\geq h(x)\rbrace.$$

\begin{thm}
\label{thm:ProperBissectro dichotomy}
   Let $F_\rho$ be the face of the unit ball of $N$ whose closure is the maximal points of the linear form $\rho$. 
   Suppose that $\mathcal{O}_{F_\rho}$ has the proper-on-bisector property, then almost surely for every pair of opposite horofunctions $f\neq g\in \Pi_N$, the corresponding side in the tessellation is bounded:
   $$\mathcal{C}_f\cap \mathcal{C}_g=\lbrace x\in \X\mid f(x)=g(x),\, \forall h\in \Pi_N,\, h(x)\geq f(x)  \rbrace.$$

   Conversely if $\mathcal{O}_{F_\rho}$ does not have the proper-on-bisector property, then almost surely for every pair $f\neq g\in \Pi_N$, the corresponding side in the tessellation is unbounded.
\end{thm}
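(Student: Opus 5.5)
Both directions will go through the bivariate Mecke formula in the form of Lemma \ref{lem:MeckeApplied}. It therefore suffices to fix a $(\mu_N\times\mu_N)$-generic pair $(f,g)$ of horofunctions in $\mathcal{O}_{F_\rho}$, and to show that almost surely the side of $f,g$ in the Voronoi tessellation of $\Pi_N\cup\{f,g\}$ is bounded (respectively unbounded). Since the projection of $\mu_N$ to the flag manifold is comparable to Lebesgue measure (as in the proof of Proposition \ref{prop: Tessellation}), a $\mu_N\times\mu_N$-generic pair projects to transverse flags. We may therefore assume $f,g$ are in generic position, and apply Proposition \ref{prop:MainProp of proper horofunctions}.

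\textbf{Proper-on-bisector case.} By Proposition \ref{prop:MainProp of proper horofunctions} there are $A,B>0$ with $f(x)=g(x)\ge A d^N(o,x)-B$ on $\Bis(f,g)$. Since the side $\mathcal{C}_f\cap\mathcal{C}_g$ lies in $\Bis(f,g)$, it is enough to find a single $h\in\Pi_N$ and a radius $R_0$ such that $h(x)<f(x)$ for all $x\in\Bis(f,g)$ with $d^N(o,x)\ge R_0$. Any $h\in\mathcal{O}_{F_\rho}$ is $1$-Lipschitz, so $h(x)\le h(o)+d^N(x,o)$. The linear growth rate $A$ may be smaller than $1$, so this crude bound does not beat $f$. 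Instead I would use Poisson richness.

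For each point $x$ far out on the bisector, the set $U_x=\{h: h(x)< f(x)\}$ has $\mu_N$-mass at least $\mu_N(C^x_{-\infty,f(x)})=e^{2\rho_{\max}f(x)}$, by $G$-invariance and homogeneity in Theorem \ref{thm: Convergence measure}. This mass grows exponentially in $d^N(o,x)$. I would cover the bisector outside a ball by countably many small balls $B(x_k,1)$, of which there are at most polynomially or exponentially many at distance $\le n$ with a controlled exponent. On each such ball, $h(x_k)<f(x_k)-2$ forces $h<f$ on the whole ball. The probability that no such $h$ exists is $\exp(-c\,e^{2\rho_{\max}A d^N(o,x_k)})$, which is summable against the count of balls. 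Borel--Cantelli then shows that almost surely only finitely many balls are uncovered, so the side is bounded.

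\textbf{Non-proper case.} By Proposition \ref{prop:MainProp of proper horofunctions} (and its proof), there is a diverging sequence $x_n\in\Bis(f,g)$ with $f(x_n)=g(x_n)\le A'$. I would show that with positive probability no $h\in\Pi_N$ satisfies $h(x)<f(x)$ at infinitely many such points. Better, I would show that almost surely $\Pi_N$ has only finitely many points below $A'+1$ along a whole unbounded region of the bisector. The key step is that the set $\bigcup_n\{h: h(x_n)\le A'\}$ should have finite $\mu_N$-measure. I would try to choose the $x_n$ along a ray in the flat, as constructed in the proof via $[rf,rv]\cup[rv,rw_0f]$, where $h_F$ and $h_F\circ w_0$ stay bounded. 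Then $\{h:\exists n,\ h(x_n)\le A'\}$ is a region whose measure can be computed by the explicit density of Proposition \ref{prop: vague limit of measures in flat} and shown finite thanks to the factor $e^{2\rho(v)}$ with $\rho\le 0$ on the relevant cone.

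Given finiteness, almost surely only finitely many $h\in\Pi_N$ ever go below $f$ along this sequence. Each such $h$, being opposite to $f$ almost surely, satisfies $h-f\to+\infty$ along the diverging ray by the same properness argument applied to the pair $(h,f)$. So for large $n$ the points $x_n$ lie in $\mathcal{C}_f\cap\mathcal{C}_g$. The last step needs care, since the divergence of $h-f$ must hold along this specific ray.

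\textbf{Main obstacle.} The finite-measure estimate in the non-proper case is the step I expect to be hardest. The counting of balls in the proper case is comparatively routine once the exponential growth of $f$ along the bisector is established.
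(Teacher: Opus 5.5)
Your proper-on-bisector case is the paper's argument. After the Mecke reduction, Proposition \ref{prop:MainProp of proper horofunctions} gives linear growth of $f$ on $\Bis(f,g)$, and your covering and Borel--Cantelli estimate is exactly Lemma \ref{lem:3 conditions}(i) applied to $E=\Bis(f,g)$.

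The non-proper case, however, rests on a claim that is false. Write $x_n=g_n\cdot o$ with $g_n\to\infty$ in $G$. Each cylinder $C^{x_n}_{-\infty,A'}=g_n\cdot C^{o}_{-\infty,A'}$ has the same mass $e^{2\rho_{\max}A'}$, by $G$-invariance and Theorem \ref{thm: Convergence measure}. Lemma \ref{lem:Matrix Coefficient Vanishing} gives $\mu_N(C^{x_i}_{-\infty,A'}\cap C^{x_j}_{-\infty,A'})\to 0$ as $j\to\infty$ for each fixed $i$. Pass to a subsequence with $\sum_{i<j}\mu_N(C^{x_i}_{-\infty,A'}\cap C^{x_j}_{-\infty,A'})\le 1$. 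Inclusion--exclusion then gives $\mu_N\bigl(\bigcup_{i\le k}C^{x_i}_{-\infty,A'}\bigr)\ge k\,e^{2\rho_{\max}A'}-1\to\infty$. So $\{h:\exists n,\ h(x_n)\le A'\}$ always has infinite mass. The density of Proposition \ref{prop: vague limit of measures in flat} describes $\mu_N$ on $K$-orbits around one base point, and cannot prevent this.

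Shrinking to $\{h:\exists n,\ h(x_n)<f(x_n)\}$ does not help in general. For $N_1$ on $\mathbb{H}^2\times\mathbb{H}^2$, $f+g$ is a sum of Busemann functions of two distinct boundary points in each factor, hence bounded below. So $f$ stays in a compact interval along any diverging sequence in $\Bis(f,g)$ on which it is bounded above, and the same computation applies.

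Worse, the statement you are driving toward, that all but finitely many $x_n$ lie in $\mathcal{C}_f$, is false. In that example each event $\{x_n\notin\mathcal{C}_f\}$ has probability $1-e^{-e^{2\rho_{\max}f(x_n)}}$, bounded away from $0$. Along such a subsequence these events are asymptotically pairwise independent, so almost surely infinitely many $x_n$ lie outside the cell. Your last step also appeals to properness for the pair $(h,f)$ in exactly the case where properness fails.

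The missing idea is to aim for \emph{infinitely many} $x_n$ in the cell, and to use decay of matrix coefficients for near-independence rather than for finiteness. This is Lemma \ref{lem:3 conditions}(ii), which is how the paper concludes:
\begin{enumerate}
\item The events $N_i=\{\Pi_N\cap C^{x_i}_{-\infty,m}=\emptyset\}$ force $x_i\in\mathcal{C}_f\cap\Bis(f,g)$ and have probability at least $e^{-e^{2\rho_{\max}m}}$.
\item Lemma \ref{lem:Matrix Coefficient Vanishing} yields a subsequence with $\mathbb{P}(N_i\cap N_j)\le(1+2^{-i-j})\mathbb{P}(N_i)\mathbb{P}(N_j)$.
\item The Kochen--Stone lemma \cite{KS64} then shows that almost surely infinitely many $N_i$ occur, so the side is unbounded.
\end{enumerate}
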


We delay the proof of this theorem to the end of this subsection.

\begin{rem}
Two horofunctions $h_1,h_2$ are \emph{opposite} exactly if the corresponding flags in $G/P_\Theta$, $\Theta=\Delta\setminus \Theta_{F_\rho}^\text{stable}$ are in generic position. Hence the set of transverse pairs has measure $1$ for any measure equivalent to the Lebesgue measure on $\mathcal{O}_{F_\rho}$.
\end{rem}

We first see that the behavior of the intersection of a cell with a domain $E$ in the ideal Poisson Voronoi diagram is dictated by the growth of the corresponding horofunction on this domain.

We will need the following lemma, which is similar to \cite[{Theorem 3.8}]{FMW23poissonvoronoi}.
\begin{lem}
\label{lem:Matrix Coefficient Vanishing}
For all $A\subset \mathcal{O}_{F_\rho}$ of finite measure for $\mu_N$, one has $\mu_N(A\cap g_nA)\to 0$ for every diverging sequence $(g_n)$ in $G$.

\end{lem}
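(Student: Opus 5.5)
Since $\mu_N$ is $G$-invariant and $\sigma$-finite, and $\mathcal{O}_{F_\rho}\cong G/R_F$ with $\mu_N$ coming from a smooth invariant volume form, I would first reduce to the case where $A$ is a compact set. Any finite measure set can be approximated in measure by compact sets: given $\epsilon>0$, pick $A'\subset A$ compact with $\mu_N(A\setminus A')<\epsilon$. Then
$$\mu_N(A\cap g_nA)\le \mu_N(A'\cap g_nA')+2\epsilon,$$
using the invariance $\mu_N(g_n(A\setminus A'))=\mu_N(A\setminus A')$. So it suffices to treat compact $A$. Moreover $A$ is then contained in some compact set of the form $\pi^{-1}(\mathcal{F})\cap C^o_{r,R}$. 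Here $C^o_{r,R}$ is a cylinder, so $h(o)\in[r,R]$ for all $h\in A$.

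Next, use the Cartan decomposition $g_n=k_n e^{a_n}k_n'$ with $a_n\in\overline{\mathfrak{a}^+}$ and $a_n\to\infty$. Replacing $A$ by the compact $K$-saturation $K\cdot A$ only increases the quantity, so it suffices to show $\mu_N(KA\cap e^{a_n}KA)\to 0$. Write $B=KA$, a $K$-invariant compact subset of $C^o_{r,R}$. If $h\in B\cap e^{a_n}B$, then $h(o)\in[r,R]$ and $h(e^{a_n}o)=(e^{-a_n}\cdot h)(o)\in[r,R]$ (up to the convention for the action). Thus $h$ takes bounded values at both $o$ and $x_n:=e^{a_n}o$, with $d^N(o,x_n)\to\infty$. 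Then use the disintegration of $\mu_N$ through $\Psi_{F_\rho}$ and Proposition \ref{prop: vague limit of measures in flat}: the measure of horofunctions in $C^o_{r,R}$ whose value at a point $x_n$ lies in $[r,R]$ should tend to zero. Such an $h$ is a translate of $h_{F_\rho}$ by some $k e^{v}$ with $[v]$ in a compact set. Evaluating at $x_n$, by the Busemann formula of Proposition \ref{prop:Explicit formula for Horofunctions} one gets roughly $h(x_n)\approx -\beta(\text{direction})$ for the relevant $\beta$'s in $F_\rho^\vee$. The condition $|h(x_n)|\le C$ then forces the flag of $h$, viewed from $o$, to lie in a shrinking neighborhood of the set of flags $\xi$ for which the Busemann growth along the ray $o\to x_n$ is sublinear.

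The main step is to show that this set of flags has $K$-Haar measure tending to zero. Here $\rho$ strictly dominant and the Lebesgue-comparable projection of $\mu_N$ onto $\mathcal{F}_\Theta$ enter. The set of flags not in generic position with the limiting Weyl chamber of $x_n$ is a lower-dimensional Schubert subvariety, hence null. For generic flags, Lemma \ref{lem:Bruhat order} combined with $h_F(w_0u)>0$ for nonzero dominant $u$ gives linear growth $h(x_n)\ge c\,d^N(o,x_n)-C'$, uniformly on compact sets of generic flags. So the allowed flags lie in an $o(1)$-neighborhood of a null closed set, and their measure tends to zero.

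I expect the main obstacle to be this uniformity. One has to control the error term in $h(x_n)=t_n h_F(w u_n)+O(1)$ uniformly over flags and over the compact fiber part, while the direction $u_n=a_n/|a_n|$ may itself vary and approach walls. I would handle it by passing to a subsequence with $u_n\to u$, arguing by contradiction, and using the convexity of $h$ together with the fact that it is $1$-Lipschitz. An alternative route, as in \cite[Theorem 3.8]{FMW23poissonvoronoi}, is to invoke Howe--Moore on $L^2(G/R_F)$. However, $\mu_N$ is infinite and $R_F$ is noncompact, so I prefer the direct geometric argument above.
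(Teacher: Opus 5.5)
Your proof is correct, but it takes a different route from the paper. The paper's argument is representation-theoretic. It shows that $\mathrm{L}^2(\mathcal{O}_{F_\rho},\mu_N)$ has no nonzero vector invariant under any simple factor $G_i$: the projection of $R_{F_\rho}$ to $G_i$ lies in a parabolic subgroup or in a maximal compact subgroup, so such a vector $f$ would push $f^2\mu_N$ to a finite $G_i$-invariant measure on a flag manifold or a symmetric space. It then applies the Howe--Moore type theorem \cite[Theorem 9.1]{BFGM07} to the matrix coefficient of $\1_A$.

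Your stated reason for avoiding Howe--Moore is therefore not valid. The infinite total mass is irrelevant once you work with $\1_A\in\mathrm{L}^2$, and the noncompactness of $R_{F_\rho}$ only enters through the invariant-vector check.

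Your geometric argument is self-contained and gives a quantitative mechanism: uniform linear growth $h(e^{a}o)\ge \lambda\rho(a)-C$. It rests essentially on $\lambda\rho\in F_\rho^\vee$ and $\rho>0$ on $\overline{\mathfrak{a}^+}\setminus\{0\}$. Its cost is the uniform estimate, which is easier than you fear:
\begin{itemize}
\item Every $a_n$ lies in $\overline{\mathfrak{a}^+}$, so the relevant chamber is always the standard one $c_0$. No subsequence or contradiction argument is needed.
\item Flags opposite $c_0$ are $\xi=nw_0P_\Theta$ with $n$ in the unipotent radical of the minimal parabolic. The flat $nA\cdot o$ contains $\xi$ and $c_0$ at infinity.
\item For every $a\in\overline{\mathfrak{a}^+}$ one has $d(e^{a}o,ne^{a}o)\le d(o,no)$, because the rays $t\mapsto e^{ta}o$ and $t\mapsto ne^{ta}o$ are asymptotic.
\item Replacing the ray defining $h$ by an asymptotic ray in this flat changes $h$ by a bounded amount. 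This bound is uniform for $h\in B$ with $n$ in a compact set.
\item On the flat you read off $h_F(w_0a)\ge-\lambda\rho(w_0a)=\lambda\rho(a)$.
\end{itemize}
This holds uniformly in $a$, including near the walls, and you do not need the Bruhat-order lemma.

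Finally, since $B$ is $K$-invariant, $\pi_*(\mu_N|_B)$ is a multiple of the $K$-invariant measure on $\mathcal{F}_\Theta$. So the non-generic Schubert set is null, and your $\epsilon$-argument closes the proof.
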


The proof is more complex for semisimple groups than for simple groups.

\begin{proof}
We consider the unitary representation $\mathrm{L}^2(\mathcal{O}_{F_\rho}, \mu_N)$ of $G$. Decompose $G$ into simple factors $G=G_1\times\cdots \times G_n$, up to taking a finite quotient. Let us first prove that this representation does not admit any $G_i$-invariant vector. The orbit $\mathcal{O}_{F_\rho}=G/R_{F_\rho}$ projects $G_i$-equivariantly onto $G_i/R_i$ where $R_i$ is the projection onto $G_i$ of $R_{F_\rho}$. Hence the invariant vector can be projected to this homogeneous space, and must be trivial if $R_i$ is included to either a maximal compact subgroup or a parabolic subgroup. Indeed the pushforward of $f^2\mu_N$ would give a finite $G_i$-invariant measure on the symmetric space or a flag manifold. 

We now use Propositon \ref{prop:Lie algebra of stabilizer}. If there is some root of $G_i$ in $\Delta\setminus \Theta_{F_\rho}^\text{stable}$ then $R_i$ belongs to a parabolic subgroup. Else all roots of $G_i$ are in $\Theta^\text{fixed}_{F_\rho}$ ($\Theta^\text{flipped}_{F_\rho}$ is empty) and hence $G_i\subset G^\text{fixed}$, and $R_i$ is in a maximal compact subgroup of $G_i$. 

\medskip

We proved that there is no $G_i$-invariant element in $\mathrm{L}^2(\mathcal{O}_{F_\rho}, \mu_N)$. The Lie group $G$ satisfies the hypothesis of \cite[{Theorem 9.1}]{BFGM07}, hence every matrix coefficient goes to zero for any diverging sequence $(g_n)$ in $G$, meaning that for all $f_1,f_2\in \mathrm{L}^2(\mathcal{O}_{F_\rho}, \mu_N)$:
$$\int f_1 (f_2\circ g_n^{-1})\mathrm{d}\mu_N\to 0.$$

Applying this to $f_1=f_2=\1_A$ yields the desired result.

\end{proof}

\begin{lem}
\label{lem:3 conditions}
    Let $h_0\in \mathcal{O}_{F_\rho}$ be a horofunction and let $E\subset \X$ be a subset. Let $\mathcal{C}_{h_0}$ be the cell corresponding to $h_0$ in the Voronoi diagam for $\Pi_0\cup \lbrace h_0 \rbrace$, where $\Pi_0\subset \mathcal{O}_{F_\rho}$ is a Poisson point process with measure intensity $\mu_N$.
    \begin{itemize} 
    \item[(i)] Suppose that $h_0$ has linear growth on $E$ , meaning that for some $A,B>0$, for all $x\in E$:
$$h_0(x)\geq Ad^N(o,x)-B.$$

Then almost surely $\mathcal{C}_{h_0}\cap E$ is bounded.

\item[(ii)] Suppose that $E$ is unbounded. If $h_0$ is not proper and bounded from below on $E$, then almost surely $\mathcal{C}_{h_0}\cap E$ is unbounded.

\item[(iii)]  Suppose that $-h_0$ has linear growth on $E$,  meaning that for some $A,B>0$, for all $x\in E$:
$$h_0(x)\leq -Ad^N(o,x)+B.$$

Suppose also that the $1$-neighborhood $E_1$ of $E$ has subexponential volume growth, meaning that the volume of $E_1\cap B(o,t)=o(e^{\epsilon t})$ for all $\epsilon>0$. Then almost surely $E\setminus \mathcal{C}_{h_0}$ is bounded.
\end{itemize}

\end{lem}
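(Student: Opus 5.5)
The three parts share one tool. For a point $x\in\X$ and a level $s\in\R$, the event that $x\notin\mathcal{C}_{h_0}$ is the event that $\Pi_0$ meets $C^x_{-\infty,s}$ with $s<h_0(x)$. By Theorem \ref{thm: Convergence measure} and $G$-invariance,
\[ \mu_N(C^x_{-\infty,s})=e^{2\rho_{\max}s}. \]
The number of points of $\Pi_0$ in $C^x_{-\infty,s}$ is therefore Poisson with parameter $e^{2\rho_{\max}s}$. To pass from individual points to sets of points, I will discretize $E$ by a $1$-net and use that every horofunction is $1$-Lipschitz for $d^N$. This costs only an additive constant, together with a uniform constant comparing $d^N$ with the Riemannian distance.

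For (i), I would discretize by levels rather than by points. Let $A_k=C^o_{-\infty,k}$. Almost surely $\Pi_0\cap A_{k_0}\neq\emptyset$ for some finite $k_0$, because the parameter $e^{2\rho_{\max}k}$ tends to $\infty$. Fix such an $h\in\Pi_0$ with $h(o)\le k_0$. Since $h$ is $1$-Lipschitz, $h(x)\le k_0+d^N(x,o)$. By hypothesis $h_0(x)\ge A d^N(o,x)-B$. The constant $C=\max N(v)/N(-v)$ compares $d^N(x,o)$ with $d^N(o,x)$. So once $d^N(o,x)$ is large and $A$ exceeds the resulting slope, we get $h(x)<h_0(x)$ and $x\notin\mathcal{C}_{h_0}$. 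If $A$ is small, this single $h$ is not enough. Instead I would use many points of $\Pi_0$: by Lemma \ref{lem:Matrix Coefficient Vanishing}, or more directly by a union bound over a net of $E\cap\{d^N(o,x)\in[n,n+1]\}$, each net point $x$ fails to be excluded with probability
\[ \exp\bigl(-e^{2\rho_{\max}(h_0(x)-1)}\bigr)\le \exp\bigl(-c\,e^{2\rho_{\max}An}\bigr). \]
The net has at most exponentially many points in $n$ (volume growth $\sim e^{2\rho_{\max}n}$), and the bound above decays doubly exponentially, so the sum over $n$ converges. Borel--Cantelli then gives that $\mathcal{C}_{h_0}\cap E$ is bounded.

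For (iii), the symmetric estimate applies. A net point $x$ at distance $n$ lies outside $\mathcal{C}_{h_0}$ only if $\Pi_0$ meets $C^x_{-\infty,h_0(x)+1}$, which has probability at most
\[ e^{2\rho_{\max}(B+1-An)}. \]
Subexponential growth of $E_1$ gives $e^{o(n)}$ net points at distance about $n$, so the sum is finite. By Borel--Cantelli and the Lipschitz property, only finitely many net points, and hence only a bounded part of $E$, lie outside the cell.

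For (ii), there is a sequence $x_n\in E$ diverging with $h_0(x_n)\le M$. I would show that infinitely many $x_n$ lie in $\mathcal{C}_{h_0}$ with positive probability, and then upgrade this to probability $1$. The event that $x_n\in\mathcal{C}_{h_0}$ contains the event that $\Pi_0$ avoids $U_n=C^{x_n}_{-\infty,M}$, which has probability $e^{-e^{2\rho_{\max}M}}=p>0$, independent of $n$. To control correlations, I pass to a subsequence on which $x_n=g_n o$ with $g_n$ diverging. Lemma \ref{lem:Matrix Coefficient Vanishing}, applied to truncations of these cylinders (cut off at a fixed lower level, so the cylinders have finite measure $\mu_N$), makes $U_n$ and $U_m$ asymptotically disjoint. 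The avoidance events are then asymptotically independent, so by a second-moment (Kochen--Stone) argument infinitely many occur with probability at least $p$. A 0-1 law, using ergodicity of the Poisson process under the mixing $G$-action, would raise this to probability $1$.

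The main obstacle is (ii): the cylinders are not of finite measure and are not truncated uniformly, and it is not yet clear how to make the asymptotic-independence and 0-1 steps rigorous. I would first try replacing avoidance of $U_n$ by avoidance of its truncation at level $M-L$. The lower part has measure at most $e^{2\rho_{\max}(M-L)}$, which is negligible for large $L$ uniformly in $n$.
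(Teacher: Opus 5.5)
Parts (i) and (iii) are correct and essentially follow the paper's argument. You use a separated net in $E$, the $1$-Lipschitz property to pass from net points to small balls, the formula $\mu_N(C^x_{-\infty,s})=e^{2\rho_{\max}s}$, and Borel--Cantelli against an exponential (resp.\ subexponential) count of net points. Your preliminary single-$h$ argument in (i) never actually applies: $h_0$ is itself $1$-Lipschitz, so $h_0(x)\le h_0(o)+d^N(x,o)$ forces $A$ to be at most the asymmetry constant on unbounded $E$. The union bound you give next covers every $A$, so this is harmless.

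Part (ii) has a genuine gap, and the step you propose to close it would fail. The event that infinitely many $x_n$ lie in $\mathcal{C}_{h_0}$ involves the fixed $h_0$ and the fixed sequence $(x_n)$. It is therefore not $G$-invariant, and ergodicity of the $G$-action on the Poisson process gives no $0$--$1$ law for it. What you are missing is that Kochen--Stone gives probability $1$ directly once the near-independence is made quantitative, and you already have the ingredients.

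First, contrary to what you write, the sets $U_n=C^{x_n}_{-\infty,M}$ have finite measure $e^{2\rho_{\max}M}$, by the very formula you use. Writing $x_n=g_no$, they are translates $U_n=g_nU$ of the single set $U=C^o_{-\infty,M}$. This set is non-compact, but Lemma \ref{lem:Matrix Coefficient Vanishing} only needs finite measure, so no truncation is required. For fixed $i$, $\mu_N(U_i\cap U_j)=\mu_N(U\cap g_i^{-1}g_jU)\to 0$ as $j\to\infty$. An inductive extraction then gives $\mu_N(U_i\cap U_j)\le\log(1+2^{-i-j})$ for all $i<j$.

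Second, let $N_i=\{\Pi_0\cap U_i=\emptyset\}$. The Poisson property gives $\mathbb{P}(N_i\cap N_j)=e^{-\mu_N(U_i\cup U_j)}=\mathbb{P}(N_i)\mathbb{P}(N_j)e^{\mu_N(U_i\cap U_j)}\le(1+2^{-i-j})\mathbb{P}(N_i)\mathbb{P}(N_j)$. Set $S_n=\sum_{i\le n}\mathbb{P}(N_i)\ge np$. Then $\sum_{i,j\le n}\mathbb{P}(N_i\cap N_j)\le S_n^2+S_n+1$. The Kochen--Stone ratio $S_n^2/\sum_{i,j\le n}\mathbb{P}(N_i\cap N_j)$ therefore tends to $1$, so $\mathbb{P}(\limsup N_i)=1$. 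This is precisely how the paper concludes (ii).
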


Part $(iii)$ of the Lemma is not used in this section, and will only be used in a different form in Lemma \ref{lem:3 conditionsbis}, in order to prove Theorem \ref{thm:Selberg Main}.

\begin{proof}

Let us first prove the point $(i)$.
    Let $(x_i)_{i\in I}$ be a set of points in $E$ such that every point in $E$ is at distance at most $2$ from some $x_i$, and so that for $i\neq j$, $d(x_i,x_j) \geq 1$. Let $M_i$ be the event that the ball of radius $2$ abound $x_i$ intersects non-trivially $\mathcal{C}_{h_0}$, i.e such that for some $x$ with $d(x,x_i)\leq 2$, for all $h\in \Pi_0$, $h_0(x)\leq h(x)$.

    The probability of $M_i$ is less than the probability that $\Pi_0$ contains no horofunction $h$ such that $h(x_i)\leq Ad^N(o,x_i)-B-4$. The measure of the set of all such $h\in \mathcal{O}_{F_\rho}$ is equal to at least $e^{A'd(o,x_i)-B'}$ for some $A',B'>0$, using Proposition \ref{prop: vague limit of measures in flat}. Recall that if a random variable $X$ follows a Poisson law with intensity $\lambda$, $\mathbb{P}(X=0)=e^{-\lambda}$. In particular :
$$\mathbb{P}(M_i)\leq e^{-e^{A'd(o,x_i)-B'}}.$$

The number of $i\in I$ such that $d(o,x)\leq n$ is at most $e^{Dn}$ for some $D>0$ since the balls of radius $1$ around the $x_i$ are disjoint, of fixed volume, and the volume of balls in the symmetric space $\X$ is exponential. Therefore:

$$\sum_{i\in I}\mathbb{P}(M_i)\leq \sum_{n\geq 0} e^{Dn}e^{-e^{A'n-B'}}.$$
This sum converges, so by the Borel-Cantelli lemma, almost surely only finitely many of the events $M_i$ occur.

\medskip

We now move to $(ii)$. This part of the proof is very close to the argument in \cite[{Proposition 6.2}]{FMW23poissonvoronoi}. 
We take an infinite diverging sequence $(x_i)_{i\in \N}$ in $E$ such that for some $m>0$ for all $i\in \N$, $h(x_i)< m$, that exists since $h$ is not proper on $E$. 
Let $N_i$ be the event $\Pi_0\cap C^{x_i}_{-\infty,m}=\emptyset$, which implies in particular that $x_i\in \mathcal{C}_{h_0}$.
The probability of the event $N_i$ is equal as previously to $e^{-\mu_N(C^{x_i}_{-\infty,m})}\geq e^{-e^{2\rho_{max}M}}$.

Up to taking a subsequence of the diverging sequence $(x_i)$, and thanks to Lemma \ref{lem:Matrix Coefficient Vanishing}, one can assume that for all $i<j\in \N$:
$$\mu_N(C^{x_i}_{-\infty,m}\cap C^{x_j}_{-\infty,m})\leq \log(1+2^{-i-j}).$$

Hence $\mathbb{P}(N_i\cap N_j)=e^{-\mu_N(C^{x_i}_{-\infty,m}\cup C^{x_j}_{-\infty,m})}\leq \mathbb{P}(N_i)\mathbb{P}(N_j)(1+\frac{1}{2^{i+j}})$. This and the fact that $\mathbb{P}(N_i)\geq \exp(-e^{2\rho_{\max}m})$ allows us to get that the following limit is equal to $1$:

$$\lim_{n\to +\infty}\frac{(\sum_{i=1}^n\mathbb{P}(N_i))^2}{\sum_{i,j=1}^n\mathbb{P}(N_i\cap N_j)}= 1.$$

Using the Kochen-Stone generalisation of the Borel-Cantelli Lemma \cite{KS64}, we obtain that infinitely many of the events $N_i$ occur.

\medskip

 Finally let us prove $(iii)$. We consider as previously a set of points $(x_i)$ and we denote by $\overline{M}_i$ the complementary event to $M_i$, \ie the probability that the ball of radius $2$ abound $x_i$ contains no point $x$ such that $f(x)> h(x)$ for some $h\in \Pi_N$.

 \medskip

 By the definition of the Poisson distribution, as before we have for some $A',B'>0$:
 $$\mathbb{P}(\overline{M}_i)\leq 1-e^{-e^{-A'd(o,x_i)+B'}}\leq e^{-A'd(o,x_i)+B'} $$

 The number of $i\in I$ such that $d(o,x)\leq n$ for $n$ large enough is subexponential by hypothesis since the balls of radius $1$ around the $x_i$ are disjoint. Therefore for some $A'',B''>0$:
$$\sum_{i\in I}\mathbb{P}(\overline{M}_i)\leq \sum_{n\geq 0} e^{-A''n+B''}.$$
This sum converges, so by the Borel-Cantelli lemma, almost surely only finitely many of the events $\overline{M}_i$ occur.
\end{proof}

\begin{proof}[Proof of Theorem \ref{thm:ProperBissectro dichotomy}]
Using the Mecke formula (Lemma \ref{lem:MeckeApplied}) the theorem boils down to showing that given two $f,g\in \mathcal{O}_{F_\rho}$ in generic position, if $\Pi_0$ is a Poisson point process on $\mathcal{O}_{F_\rho}$ with measure intensity $\mu_N$ and $(\mathcal{C}_h)_{h\in \lbrace f,g\rbrace\cup \Pi_0}$ is the corresponding Voronoi diagram, the side $ \mathcal{C}_f\cap \mathcal{C}_g$ is bounded or unbounded depending on the behavior of $f$ on it. Note that  $\mathcal{C}_f\cap \mathcal{C}_g=\mathcal{C}_f\cap \Bis(f,g)$. 

\medskip

We use the characterisation of the proper-on-bisector property from Proposition \ref{prop:MainProp of proper horofunctions}. If $\mathcal{O}_{F_\rho}$ has the proper-on-bissector property, part (i) of Lemma \ref{lem:3 conditions} implies that almost surely $\Bis(f,g)\cap \mathcal{C}_f$ is bounded, which concludes that the side is bounded. 

If $\mathcal{O}_{F_\rho}$ does not have the proper-on-bissector property, part (ii) of Lemma \ref{lem:3 conditions} implies that almost surely $\Bis(f,g)\cap \mathcal{C}_f$ is unbounded. This concludes the proof.

\end{proof}

As a consequence we get the following corollary, giving examples, of ideal Poisson--Voronoi tessellation with \emph{bounded sides}, 
meaning either that almost surely all pairs of cells share at most a bounded side, respectively with \emph{unbounded sides}, meaning that almost surely all pairs of cells share an unbounded side.

Let $n$ be the rank of $\X$ and let $(\tau_i)_{1\leq i\leq n}$ be the renormalized fundamental coweights defined in Section \ref{subsection:Tau_i}, that satisfy $\rho(\tau_i)=1$ for all $i$.

\begin{corollary}
Let $\X$ be symmetric space of non-compact type with a polyhedral Finsler metric associated to a norm $N_\omega=\max_{w\in W}w\cdot \omega$ for a linear form $\omega\in \mathfrak{a}^*$.

\begin{enumerate}
\item 
Let $n$ be the rank of $\X$ and let $(\tau_i)_{1\leq i\leq n}$ be the renormalized fundamental co-weights defined in Section \ref{subsection:Tau_i}, that satisfy $\rho(\tau_i)=1$ for all $i$.
If for some $1\leq i\leq n$, $\forall j\neq i,\;\omega(\tau_i)<\omega(\tau_j)$, then the ideal Poisson--Voronoi tessellation for $d_{N_\omega}$ has bounded sides. 

In particular, if $\X$ is a rank-one symmetric space, the ideal Poisson--Voronoi tessellation has bounded sides. 
    \item 
Let the rank of $\X$ be bigger or equal to two, and let $\omega=\rho$, then the ideal Poisson--Voronoi tessellation for $d_{N_\rho}$ has unbounded sides. 

\end{enumerate}
\end{corollary}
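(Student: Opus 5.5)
The plan is to combine Theorem \ref{thm:ProperBissectro dichotomy} with the combinatorial criteria of Section \ref{subsection:Tau_i}. The corollary then reduces to deciding whether $\mathcal{O}_{F_\rho}$ has the proper-on-bisector property for $N_\omega$. Up to the Weyl group action, which changes neither $N_\omega$ nor the metric, I would first replace $\omega$ by a dominant representative. Since $\rho(\tau_i)=1$ for all $i$, the set $\Theta$ of Proposition \ref{prop:Proper bisectors polyhedral dominant} is exactly the set of indices $i$ at which $\omega(\tau_i)$ is minimal.

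For part (1), the hypothesis says the minimum is attained at a single index, so $\#\Theta=1$. Then $\#(\Theta\cap\iota(\Theta))\le 1$, and Proposition \ref{prop:ProperbissectorLinear forms} gives the proper-on-bisector property. Equivalently, $F_\rho$ is the vertex $\tau_i/\omega(\tau_i)$, so Proposition \ref{prop:Caracterisation proper bisectors Frho} applies. Theorem \ref{thm:ProperBissectro dichotomy}(1) then says that almost surely, for every pair of cells, the side between them is bounded.

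For the rank-one case, $\Delta$ has one element, so $\Theta$ is a singleton and the same argument applies. Equivalently, $B^N$ is a segment, $F_\rho$ is a vertex, and Proposition \ref{prop:Caracterisation proper bisectors Frho} again gives the property.

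For part (2), with $\omega=\rho$ we have $\omega(\tau_i)=1$ for every $i$, so $\Theta=\Delta$ (as in Remark \ref{rem:drho}). Since $\iota(\Delta)=\Delta$, we get $\#(\Theta\cap\iota(\Theta))=\#\Delta\ge 2$ in rank at least two. Proposition \ref{prop:ProperbissectorLinear forms} then shows the proper-on-bisector property fails, and Theorem \ref{thm:ProperBissectro dichotomy}(2) gives that almost surely every pair of cells shares an unbounded side.

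The only delicate point is the normalization. The hypothesis in (1) should be read for the dominant representative of $\omega$. Also, $N_\omega$ must be an honest norm, which holds when $G$ is simple or more generally when $\omega$ is nonzero on each factor; I would state this assumption. Otherwise the argument is a direct chaining of earlier results.
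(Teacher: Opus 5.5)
Your proposal is correct and is essentially the paper's proof. You reduce to the criterion of Proposition~\ref{prop:ProperbissectorLinear forms}, with $\#\Theta=1$ (so $F_\rho$ is a vertex) in part (1) and $\Theta=\Delta=\iota(\Delta)$ in part (2), and then invoke the dichotomy of Theorem~\ref{thm:ProperBissectro dichotomy}.

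Your caveat that the hypothesis in (1) must be read for the dominant representative of $\omega$ is genuinely needed, and the paper leaves it implicit. On $\mathbb{H}^2\times\mathbb{H}^2$, the non-dominant form $\omega=(1,-1)$ satisfies $\omega(\tau_2)<\omega(\tau_1)$, yet $N_\omega=N_1$, whose tessellation has unbounded sides.
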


The first part of this corollary applies to generic elements $\omega$. 
For example, when $\mathfrak{g} = \mathfrak{sl}(d,\R)$, one can take $\omega = \varpi_k$ for any $k \ne d/2$. 
However, when $d=4$ and $\omega = \varpi_2$, the norm $N_{\varpi_2}$ does not have the proper-on-bisector property, as follows from Proposition \ref{prop:ProperbissectorLinear forms}. In this case $G=\SL(4,\R)$ we illustrate for which linear form $\omega$ the corrollary applies in Figure \ref{fig:SL4Wslinar forms}.

\begin{figure}[ht]
    \centering
    \begin{tikzpicture}[xscale=2, yscale=2]
        \draw (-1,0) -- (1,0) -- (0,1) -- cycle;
        \node[above left] (A) at (0,1/3) {$\rho$};
        \draw[red, thick] (0,1/3) -- (0,1);
        \draw[blue, thick] (0,1/3) -- (-.5,0);
        \draw[blue, thick] (0,1/3) -- (.5,0);
        \node[below] (B) at (0,0) {$\mathbb{P}(\mathfrak{a}^*)$};
        \node[left] (B) at (-1,0) {$\varpi_1$};
        \node[right] (B) at (1,0) {$\varpi_3$};
        \node[above] (B) at (0,1) {$\varpi_2$};
        \fill[red] (0,1) circle (.025);
        \fill (1,0) circle (.025);
        \fill (-1,0) circle (.025);
        \fill[red] (0,1/3) circle (.025);
    \end{tikzpicture}
    \caption{Set of dominant linear forms $\omega$ in $\mathbb{P}(\mathfrak{a}^*)$ for the rank $3$ group $\SL(4,\R)$. 
    In red are the linear forms $\omega$ such that the orbit $\mathcal{O}_{F_\rho}$ for the metric $N_\omega$ does not have the proper-on-bisector property.
    The linear forms where $F_\rho$ is not a vertex is the union of red and blue points.}
    \label{fig:SL4Wslinar forms}
\end{figure}

\begin{proof}
The proper-on-bisector property is immediatly determined in these cases by Proposition \ref{prop:ProperbissectorLinear forms}.
\end{proof}

\subsection{Illustration of the results in  \texorpdfstring{$\mathbb{H}^2\times \mathbb{H}^2$}{H2*H2}}
\label{subsec:ExamplesV2}

In this section we keep the notations from Section \ref{subsec:H2H2}. We discuss the proper-on-bissector property and the consequences for the ideal Poisson--Voronoi tessellation.

\medskip

Be begin with the metric on $\mathbb{H}^2\times \mathbb{H}^2$ defined from the norm $N=N_\infty$. The orbit of horofunctions $\mathcal{O}_{F_\rho}$  has the proper-on-bissector property, as illustrated in Figure \ref{fig:H2Example1}. Hence almost surely the side between any two cells of the ideal Poisson--Voronoi tessellation for the measure $\mu_{N_\infty}$ is bounded. 

\begin{figure}[htbp!]
\begin{center}
\begin{tikzpicture}[scale=1.2]

    \draw[olive, fill=olive!20, thick] (.5,.5) -- (-.5,.5)--(-.5,-.5) --(.5,-.5)-- cycle;

    \draw[red] (2,-0.5) -- (-0.5,2);
    \fill[red] (.5,.5) circle (.05);

    \draw[thick] (2,0) -- (-2,0);
    \draw[thick] (0,2) -- (0,-2);
    \node[below left] (A) at (.5,.5) {$F_\rho$};
    \node[below left] (A) at (0,0) {$N_\infty$};
    \node[above right] (A) at (.75,.75) {$\rho=c$};
\end{tikzpicture}
\begin{tikzpicture}[scale=1.2]

    \draw[olive, fill=olive!20, thick] (.5,.5) -- (-.5,.5)--(-.5,-.5) --(.5,-.5)-- cycle;
    
    \draw[gray, thick] (-1.75,1.75) -- (1.75,-1.75);

    \draw[blue, thick] (-1,2) -- (-1,-1) -- (2,-1);
    \draw[blue, thick] (-1.5,2) -- (-1.5,-1.5) -- (2,-1.5);
    \draw[red, thick] (1,-2) -- (1,1) -- (-2,1);
    \draw[red, thick] (1.5,-2) -- (1.5,1.5) -- (-2,1.5);

    \draw[thick] (2,0) -- (-2,0);
    \draw[thick] (0,2) -- (0,-2);
    
    \node[below right] (A) at (1.75,-1.75) {$h_1=h_2$};
    \node[above right] (A) at (1.5,1.5) {$h_2=c$};
    \node[below left] (A) at (-1.5,-1.5) {$h_1=c$};
    \node[below left] (A) at (0,0) {$N_\infty$};

\end{tikzpicture}
\end{center}
\caption{Illustration of $F_{\rho}$ for the norm $N_\infty$ for $\X=\mathbb{H}^2\times \mathbb{H}^2$, as well as the level lines of two opposite horofunctions on the orbit $\mathcal{O}_{F_\rho}$.}
\label{fig:H2Example1}
\end{figure}
\medskip

For $N=N_1$, the orbit of horofunctions $\mathcal{O}_{F_\rho}$ does not have the proper-on-bisector property, as illustrated in Figure \ref{fig:H2Example2}. The sides of the cells of the ideal Poisson--Voronoi tessellation hence are almost surely all unbounded. This recovers a result of D'Achille \cite{DAchille2024ipvtbeyondhyperbolic}.

\begin{figure}[htbp!]
\begin{center}
\begin{tikzpicture}[scale=1.2]

    \draw[olive, fill=olive!20, thick] (1,0) -- (0,1)--(-1,0) --(0,-1)-- cycle;

    \draw[red] (2,-0.5) -- (-0.5,2);
    \draw[red, very thick] (1,0) -- (0,1);

    \draw[thick] (2,0) -- (-2,0);
    \draw[thick] (0,2) -- (0,-2);
    \node[below left] (A) at (.5,.5) {$F_\rho$};
    \node[below left] (A) at (0,0) {$N_1$};
    \node[above right] (A) at (.75,.75) {$\rho=c$};
\end{tikzpicture}
\begin{tikzpicture}[scale=1.2]

    \draw[olive, fill=olive!20, thick] (1,0) -- (0,1)--(-1,0) --(0,-1)-- cycle;
    
    \draw[gray, thick] (-1.25,1.25) -- (1.25,-1.25);

    \draw[blue, thick] (2,-0.5) -- (-0.5,2);
    \draw[blue, thick] (1.625,-.875) -- (-.875,1.625);
    \draw[red, thick] (-2,0.5) -- (0.5,-2);
    \draw[red, thick] (-1.625,.875) -- (.875,-1.625);

    \draw[thick] (2,0) -- (-2,0);
    \draw[thick] (0,2) -- (0,-2);
    
    \node[below right] (A) at (1.25,-1.25) {$h_1=h_2$};
    \node[above right] (A) at (.75,.75) {$h_1=c$};
    \node[below left] (A) at (-.75,-.75) {$h_2=c$};
    \node[below left] (A) at (0,0) {$N_1$};

\end{tikzpicture}
\end{center}
\caption{Illustration of $F_{\rho}$ for the norm $N_1$ for $\X=\mathbb{H}^2\times \mathbb{H}^2$, as well as the level lines of two opposite horofunctions on the orbit $\mathcal{O}_{F_\rho}$.}
\label{fig:H2Example2}
\end{figure}

\medskip

For $N=N_{a,b}$ and $a>b>0$, the orbit of horofunctions $\mathcal{O}_{F_\rho}$ changes with respect to the previous case, and it satisfies the proper-on-bisector property, as illustrated in Figure \ref{fig:H2Example3}. Hence the sides of the cells of the ideal Poisson--Voronoi tessellation for the measure $\mu_{N_{a,b}}$ are almost surely all bounded.

\begin{figure}[htbp!]
\begin{center}
\begin{tikzpicture}[scale=1.2]

    \draw[olive, fill=olive!20, thick] (.5,0) -- (0,1)--(-.5,0) --(0,-1)-- cycle;

    \draw[red] (2,-0.5) -- (-0.5,2);
        \fill[red] (0,1) circle (.05);

    \draw[thick] (2,0) -- (-2,0);
    \draw[thick] (0,2) -- (0,-2);
    \node[left] (A) at (0,1) {$F_\rho$};
    \node[below left] (A) at (-.5,0) {$N_{2,1}$};
    \node[above right] (A) at (.75,.75) {$\rho=c$};
\end{tikzpicture}
\begin{tikzpicture}[scale=1.2]

    \draw[olive, fill=olive!20, thick] (.5,0) -- (0,1)--(-.5,0) --(0,-1)-- cycle;
    
    \draw[gray, thick] (-2,0) -- (2,0);

    \draw[blue, thick] (-1.9,-2) -- (0,1.8) -- (1.9,-2);
    \draw[blue, thick] (-1.7,-2) -- (0,1.4) -- (1.7,-2);

    \draw[red, thick] (1.9,2) -- (0,-1.8) -- (-1.9,2);
    \draw[red, thick] (1.7,2) -- (0,-1.4) -- (-1.7,2);
    \draw[thick] (0,2) -- (0,-2);
    
    \node[above] (A) at (2,0) {$h_1=h_2$};
    \node[right] (A) at (1.45,1) {$h_2=c$};
    \node[left] (A) at (-1.35,-.75) {$h_1=c$};

\end{tikzpicture}
\end{center}
\caption{Illustration of $F_{\rho}$ for the norm $N_{2,1}$ for $\X=\mathbb{H}^2\times \mathbb{H}^2$, as well as the level lines of two opposite horofunctions on the orbit $\mathcal{O}_{F_\rho}$.}
\label{fig:H2Example3}
\end{figure}

\subsection{Products of symmetric spaces}
\label{subsec:Corollaries}
We give more corollaries of Theorem \ref{thm:ProperBissectro dichotomy} for products of symmetric spaces, describing cases  for which the ideal Poisson--Voronoi tessellation has \emph{bounded sides}, respectively \emph{unbounded sides}. meaning either that almost surely all pairs of cells share at most a bounded side, respectively always shares a unbounded side.

\medskip

We first consider products of symmetric spaces (real) rank $1$ of non-compact type, for instance $\mathbb{H}^n$ or $\mathbb{H}^n_\C$. Every such group admits a unique invariant Finsler metric which is the Riemannian metric $d$ associated with the Euclidean norm on $\R\simeq \mathfrak{a}$. By convention we say that the normalized Riemannian distance is the one such that $\rho(v)=1$ for $v\in \mathfrak{a}$ of norm $1$. 

\begin{corollary}
    Let $\X=\mathcal{X}_1\times \cdots \times \mathcal{X}_d$ be a product of rank one symmetric spaces of non-compact type, with $\mathcal{X}_i$ equipped with the normalized Riemannian distance $d_i$. 
Let us consider for some positive $0<a_1\leq a_2\leq \cdots\leq a_n$ the polyhedral Finsler metric:
    $$d_a((x_i),(y_i))=\sum_{i=1}^n a_i d_i(x_i,y_i).$$
\begin{itemize}
\item[-] If $a_1<a_2$, then the ideal Poisson--Voronoi tessellation for $d_a$ has bounded sides,
\item[-] If $a_1=a_2$, then the ideal Poisson--Voronoi tessellation for $d_a$ has unbounded sides.
\end{itemize}
Let us consider for some positive $a_1\geq a_2\geq \cdots\geq a_n>0$ the polyhedral Finsler metric:
    $$d_\infty((x_i),(y_i))=\max(d_i(x_i,y_i)).$$
Then the ideal Poisson--Voronoi tessellation for $d$ has bounded sides.
\end{corollary}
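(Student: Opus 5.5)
The plan is to reduce both items to the face $F_\rho$ of a concrete polyhedron in $\mathfrak{a}\simeq\R^d$, and then combine Corollary \ref{cor: proper bisectors when opposition involution is trivial} with Theorem \ref{thm:ProperBissectro dichotomy}. The number of factors is written $d$ in the statement and $n$ in the sums; I treat them as the same integer.

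\emph{Step 1: the flat, the Weyl group and $\rho$.} For $G=G_1\times\cdots\times G_d$ with each $G_i$ of real rank one, we have $\mathfrak{a}=\mathfrak{a}_1\oplus\cdots\oplus\mathfrak{a}_d\simeq\R^d$. The Weyl group is $\{\pm1\}^d$ acting by sign changes, exactly as in the $(\mathbb{H}^2)^n$ example of Section \ref{subsec:Symmetric spaces examples}. Each $\vec d$ has components $d_i(x_i,y_i)$. Consequently:
\begin{itemize}
\item $d_a=d^{N_a}$ with $N_a(v)=\sum_i a_i|v_i|$;
\item $d_\infty=d^{N_\infty}$ with $N_\infty(v)=\max_i|v_i|$, or $\max_i a_i|v_i|$ if weights are intended.
\end{itemize}
Both norms are $W$-invariant and polyhedral. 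The longest element is $w_0=-\Id$ on each factor, so the opposition involution $\iota=-w_0$ is trivial.

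The restricted Weyl vector is $\rho=\sum_i\rho_i$, where $\rho_i$ is the Weyl vector of $G_i$. With the normalization $\rho_i(v)=1$ for unit $v\in\mathfrak{a}_i^+$, this gives $\rho(v)=\sum_i v_i$ on $\mathfrak{a}$.

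\emph{Step 2: computing $F_\rho$.}
\begin{itemize}
\item For $N_a$, the unit ball is the cross-polytope with vertices $\pm e_i/a_i$. On these vertices $\rho$ takes the values $\pm 1/a_i$, so its maximum over $B^{N_a}$ is $1/a_1$. This maximum is attained exactly on the face spanned by the $e_i/a_i$ with $a_i=a_1$.
\begin{itemize}
\item If $a_1<a_2$, only $i=1$ qualifies, so $F_\rho=\{e_1/a_1\}$ is a vertex.
\item If $a_1=a_2$, $\overline{F_\rho}$ contains the segment $[e_1/a_1,e_2/a_2]$, so $F_\rho$ is not a vertex.
\end{itemize}
\item For $N_\infty$ (weighted or not), the unit ball is a box $\prod_i[-1/a_i,1/a_i]$. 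The linear form $\sum_i v_i$, having all coefficients nonzero, is maximized at the single corner $(1/a_1,\dots,1/a_d)$. Hence $F_\rho$ is a vertex.
\end{itemize}

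\emph{Step 3: conclusion.} Since the opposition involution is trivial, Corollary \ref{cor: proper bisectors when opposition involution is trivial} says that $\mathcal{O}_{F_\rho}$ has the proper-on-bisector property exactly when $F_\rho$ is a vertex. Equivalently, this follows from Proposition \ref{prop:Caracterisation proper bisectors Frho} with $w_0\cdot\Lin_0(F_\rho)=\Lin_0(F_\rho)$. Theorem \ref{thm:ProperBissectro dichotomy} then gives:
\begin{itemize}
\item bounded sides almost surely for $d_a$ with $a_1<a_2$ and for $d_\infty$;
\item unbounded sides almost surely for $d_a$ with $a_1=a_2$.
\end{itemize}
For $d_a$ with $a_1=a_2$ one can also check Proposition \ref{prop:Caracterisation proper bisectors Frho} directly: $\Lin_0(F_\rho)\ni e_1/a_1-e_2/a_2$ is preserved by $w_0=-\Id$, so the intersection $w_0\cdot\Lin_0(F_\rho)\cap\Lin_0(F_\rho)$ is nonzero.

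The only point needing care is Step 1: confirming that the product metrics are exactly the Finsler metrics $d^N$ of Theorem \ref{thm:Planche}, and that the normalization of each $d_i$ makes $\rho$ the coordinate sum. If the normalization were different, $\rho$ would become $\sum_i c_iv_i$ with $c_i>0$, and Step 2 would compare the ratios $c_i/a_i$ rather than $1/a_i$.
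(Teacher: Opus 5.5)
Your proposal is correct and follows the paper's proof essentially step for step. You identify $\mathfrak{a}\simeq\R^d$ with $W=\{\pm1\}^d$, so the opposition involution is trivial; you compute $F_\rho$ as the face spanned by the $e_i/a_i$ with $a_i=a_1$ for $d_a$ and as a single corner of the box for $d_\infty$; and you conclude via Corollary \ref{cor: proper bisectors when opposition involution is trivial} and Theorem \ref{thm:ProperBissectro dichotomy}, exactly as the paper does (your reading of the $d$ versus $n$ indexing and of the weighted $d_\infty$ just resolves typos in the statement).
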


\begin{proof}
The Cartan subalgebra for the associated Lie group $G$ is naturally $\R^n$, and note that the opposition involution is trivial, as the Weyl group is just the group of elements acting like $\Diag(\pm1,\cdots \pm1)$. Note also that $\rho=\rho_1+\cdots +\rho_n$ is the sum of the half sums of roots for the symmetric spaces $\mathcal{X}_i$.

For the metric $d_a$, the face $F_\rho$ has dimension $k$, which is the maximum integer such that $a_1=a_k$. By Corollary \ref{cor: proper bisectors when opposition involution is trivial}, $\mathcal{O}_{F_\rho}$ has the proper-on-bisector property if and only if $k=1$, which concludes the proof.

For the metric $d_\infty$, the face $F_\rho$ is the vertex $(1,\cdots, 1)$, which concludes the proof once again by Corollary \ref{cor: proper bisectors when opposition involution is trivial}.
    
\end{proof}

Now for a product of any two symmetric spaces, we obtain the following corollaries. These corollaries naturally extend to products with more than two factors, and hence are stronger than the previous one.

\begin{corollary}
Let $\mathcal{X}_1,\mathcal{X}_2$ be two symmetric spaces of non-compact type  associated to semi-simple groups $G_1,G_2$ with polyhedral Finsler metrics associated to norms $N_1,N_2$ on their Cartan subalgebras $\mathfrak{a}_1,\mathfrak{a}_2$. 

Given the norm $N_\infty=\max(N_1,N_2)$ the ideal Poisson--Voronoi tessellation for $d_{N_\infty}$ has bounded sides if and only if both the ideal Poisson--Voronoi tessellation for $d_{N_1}$ and $d_{N_2}$ have bounded sides.

\medskip

Given the polyhedral norm $N=N_1+N_2$:
\begin{itemize}
    \item[-] If $N_1(\rho_1)>N_2(\rho_2)$, then the ideal Poisson--Voronoi tessellation for $d_N$ has bounded sides if and only if the ideal Poisson--Voronoi tessellation for $d_{N_1}$ have bounded sides.
    \item[-] If $N_i(\rho_i)=N_i(\rho_i)$ and both opposition involutions for $G_1,G_2$ are trivial, then the ideal Poisson--Voronoi tessellation for $d_N$ has unbounded sides.
\end{itemize}
\end{corollary}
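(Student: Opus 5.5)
The strategy is to reduce everything, via Theorem~\ref{thm:ProperBissectro dichotomy}, to a check in the Cartan subspace $\mathfrak{a}=\mathfrak{a}_1\oplus\mathfrak{a}_2$. There we use the product structure: $W=W_1\times W_2$, $w_0=(w_0^1,w_0^2)$, $\rho=\rho_1+\rho_2$, and $\Sigma^+=\Sigma_1^+\sqcup\Sigma_2^+$. For each norm we identify the face $F_\rho$ and its dual $F_\rho^\vee$. We then test the criterion $\Cone(F_\rho^\vee\cup w_0 F_\rho^\vee)=\mathfrak{a}^*$ of Proposition~\ref{prop: proper bisectors}, or the $\Lin_0$ criterion of Proposition~\ref{prop:Caracterisation proper bisectors Frho}. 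Throughout I read $N_i(\rho_i)$ as the maximum $\rho_{i,\max}$ of $\rho_i$ on $B^{N_i}$, i.e.\ the dual norm of $\rho_i$, since that is the quantity that actually governs $F_\rho$.

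\textbf{Case $N_\infty=\max(N_1,N_2)$.} The unit ball is $B^{N_1}\times B^{N_2}$, so $\rho$ is maximized on $\overline{F_{\rho_1}}\times\overline{F_{\rho_2}}$ and $F_\rho=F_{\rho_1}\times F_{\rho_2}$. By Lemma~\ref{lem: Alternative definition Fvee}, the vertices of $(B^N)^*$ that equal $1$ on this product face are those of the form $(\beta_1,0)$ with $\beta_1\in\ver F_{\rho_1}^\vee$, or $(0,\beta_2)$ with $\beta_2\in\ver F_{\rho_2}^\vee$. Hence $F_\rho^\vee$ is the convex hull of $F_{\rho_1}^\vee\times\{0\}$ and $\{0\}\times F_{\rho_2}^\vee$. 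Taking cones, and using that $w_0$ acts factorwise,
$$\Cone(F_\rho^\vee\cup w_0F_\rho^\vee)=\Cone(F_{\rho_1}^\vee\cup w_0^1F_{\rho_1}^\vee)\times\Cone(F_{\rho_2}^\vee\cup w_0^2F_{\rho_2}^\vee).$$
This product equals $\mathfrak{a}_1^*\oplus\mathfrak{a}_2^*$ if and only if both factors are everything. The equivalence then follows from Proposition~\ref{prop: proper bisectors} together with the dichotomy of Theorem~\ref{thm:ProperBissectro dichotomy}.

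\textbf{Case $N=N_1+N_2$.} The dual ball is now $(B^{N_1})^*\times(B^{N_2})^*$, and the unit ball is the convex hull of $B^{N_1}\times\{0\}$ and $\{0\}\times B^{N_2}$. The maximum of $\rho$ on it is $\max(\rho_{1,\max},\rho_{2,\max})$.
\begin{itemize}
\item If $\rho_{1,\max}>\rho_{2,\max}$, then $F_\rho=F_{\rho_1}\times\{0\}$. Its dual face is $F_{\rho_1}^\vee\times C$, where $C=\{\beta_2\in (B^{N_2})^*:\beta_2(0)\ldots\}$ is all of $(B^{N_2})^*$, because every $\beta_2$ vanishes at $0$. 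Since $(B^{N_2})^*$ contains $0$ in its interior, the cone of $F_\rho^\vee\cup w_0F_\rho^\vee$ is $\Cone(F_{\rho_1}^\vee\cup w_0^1F_{\rho_1}^\vee)\times\mathfrak{a}_2^*$. So the criterion for $N$ is exactly the criterion for $N_1$.
\item If $\rho_{1,\max}=\rho_{2,\max}$, then $F_\rho$ is the open join of $F_{\rho_1}\times 0$ and $0\times F_{\rho_2}$, so $\dim F_\rho\ge 1$. Both opposition involutions are trivial, so $\iota$ is trivial on the product. Corollary~\ref{cor: proper bisectors when opposition involution is trivial} then shows the property fails, and Theorem~\ref{thm:ProperBissectro dichotomy} gives unbounded sides.
\end{itemize}

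The main obstacle is the careful identification of dual faces for the product and sum norms, i.e.\ verifying that these faces really are the joins and products claimed, including checking which vertices of $(B^N)^*$ attain $1$. The second case needs particular care, since there $F_\rho^\vee$ contains the whole ball $(B^{N_2})^*$ as a factor. Once these identifications are in place, the cone computations are formal.
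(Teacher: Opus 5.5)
Your argument is correct, but in the two non-equality cases you use a different criterion from the paper. The paper never computes dual faces. It applies the $\Lin_0$ test of Proposition~\ref{prop:Caracterisation proper bisectors Frho}, which is immediate here:
\begin{itemize}
\item $\Lin_0(F_{\rho_1}\times F_{\rho_2})=\Lin_0(F_{\rho_1})\oplus\Lin_0(F_{\rho_2})$;
\item $\Lin_0(F_{\rho_1}\times\{0\})=\Lin_0(F_{\rho_1})\oplus 0$;
\item $w_0=(w_0^1,w_0^2)$ acts factorwise, so $w_0\Lin_0(F_\rho)\cap\Lin_0(F_\rho)$ splits as the direct sum of the factor intersections.
\end{itemize}
You use the cone test of Proposition~\ref{prop: proper bisectors} instead. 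This costs you identifying $(B^N)^*$ and $F_\rho^\vee$ for the max and sum norms, but it relies only on a criterion valid for an arbitrary face. The equality case is handled the same way in both: a join face of positive dimension, then Corollary~\ref{cor: proper bisectors when opposition involution is trivial}.

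One justification should be tightened. In the subcase $N_1(\rho_1)>N_2(\rho_2)$, the fact that $0$ is interior to $(B^{N_2})^*$ does not by itself give $\Cone(F_\rho^\vee\cup w_0F_\rho^\vee)=\Cone(F_{\rho_1}^\vee\cup w_0^1F_{\rho_1}^\vee)\times\mathfrak{a}_2^*$. The cone over $F_{\rho_1}^\vee\times(B^{N_2})^*$ alone contains only pairs $(c,d)$ with $d\in c(f)\,(B^{N_2})^*$, where $f\in F_{\rho_1}$. The identity does hold, because first-factor contributions can cancel. We have $\lambda\rho_1\in F_{\rho_1}^\vee$ and $-\lambda\rho_1=w_0^1(\lambda\rho_1)\in w_0^1F_{\rho_1}^\vee$, using $(B^{N_2})^*$ being $W_2$-invariant. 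Hence the sums $(\lambda\rho_1,u)+(-\lambda\rho_1,u')$, with $u,u'$ in the interior of $(B^{N_2})^*$, fill out $\{0\}\times\mathfrak{a}_2^*$ after scaling. Combined with the projection onto the first factor, this gives the claimed product, and in particular the equivalence you need.
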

In this statement, we write $N_i(\rho_i)$ for the maximum value of the linear form $\rho_i\in \mathfrak{a}_i^*$ on the unit ball of $N_i$ in $\mathfrak{a}_i$, which is reached on $F_{\rho_i}$. In the second case if we remove the triviality of the opposition involution, the boundedness if the sides of the Poisson--Voronoi tessellation can be checked from Proposition \ref{prop:Caracterisation proper bisectors Frho}.

\begin{proof}

For $N_\infty$, note that $F_\rho$ is naturally $F_{\rho_1}\times F_{\rho_2}$. In particular $\Lin_0(F_\rho)=\Lin_0(F_{\rho_1})\oplus \Lin_0(F_{\rho_2})$. Let $(w_0^1,w_0^2)$ be the opposition involution on $\mathfrak{a}=\mathfrak{a}_1\oplus \mathfrak{a}_2$. The intersection from Proposition \ref{prop:Caracterisation proper bisectors Frho}, that characterizes the proper-on-bisector property for $F_\rho$, is empty if and only if it is empty for $\mathcal{X}_1$ and $\mathcal{X}_2$. This concludes the proof.

\medskip

For $N$, note that $F_\rho$ is equal to $F_{\rho_1}$ in the first case and is equal to $F_{\rho_1}\oplus F_{\rho_2}$ in the second. As before the opposition involution on $\mathfrak{a}=\mathfrak{a}_1\oplus \mathfrak{a}_2$ writes as $(w_0^1,w_0^2)$. Hence in the first case we obtain once again the result by Proposition \ref{prop:Caracterisation proper bisectors Frho}. In the second case $F_{\rho}$ is the affine span of $F_{\rho_1}$ and $F_{\rho_2}$, which is not a vertex, hence giving the desired result by Corollary \ref{cor: proper bisectors when opposition involution is trivial}.

\end{proof}

\begin{rem}
    We believe that this result also holds in the case $N_1(\rho_1)>N_2(\rho_2)$ when the metric $N_2$ is a general Finsler norm that is not polyhedral, for instance for Euclidean metrics. However we do not define the ideal Poisson--Voronoi tessellation for such non-polyhedral metrics in this paper.
\end{rem}

\section{Tessellation with convex tiles}
\label{sec:Selberg}

Aside from the exceptional case of real hyperbolic space, the cells of a Voronoi tessellation with respect to the Riemannian metric are typically not convex.

However when $G=\PSL(n,\R)$, the associated symmetric space admits a projective model: it can be identified with an open convex set in a large projective space, so that $\PSL(n,\R)$ acts by conjugation. 
More concretely it can be seen as the space of projective positive symmetric tensors in $\mathbb{P}(S^2 \mathbb{R}^n)$, or dually as the space of positive symmetric $n$ by $n$ matrices up to a scalar.

\medskip

In this setup, there is an invariant $\mathfrak{s}:\X \times \X \to \R$, called the Selberg invariant, which is at bounded distance from the polyhedral Finsler metric associated to the norm $N=N_{\omega_1}$. This invariant has the advantage that its bisectors are projective hyperplane in the projective model. Consequently the tiles of a  Voronoi diagram with respect to the Selberg invariant are convex polyhedra in the projective model. 
Examples of such tessellations are illustrated Figure \ref{fig:CellSymSpace} and Figure \ref{fig:CellSymSpace2}.

It turns out that there is a natural identification between  horofunctions for $\mathfrak{s}$ and horofunctions for $d_{N_{\iota\omega_1}}$.

\subsection{The Selberg invariant and the dual Satake compactification.}

\label{subsec:Selberg}
We begin with some linear algebra to define the two projective models for the symmetric space $\X$. Let $V$ be a $n$-dimensional real vector space. In what follows, making a distinction between $V$ and $V^*$ will be important. Let $S^2V$ be the second symmetric power of $V$, or in other words the space of symmetric bilinear forms on $V^*$. The space of symmetric bilinear forms on $V$ is $\left(S^2V\right)^*\simeq S^2V^*$. Let $S_{>0}^2V\subset S_{\geq 0}^2V\subset S^2V$ be the cones of positive and semi-positive bilinear forms on $V^*$. Simillarly let $S_{>0}^2V^*\subset S_{\geq 0}^2V^*\subset S^2V^*$ be the cones of non-zero positive and semi-positive bilinear forms on $V$.

\medskip

Let us fix a volume form on $V$. This also defines a volume form on $V^*$. Let $\mathcal{X}\subset S_{>0}^2V$ be the set of elements of determinant one, which makes sense if we view them as linear maps $V^*\to V$. Let similarly $\mathcal{X}^*\subset S_{>0}^2V^*$ be the set of elements of determinant one, if we view these elements as linear maps $V\to V^*$. These spaces $\mathcal{X}, \mathcal{X}^*$ can both naturally be identified with the symmetric space $\X=\SL(V)/SO(V)$, we call them respectively the \emph{projective model} and the \emph{dual projective model} of $\X$. Given an element $X\in \mathcal{X}$ viewed as a map $V^*\to V$, we obtain the corresponding element of $\mathcal{X}^*$ by considering $X^{-1}:V\to V^*$.

\medskip

\begin{defn}
Given two points $X\in \mathcal{X}$ and $Y\in \mathcal{X}^*$, the \emph{Selberg invariant} is equal to :
$$\mathfrak{s}(X,Y)= \frac{1}{2}\log\frac{1}{n}\Tr(XY).$$ 
Here $X$ is again viewed as a linear map $V\to V^*$ and $Y$ as a linear map $V^*\to V$.
\end{defn}

This invariant can be viewed coarsely as a distance, up to indetifyting $\mathcal{X}$ and $\mathcal{X}^*$ with $\X$.

\begin{prop}
    Let $X\in \mathcal{X}$ and $Y\in \mathcal{X}^*$, $\mathfrak{s}(X,Y)\geq 0$ with equality if and only if $Y= X^{-1}$, \ie if $X$ and $Y$ correspond to the same point in $\X$.
\end{prop}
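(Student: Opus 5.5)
The plan is to reduce to a diagonal computation using the $\SL(V)$-invariance of $\mathfrak{s}$. Since $\SL(V)$ acts on $\mathcal{X}$ and $\mathcal{X}^*$ compatibly, the trace $\Tr(XY)$ is invariant under simultaneous action. Concretely, if $g\in \SL(V)$ acts on $X\colon V\to V^*$ by $X\mapsto g^{*}Xg$ and on $Y\colon V^*\to V$ by $Y\mapsto g^{-1}Y(g^{*})^{-1}$, the product transforms as $XY\mapsto g^{*}XY(g^{*})^{-1}$. Its trace is therefore unchanged.

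The first step is to bring $X$ to the identity. After fixing a basis of $V$ (with the dual basis on $V^*$) compatible with the volume form, $X$ is a positive definite symmetric matrix of determinant one. By transitivity of $\SL(V)$ on $\mathcal{X}$ we may assume $X=\Id$. Then $Y$ is a positive definite symmetric matrix of determinant one, and $\mathfrak{s}(X,Y)=\frac12\log\frac1n\Tr(Y)$.

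The next step is to apply the orthogonal stabilizer of $X=\Id$ to diagonalize $Y$, with positive eigenvalues $\lambda_1,\dots,\lambda_n$ satisfying $\prod\lambda_i=1$. By the AM--GM inequality,
\[ \frac1n\sum_{i=1}^n \lambda_i \geq \Big(\prod_{i=1}^n\lambda_i\Big)^{1/n}=1 . \]
Hence $\mathfrak{s}(X,Y)\geq 0$. Equality holds exactly when all $\lambda_i$ are equal, which forces every $\lambda_i=1$, i.e.\ $Y=\Id=X^{-1}$. Transporting back by $g$ gives $Y=X^{-1}$ in general.

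The only point requiring care is the bookkeeping of the action on $V$ versus $V^*$: one must check that the actions really make $X\mapsto X^{-1}$ equivariant and the trace invariant. Once this is verified in coordinates, the rest is AM--GM.
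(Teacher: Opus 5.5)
Your proof is correct and is essentially the paper's argument. The paper observes directly that $XY$ has determinant one and is self-adjoint and positive for the inner product defined by $X$, then applies AM--GM to its eigenvalues; you reach the same eigenvalue computation by first normalizing $X=\Id$ via the $\SL(V)$-action and then diagonalizing $Y$ orthogonally, and your equivariance check ($XY$ is conjugated, and $X^{-1}$ transforms like $Y$) is exactly what makes that normalization legitimate.
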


\begin{proof}
    The endomorphsim $XY:V\to V$ has determinant one, it is symmetric with respect to the scalar product defined by $X$ and positive. Hence if we write $\lambda_1,\cdots \lambda_n>0$ for its eigenvalues, one has $\Tr(XY)=\lambda_1+\cdots+\lambda_n$ and $1=\lambda_1\cdots \lambda_n$, so $\Tr(XY)\geq n$ with equality if and only if $X^{-1}=Y$.
\end{proof}

The Selberg invariant defines a map $\X\times \X \to \R_{>0}$ which does not satisfy the triangular inequality, but which is close to the distance associated to a Finsler metric.

\begin{rem}

When $n=2$, one can identify $\X$ with the hyperbolic plane, and in this case $\mathfrak{s}(x,y) = \frac12 \log \cosh(d(x,y))$.

\end{rem}

Recall that $\omega_1$ is the linear form on the restricted Cartan sublagebra $\mathfrak{a}=\lbrace\Diag(\lambda_1, \cdots, \lambda_n)\mid \lambda_1+\cdots+\lambda_n=0 \rbrace $ of $\mathfrak{sl}_n(\R)$, that associates $\lambda_1$ to the diagonal matrix $\Diag(\lambda_1, \cdots, \lambda_n)$. This linear form defines a norm $N_{\omega_1}$ on $\mathfrak{a}$ such that :$$N_{\omega_1}\left(\lambda_1, \cdots, \lambda_n\right)=\max_{1\leq i\leq n}\lambda_i.$$
This norm defines a Finsler distance $d_{\omega_1}$ on $\X$ so that $d_{\omega_1}(x,y)=N_{\omega_1}(\vec{d}(x,y))$. The action opposition involution $\iota$ on $\mathfrak{a}$ exchanges any asymmetric norm $N$ with $\iota N(v)=N(-v)$. In particular :
$$N_{\iota\omega_1}\left(\lambda_1, \cdots, \lambda_n\right)=\max_{1\leq i\leq n}-\lambda_i.$$ 

\begin{prop}
\label{prop:Compariason SelbergDistance}
Given $X\in \mathcal{X}\simeq \X$ and $Y\in \mathcal{X}^*\simeq \X$; $d_{\omega_1}(Y,X)$ is half the logarithm of the largest eigenvalue of $XY:V^*\to V^*$. In particular:
$$d_{\omega_1}(Y,X)-\frac12 \log(n)\leq \mathfrak{s}(X,Y)\leq d_{\omega_1}(Y,X).$$
$$d_{\iota\omega_1}(X,Y)-\frac12 \log(n)\leq \mathfrak{s}(X,Y)\leq d_{\iota\omega_1}(X,Y).$$
\end{prop}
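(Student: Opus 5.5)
The plan is to reduce everything to a diagonal computation by $\SL(V)$-equivariance, and then compare a trace with a largest eigenvalue.

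\emph{Reduction.} Both sides are invariant under the simultaneous action of $g\in\SL(V)$ on $\mathcal{X}$ and $\mathcal{X}^*$. That action is $X\mapsto gXg^{T}$ on maps $V^*\to V$ and $Y\mapsto g^{-T}Yg^{-1}$ on maps $V\to V^*$, under which the product $XY$ (or $YX$) changes only by conjugation. So we may assume that $Y^{-1}$, viewed in $\mathcal{X}$, is the identity for a chosen basis, i.e.\ $Y$ is the basepoint $o$. Using the $\SO$-stabilizer, we may further diagonalize $X=\Diag(e^{2\sigma_1},\dots,e^{2\sigma_n})$ with $\sum\sigma_i=0$ and $\sigma_1\ge\cdots\ge\sigma_n$.

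\emph{Computing $\vec d$.} With the paper's conventions, $\vec{d}$ is read off from the eigenvalues of $yx^{-1}$ for matrices in $\text{Sym}^+_n$. Translating between the model $\mathcal{X}$ and the dual model $\mathcal{X}^*$ (via $X\mapsto X^{-1}$), we will check that $\vec{d}(Y,X)=(\sigma_1,\dots,\sigma_n)$, possibly up to the order and sign conventions. This is where the factor $\frac12$ comes from: the square-root relation between the matrix and the point $e^{\sigma}\cdot o$, since the action is $g\cdot M=g^TMg$. Then
\[
d_{\omega_1}(Y,X)=N_{\omega_1}(\vec d(Y,X))=\max_i\sigma_i=\tfrac12\log\lambda_{\max}(XY),
\]
because the eigenvalues of $XY$ are the $e^{2\sigma_i}$. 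The only delicate point is which of $X$, $Y$ comes first and whether one gets $\omega_1$ or $\iota\omega_1$. We will fix this by testing a single diagonal example against the convention $\vec d(x,y)=\iota\vec d(y,x)$.

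\emph{Inequalities.} For positive eigenvalues $\lambda_1,\dots,\lambda_n$ we have
\[
\tfrac1n\lambda_{\max}\le\tfrac1n\sum_i\lambda_i\le\lambda_{\max}.
\]
Taking $\frac12\log$ gives
\[
d_{\omega_1}(Y,X)-\tfrac12\log n\le\mathfrak{s}(X,Y)\le d_{\omega_1}(Y,X).
\]
For the second line, we use
\[
d_{\iota\omega_1}(X,Y)=N_{\iota\omega_1}(\vec d(X,Y))=N_{\omega_1}(-\vec d(X,Y)).
\]
Since $\vec d(X,Y)=\iota\vec d(Y,X)=-w_0\vec d(Y,X)$ and $N_{\omega_1}$ is $W$-invariant, this equals $N_{\omega_1}(\vec d(Y,X))=d_{\omega_1}(Y,X)$. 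So the second chain of inequalities is the same as the first.

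The main (mild) obstacle is the bookkeeping of conventions, namely $V$ versus $V^*$, inverses, and argument order. We will settle it by the explicit diagonal check described above.
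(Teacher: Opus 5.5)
Your proposal is correct and follows essentially the same route as the paper. In both, $\vec d(Y,X)$ is read off as half-logarithms of the eigenvalues of $XY$: your diagonal check with $Y$ at the basepoint and $X=\Diag(e^{2\sigma_i})$ gives exactly $(\sigma_1,\dots,\sigma_n)$, with no sign flip. Then $\mathfrak{s}(X,Y)=\frac12\log\frac1n\sum_i e^{2\sigma_i}$ is compared with the maximum term. Your derivation of the $\iota\omega_1$ line from $\vec d(X,Y)=\iota\vec d(Y,X)$ and the $W$-invariance of $N_{\omega_1}$ makes explicit a step the paper leaves to the reader.
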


\begin{remark}
In what follows, we want to consider a tessellation of $\mathcal{X}$ with convex tiles, so we want $X\in \mathcal{X}$ to be the first argument. Hence the Finsler metric that will appear will be $d_{\iota\omega_1}$. 
\end{remark}
\begin{proof}
Recalll that $g\in \SL(V)$ acts on $S^2V$ by $g\cdot X= g Xg^{*}$, where $g^*:V^*\to V^*$ is the dual of $g$, when $X$ is viewed as an element $V^*\to V$. The vector valued distance between $X\in \mathcal{X}$ and $Y\in \mathcal{X}^*$ is the element $(\lambda_1, \cdots, \lambda_n)\in \mathfrak{a}$ such that $\lambda_1\geq \lambda_2 \geq \cdots \geq \lambda_n$ and the eigenvalues of $XY:V^*\to V^*$ are $(e^{2\lambda_1}, \cdots, e^{2\lambda_n})$. We can rewrite $\mathfrak{s}(X,Y)$ as 
$$\frac{1}{2}\log\left(\frac{1}{n}\sum_{i=1}^ne^{2\lambda_i}\right).$$
From that and the expression of $d_{\omega_1}$ we deduce the result.
    
\end{proof}

\begin{figure}[h!]
\begin{center}
\begin{tikzpicture}[scale=.7]
\begin{axis}[
      hide axis,
    axis equal,
    samples=100,
    xmin=-4, xmax=4,
    ymin=-4, ymax=4,
    width=15cm,    
    height=15cm,
]
    \draw[very thick, gray] (2*2,0) -- (0,0) --(-2*2*.5,2*2*0.866);
    \draw[very thick, gray] (-2*2,0) -- (0,0) --(2*2*.5,-2*2*0.866);
    \draw[very thick, gray] (0,0) -- (-2*2*.5,-2*2*0.866);
    \draw[very thick, gray] (0,0) -- (2*2*.5,2*2*0.866);

\draw[blue, thick,] (1,0) -- (-.5,0.866) --(-.5,-0.866)  -- cycle;
\draw[olive, thick] (0.1*1,0.1*0) -- (-0.1*.5,0.1*0.866) -- (-0.1*.5,-0.1*0.866)  -- cycle;

 \draw[red, thick] (3*1,3*0) -- (-3*.5,3*0.866) -- (-3*.5,-3*0.866)  -- cycle;

\pgfmathdeclarefunction{A}{1}{\pgfmathparse{3*exp(1)*exp(-#1) - exp(-3*#1)}}

\pgfmathdeclarefunction{B}{1}{\pgfmathparse{3*exp(0.25)*exp(-#1) - exp(-3*#1)}}

\pgfmathdeclarefunction{C}{1}{\pgfmathparse{3*exp(0.1)*exp(-#1) - exp(-3*#1)}}

\pgfmathdeclarefunction{D}{1}{\pgfmathparse{3*exp(3)*exp(-#1) - exp(-3*#1)}}
\addplot[blue, thick, domain=-2:2]
  ({x},
   { ln( (A(x) + sqrt(A(x)^2 - 4))/2)/sqrt(3) }
  );
\addplot[blue, thick,transform canvas={rotate around={120:(0,0)}}, domain=-2:2]
   ({x},
   { ln( (A(x) + sqrt(A(x)^2 - 4))/2)/sqrt(3) }
   );
\addplot[blue, thick,transform canvas={rotate around={240:(0,0)}}, domain=-2:2]
   ({x},
   { ln( (A(x) + sqrt(A(x)^2 - 4))/2)/sqrt(3) }
   );
      
\addplot[olive, thick, domain=-2:2]
  ({x},
   { ln( (C(x) + sqrt(C(x)^2 - 4))/2)/sqrt(3) }
  );
\addplot[olive, thick,transform canvas={rotate around={120:(0,0)}},, domain=-2:2]
   ({x},
   { ln( (C(x) + sqrt(C(x)^2 - 4))/2)/sqrt(3) }
   );
\addplot[olive, thick,transform canvas={rotate around={240:(0,0)}}, domain=-2:2]
   ({x},
   { ln( (C(x) + sqrt(C(x)^2 - 4))/2)/sqrt(3) }
   );

   \addplot[red, thick, domain=-3:3]
  ({x},
   { ln( (D(x) + sqrt(D(x)^2 - 4))/2)/sqrt(3) }
  );
\addplot[red, thick,transform canvas={rotate around={120:(0,0)}},, domain=-3:3]
   ({x},
   { ln( (D(x) + sqrt(D(x)^2 - 4))/2)/sqrt(3) }
   );
\addplot[red,thick,transform canvas={rotate around={240:(0,0)}}, domain=-3:3]
   ({x},
   { ln( (D(x) + sqrt(D(x)^2 - 4))/2)/sqrt(3) }
   );

\end{axis}
    
\end{tikzpicture}
\end{center}
\caption{Illustration of the Selberg invariant level sets and the balls of the norm $N_{\omega_1}$ on a model flat for $\dim(V)=3$. The 3 different colors illustrate the balls of radius $3$, $1$ and $0.1$. For $r$ small the $r$-balls for the Selberg invariant become close to the $r^{\frac{1}{2\dim(V)}}$-balls for the Riemmanian metric.}
\label{fig:Selberginvariant}
\end{figure}

On a model flat, the Finsler metric $\omega_1$ can be viewed as a maximum of linear forms, and the invariant $\mathfrak{s}$ is a smoothening of this maximum.

\medskip

Let $\mathcal{H}(\mathcal{X})$ be the horofunction compactification of $\mathcal{X}^*$ defined using the Selberg invariant. More precisely we can view points in $\mathcal{X}^*$ as functions  $\mathfrak{s}(\cdot,Y)$ on $\mathcal{X}$, given $Y\in \mathcal{X}^*$. If we consider the space of functions $\mathfrak{s}(\cdot,Y)+c$ for $x\in \mathcal{X}^*$, $c\in \R$ we obtain a set of the form $\mathcal{H}(\mathcal{X})\cup \mathcal{X}^*$. 
However in this case the space $\mathcal{H}(\mathcal{X})$ admits a simple description in terms of linear algebra.

\medskip

It is easily checked that, given a sequence $Y_n \in \mathcal{X}^\ast$ and $\lambda_n \to 0$, the forms $\lambda_n Y_n$ have a non-zero limit in $S^2_{\geq 0} V^*$ if and only if the functions on $\mathcal{X}$ given by $\mathfrak{s}(\cdot,Y_n)+\log(\lambda_n)$ converge.
So we may also view $\mathcal{H}(\mathcal{X})$ as the set $S^2_{\geq 0} V^*\setminus S^2_{> 0} V^*$ of non-zero, semi-positive symmetric bilinear forms on $V$ that are not positive. 
From now on we consider these spaces naturally identified.

We denote by $\partial\mathcal{X}^*$ the projectivisation of $\mathcal{H}(\mathcal{X})$ which defines a compactification of $\mathcal{X}^*\simeq\mathbb{P}(S^2_{>0}V^*)$ we call this compactification  the \emph{dual Satake compactification}. This is naturally a compactification of $\mathcal{X}^*$, and we will see that it is related to the horofunction compactification of $d_{\iota\omega_1}$. Each element $Y_0\in \mathcal{H}(\mathcal{X})$ has naturally associated a function $X\in \mathcal{X}\mapsto \log\frac{1}{n}\Tr(XY_0)$. These functions are horofunctions for $\mathfrak{s}$ in the sense that there are limits up to constant functions of functions of the form $X\mapsto \mathfrak{s}(X,Y)$. 

\begin{remark}
We will also consider the \emph{Satake compactification} $\partial \mathcal{X}$, which is natually a compactification of $\mathcal{X}$, and is the projectivisation of $\mathcal{H}(\mathcal{X}^*)=S^2_{\geq 0} V\setminus S^2_{> 0} V$.
\end{remark}

The action of $\SL(V)$ on $\mathcal{H}(\mathcal{X})$ decomposes into $n-1$ orbits $\mathcal{O}_1, \mathcal{O}_2, \cdots, \mathcal{O}_{n-1}$ where $\mathcal{O}_k$ is the set of semi-positive elements $Y_0\in S^2 V^*$ of corank $k$ (of rank $n-k$). 
The orbit $\mathcal{O}_k$ has a natural map to $\Gr(k,V)$ given by taking radicals: $Y \mapsto \rad(Y)$, where
    \[ \rad(Y) \coloneqq \{ v \in V : Y(v,w)=0, \, \forall w \in V \} .\]
The fibers are the inner products on $V/\rad(V)$. 
In particular, $\mathcal{O}_1$ fibers over $\mathbb{P}(V)$ with fiber over $\ell$ the space of inner products on $V/\ell$, and $\mathcal{O}_{n-1}$ is naturally an $\R$-bundle over $\mathbb{P}(V^\ast) = \Gr(d-1,V)$. 
On the other side, the unit ball for $N_{\iota\omega_1}$ admits (up to the action of the Weyl group) exactly one face for every $1\leq i\leq n-1$.

The space of horofunctions $\mathcal{H}^{N_{\iota \omega_1}}(\X)$ decomposes into orbits $\mathcal{O}_{F_k}$ for $1\leq k\leq n-1$ where $F_k$ is the unique face of the unit ball of $N_{\iota \omega_1}$ of dimension $k-1$ intersecting $\mathfrak{a}^+$ . 
Concretely if $\mathfrak{a}=\lbrace \Diag(\lambda_1, \cdots ,\lambda_n)\mid \lambda_1+\cdots +\lambda_n=0 \rbrace $ then the face is :
$$F_k=\left\lbrace \Diag(\lambda_1,\lambda_2,\cdots, \lambda_{k},-1,\cdots ,-1)\mid \lambda_{1}+\cdots +\lambda_{k}=n-k,\, \lambda_i\geq -1\right\rbrace.$$ 

\begin{prop}
\label{prop:Identification Selberg and Finler}
    The natural identification $\X\simeq \mathcal{X}$ extends to a $G$-equivariant homeomorphism $\phi: \overline{\mathcal{D}^{N_{\iota\omega_1}}(\mathbb{X})}\to S^2{V^*}^{\geq 0}$ such that:
\begin{itemize}
\item[(i)] For all $X\in \X\simeq \mathcal{X}$, for all $h\in \overline{\mathcal{D}^{N_{\iota\omega_1}}(\X)}$ :
$$h(X)-\frac{1}{2}\log(n)\leq \mathfrak{s}(X,\phi(h))\leq h(X),$$
\item[(ii)] $\phi$ is equivariant with respect to the $\R$-action defined by $\lambda\cdot h=(X \mapsto h(X)+\lambda)$ and $\lambda \cdot Y=e^{2\lambda} Y$ for $\lambda\in \R$, $h\in \overline{\mathcal{D}^N(\X)}$ and $Y\in S^2V^{\geq 0}$.
\item[(iii)]  This homeomorphism identifies $\mathcal{O}_{F_k}\subset \overline{\mathcal{D}^{N_{\iota\omega_1}}(\mathbb{X})}$ with $\mathcal{O}_k\subset S^2{V^*}^{\geq 0}$.
\end{itemize}
\end{prop}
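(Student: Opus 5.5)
The plan is to construct $\phi$ on distance-like functions first, extend it by the sandwich estimate and compactness, and then read off the orbits. On $\mathcal{D}^{N_{\iota\omega_1}}(\X)$ we set
\[ \phi\big(d_{\iota\omega_1}(\cdot,Y)-t\big)=e^{-2t}\,Y \in S^2_{>0}V^*, \]
where $Y$ is viewed in the dual projective model $\mathcal{X}^*$. This is well defined, because $(Y,t)\mapsto d_{\iota\omega_1}(\cdot,Y)-t$ is injective: two such functions with $Y\neq Y'$ differ by a non-constant function. It is $G$-equivariant and satisfies (ii) by construction. Since $\mathfrak{s}(X,e^{-2t}Y)=\mathfrak{s}(X,Y)-t$, Proposition \ref{prop:Compariason SelbergDistance} gives (i) on $\mathcal{D}$ directly. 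The map is also a bijection onto $S^2_{>0}V^*$, since $(Y,t)\mapsto e^{-2t}Y$ is the polar decomposition of positive definite forms into a determinant-one part and a scale.

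Next we extend $\phi$ to the closure. Let $h_m=d(\cdot,Y_m)-t_m$ converge to $h$ uniformly on compacts. By (i), the functions $X\mapsto \log\frac1n\Tr(X\phi(h_m))$ then stay within bounded distance of $2h_m$. Evaluating at a basepoint shows that $\phi(h_m)$ stays in a compact subset of $S^2_{\ge0}V^*\setminus\{0\}$. Here we use that $\Tr(X_0\,\cdot)$ is a norm-like function on the closed cone and that $h_m(o)$ is bounded. So every limit point $Y_\infty$ of $\phi(h_m)$ is a nonzero semipositive form.

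The main obstacle is uniqueness: different subsequences could a priori give different limits $Y_\infty$. To rule this out, we show that $h$ determines $Y_\infty$. I would first use the explicit description of horofunctions in Proposition \ref{prop:Explicit formula for Horofunctions} together with Theorem \ref{thm:Comparaison Horofunction Flat and symmetric space HSWW}. This lets us reduce to sequences $Y_m=e^{v_m}\cdot o$ lying along a model flat with $v_m-t_mf_0$ convergent, which is the situation of Proposition \ref{prop: identifying a/Lin0(F)/W_F^fixed with O_F/K}.

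On the model flat everything is diagonal. Write $Y_m=\Diag(e^{2\mu_i^{(m)}})$ up to the normalizing conventions. Then $e^{-2t_m}Y_m$ converges to the diagonal form whose entries equal $e^{2(\mu_i-t)}$ on the coordinates where $-\mu_i$ attains the face value $1$ (after normalization), and $0$ elsewhere. Meanwhile $h_F(x)=\max_{\omega\in\ver F^\vee}-\omega(x)$ depends on exactly the same coordinates. Hence the map $h\mapsto Y_\infty$ is well defined on the standard horofunctions. It extends equivariantly to all horofunctions, using the stabilizer comparison: $R_{F_k}$ from Section \ref{subsec:Horofunctions for SLn} should coincide with the stabilizer of a corank-$k$ form with prescribed radical and prescribed induced inner product.

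With this, $\phi$ is a continuous bijection $\overline{\mathcal{D}}\to S^2_{\ge0}V^*\setminus\{0\}$. Inequality (i) passes to the limit. Properness follows from (i): $h(o)$ bounded if and only if $\phi(h)$ lies in a compact set. So $\phi$ is a homeomorphism. Finally, the corank of the limit form is the number of coordinates where $-\lambda_i$ is not maximal, which is $\dim F+1=k$. Thus $\mathcal{O}_{F_k}$ maps onto the corank-$k$ orbit $\mathcal{O}_k$, which gives (iii). Both sides are single $G$-orbits, so it suffices to check one point of each.
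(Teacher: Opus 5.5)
Your outline has the same skeleton as the paper's proof: $d_{\iota\omega_1}(\cdot,Y)-t\mapsto e^{-2t}Y$ on $\mathcal{D}$, identification of $\mathcal{O}_{F_k}$ with $\mathcal{O}_k$ through stabilizers, a diagonal computation in the model flat, and properness from (i). Obtaining (i) on horofunctions by passing to the limit is a legitimate shortcut compared with the paper's explicit comparison of a trace with an operator norm. The gap is in the uniqueness step, which is where continuity of $\phi$ actually comes from.

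First, Proposition~\ref{prop:Explicit formula for Horofunctions} and Theorem~\ref{thm:Comparaison Horofunction Flat and symmetric space HSWW} only describe the limit $h$; they do not move an arbitrary sequence $Y_m$ onto the model flat. What does is the Cartan decomposition $Y_m=k_me^{v_m}\cdot o$, a subsequence with $k_m\to k_\infty$, and the observation $k_m^{-1}h_m\to k_\infty^{-1}h$, as in the paper. Second, the flat sequences obtained this way are in general not in the situation of Proposition~\ref{prop: identifying a/Lin0(F)/W_F^fixed with O_F/K}. For $k\ge 2$, convergence of the distance functions to a translate of $h_{F_k}$ only forces the $n-k$ coordinates on which $h_{F_k}$ depends to converge after subtracting $t_mf_0$. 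The other $k$ coordinates only have to escape, at rates that may differ (one sublinear, another linear in $t_m$), and then $v_m-t_mf_0$ diverges for every $f_0\in F_k$. Your diagonal computation still gives the right limit, since the corresponding entries still tend to $0$, but you must run it under this weaker hypothesis.

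Finally, and this is the main missing piece: different subsequences may give different $k_\infty$, and you then obtain the limits $k_\infty Y_\infty(k_\infty^{-1}h)$. Already for $h=h_{F_k}$ and any $k_\infty\in K\cap R_{F_k}$, this limit is $k_\infty Y_0$, where $Y_0$ is the corank-$k$ diagonal form your flat computation attaches to $h_{F_k}$. So uniqueness of the limit requires $R_{F_k}\subset\mathrm{Stab}(Y_0)$, and not only the later equivariant extension. Injectivity of $\phi$ on $\mathcal{O}_{F_k}$ requires the reverse inclusion. You only say these groups ``should'' coincide, and this has to be proved. The identity components can be matched with Proposition~\ref{prop:Lie algebra of stabilizer}: in suitable block form, both Lie algebras consist of $\mathfrak{sl}(k)$ in one diagonal block, $\mathfrak{so}(n-k)$ in the other, and one full off-diagonal block. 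The component groups must then be compared, for instance using $K\cap P_{\Delta\setminus\Theta^{\stable}_{F_k}}\subset R_{F_k}$ from Proposition~\ref{prop:comparing RF and SF to Gflip and Gfix}. This identification is the first step of the paper's proof, where $\phi$ on $\mathcal{O}_{F_k}$ is \emph{defined} through it. Once it is established, your limit argument goes through.
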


This homeomorphism on the horofunction compactification is given in \cite[{Theorem 5.5}]{haettel2018horofunctioncompactificationssymmetricspaces}. However we verify that it satisfies the additional properties.

\begin{proof}
If we fix a model pointed flat, the stabilizer of the model horofunction $h_{F_k}\in \mathcal{O}_{F_k}$, as described in Section \ref{subsec:The stabilizer of a horofunction} is the set of matrices that can be written as:

$$M=\begin{pmatrix}
B & 0 \\
A & U
\end{pmatrix}.$$
Here $(B,U)\in S(\GL(k,\R)\times\SO(n-k,\R))$ and $A$ is any $(n-k)\times k$ matrix. This subgroup is equal to the stabilizer of the semi-positive bilinear form $Y_0\in S^2 {V^*}^{\geq 0}\setminus S^2{V^*}^{>0 }$, which is associated to the following symmetric matrix written in block form with blocks of size $(k,n-k)$:
$$Y_0=\begin{pmatrix}
0 & 0 \\
0 & \Id
\end{pmatrix}.$$
Here we view this matrix as a bilinear from, using the standard scalar product associated with the base-point $o\in \mathcal{X}$.
Hence as homogeneous spaces one can identify $\mathcal{O}_{F_k}=G/R_{F_k}$ with $\mathcal{O}_k$ so that $\phi(h_{F_k})=Y_0$.

\medskip

Hence one can define a bijection $\phi: \overline{\mathcal{D}^{N_{\iota\omega_1}}(\mathbb{X})}\to S^2(V^*)^{\geq 0}$ 
that identifies for $Y\in \X\simeq \mathcal{X}^{-1}$ and $c\in \R$ the function $d(\cdot,Y)+c$ with $e^{2c}Y$, so that $\mathfrak{s}(\cdot,Y)+c=\mathfrak{s}(\cdot , e^{2c}Y)$. This identifies each $\mathcal{O}_{F_k}$ with $\mathcal{O}_k$ as previously described. This map clearly satisfies $(ii)$ and $(iii)$.

\medskip

In order to check $(i)$, one can use Proposition \ref{prop:Compariason SelbergDistance}, so it only remains to consider the case when $Y\in \mathcal{O}_{F_k}$, $1\leq k\leq n-1$.

The standard horofunction $h_{F_k}\in \mathcal{O}_{F_k}$ has the following formula, given the choice of a basepoint  $X_0\in \mathcal{X}$ and a model flat, by Proposition \ref{prop:Explicit formula for Horofunctions}:
 $$h_{F_k}(X)= \max_{\alpha\in \Ker(Y_0)^*} \frac{1}{2}\log(X(\alpha, \alpha)/X_0(\alpha,\alpha)).$$
 Here $\Ker(Y_0)^*\subset V^*$ is the space of linear forms vanishing on $\Ker(Y_0)\subset V$.

 \medskip

 We write $X$ as a symmetric matrix with the following block form:
 $$X=\begin{pmatrix}
 * & * \\
 * & X'
 \end{pmatrix}.$$

 We obtain that $\mathfrak{s}(X,\phi(h_{F_k}))=\frac{1}{2}\log(\frac{1}{n}\Tr(X'{X_0}_{|\Ker(Y_0)^*}))$ and $h_{F_k}(X)$ is half the operator norm of $X'$, for the metric $X_{|\Ker(Y_0)^*}$. The inequality $(i)$ follows directly from that.

 \medskip

 Now let us check that $\phi$ is continuous. This is an adaptation of the arguments of \cite[Theorem 5.5]{haettel2018horofunctioncompactificationssymmetricspaces}. Let $(h_m)_{m\in \N}$ be a sequence in $\overline{\mathcal{D}^{N_{\iota\omega_1}}(\X)}$ that converges to $h_\infty \in \mathcal{H}^{N_{\iota\omega_1}}(\X)$. We will prove that for any subsequence of $(h_m)$, there is a sub-subsequence that converges to $\phi(h_\infty)$. 
 
 First one can assume that $h_m=d(\cdot,y_m)+t_m$ for some $y_m\in \X$ and $t_m\in \R$, up to replacing $(h_m)$ by a distance functions that is close enough, so that the limit remains unchanged. 

 Now fix $X_0\in \mathcal{X}$ a basepoint in the model flat as previously and write $y_m=k_m\cdot \exp(\iota v_m)\cdot o$ with $v_m\in \mathfrak{a}^+$ in the model Cartan subalgebra. 
 Up to taking a subsequence one can assume that $k_m$ converges to $k_\infty\in K$.
 Let $\mathcal{O}_{F_k}$ be the orbit of the space of horofunctions containing $h_\infty$. 
 This means that $v_m=\Diag(\lambda_1^{(m)},\cdots, \lambda_n^{(m)})\in \mathfrak{a}^+$ is such that 
 $(\lambda_1^{(m)}-t_m,\cdots, \lambda_k^{(m)}-t_m)$ converge to some real numbers 
 $(\lambda_{1}^\infty,\cdots, \lambda_{n-k}^\infty)$ but $\lambda_{k+1}^{(m)}-t_m\to -\infty$. 
 In this case the matrices corresponding to the corresponding points in $S^2{V^*}^{\geq 0}$ are the following, using the base bilinear form $X_0$:
 $$k_m\cdot \Diag(e^{2\lambda_n^{(m)}-t_m}, \cdots, e^{2\lambda_1^{(m)}-t_m}).$$

This converges to $k_\infty \Diag(0,\cdots, ,0,e^{\lambda_{n-k}^\infty},\cdots, e^{\lambda_{1}^\infty})$, which is equal to $\phi(h_\infty)$ since $\phi$ is equivariant and $h_\infty=k_\infty\cdot\exp(\Diag(0,\cdots,0,\lambda_{n-k}^\infty,\cdots, \lambda_1^\infty))\cdot h_{F_k}$. This concludes the proof of the continuity.

 \medskip

 Finally note that $\phi$ is proper. Indeed any compact set in $S^2{V^*}^{\geq 0}$ is contained for some $C>0$ in the set of elements $Y$ such that $-C\leq \mathfrak{s}(X_0,Y)\leq C$. The pre-image of this set is compact by $(ii)$, since the set of horofunctions $h$ such that $-C-\log(n)/2 \leq h(X_0)\leq C+\log(n)/2$ is compact. 
 Hence $\phi$ is a homeomorphism.

\end{proof}

The orbit $\mathcal{O}_{F_\rho}$ for the Finsler distance $d_{\iota\omega_1}$ has the proper-on-bisector property, and the orbit $\mathcal{O}_1$ satisfies a similar property for the Selberg invariant.

\begin{lem}
    \label{lem:Proper Horofunctions Selberg}
    Let $Y_1,Y_2\in \mathcal{O}_1$ be in generic position, meaning that the lines $\ker Y_1$ and $\ker Y_2$ are distinct. Then for any $O\in \mathcal{X}^*$, there exist $A,B>0$ such that for all $X\in \mathcal{X}$ such that $\mathfrak{s}(X,Y_1)=\mathfrak{s}(X,Y_2)$:
    $$\mathfrak{s}(X,Y_1)\geq A\mathfrak{s}(X,O)-B. $$
\end{lem}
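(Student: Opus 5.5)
The plan is to transfer the statement to the Finsler metric $d_{\iota\omega_1}$ via Proposition~\ref{prop:Identification Selberg and Finler}, and then use the proper-on-bisector property of $\mathcal{O}_{F_\rho}$ for that metric.

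\textbf{Step 1: identify the orbit and check the property.} By Proposition~\ref{prop:Identification Selberg and Finler}(iii), $\mathcal{O}_1$ corresponds to $\mathcal{O}_{F_1}$, and $F_1$ is a vertex of $B^{N_{\iota\omega_1}}$. I would check that $F_1=F_\rho$: the vertices of the unit ball in $\overline{\mathfrak{a}^+}$ are the rescaled fundamental coweights $\varpi_i^\vee/\iota\omega_1(\varpi_i^\vee)$. Comparing $\rho(\varpi_i^\vee)/\iota\omega_1(\varpi_i^\vee)$ as in Proposition~\ref{prop:Proper bisectors polyhedral dominant} should single out $i=1$ with strict inequality. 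Then Proposition~\ref{prop:Caracterisation proper bisectors Frho} gives the proper-on-bisector property.

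\textbf{Step 2: reduce genericity.} Distinct kernel lines for $Y_1,Y_2$ is exactly transversality in $\mathcal{F}_{\Delta\setminus\Theta^{\stable}_{F_1}}=\mathbb{P}(V)$, since $n-1$-planes and lines are handled symmetrically. So $h_i=\phi^{-1}(Y_i)$ are in generic position.

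\textbf{Step 3: handle the approximate bisector.} The difficulty is that the Selberg bisector $\{\mathfrak{s}(\cdot,Y_1)=\mathfrak{s}(\cdot,Y_2)\}$ is not the Finsler bisector $\Bis(h_1,h_2)$. By (i) we only know $|h_1(X)-h_2(X)|\le \frac12\log n$ on the Selberg bisector. I would therefore strengthen Proposition~\ref{prop:MainProp of proper horofunctions} to approximate bisectors $\{|h_1-h_2|\le C\}$. Its proof adapts directly: take a diverging sequence $x_n$ with $|h_1-h_2|(x_n)\le C$ and $h_1(x_n)/d^N(o,x_n)\to\ell\le 0$. Then $h_2(x_n)/d^N\to\ell$ as well. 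Convexity of both horofunctions along Riemannian geodesics (they are maxima of Busemann functions) lets the segments $[o,x_n]$ limit to a ray on which both $h_i$ are bounded above. The Bruhat-order argument, using Proposition~\ref{prop:Tits angle} and Lemma~\ref{lem:Bruhat order}, then yields $v\ne0$ with $h_F(v)\le0$ and $h_F(w_0v)\le0$. This contradicts Proposition~\ref{prop: proper_on-bisector Plus}.

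\textbf{Step 4: conclude.} Step 3 gives $A,B$ with $h_1(X)\ge A\,d_{\iota\omega_1}(X,O)-B$ on the Selberg bisector. Using (i) again together with Proposition~\ref{prop:Compariason SelbergDistance} to compare $d_{\iota\omega_1}(X,O)$ with $\mathfrak{s}(X,O)$, and adjusting the constants, gives the claim.

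I expect Step 3 to be the main obstacle, and the point needing care is the limiting ray argument when $h_1(x_n)/d\to\ell<0$ versus $\ell=0$. In both cases, convexity should give boundedness above of $h_i$ along the limit ray via $h_i(\text{point on }[o,x_n])\le\max(h_i(o),h_i(x_n))$ up to $o(d)$, but this needs to be verified.
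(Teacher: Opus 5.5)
Your argument is correct, but it takes a much longer route than the paper, which never leaves the Selberg picture. There, since $\ker Y_1\neq\ker Y_2$ and both forms are semi-positive, $Y_1+Y_2$ is positive definite. Hence $Y_1+Y_2\ge\epsilon O$ for some $\epsilon>0$, and so $\Tr(XY_1)+\Tr(XY_2)\ge\epsilon\Tr(XO)$ for every $X\in\mathcal{X}$. On the bisector the two traces are equal, which gives $\mathfrak{s}(X,Y_1)\ge\mathfrak{s}(X,O)+\frac12\log(\epsilon/2)$. This is the statement with $A=1$ and an explicit $B$.

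You instead transfer to $d_{\iota\omega_1}$ via $\phi$ and Proposition~\ref{prop:Identification Selberg and Finler}(i). Your one genuinely new step, extending Proposition~\ref{prop:MainProp of proper horofunctions} to coarse bisectors $\{|h_1-h_2|\le C\}$, does go through. The Bruhat-order part of that proof only uses that both $h_i$ are bounded above along the limiting ray, and $|h_1-h_2|\le C$ forces $h_2(x_n)/d^N(o,x_n)$ to have the same limit $\ell\le 0$.

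What your approach buys is robustness. It shows the estimate holds for any functions within bounded distance of a generic pair in an orbit with the proper-on-bisector property, without using the linearity of $\Tr$. So it would also apply to other invariants coarsely equivalent to a polyhedral Finsler metric. What it costs is reliance on the heaviest and least detailed parts of Section~\ref{sec:bisectors}, a non-explicit $A$, and it hides that the Selberg bisector is a hyperplane section on which the estimate is immediate.

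On the point you flagged, do not use a $\max$ bound. When $\ell=0$, $h_i(x_n)$ may tend to $+\infty$ sublinearly, and then $\max(h_i(o),h_i(x_n))$ is unbounded. Use the convex-combination bound instead: $h_i(\gamma_n(t))\le(1-t/d_n)\,h_i(o)+(t/d_n)\,h_i(x_n)$, where $d_n$ is the Riemannian length of $[o,x_n]$. Since $\limsup_n h_i(x_n)/d_n\le 0$, this gives $h_i(\gamma(t))\le h_i(o)$ in the limit, uniformly for both $\ell<0$ and $\ell=0$.
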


\begin{proof}
Since the kernels of $Y_1,Y_2$ are disjoint, then $Y_1+Y_2$ is positive, and for some $\epsilon>0$, for all $X\in \mathcal{X}$:
$$\Tr(XY_1)+\Tr(XY_2)\geq \epsilon \Tr(XO).$$

Hence if we assume $\Tr(XY_1)=\Tr(XY_2)$, then :
$$\mathfrak{s}(X,Y_1)\geq 1/2\log(\epsilon\Tr(XO)/2n)\geq \mathfrak{s}(X,O)+\frac{1}{2}\log(\epsilon/2).$$

This concludes the proof.
\end{proof}

\subsection{Properties of ideal Selberg--Poisson--Voronoi tessellations.}

All the orbits $\mathcal{O}_{F_k}\simeq \mathcal{O}_k$ for $1\leq k\leq n-1$ carry an invariant measures.

\begin{prop}
Each stratum $\mathcal{O}_{F_k}$ carries a unique $\SL(V)$-invariant measure $\mu_k$, up to scaling. 
In particular $\mu_{1}$ is equal to the pushforward by the homeomorphism $\phi$ from Proposition \ref{prop:Identification Selberg and Finler} of the measure $\mu_{N_{\iota \omega_1}}$ on $\mathcal{O}_{F_{1}}=\mathcal{O}_{F_\rho}$ obtained previously from a limiting process, up to a scalar.
\end{prop}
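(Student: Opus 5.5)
The plan is to apply the homogeneous-space criterion for invariant measures (the proposition preceding Proposition \ref{prop: Existence measure}) directly in the linear-algebra model $\mathcal{O}_k \subset S^2_{\geq 0}V^*$. By Proposition \ref{prop:Identification Selberg and Finler}(iii), $\phi$ gives a $G$-equivariant homeomorphism $\mathcal{O}_{F_k}\simeq\mathcal{O}_k = G/R_{F_k}$. It therefore suffices to know that each $G/R_{F_k}$ carries an invariant measure, and that this measure is unique up to scaling.

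For existence I will use Proposition \ref{prop: Existence measure}. In the linear-algebra model, this is Example \ref{exmp:Existence Invariant Measures} with the roles of $\omega_1$ and $\iota\omega_1$ exchanged, which is harmless since $\iota$ is induced by $g\mapsto (g^{-1})^T$. Concretely, $\Theta_{F_k}^{\flipped}$ consists of the simple roots acting inside the $\SO(n-k)$ block. The form $\sum_{\alpha\in\Sigma^+\setminus \Span(\Theta^{\flipped}_{F_k})} m_\alpha\alpha$ is then the sum of the roots $\lambda_i-\lambda_j$ with $i<j\le k$ or $i\le k<j$. I will check by direct computation that this form is constant on $F_k$. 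Its coefficient on each $\lambda_i$ with $i\le k$ equals $(k-1)+(n-k)=n-1$ minus the symmetric contribution from within the block, so the pairing reduces to an expression in $\lambda_1+\dots+\lambda_k$. That sum is constant on $F_k$, so the form vanishes on $\Lin_0(F_k)$. The unimodularity criterion then gives an invariant measure.

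Alternatively, and more concretely, I can write the measure explicitly. On the open cone of rank-$(n-k)$ semipositive forms, choose local coordinates and check that the measure
\[
\mathrm{d}\mu_k(Y)=\det(Y|_{V/\rad Y})^{-c}\,\mathrm{d}Y
\]
is invariant for a suitable exponent $c$, with $\mathrm{d}Y$ a Lebesgue-type measure on the fibration over $\Gr(k,V)$. I will only use this form as a sanity check.

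For uniqueness, I first note that $R_{F_k}$ has finitely many components: it is the algebraic group of block lower triangular matrices with an $\SO(n-k)$ block. The cited proposition then gives uniqueness up to scalar, since any invariant measure comes from an invariant volume form on $\mathfrak{g}/\mathfrak{r}$, and such a form is determined up to scalar.

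For the last claim, Theorem \ref{thm: Convergence measure} says $\mu_{N_{\iota\omega_1}}$ is a nonzero $G$-invariant measure supported on $\mathcal{O}_{F_\rho}$. It remains to identify $F_\rho$ with $F_1$ for $N_{\iota\omega_1}$. By Proposition \ref{prop:Proper bisectors polyhedral dominant}, applied to the dominant form $w\cdot\iota\omega_1=\omega_{n-1}$, there is a unique minimizing index, so $F_\rho$ is a vertex, namely the $0$-dimensional face $F_1$. Pushing forward by $\phi$ gives an invariant measure on $\mathcal{O}_1$, so by uniqueness it is a scalar multiple of $\mu_1$.

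The main obstacle is the sign and indexing bookkeeping: matching $\Theta^{\flipped}$, the face $F_k$, and the kernel/corank conventions between $d_{\omega_1}$ and $d_{\iota\omega_1}$. I would settle this by comparing stabilizers concretely, as in the proof of Proposition \ref{prop:Identification Selberg and Finler}.
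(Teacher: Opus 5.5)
Your overall structure is the paper's. The paper's proof simply asserts that each $F_k$ satisfies the hypothesis of Proposition \ref{prop: Existence measure}, takes uniqueness from the homogeneous-space criterion, and identifies $\mu_1$ with $\phi_*\mu_{N_{\iota\omega_1}}$ by uniqueness. Your uniqueness step is fine. So is your identification $F_\rho=F_1$: for $\omega=\iota\omega_1$, which is already dominant, $\omega(\tau_i)$ is proportional to $1/(n-i)$, so $i=1$ is the unique minimizer.

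The problem is the one step where you actually verify the hypothesis, which is the only nontrivial point for $k\ge 2$.

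\begin{itemize}
\item \textbf{Wrong flipped set.} You put the roots of the $\SO(n-k)$ block into $\Theta^{\flipped}_{F_k}$. But by Proposition \ref{prop:comparing RF and SF to Gflip and Gfix}, the stabilizer contains all of $G^{\flipped}_F$ and only the maximal compact of $G^{\fixed}_F$. So $\alpha_{k+1},\dots,\alpha_{n-1}$ form $\Theta^{\fixed}_{F_k}$, and $\Theta^{\flipped}_{F_k}=\{\alpha_1,\dots,\alpha_{k-1}\}$. You can also see this directly: for $\ell<k$ the reflection swaps two free coordinates of $F_k$, while for $\ell>k$ it swaps two coordinates both equal to $-1$.
\item \textbf{Source of the error.} You transplanted Example \ref{exmp:Existence Invariant Measures} without the reindexing $k\leftrightarrow n-k$ induced by $\iota$. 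The face $F_k$ of $N_{\iota\omega_1}$ is $\iota$ of the face $F_{n-k}$ of $N_{\omega_1}$.
\item \textbf{False cancellation.} In your form $\sum_{i\le k,\, j>i}(\lambda_i-\lambda_j)$, the coefficient of $\lambda_m$ for $m\le k$ is $n+1-2m$, not a constant. So on $\Lin_0(F_k)$ the form equals $\sum_{m\le k}(n+1-2m)\mu_m$, which takes the value $2$ at $\mu=e_1-e_2$. Carried out honestly, your check would ``prove'' that $\mathcal{O}_{F_k}$ has no invariant measure for $k\ge 2$.
\end{itemize}

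The repair is easy. With the correct flipped set, the form is $\sum_{i\le k<j}(\lambda_i-\lambda_j)+\sum_{k<i<j}(\lambda_i-\lambda_j)$. On $\Lin_0(F_k)=\{(\mu_1,\dots,\mu_k,0,\dots,0):\sum_i\mu_i=0\}$ the second sum vanishes, and the first equals $(n-k)(\mu_1+\dots+\mu_k)=0$.

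Equivalently, the stabilizer of block form $\left(\begin{smallmatrix} B&0\\ A&U\end{smallmatrix}\right)$ from the proof of Proposition \ref{prop:Identification Selberg and Finler} is unimodular. A traceless $Y$ in the $B$-block acts on the $A$-block by $A\mapsto -AY$, whose trace is $-(n-k)\Tr(Y)=0$.

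With this correction, the rest of your argument goes through as in the paper.
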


\begin{proof}

For all $1\leq k<n$, the face $F_k$ satisfies the hypothesis of Proposition \ref{prop: Existence measure}. 
The measure $\mu_k$ is then the unique invariant measure. In the case $k=1$ it must coincide with the $G$-invariant measure $\mu_{N_{\iota \omega_1}}$.

\end{proof}

We first note the following result, that is an analog of the second part in Theorem \ref{thm: Convergence measure}. Let $\mathfrak{C}^x_{-\infty,R}$ for $x\in \mathcal{X}$ and $R\in \R$ be the cylinder of elements $Y_0\in\mathcal{O}_{F_1}$ such that $h_{Y_0}(X)\leq R$. 

\begin{lem}
\label{lem: Selberg Control volume cones}
There exist a constant $\frac{1}{n^{2\rho_{max}}}<C_{1}<n^{{2\rho_{max}}}$ where $2\rho_{\max}=n^2-n$ such that for all $X\in \mathcal{X}$ and $R\in \R$:
$$\mu_{1}(\mathfrak{C}^X_{-\infty,R})=C_{1}e^{2\rho_{\max} R}.$$
The measures $\mu_k$ also satisfy $\mu_{k}(\mathfrak{C}^X_{-\infty,R})=C_ke^{\delta_k R}$ for some $C_k,\delta_k>0$.
\end{lem}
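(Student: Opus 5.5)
The plan is to use $\SL(V)$-invariance and the $\R$-action to reduce to a single cylinder. Then I will compare the Selberg cylinders with the Finsler cylinders $C^X_{-\infty,R}$, whose $\mu_N$-measure is known exactly.

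\textbf{Step 1: the measure of $\mathfrak{C}^X_{-\infty,R}$ does not depend on $X$.} The cylinder $\mathfrak{C}^X_{-\infty,R}$ is the set of $Y_0\in\mathcal{O}_1$ with $\mathfrak{s}(X,Y_0)\le R$, where $h_{Y_0}(X)=\frac12\log\frac1n\Tr(XY_0)$. Since $G=\SL(V)$ acts transitively on $\mathcal{X}$, we have $g\cdot\mathfrak{C}^X_{-\infty,R}=\mathfrak{C}^{gX}_{-\infty,R}$. As $\mu_1$ is $G$-invariant, the measure is independent of $X$.

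\textbf{Step 2: homogeneity in $R$.} Scaling $Y_0\mapsto e^{2\lambda}Y_0$ shifts $h_{Y_0}$ by $\lambda$. It therefore maps $\mathfrak{C}^X_{-\infty,R}$ onto $\mathfrak{C}^X_{-\infty,R+\lambda}$. Under $\phi$ from Proposition \ref{prop:Identification Selberg and Finler}(ii), this scaling corresponds to the $\R$-action on horofunctions. Up to a scalar, $\mu_1$ is the pushforward of $\mu_{N_{\iota\omega_1}}$, which is $2\rho_{\max}$-homogeneous by Theorem \ref{thm: Convergence measure}. Hence $\mu_1(\mathfrak{C}^X_{-\infty,R})=C_1e^{2\rho_{\max}R}$, with $C_1=\mu_1(\mathfrak{C}^X_{-\infty,0})$.

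For $\mu_k$ with $k\ge 2$ I would argue the same way. By uniqueness, $\mu_k$ pushed forward by $Y\mapsto e^{2\lambda}Y$ is a multiple $c(\lambda)\mu_k$, and $c$ is a continuous homomorphism $\R\to\R_{>0}$. So $c(\lambda)=e^{\delta_k\lambda}$, and this gives the formula with $\delta_k=c'(0)$. One could also read off $\delta_k$ as the value on the face $F_k$ of the linear form in Proposition \ref{prop: Existence measure}. Positivity of $\delta_k$ follows because the cylinders increase with $R$ and exhaust $\mathcal{O}_k$, which has infinite measure. Finiteness of $C_k$ follows from local finiteness, since the cylinders are relatively compact by $(i)$ and $(ii)$ of Proposition \ref{prop:Identification Selberg and Finler}.

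\textbf{Step 3: bounds on $C_1$.} I normalize $\mu_1=\phi_*\mu_N$ with $\mu_N(C^X_{-\infty,0})=1$; the constant $C_1$ is only meaningful up to this normalization. By Proposition \ref{prop:Identification Selberg and Finler}(i),
\[ h(X)-\tfrac12\log n\le \mathfrak{s}(X,\phi(h))\le h(X). \]
This gives the inclusions
\[ \phi(C^X_{-\infty,0})\subset\mathfrak{C}^X_{-\infty,0}\subset\phi\bigl(C^X_{-\infty,\frac12\log n}\bigr). \]
Taking measures yields
\[ 1\le C_1\le e^{\rho_{\max}\log n}=n^{\rho_{\max}}, \]
which lies strictly inside $(n^{-2\rho_{\max}},n^{2\rho_{\max}})$. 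The stated value $2\rho_{\max}=n^2-n$ should come from computing $\rho_{\max}$ directly: the maximum of $\rho$ on $B^{N_{\iota\omega_1}}$ is attained at the vertex $F_1$, and evaluating $\rho$ there gives the claimed number.

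\textbf{Main obstacle.} The delicate point is making sure the normalizations and conventions match. The factor $2$ in $\lambda\cdot Y=e^{2\lambda}Y$ must agree with the definition of $\mathfrak{s}$. The $\R$-action on $\mathcal{H}$ must be the one used in Theorem \ref{thm: Convergence measure}. And the scalar identifying $\mu_1$ with $\phi_*\mu_N$ must be fixed as above.
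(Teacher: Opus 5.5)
Your argument for $\mu_1$ is the paper's. You transport the $G$-invariance and the $2\rho_{\max}$-homogeneity of $\mu_{N_{\iota\omega_1}}$ (Theorem~\ref{thm: Convergence measure}) through $\phi$, and you sandwich the Selberg cylinder between Finsler cylinders using Proposition~\ref{prop:Identification Selberg and Finler}(i). Your shift $\frac12\log n$ is exactly what (i) gives; the paper uses the cruder $\log n$. So you get $1\le C_1\le n^{\rho_{\max}}$, which lies strictly inside the stated interval. You should still carry out the one-line evaluation $2\rho(\Diag(n-1,-1,\dots,-1))=n^2-n$, using $2\rho(\Diag(\lambda_i))=\sum_i(n+1-2i)\lambda_i$.

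The gap is in the claim about $\mu_k$ for $k\ge 2$. You derive $C_k<\infty$ ``from local finiteness, since the cylinders are relatively compact''. But $\mu_k$ is only known to be locally finite on $\mathcal{O}_k$, and the cylinders are not relatively compact there.
\begin{itemize}
\item $\mathfrak{C}^X_{-\infty,R}$ is stable under $Y\mapsto e^{2\lambda}Y$ for $\lambda\le 0$, so it accumulates at $0$.
\item Even a shell $\mathfrak{C}^X_{r,R}$ with $r>-\infty$ accumulates on the lower-rank strata $\mathcal{O}_{k'}$, $k'>k$.
\end{itemize}
Compactness of bounded cylinders in $\overline{\mathcal{D}^{N}(\X)}$ does not help. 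An invariant measure on a single stratum need not extend to a Radon measure on the compactification. For instance, take $N_1$ on $\mathbb{H}^2\times\mathbb{H}^2$ and the stratum of horofunctions $b_\xi(p_1)+c+d(q,p_2)$, normalized by $b_\xi(x_1)=0$. Its invariant measure $e^{c}\,dc\,d\xi\,d\mathrm{vol}(q)$ gives infinite mass to every compact cylinder $C^{x}_{r,R}$ with $r<R$, because $\int_{\mathbb{H}^2}e^{-d(q,x_2)}\,d\mathrm{vol}(q)=\infty$. For $k=1$ the issue does not arise, but only because $\mu_1$ is a vague limit on $\overline{\mathcal{D}^{N}(\X)}$.

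Your positivity argument for $\delta_k$ presupposes $C_k<\infty$, so it inherits the gap: if $\delta_k\le 0$, every $\mathfrak{C}^X_{-\infty,R}$ has infinite mass. The paper's own proof of this part rests on the same compactness assertion and never establishes $\delta_k>0$, so it cannot be used to patch this.

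A direct fix works in the matrix model.
\begin{itemize}
\item Write points of $\mathcal{O}_k$ as $AA^{T}$ with $A$ a full-rank $n\times(n-k)$ matrix. Then $\SL(V)$ acts on $A$ by left multiplication by determinant-one matrices.
\item Lebesgue measure on $M_{n\times(n-k)}(\R)$ is invariant under this action. Its pushforward under the proper map $A\mapsto AA^{T}$ (a principal $O(n-k)$-bundle) is an invariant Radon measure, hence a positive multiple of $\mu_k$.
\item Take $X$ to be the identity. Then $\mathfrak{C}^X_{-\infty,R}$ pulls back to the Euclidean ball $\{\Tr(AA^{T})\le ne^{2R}\}$ in $\R^{n(n-k)}$, since rank-deficient matrices form a null set.
\end{itemize}
Hence $\mu_k(\mathfrak{C}^X_{-\infty,R})=C_ke^{n(n-k)R}$ with $0<C_k<\infty$, so $\delta_k=n(n-k)$. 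This is indeed the value on $F_k$ of the linear form of Proposition~\ref{prop: Existence measure}, as you guessed, and it recovers $2\rho_{\max}=n^2-n$ when $k=1$.
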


The second part of this Lemma is not used in what follows.

\begin{proof}
 Let us consider the cylinders $C^x_{-\infty,R}$ defined previously as the sets of elements $Y_0$ such that $h_{\phi^{-1}(Y_0)}(X)\leq R$. By the second part of Theorem \ref{thm: Convergence measure} we have that $\mu_{1}(C^x_{-\infty,R})=e^{2\rho_{\max}R}$, where $2\rho_{\max}$ is the evaluation of the supremum of the sum positive roots $2\rho$ on the unit ball of $N_{\iota\omega_1}$. Here $2\rho_{\max}=n^2-n$, as the sum of positive roots on $\mathfrak{a}$ is the linear form:
 $$2\rho:\Diag(\lambda_1,\cdots, \lambda_n)\mapsto \sum_{i=1}^n\lambda_i(n+1-2i).$$
 The unit ball of $N_{\iota \omega_1}$ is the set of elements $\Diag(\lambda_1,\cdots, \lambda_n)$ with $\lambda_i\geq -1$ for all $i$ and $\lambda_1+\cdots +\lambda_n=0$. The maximum is reached for $\Diag(n-1,-1,,\cdots,-1)\in F_{1}$.
\medskip

Finally we use the inequality of Proposition \ref{prop:Identification Selberg and Finler} to see that :
$$C^x_{-\infty,
R-\log(n)} \subset \mathfrak{C}^x_{-\infty,R}\subset C^x_{-\infty,
R+\log(n)} .$$

Hence we obtain a constant $C_{1}$ for $R=0$ that satisfies the desired inequality. The inequality for all $R$ comes from the homogeneity of the measure from Theorem \ref{thm: Convergence measure}, and the fact that for $\lambda\in \R$, $\lambda_*h_Y(X)=h_Y(X)+\lambda$.

\medskip

For the other measures $\mu_k$, we note that $\mu_k(\mathfrak{C}^x_{0,1})$ is finite as $\mathfrak{C}^X_{0,1}$ is compact. Moreover by the uniqueness of the invariant measure, there exist $c\in \R$ such that $\lambda_*\cdot \mu_k= e^{\delta_k^{(\lambda)}}\mu_k$ for all $\lambda\in \R$ acting on $\mathcal{O}_k$ via $\lambda\cdot h=h(\cdot)+\lambda$, for some $\delta_k^{(\lambda)}$. Moreover $\lambda\mapsto \delta_k^{(\lambda)}$ is continuous and $\delta_k^{(\lambda+\lambda')}=\delta_k^{(\lambda)}+\delta_k^{(\lambda')}$ for all $\lambda,\lambda'\in \R$, $m\in \N_{>0}$, hence $\delta^{(\lambda)}_k=\lambda \delta_k$ for all $\lambda$. In particular $\mu_k(\mathfrak{C}^x_{-\infty,0})$ is finite and $\mu_k(\mathfrak{C}^x_{-\infty,R})=e^{\delta_kR}\mu_k(\mathfrak{C}^x_{-\infty,0})$.

\end{proof}

The proof of the following lemma is identical to the proof of Lemma \ref{lem:3 conditions}, using Lemma \ref{lem: Selberg Control volume cones} :

\begin{lem}
\label{lem:3 conditionsbis}
    Let $Y_0\in \mathcal{O}_{1}$ be a horofunction and let $E\subset \mathcal{X}$ be a subset. Let $\Pi_0\cup \lbrace Y_0 \rbrace$, where $\Pi_0$ is a Poisson point process on $\mathcal{O}_1$ with measure intensity $\mu_1$. We consider the Voronoi tessellation $(\mathcal{C}_Y)_{Y\in \Pi\cup \lbrace Y_0\rbrace}$ of $\mathcal{X}$ associated to $\Pi_0\cup \lbrace Y_0\rbrace$ for the Selberg invariant \footnote{The functions we compare to build the diagram are $\mathfrak{s}(\cdot, Y)$ for $Y\in \Pi_0\cup \lbrace Y_0\rbrace$}.
    \begin{itemize} 
    \item[(i)] Suppose that $\mathfrak{s}(\cdot,Y_0)$ has linear growth on $E$ , meaning that for some $A,B>0$, for all $X\in E$:
$$\mathfrak{s}(X,Y_0)\geq A \mathfrak{s}(X,O)-B.$$

Then almost surely $C_{Y_0}\cap E$ is bounded.

\item[(ii)] Suppose that $E$ is unbounded. If $h$ is not proper and bounded from below on $E$, then almost surely $C_{Y_0}\cap E$ is unbounded.

\item[(iii)]  Suppose that $-h$ has linear growth on $E$,  meaning that for some $A,B>0$, for all $X\in E$:
$$\mathfrak{s}(X,Y_0)\leq -A\mathfrak{s}(X,O)+B.$$

Suppose also that the $1$-neighborhood $E_1$ of $E$ has subexponential volume growth, meaning that the volume of $E_1 \cap B(o,t)$ is $o(e^{\epsilon t})$ for all $\epsilon>0$. Then almost surely $E\setminus C_{Y_0}$ is bounded.
\end{itemize}

\end{lem}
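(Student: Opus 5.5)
The plan is to transport the proof of Lemma \ref{lem:3 conditions} verbatim to the Selberg setting. Two earlier ingredients do all the work. The first is the exact volume formula of Lemma \ref{lem: Selberg Control volume cones}, $\mu_1(\mathfrak{C}^X_{-\infty,R})=C_1e^{2\rho_{\max}R}$, uniformly in $X$. The second is the comparison of Proposition \ref{prop:Identification Selberg and Finler}, which lets us replace $\mathfrak{s}(\cdot,O)$ by $d_{\iota\omega_1}(\cdot,O)$ up to an additive $\frac12\log n$. We also use that $\mathfrak{s}(\cdot,Y)$ is uniformly Lipschitz in the first variable for the Riemannian metric: it is within a constant of a horofunction for $d_{\iota\omega_1}$, and is itself a log-trace, hence has bounded gradient. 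So all steps that used the $1$-Lipschitz property of horofunctions go through with constants changed.

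For (i), choose a $1$-separated, $2$-dense net $(X_i)$ in $E$ for the Riemannian metric, and let $M_i$ be the event that the $2$-ball around $X_i$ meets $\mathcal{C}_{Y_0}$. By the Lipschitz bound and the growth hypothesis, $M_i$ forces $\Pi_0$ to avoid $\mathfrak{C}^{X_i}_{-\infty,\,A\mathfrak{s}(X_i,O)-B-L}$. By Lemma \ref{lem: Selberg Control volume cones}, this set has $\mu_1$-mass $\ge e^{A'd(O,X_i)-B'}$. Hence $\mathbb P(M_i)\le \exp(-e^{A'd(O,X_i)-B'})$. The number of net points in a ball of radius $n$ is at most $e^{Dn}$, so the Borel–Cantelli lemma concludes.

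For (ii), take a diverging sequence $X_i\in E$ with $\mathfrak{s}(X_i,Y_0)<m$, and let $N_i=\{\Pi_0\cap\mathfrak{C}^{X_i}_{-\infty,m}=\emptyset\}$. By Lemma \ref{lem: Selberg Control volume cones}, $\mathbb P(N_i)=\exp(-C_1e^{2\rho_{\max}m})$ is bounded below. The one point needing care is the decorrelation $\mu_1(\mathfrak{C}^{X_i}_{-\infty,m}\cap\mathfrak{C}^{X_j}_{-\infty,m})\to0$. This follows from Lemma \ref{lem:Matrix Coefficient Vanishing}, since $\mu_1$ is (a multiple of) $\phi_*\mu_{N_{\iota\omega_1}}$ and the sets $\mathfrak{C}^{X}_{-\infty,m}=g\cdot\mathfrak{C}^{X_0}_{-\infty,m}$ are translates of one finite-measure set. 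After passing to a subsequence, the Kochen–Stone argument applies unchanged.

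For (iii), use the complementary events $\overline M_i$. These have probability $\le 1-\exp(-C e^{-A'd(O,X_i)+B'})\le e^{-A'd(O,X_i)+B'}$. Subexponential growth of $E_1$ makes the sum finite, and Borel–Cantelli applies again.

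The only genuine obstacle is bookkeeping: checking that the Lipschitz constants and the shift between $\mathfrak{s}$ and the Finsler horofunction are uniform. This is guaranteed by the explicit bounds above.
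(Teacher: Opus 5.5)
Your proposal is correct and follows the paper's own route: the paper's proof is the single remark that the argument is identical to that of Lemma \ref{lem:3 conditions}, with the cylinder volumes now supplied by Lemma \ref{lem: Selberg Control volume cones}. What you add is to make explicit the ingredients that remark leaves implicit:
\begin{itemize}
\item the uniform Lipschitz bound for $X\mapsto\mathfrak{s}(X,Y)$, which does hold by $\lvert\Tr(WH)\rvert\le\lVert H\rVert_{op}\Tr(W)$ for $W\ge 0$;
\item the comparison of $\mathfrak{s}(\cdot,O)$ with the distance;
\item the transfer of Lemma \ref{lem:Matrix Coefficient Vanishing} to $\mu_1$ via the equivariant homeomorphism $\phi$.
\end{itemize}
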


We prove the following results that makes the ideal Selberg--Poisson--Voronoi tessellations similar to ideal Poisson--Voronoi tessalations in the hyperbolic spaces $\HH^n$: every cell is convex, admit a unique point at infnity that we call its center and every geodesic from the center of a cell intersects the cell in a non-trivial interval.

\begin{thm}
\label{thm:Selberg Main}
    Let $\Pi_0$ be a Poisson point process on $\mathcal{O}_{F_{1}}$ with intensity $\mu_{1}$, and let $(\mathcal{C}_Y)_{Y\in \Pi_0}$ be the corresponding Voronoi tessellation of $\mathcal{X}$. Almost surely the following holds:
    
    \begin{itemize}
        \item[(i)] For every $Y_0\in \Pi_0$ the cell $\mathcal{C}_{Y_0}$ is projectively convex, has locally finitely many sides and its closure contains a single point $P_{Y_0}\in\partial \mathcal{X}$, which is the unique point on which the linear form on $S^2V$ associated to $Y_0$ vanishes.
        \item[(ii)] For every distinct $Y_1,Y_2\in \Pi_0$ the side $\mathcal{C}_{Y_1}\cap\mathcal{C}_{Y_2}\subset \mathcal{X}$ is bounded.
        \item[(iii)] For every projective line $L$ in $\mathbb{P}(S^2V)$ passing through $P_{Y_0}$ and some $[X]$ for $X\in \mathcal{X}$, the intersection of $L$ with $\mathcal{C}_{Y_0}$ contains a non-trivial segment $[X',P_{Y_0})$.
    \end{itemize}

\end{thm}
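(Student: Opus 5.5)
The plan is to exploit the linear-algebraic description: each $Y\in\Pi_0$ is a rank $n-1$ semi-positive form on $V$, and $\mathfrak{s}(X,Y)\le \mathfrak{s}(X,Y')$ is equivalent to the linear inequality $\Tr(XY)\le \Tr(XY')$ on $S^2V$. Hence each cell $\mathcal{C}_{Y_0}$ is the intersection of $\mathcal{X}$ (an open convex cone, projectivized) with the family of half-spaces $\{\Tr(X(Y-Y_0))\ge 0\}$, $Y\in\Pi_0$. This gives projective convexity at once. Local finiteness of the sides follows as in Proposition \ref{prop: Tessellation}, using Lemma \ref{lem: Selberg Control volume cones} in place of Theorem \ref{thm: Convergence measure} to see that only finitely many $Y$ with $\mathfrak{s}(X_0,Y)\le R$ exist for each $R$. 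Every statement is then reduced, via the Mecke formula (Lemma \ref{lem:MeckeApplied}), to a statement about one or two fixed points $Y_0$ (or $Y_1,Y_2$) added to an independent process $\Pi_0$. Since $\mu_1$ is smooth, we may take $\ker Y_1\ne\ker Y_2$.

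For (ii), apply Lemma \ref{lem:Proper Horofunctions Selberg} to get linear growth of $\mathfrak{s}(\cdot,Y_1)$ on the bisector. Then part (i) of Lemma \ref{lem:3 conditionsbis}, with $E=\Bis(Y_1,Y_2)$, shows that $\mathcal{C}_{Y_1}\cap E$ is bounded.

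For the boundary point in (i), let $P_{Y_0}=[v_0\otimes v_0]\in\partial\mathcal{X}$, where $\ker Y_0=\R v_0$. This is the unique point of $\mathbb{P}(S^2_{\ge0}V)$ where $Y_0$, viewed as a linear form, vanishes. I will show two things.

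\begin{enumerate}
\item $\overline{\mathcal{C}_{Y_0}}\cap\partial\mathcal{X}\subset\{P_{Y_0}\}$. Take $[Z]\in\partial\mathcal{X}$ with $\Tr(ZY_0)>0$. By density of the Poisson process there is almost surely a $Y\in\Pi_0$ with $\Tr(ZY)<\Tr(ZY_0)$: choose $Y$ with $\ker Y$ close to a vector in the range of $Z$, rescaled small, which is an open set of positive $\mu_1$-measure. Then by continuity of the linear forms, a neighbourhood of $[Z]$ misses $\mathcal{C}_{Y_0}$. To make this hold simultaneously for all $[Z]$, use compactness of $\{[Z]:\Tr(ZY_0)\ge\epsilon\Tr(ZO)\}$ with a countable dense family, and let $\epsilon\to0$.
\item $P_{Y_0}$ lies in the closure. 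This follows from (iii).
\end{enumerate}

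For (iii), fix $X$ and the segment $L_X=\{[X+s\,v_0\otimes v_0]:s\ge0\}$, along which $\Tr(\cdot\,Y_0)$ is constant after normalizing $\Tr(X_s O)=1$. We need a neighbourhood of $P_{Y_0}$ along every such line. The cleanest route is to take $E$ to be a cone-like region near $P_{Y_0}$, namely $E_\delta=\{X\in\mathcal{X}:\Tr(XY_0)\le\delta\Tr(XO)\}$, on which $\mathfrak{s}(X,Y_0)\le \mathfrak{s}(X,O)+\tfrac12\log\delta$. However, part (iii) of Lemma \ref{lem:3 conditionsbis} needs $-\mathfrak{s}(\cdot,Y_0)$ to grow linearly on a set of subexponential volume, so instead I plan to argue directly. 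Work in the affine chart $\Tr(XO)=1$, where all $Y\in\Pi_0$ are linear forms. Almost surely the set $\{Y\in\Pi_0:\Tr(P_{Y_0}Y)\le 2\Tr(P_{Y_0}Y_0)\}$ is empty except for $Y_0$. Since $\Tr(P_{Y_0}Y_0)=0$, this amounts to there being no $Y$ with $\ker Y=\R v_0$, which holds almost surely. Moreover, $\inf_{Y\ne Y_0}\Tr(P_{Y_0}Y)/\Tr(P_{Y_0}O)$ is positive and attained on a finite set, because $\mu_1$ of $\{Y:\Tr(P_{Y_0}Y)\le c\}$ is finite, by a variant of the cylinder estimate of Lemma \ref{lem: Selberg Control volume cones} applied near the boundary point.

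Given this, every half-space $\{\Tr(X(Y-Y_0))\ge0\}$ contains $P_{Y_0}$ in its open interior with a uniform margin. Local finiteness near $P_{Y_0}$ then gives a uniform projective neighbourhood $U$ of $P_{Y_0}$ with $U\cap\mathcal{X}\subset\mathcal{C}_{Y_0}$. Each line $L$ through $P_{Y_0}$ and $[X]$ enters $U$, which yields the segment $[X',P_{Y_0})$ and also shows that $P_{Y_0}\in\overline{\mathcal{C}_{Y_0}}$.

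I expect the main obstacle to be this last step. One must control uniformly the infinitely many half-spaces near the boundary point $P_{Y_0}$, and this needs a finiteness estimate: $\mu_1(\{Y:\Tr(P_{Y_0}Y)\le c\,\Tr(P_{Y_0}O)\})<\infty$. I would try to prove it by pulling back to $\mathcal{X}$ points $X_t\to P_{Y_0}$ along a geodesic and using the bounds $\mathfrak{C}^{X_t}_{-\infty,R_t}$ with the exact homogeneity $e^{2\rho_{\max}R}$, checking that the relevant region is a decreasing limit of cylinders of bounded measure.
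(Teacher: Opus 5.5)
You handle convexity, local finiteness, the Mecke reduction and part (ii) exactly as the paper does. The genuine gap is in (iii), from which you also derive $P_{Y_0}\in\overline{\mathcal{C}_{Y_0}}$.

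The finiteness you rely on, $\mu_1(\{Y:\Tr(P_{Y_0}Y)\le c\})<\infty$, is false for every $c>0$. Take a basis with $v_0=e_1$ and the paper's points $X_k=e^{-k/n}\Diag(e^k,1,\dots,1)$, which converge to $P_{Y_0}$. Since
\[
\Tr(X_kY)=e^{k(n-1)/n}\bigl(Y(e_1,e_1)+e^{-k}\textstyle\sum_{i\ge 2}Y(e_i,e_i)\bigr),
\]
the set $\{Y:Y(e_1,e_1)\le c\}$ contains the increasing cylinders $\{Y:\Tr(X_kY)\le c\,e^{k(n-1)/n}\}$. These have $\mathfrak{s}$-radius $R_k=\frac{k(n-1)}{2n}+\frac12\log\frac{c}{n}$, so their measures $C_1e^{2\rho_{\max}R_k}$ tend to infinity by Lemma \ref{lem: Selberg Control volume cones}. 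Thus the region is an increasing union of cylinders of unbounded measure, not a decreasing limit of bounded ones. Almost surely $\inf_{Y\neq Y_0}Y(v_0,v_0)=0$ and is not attained, so there is no uniform margin.

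Worse, the uniform neighbourhood $U$ you aim for does not exist. The set $\{Y:\angle(\ker Y,\R v_0)<\theta\}$ is open and invariant under $Y\mapsto e^{2\lambda}Y$, which multiplies $\mu_1$ by $e^{2\lambda\rho_{\max}}$, so it has infinite measure. Hence $\Pi_0$ almost surely contains $Y$ with $\ker Y=\R u$ at arbitrarily small angle $\theta$ from $v_0$. Fix $X_0\in\mathcal{X}$ and take $X=u\otimes u+\eta X_0$ with $\eta\le\theta$ and $\eta\,\Tr(X_0Y)<Y_0(u,u)$. Then $\Tr(XY)<\Tr(XY_0)$, so $X\notin\mathcal{C}_{Y_0}$, yet $[X]\to P_{Y_0}$ as $\theta\to 0$. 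For $n=2$ this is just the density of the ideal points of the cells in the hyperbolic ideal tessellation. Statement (iii) only provides a segment on each line, of non-uniform length.

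The uniformity must be taken over compact sets $K\subset\mathcal{X}$ of starting points, not over a neighbourhood of $P_{Y_0}$. You discarded Lemma \ref{lem:3 conditionsbis}(iii) after testing it on $E_\delta$. The paper instead applies it to the projective cone over $K$ with apex $P_{Y_0}$. This cone stays within bounded distance of the ray $t\mapsto g_t\cdot\Id$, where $g_t=e^{-t/n}\Diag(e^t,1,\dots,1)$ and $t\ge0$. So it has linear volume growth, and $\mathfrak{s}(\cdot,Y_0)$ decreases linearly on it. Hence the part of the cone outside $\mathcal{C}_{Y_0}$ is bounded, and convexity gives $[X',P_{Y_0})$ on each segment from $K$. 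A countable cover of $\mathcal{X}$ by compact sets finishes (iii), and the sequence $X_k$ gives $P_{Y_0}\in\overline{\mathcal{C}_{Y_0}}$.

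A direct version of your idea also works. Choose $c_K,C_K>0$ with $\Tr(XY)\ge c_K\Tr(X_0Y)$ and $\Tr(XY_0)\le C_K$ for all $X\in K$. Any $Y$ with $\Tr(XY)<\Tr(XY_0)$ for some $X\in K$ then lies in the finite-measure cylinder $\{\Tr(X_0Y)<C_K/c_K\}$. So almost surely there are finitely many such $Y$, each with $Y(v_0,v_0)>0$. Consequently $X+s\,v_0\otimes v_0\in\mathcal{C}_{Y_0}$ for all $X\in K$ once $s\ge C_K/\min Y(v_0,v_0)$.

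Your argument for $\overline{\mathcal{C}_{Y_0}}\cap\partial\mathcal{X}\subset\{P_{Y_0}\}$ also needs repair. A set of finite positive measure contains a point of $\Pi_0$ only with probability $1-e^{-\mu_1(U)}<1$. The relevant set does have infinite measure, by a cylinder argument as above, but you have to show this. Moreover, a countable dense family of $[Z]$ does not cover the compact set, because the neighbourhoods you obtain depend on the random $Y$. The paper gets uniformity from Borel--Cantelli instead. It applies Lemma \ref{lem:3 conditionsbis}(i) on $\mathcal{X}\setminus E_\epsilon$, where $E_\epsilon=\{\Tr(XY_0)\le\epsilon\Tr(X)\}$ and $\mathfrak{s}(\cdot,Y_0)\ge\mathfrak{s}(\cdot,\Id)+\frac12\log\epsilon$ grows linearly.
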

We emphasize here that the compactification of the symmetric space $\partial \mathcal{X}$ for which we look at the cells is not the same as the horofunction compactification $\partial \mathcal{X}^*$, which is the quotient of the space of functions $\mathcal{H}(\mathcal{X})$ used to construct the process. This subtlety appears only in higher rank, and because the opposition involution is not trivial.
\medskip

\begin{proof}[{Proof of Theorem~\ref{thm:Selberg Main}}]

We use again Proposition \ref{prop:Mecke} to say that one can consider a fixed horofunction $Y_0\in \mathcal{O}_{1}$ in order to prove $(i)$ and $(ii)$ or two fixed opposite horofunctions $Y_1,Y_2\in \mathcal{O}_{1}$ in order to prove $(ii)$ and check that for a Poisson point process $\Pi_0'$ on $\mathcal{O}_{1}$ with intensity $\mu_1$ the Voronoi tessellation $(\mathcal{C}_h)_{Y\in \Pi'_0\cup \lbrace Y_0\rbrace }$ and respectively $(\mathcal{C}_Y)_{Y\in \Pi'_0\cup \lbrace Y_1,Y_2\rbrace }$ satisfy these properties almost surely.

Let us prove $(i)$. The cell $\mathcal{C}_{Y_0}$ is the set of $X\in \mathcal{X}$ such that for all $Y\in \Pi_0'$ one has $\mathfrak{s}(X,Y_0)\leq \mathfrak{s}(X,Y)$. This inequality is equivalent to $\Tr(X(Y_0'-Y_0))\geq 0$, hence it is the intersection of $\mathcal{X}$ with linear half spaces in $S^2V$, meaning it is corresponds to a convex subset in $\mathbb{P}(S^2V)$. The fact that it has locally finitely many sides comes from the fact that for all $X\in \mathcal{X}$ and $R>0$, almost surely $\lbrace Y\in \Pi_0'\mid \mathfrak{s}(X,Y)\leq R\rbrace$ is finite, as in Proposition \ref{prop: Tessellation}.

Given $[Y_0]\in \mathcal{O}_{1}$, if we fix some basis so that its associated bilinear form is given by the following symmetric matrix $Y_0=\Diag(0,1,\cdots, 1)$. Now let us define $[X_\infty]=P_{Y_0}$. This symmetric matrix span the unique line of semi-positive elements $X$ so that $\Tr(XY_0)=0$.

Now we show $P_{Y_0}$ belongs to $\overline{\mathcal{C}_{y_0}}$. Let us assume that $y_0$ corresponds to the symmetric matrix $Y_0$. Consider the sequence $(X_k)$ in $\mathcal{X}$ corresponding to the symmetric matrices $X_k=e^{-k/n}\Diag(e^{k},1,\cdots, 1)$, so that $\det(X_k)=1$. This sequence clearly converges to $P_{Y_0}$ in $\overline{\mathcal{X}}$. We have $\mathfrak{s}(X_k,Y_0)=-\frac{k}{2n}+\frac{1}{2}\log (\frac{n-1}{n})$. By applying part $(iii)$ of Lemma \ref{lem:3 conditionsbis} to the sequence $(X_k)$, we see that for all but finitely many $k\in \N$, $X_k\in \mathcal{C}_{y_0}$ and hence the limit, which is $P_{Y_0}$, belongs to $\overline{\mathcal{C}_{Y_0}}$.

Now let us show that $\overline{\mathcal{C}_{Y_0}}\cap \partial \mathcal{X}\subset\lbrace P_{Y_0}\rbrace$. We define for $\epsilon>0$ the set $E_\epsilon\subset \overline{\mathcal{X}}\subset \mathbb{P}(S^2V)$ of elements $X\in \mathcal{X}$, such that $\Tr(XY_0)/\Tr(X)\leq \epsilon$. 
First note that if $X\notin E_\epsilon$, then $\mathfrak{s}(X,Y_0)> \mathfrak{s}(X,\Id)+\frac{1}{2}\log(\epsilon)$. We can therefore apply part $(i)$ of Lemma \ref{lem:3 conditionsbis} to the complement of $E_\epsilon$.
This proves that for all $\epsilon>0$, almost surely there is a compact subset $K\subset\mathcal{X}$ such that $\mathcal{C}_{Y_0}\cap K\subset E_\epsilon$. Hence by considering rational $\epsilon$, we obtain that almost surely $\overline{\mathcal{C}_{Y_0}}\cap \partial \mathcal{X}\subset \bigcap_{\epsilon>0} \overline{E_\epsilon}$. But this intersection contains the classes of non-zero semi-positive matrices $X$ such that $\Tr(XY_0)=0$, which consist only of the line corresponding to $P_{Y_0}\in \partial \mathcal{X}$ as $Y_0$ has rank $n-1$. This concludes the proof of part $(i)$. 

Now let us show part $(ii)$. For almost every pair $Y_1,Y_2$ for the measure $\mu_1$ the corresponding flags are opposite. Using Lemma \ref{lem:3 conditionsbis} part $(i)$, and the linear properness on bisectors from Lemma
\ref{lem:Proper Horofunctions Selberg} we obtain that almost surely $\mathcal{C}_{Y_1}\cap \mathcal{C}_{Y_2}$ is bounded.

\medskip

Now for part $(iii)$ Let us assume that $X\in \mathcal{X}$ corresponds the identity symmetric basis and that $Y_0=\Diag(0,1,\cdots,1)$ in some basis. Pick a compact neighborhood $K$ of $\Id$ in the space of positive symmetric matrices and let consider one parameter subgroup :$$(g_t)=(e^{-t/n}\Diag(e^t,1,\cdots,1))_{t\in \R}.$$

We define $E$ to be the union $\bigcup_{t\in \R }g_t\cdot K$, which is also the projective cone with base $K$ and tip $P_{Y_0}$ in $\overline{\mathcal{X}}$. This cone is a union of translates of a fixed compact set along a geodesic, and along this geodesic $\mathfrak{s}(\cdot,Y_0)$is linearly descreasing. Hence this cone satisfies the hypothesis of part $(iii)$ in Lemma \ref{lem:3 conditionsbis}. Hence almost surely $E\setminus (\mathcal{C}_{y_0}\cap E)$ is relatively compact. Hence for all $X'\in K$, the intersection of the segment $(P_{Y_0},X']$ with the convex $\mathcal{C}_{y_0}$ is a convex of the form $(P_{Y_0},X'']$. We now apply this result to a countable familly of compact sets $K$ covering $\mathcal{X}$ in order to obtain this result amost surely for any $X'\in \mathcal{X}$. 

\end{proof}

\begin{figure}[htbp!]
\centering

\begin{tikzpicture}[scale=.85,line cap=round,line join=round]
\draw[olive, fill=olive!20, thick]
    (-1.4,-1.4) rectangle (1.4,1.4);
\draw (-2.2,0)--(3.1,0);
\draw (0,-2.1)--(0,2.4);
\draw (-2,-2)--(2.45,2.45);
\draw (-2,2)--(2,-2);

\draw[red, very thick] (.45,.35)--(.45,3);
\draw[red, very thick] (.45,.35)--(3.35,.35);

\draw[figure arrow] (.45,1.95)--(.45,2.8);
\draw[figure arrow] (2.3,.35)--(3.15,.35);
\draw[figure arrow] (1.9,1.9)--(2.5,2.5);

\fill[blue] (.45,1.95) circle[radius=2pt];
\fill[blue] (2.3,.35) circle[radius=2pt];
\fill[blue] (1.9,1.9) circle[radius=2pt];

\node at (2.85,1.8) {$\mathfrak a^+$};
\node at (.35,-2.55) {$\mathrm{Sp}(4,\mathbb R)$};

\end{tikzpicture}

\begin{tikzpicture}[scale=.85,line cap=round,line join=round]
\draw[olive, fill=olive!20, thick]
    (180:1.8)--(60:1.8)--(300:1.8)--cycle;
\draw (-2.1,0)--(2.65,0);
\draw (240:2.45)--(60:2.45);
\draw (120:2.45)--(300:2.45);

\begin{scope}[shift={(.45,0)}]

\draw[red, very thick] (0:0)--(30:2.9);
\draw[red, very thick] (0,0)--(-30:2.9);

\draw[figure arrow] (30:1.95)--(30:2.7);
\draw[figure arrow] (0:1.5)--(0:2.7);
\draw[figure arrow] (-30:1.95)--(-30:2.7);

\fill[blue] (30:1.95) circle[radius=2pt];
\fill[blue] (0:1.5) circle[radius=2pt];
\fill[blue] (-30:1.95) circle[radius=2pt];

\end{scope}

\node at (2.45,1.8) {$\mathfrak a^+$};
\node at (.25,-2.55) {$\mathrm{SL}(3,\mathbb R)$};

\end{tikzpicture}

\caption{Two metrics on the Cartan subalgebras for $G=\SL(3,\R)$ and $\Sp(4,\R)$with the level line of one horofunction in $F_\rho$. For $\Sp(4,\R)$ the blue arrows point to different points in the dual horofunction compactification, whereas for $\SL(3,\R)$ these three arrows point to the same point in the dual horofunction compactification.}
\label{fig:LastFig}
\end{figure}

\begin{rem}
\label{rem:ClosureHoroball}
The fact that the closure of a cell for the Selberg ideal Poisson--Voronoi tessellation for $\Sp(4,\R)$ contains a unique point in the "dual"\footnote{Recall again here that the horofunction compactification for the Selberg invariant is naturally the dual Satake compactification, whose dual is the standard Satake compactification} horofunction compactification is coming from the fact that the closure of horoballs also contain a unique point in the dual horofunction compactification.

\medskip

The analog is not true for general polyhedral Finsler metrics. For instance if we consider the Finsler metric $d_{\omega_1}$ restricted to the symmetric space $\X_{\Sp} \subset \mathcal{X}$ of $\Sp(4,\R)$, the orbit $\mathcal{O}_{F_\rho}$ is different from the case $G=\SL(4,\R)$ with the same metric: a standard horofunction in $\mathcal{O}_{F_\rho}$ is given by : 
    \[ h_\omega(x) = \max_{\ell \subset \omega} b_\ell(x). \]
Here $\omega  \in \Lag(\R^4)$ and $b_\ell(x) = \Abs{\hat{\ell}}_x$.
The horoballs for $d_{\omega_1}$ contain more than one point in the compactification $\overline{\X_{\Sp}}$ induced by the standard Satake compactification: $\{ h_\omega \le 0\}$ contains all of $\mathbb{P}(\omega)$. We illustrate this phenomenon in Figure \ref{fig:LastFig}, where we show for two metrics for $G=\SL(3,\R)$ and $\Sp(4,\R)$ the level lines on a flat of a horofunction. For $\Sp(4,\R)$ the blue arrows point to different points in the dual horofunction compactification, whereas for $\SL(3,\R)$ these three arrows point to the same point in the dual horofunction compactification.

In particular the cells of the ideal Poisson--Voronoi tessellations in the symmetric space $\X_{\Sp}$ will also have several boundary points. For every $x\in \partial_N\X$ in the closure of a horoball,  we can find a sequence $(x_n)$ in the horoball converging to $x$. The ideal Poisson Voronoi tessellation associated to $\lbrace h_0\rbrace \cup \Pi_N$ will be such that infinitely many terms of our sequence belong to the cell of $h_0$ (Lemma \ref{lem:3 conditions} part (ii)), meaning that this point $x$ almost surely belongs to the boundary of this cell.
\end{rem}

\bibliographystyle{amsalpha}
\bibliography{biblio}

@misc{GuichardKasselWienhard,
 author = {Olivier Guichard and Fanny Kassel and Anna Wienhard},
 title = {Tameness of {Riemannian} locally symmetric spaces arising from {Anosov} representations},
 year = {2015},
 howpublished = {Preprint, {arXiv}:1508.04759 [math.{GT}] (2015)},
 url = {https://arxiv.org/abs/1508.04759},
 arXiv = {arXiv:1508.04759}
}

@article{KS64,
  author  = {Kochen, Simon and Stone, Charles},
  title   = {A note on the {Borel-Cantelli} lemma},
  journal = {Illinois J. Math.},
  volume  = {8},
  year    = {1964},
  pages   = {248--251},
  mrnumber = {0161355}
}

@article{Gaboriau_cost,
 author = {Gaboriau, Damien},
 title = {Cost of equivalence relations and groups},
 fjournal = {Inventiones Mathematicae},
 journal = {Invent. Math.},
 issn = {0020-9910},
 volume = {139},
 number = {1},
 pages = {41--98},
 year = {2000},
 language = {French},
 doi = {10.1007/s002229900019},
 zbMATH = {1388907},
 Zbl = {0939.28012}
}

@article{FriedlandFreitas2004pMetrics,
  author  = {Friedland, Shmuel and Freitas, Pedro J.},
  title   = {{$p$}-Metrics on {$GL(n,\mathbb{C})/U_n$} and Their
             {Busemann} Compactifications},
  journal = {Linear Algebra and its Applications},
  volume  = {376},
  pages   = {1--18},
  year    = {2004},
  doi     = {10.1016/S0024-3795(03)00661-X},
  url     = {https://doi.org/10.1016/S0024-3795(03)00661-X}
}

@article{FriedlandFreitas2004Siegel,
  author  = {Friedland, Shmuel and Freitas, Pedro J.},
  title   = {Revisiting the {Siegel} Upper Half Plane. {I}},
  journal = {Linear Algebra and its Applications},
  volume  = {376},
  pages   = {19--44},
  year    = {2004},
  doi     = {10.1016/S0024-3795(03)00662-1},
  url     = {https://doi.org/10.1016/S0024-3795(03)00662-1}
}

@mastersthesis{Schilling2013Horofunction,
  author  = {Schilling, Anna-Sofie},
  title   = {Horofunction Compactification of Finite-Dimensional
             Normed Spaces and of Symmetric Spaces},
  school  = {Ruprecht-Karls-Universit{\"a}t Heidelberg},
  type    = {Diploma thesis},
  address = {Heidelberg, Germany},
  year    = {2013},
  month   = oct,
  url     = {https://www.mathi.uni-heidelberg.de/~geodyn/teaching/Theses/DA_Anna_Schilling.pdf}
}

@article{FMW23poissonvoronoi,
  author      = {Frączyk, Mikołaj and Mellick, Sam and Wilkens, Amanda},
  title       = {{P}oisson--{V}oronoi tessellations and fixed price in higher rank},
  journal     = {Ann. of Math. (2)},
  pubstate    = {forthcoming},
  eprint      = {2307.01194},
  eprinttype  = {arxiv},
  eprintclass = {math.GT},
  note = {to appear, arXiv:2307.01194}
}

@book{LP17,
  author    = {Last, G{\"u}nter and Penrose, Mathew},
  title     = {Lectures on the {P}oisson {P}rocess},
  publisher = {Cambridge University Press},
  year      = {2017},
  series    = {Institute of Mathematical Statistics Textbooks},
  doi       = {10.1017/9781316104477}
}

@phdthesis{Bhupatiraju2019LowIntensity,
  author  = {Bhupatiraju, Sandeep},
  title   = {The {L}ow-{I}ntensity {L}imit of {B}ernoulli--{V}oronoi and
             {P}oisson--{V}oronoi {M}easures},
  school  = {Indiana University},
  address = {Bloomington, Indiana},
  year    = {2019},
  month   = may,
  url     = {https://hdl.handle.net/2022/29005}
}

@article {BudzinskiCurienPetri2025cheeger,
    AUTHOR = {Budzinski, Thomas and Curien, Nicolas and Petri, Bram},
     TITLE = {On {C}heeger constants of hyperbolic surfaces},
   JOURNAL = {Invent. Math.},
  FJOURNAL = {Inventiones Mathematicae},
    VOLUME = {242},
      YEAR = {2025},
    NUMBER = {2},
     PAGES = {511--530},
      ISSN = {0020-9910,1432-1297},
   MRCLASS = {53A05 (05C50 30F99)},
  MRNUMBER = {4965870},
       DOI = {10.1007/s00222-025-01361-w},
       URL = {https://doi.org/10.1007/s00222-025-01361-w},
}

@article {JS2024polyhedral,
    AUTHOR = {Ji, Lizhen and Schilling, Anna-Sofie},
     TITLE = {Polyhedral horofunction compactification as a polyhedral ball},
   JOURNAL = {Asian J. Math.},
  FJOURNAL = {Asian Journal of Mathematics},
    VOLUME = {29},
      YEAR = {2025},
    NUMBER = {1},
     PAGES = {1--26},
      ISSN = {1093-6106,1945-0036},
   MRCLASS = {52A21 (14M25 14M27 46B20)},
  MRNUMBER = {4899346},
       DOI = {10.4310/ajm.250429053604},
       URL = {https://doi.org/10.4310/ajm.250429053604},
}

@article {DAchilleCurienEnriquezLyonsUnel2026ipvthyperbolic,
    AUTHOR = {D'Achille, Matteo and Curien, Nicolas and Enriquez,
              Nathana\"el and Lyons, Russell and \"Unel, Meltem},
     TITLE = {Ideal {P}oisson-{V}oronoi tessellations on hyperbolic spaces},
   JOURNAL = {Ann. Probab.},
  FJOURNAL = {The Annals of Probability},
    VOLUME = {54},
      YEAR = {2026},
    NUMBER = {2},
     PAGES = {846--891},
      ISSN = {0091-1798,2168-894X},
   MRCLASS = {60G55 (60D05)},
  MRNUMBER = {5036557},
       DOI = {10.1214/25-aop1781},
       URL = {https://doi.org/10.1214/25-aop1781},
}

@article{DAchille2024ipvtbeyondhyperbolic,
    author = {Matteo D'Achille},
    title = {Ideal {P}oisson–{V}oronoi tessellations beyond hyperbolic spaces},
    volume = {31},
    journal = {Electronic Communications in Probability},
    publisher = {Institute of Mathematical Statistics and Bernoulli Society},
    pages = {1--12},
    year = {2026},
    doi = {10.1214/26-ECP815},
    URL = {https://doi.org/10.1214/26-ECP815}
}

@article {Planche1995finsler,
    AUTHOR = {Planche, Pierre},
     TITLE = {Structures de {F}insler invariantes sur les espaces
              sym\'etriques},
   JOURNAL = {C. R. Acad. Sci. Paris S\'er. I Math.},
  FJOURNAL = {Comptes Rendus de l'Acad\'emie des Sciences. S\'erie I.
              Math\'ematique},
    VOLUME = {321},
      YEAR = {1995},
    NUMBER = {11},
     PAGES = {1455--1458},
      ISSN = {0764-4442},
   MRCLASS = {53C60 (53C35)},
  MRNUMBER = {1366100},
MRREVIEWER = {O.\ Kowalski},
}

@misc{haettel2018horofunctioncompactificationssymmetricspaces,
      title={{H}orofunction {C}ompactifications of {S}ymmetric {S}paces}, 
      author={Thomas Haettel and Anna-Sofie Schilling and Cormac Walsh and Anna Wienhard},
      year={2018},
      eprint={1705.05026},
      archivePrefix={arXiv},
      primaryClass={math.DG},
      url={https://arxiv.org/abs/1705.05026}, 
}

@article{davaloriestenberg2026dirichletsanosov,
    url = {https://doi.org/10.1515/crelle-2026-0077},
    title = {Dirichlet domains for {A}nosov subgroups},
    author = {Colin Davalo and J. Maxwell Riestenberg},
    journal = {Journal für die reine und angewandte Mathematik (Crelles Journal)},
    doi = {doi:10.1515/crelle-2026-0077},
    year = {2026},
    lastchecked = {2026-10-12},
    note = {to appear, arXiv:2402.06408}
}

@article {KapovichLeebPorti2018dynamicsonflagmanifolds,
    AUTHOR = {Kapovich, Michael and Leeb, Bernhard and Porti, Joan},
     TITLE = {Dynamics on flag manifolds: domains of proper discontinuity
              and cocompactness},
   JOURNAL = {Geom. Topol.},
  FJOURNAL = {Geometry \& Topology},
    VOLUME = {22},
      YEAR = {2018},
    NUMBER = {1},
     PAGES = {157--234},
      ISSN = {1465-3060,1364-0380},
   MRCLASS = {53C35 (22E40 37B05 51E24)},
  MRNUMBER = {3720343},
MRREVIEWER = {Gabriele\ Link},
       DOI = {10.2140/gt.2018.22.157},
       URL = {https://doi.org/10.2140/gt.2018.22.157},
}

@article {KapovichLeeb2018finsler,
    AUTHOR = {Kapovich, Michael and Leeb, Bernhard},
     TITLE = {Finsler bordifications of symmetric and certain locally
              symmetric spaces},
   JOURNAL = {Geom. Topol.},
  FJOURNAL = {Geometry \& Topology},
    VOLUME = {22},
      YEAR = {2018},
    NUMBER = {5},
     PAGES = {2533--2646},
      ISSN = {1465-3060,1364-0380},
   MRCLASS = {20F65 (22E40 51E24 53B40 53C35)},
  MRNUMBER = {3811766},
MRREVIEWER = {Loreno\ Heer},
       DOI = {10.2140/gt.2018.22.2533},
       URL = {https://doi.org/10.2140/gt.2018.22.2533},
}

@article {BenjaminiSchramm2001percolationsinH2,
    AUTHOR = {Benjamini, Itai and Schramm, Oded},
     TITLE = {Percolation in the hyperbolic plane},
   JOURNAL = {J. Amer. Math. Soc.},
  FJOURNAL = {Journal of the American Mathematical Society},
    VOLUME = {14},
      YEAR = {2001},
    NUMBER = {2},
     PAGES = {487--507},
      ISSN = {0894-0347,1088-6834},
   MRCLASS = {82B43 (60D05 60K35)},
  MRNUMBER = {1815220},
MRREVIEWER = {Olle\ H\"aggstr\"om},
       DOI = {10.1090/S0894-0347-00-00362-3},
       URL = {https://doi.org/10.1090/S0894-0347-00-00362-3},
}

@book {Eberlein1996book,
    AUTHOR = {Eberlein, Patrick B.},
     TITLE = {Geometry of nonpositively curved manifolds},
    SERIES = {Chicago Lectures in Mathematics},
 PUBLISHER = {University of Chicago Press, Chicago, IL},
      YEAR = {1996},
     PAGES = {vii+449},
      ISBN = {0-226-18197-9; 0-226-18198-7},
   MRCLASS = {53-02 (53C20 53C21 53C35)},
  MRNUMBER = {1441541},
MRREVIEWER = {Mar\'ia\ J.\ Druetta},
}

@article {Walsh2007normed,
    AUTHOR = {Walsh, Cormac},
     TITLE = {The horofunction boundary of finite-dimensional normed spaces},
   JOURNAL = {Math. Proc. Cambridge Philos. Soc.},
  FJOURNAL = {Mathematical Proceedings of the Cambridge Philosophical
              Society},
    VOLUME = {142},
      YEAR = {2007},
    NUMBER = {3},
     PAGES = {497--507},
      ISSN = {0305-0041,1469-8064},
   MRCLASS = {53C70 (46B20 52A41)},
  MRNUMBER = {2329698},
MRREVIEWER = {Claire\ Anantharaman-Delaroche},
       DOI = {10.1017/S0305004107000096},
       URL = {https://doi.org/10.1017/S0305004107000096},
}

@article{EberleinONeill1973visibility,
  title={Visibility manifolds},
  author={Eberlein, Patrick and O’Neill, Barrett},
  journal={Pacific Journal of Mathematics},
  volume={46},
  number={1},
  pages={45--109},
  year={1973},
  publisher={Mathematical Sciences Publishers}
}

@book{onishchikvinberg2012liegroups,
  title={Lie groups and algebraic groups},
  author={Onishchik, Arkadij L and Vinberg, Ernest B},
  year={2012},
  publisher={Springer Science \& Business Media}
}

@book{helgason1979differential,
  title={Differential geometry, {L}ie groups, and symmetric spaces},
  author={Helgason, Sigurdur},
  volume={80},
  year={1979},
  publisher={Academic press}
}

@incollection {Kapovich2023geometricalgorithms,
    AUTHOR = {Kapovich, Michael},
     TITLE = {Geometric algorithms for discreteness and faithfulness},
 BOOKTITLE = {Computational {A}spects of {D}iscrete {S}ubgroups of {L}ie
              {G}roups},
    SERIES = {Contemp. Math.},
    VOLUME = {783},
     PAGES = {87--112},
 PUBLISHER = {Amer. Math. Soc., [Providence], RI},
      YEAR = {[2023] \copyright 2023},
      ISBN = {978-1-4704-6804-0},
   MRCLASS = {20-08 (20F67 22E40 53C35)},
  MRNUMBER = {4556436},
MRREVIEWER = {Sami\ Douba},
       DOI = {10.1090/conm/783/15736},
       URL = {https://doi.org/10.1090/conm/783/15736},
}

@article {Du2024selbergbisectors,
    AUTHOR = {Du, Yukun},
     TITLE = {Geometry of {S}elberg's bisectors in the symmetric space
              {$SL(n,\mathbb{R})/SO(n,\mathbb{R})$}},
   JOURNAL = {J. Lond. Math. Soc. (2)},
  FJOURNAL = {Journal of the London Mathematical Society. Second Series},
    VOLUME = {110},
      YEAR = {2024},
    NUMBER = {4},
     PAGES = {Paper No. e12992, 36},
      ISSN = {0024-6107,1469-7750},
   MRCLASS = {20F65 (15A22 53C35)},
  MRNUMBER = {4801894},
MRREVIEWER = {Vladimir\ V.\ Gorbatsevich},
       DOI = {10.1112/jlms.12992},
       URL = {https://doi.org/10.1112/jlms.12992},
}

@misc{Du2025selbergcompleteness,
      title={{B}usemann-{S}elberg Functions and Completeness for {D}irichlet-{S}elberg domains in $SL(n,\mathbb{R})/SO(n,\mathbb{R})$}, 
      author={Yukun Du},
      year={2025},
      eprint={2412.06987},
      archivePrefix={arXiv},
      primaryClass={math.GR},
      url={https://arxiv.org/abs/2412.06987}, 
}

@incollection {Selberg1960discontinuous,
    AUTHOR = {Selberg, Atle},
     TITLE = {On discontinuous groups in higher-dimensional symmetric spaces},
 BOOKTITLE = {Contributions to function theory ({I}nternat. {C}olloq.
              {F}unction {T}heory, {B}ombay, 1960)},
     PAGES = {147--164},
 PUBLISHER = {Tata Inst. Fund. Res., Bombay},
      YEAR = {1960},
   MRCLASS = {20.65 (22.70)},
  MRNUMBER = {130324},
MRREVIEWER = {E.\ Grosswald},
}

@article{BFGM07,
  author  = {Bader, Uri and Furman, Alex and Gelander, Tsachik and Monod, Nicolas},
  title   = {Property ({T}) and rigidity for actions on {B}anach spaces},
  journal = {Acta Mathematica},
  volume  = {198},
  number  = {1},
  pages   = {57--105},
  year    = {2007},
  doi     = {10.1007/s11511-007-0013-0}
}

\end{document}